\documentclass[11pt,reqno,UTF8,twoside]{amsart}
\usepackage{amssymb}
\usepackage[dvips]{epsfig}
\usepackage{booktabs}
\usepackage{multirow}
\usepackage{bbm}
\usepackage{xltabular}
\usepackage{tabularx}
\usepackage{makecell} 
\usepackage{ragged2e} 
\usepackage{graphicx}
\usepackage{color}
\usepackage{amsmath}
\allowdisplaybreaks
\usepackage{amssymb}
\usepackage{amsfonts}
\usepackage{geometry}
\usepackage{xstring}
\usepackage{amsthm}
\usepackage[normalem]{ulem}    
\usepackage{bm}
\usepackage{cite}
\usepackage{tikz}
\makeatletter
\@namedef{subjclassname@2020}{\textup{2020} Mathematics Subject 
	Classification}
\makeatother

\newcommand{\Z}{\mathbb{Z}}

\newcommand{\R}{\mathbb{R}}

\newcommand{\E}{\mathbb{E}}

\newcommand{\spec}{{\rm Spec}}

\newcommand{\bfv}{\mathbf{v}}
\newcommand{\bfw}{\mathbf{w}}
\newcommand{\event}{{\rm Event}}

\newcommand{\tq}{\tilde{q}}
\newcommand{\ts}{\tilde{s}}
\newcommand{\tr}{\tilde{r}}
\renewcommand{\tt}{\tilde{t}}
\newcommand{\tm}{\widetilde{m}}

\newcommand{\barLambda}{\overline{\Lambda}}
\newcommand{\length}{{\rm length}}

\newcommand{\bfa}{\mathbf{a}}
\newcommand{\bfe}{\mathbf{e}}

\renewcommand{\P}{\mathbb{P}}
\newcommand{\spread}{{\rm Spread}}
\newcommand{\baromega}{\overline{\omega}}
\newcommand{\hatomega}{\widehat{\omega}}

\newcommand{\mcA}{\mathcal{A}}
\newcommand{\mcB}{\mathcal{B}}
\newcommand{\mcD}{\mathcal{D}}
\newcommand{\mcC}{\mathcal{C}}
\newcommand{\mcE}{\mathcal{E}}
\newcommand{\mcF}{\mathcal{F}}

\newcommand{\mcP}{\mathcal{P}}
\newcommand{\mcS}{\mathcal{S}}
\newcommand{\mcI}{\mathcal{I}}
\newcommand{\mcQ}{\mathcal{Q}}
\newcommand{\mcR}{\mathcal{R}}
\newcommand{\mcK}{\mathcal{K}}
\newcommand{\mcT}{\mathcal{T}}
\newcommand{\mcX}{\mathcal{X}}
\newcommand{\mcL}{\mathcal{L}}
\newcommand{\mcM}{\mathcal{M}}
\newcommand{\mcY}{\mathcal{Y}}
\newcommand{\mcZ}{\mathcal{Z}}
\newcommand{\mcO}{\mathcal{O}}
\newcommand{\mcG}{\mathcal{G}}
\newcommand{\mcW}{\mathcal{W}}

\newcommand{\whn}{\widehat{n}}

\newcommand{\wtlambda}{\widetilde{\lambda}}

\newcommand{\barB}{\overline{B}}

\newcommand{\wtSj}{\widetilde{S}^{(j)}}

\newcommand{\wtSjminusone}{\widetilde{S}^{(j-1)}}
\newcommand{\smallSj}{s^{(j)}}
\newcommand{\smallSjminusone}{s^{(j-1)}}
\newcommand{\diffSj}{\Delta {S}^{(j)}}
\newcommand{\partialSj}{\partial {S}^{(j)}}
\newcommand{\partialSjminusone}{\partial {S}^{(j-1)}}

\newcommand{\wtGjminusone}{\widetilde{G}^{(j-1)}}

\newcommand{\partialside}{\partial^{\text{side}}}

\newcommand{\dist}{{\rm dist}}
\newcommand{\diam}{{\rm diam}}

\newtheorem{thm}{Theorem}[section]
\newtheorem{lem}[thm]{Lemma}
\newtheorem{prop}[thm]{Proposition}
\newtheorem{cor}{\bf Corollary}[section]
\newtheorem{Def}{Definition}[section]

\newtheorem{rmk}{\bf Remark}[section]

\theoremstyle{definition}

\newtheorem{ex}[thm]{Example}

\newtheorem{claim}[thm]{\bf Claim}
\newtheorem{prob}[thm]{Problem}

\numberwithin{equation}{section}

\usepackage[colorlinks=true,pdfstartview=FitV,linkcolor=magenta,citecolor=cyan]{hyperref}

\keywords{Quantum tunneling, Anderson-Bernoulli model, Hierarchical potential, Anderson localization, Multi-scale analysis, Transversality}

\usepackage{caption}

\begin{document}
\title[hierarchical Anderson Bernoulli Model]{Anderson Localization for the hierarchical Anderson-Bernoulli model on $\Z^d$}

\author[Liu]{Shihe Liu}
\address[S. Liu] {School of Mathematical Sciences,
Peking University,
Beijing 100871,
China}
\email{2301110021@stu.pku.edu.cn}

\author[Shi]{Yunfeng Shi}
\address[Y. Shi] {School of Mathematics,
Sichuan University,
Chengdu 610064,
China}
\email{yunfengshi@scu.edu.cn}

\author[Zhang]{Zhifei Zhang}
\address[Z. Zhang] {School of Mathematical Sciences,
Peking University,
Beijing 100871,
China}
\email{zfzhang@math.pku.edu.cn}

\begin{abstract}
In this paper, we prove Anderson localization for a hierarchical Anderson-Bernoulli model on $\mathbb{Z}^d$ with arbitrary $d\geq 1$, where the potential is characterized by a geometric hierarchical structure combined with fluctuations induced by independent and identically distributed (i.i.d.) Bernoulli random variables. To the best of our knowledge, this is the first localization result for a model with i.i.d. two‑point Bernoulli potential in dimensions $d\geq 4$.  Notably, our approach is independent of any unique continuation result, instead combining a weak (one-dimensional) transversality estimate with a martingale argument. As such, we hope that our method may shed new light on the eventual proof of Anderson localization for the standard Anderson-Bernoulli model  on $\Z^d$ with $d\geq 4.$ Our method is also applicable to proving a probabilistic unique continuation result on $\Z^d$.
\end{abstract}

\maketitle

\tableofcontents

\section{Introduction}

\subsection{Background and related results}

The study of Anderson localization (i.e., pure point spectrum with exponentially decaying eigenfunctions) for Schr\"odinger operator on $\Z^d$
with independent and identically distributed (i.i.d.) random potentials has attracted considerable attention over the decades. For the Anderson model—where the random potential of the Schr\"odinger operator  is   is uniformly distributed over an interval—Anderson conjectured in \cite{And58} that sufficiently strong disorder would induce eigenstates to be exponentially localized within a finite region. This conjecture was later independently proven in \cite{FMSS85, DLS85, SW86}, building on the pivotal Green’s function estimates developed by Fr\"ohlich-Spencer \cite{FS83}.  In that work, the authors devised a novel multiscale analysis framework to address resonant phenomena, with a core focus on the {\bf hierarchical structure of resonant blocks}—subsystems where Green’s functions cannot be tightly controlled (e.g., lacking off-diagonal exponential decay bounds). Specifically, they demonstrated that in the regime of strong disorder or extreme energies, any finite subset can be decomposed, with high probability, into a union of non-resonant blocks of varying sizes $d_k$ that satisfy the scaling relation $d_{k+1}=d_k^{\alpha},\ \alpha>1$, known as Newtonian scales. The probabilistic bounds underpinning this decomposition were derived using the a priori Wegner estimate \cite{Weg81}—which requires the random potential to have at least a continuous distribution—and a suite of combinatorial entropy estimates.

Because the Green’s function estimates in \cite{FS83} were established for a fixed energy, the result only confirmed the absence of diffusion (or equivalently, vanishing conductivity) rather than full Anderson localization. Later, Martinelli and Scoppola \cite{MS85} integrated the estimates from \cite{FS83} with a generalized eigenvalue argument (namely, Shnol’s theorem) to rigorously establish the absence of absolutely continuous spectrum. Notably, to elucidate the physical mechanism driving localization, Jona-Lasinio, Martinelli, and Scoppola \cite{JLMS84,JLMS85} introduced the concept of a {\bf deterministic hierarchical potential}, constructed by iterating a basic geometric building block across an ascending sequence of Newtonian-type length scales. This innovation enabled the first formal connection between the quantum tunneling analysis of semiclassical limit Schrödinger equations \cite{JLMS81} and the theory of Anderson localization.  In particular, they proved that for the Schr\"odinger operator on $\Z^d$ with a deterministic symmetric hierarchical potential, even weak perturbations by i.i.d. random potentials (with \textbf{absolutely continuous distributions}) trigger a delocalization-to-localization transition for the low-lying spectrum. This hierarchical Anderson model can be regarded as a simplified ``toy mode'' of the standard Anderson model—one that is, in principle, physically realizable \cite{JLMS84,WSPZ89}—as it presupposes that resonant sets inherently exhibit a geometrically hierarchical structure. From a quantum tunneling perspective, this hierarchical architecture dictates the spatial arrangement of potential barriers and wells in the system. Since localization fundamentally implies that a quantum particle cannot escape a fixed region over long timescales (i.e., it remains confined within a potential well without undergoing tunneling), localization becomes increasingly probable as barriers grow higher and wider. In fact, the authors of \cite{FMSS85} (see \cite{MS87} for a comprehensive exposition) successfully proved Anderson localization for the standard Anderson model by showing that typical configurations of its random potential share a similar hierarchical structure to those in the model proposed in \cite{JLMS85}. It is worth noting that the hierarchical structure of resonant sets also manifests in the context of deterministic quasi-periodic Schrödinger operators, as documented in \cite{FSW90,BGS02,Bou07,CSZ23}.

However, the results discussed above primarily concern Schrödinger operators on $\Z^d$ with random potentials possessing bounded densities—even the work in \cite{JLMS85} only established localization for random potentials with bounded densities—and we refer readers to \cite{Kir08, AW15, AM93} for additional findings pertaining to continuously distributed random potentials. In that framework, the proof of the Wegner estimate, which quantifies the probability of resonance events, is standard and straightforward. By contrast, our focus is directed toward the scenario where the i.i.d. random potential follows a two-point Bernoulli distribution, namely the well-known Anderson-Bernoulli model (ABM). In the ABM, the measure concentration effects induced by the potential invalidate the traditional approaches to deriving the Wegner estimate, thereby posing a major obstacle to proving localization. Indeed, in the case of Bernoulli potentials, probabilistic bounds on the Green’s function cannot be obtained by perturbing a single random variable; instead, bounds on ``rare events'' must be established by accounting for the dependence of eigenvalues on a large ensemble of random variables.

For the one-dimensional ABM, this challenge can be circumvented via methods specific to one-dimensional lattices, such as transfer matrix-based methods and the Furstenberg–LePage approach (cf. \cite{CKM87,SVW98,DSS02}). In higher dimensions, however, the transfer matrices essential to one-dimensional dynamical systems techniques are generally unavailable, rendering multiscale analysis the more viable methodological candidate. The foundational work addressing the localization problem for the alloy-type ABM in higher dimensions was pioneered by Bourgain in \cite{Bou04}, where he developed a novel multiscale iteration scheme for Green’s function estimates incorporating a key {\bf free sites argument} and proved localization at the spectral edges. The methodology introduced in \cite{Bou04} was further advanced by Bourgain and Kenig in \cite{BK05}, in which they achieved a breakthrough by establishing localization at the spectral edges for the ABM defined on $\mathbb{R}^d$.

In \cite{Bou04} and \cite{BK05} (see \cite{GK13} for refinements), transversality served as a critical ingredient for proving localization. In the former work, transversality is inherent to the structure of the alloy-type potential; in the latter, it relies on a unique continuation theorem for elliptic equations in $\R^d$ (cf. \cite[Lemma 3.15]{BK05}). However, as noted by Jitomirskaya \cite{Jit07}, an analogous unique continuation statement fails on the lattice  $\Z^d$—a limitation that posed a major obstacle to proving localization for the discrete ABM. The problem of establishing localization at the spectral edges for the ABM on  $\Z^d$ with $d\ge 2$
remained open until the influential work of Ding and Smart \cite{DS20}, in which they established a probabilistic variant of the unique continuation result on $\Z^2$, partially building on the techniques of \cite{BLMS}, and thereby proved Anderson localization for the ABM on $\Z^2$. Subsequently, Li and Zhang \cite{LZ22} extended this two-dimensional framework to prove a unique continuation result and resolve the localization problem at the spectral edges for the ABM on $\Z^3$, supplying the necessary transversality. Furthermore, the transversality result on $\Z^2$ has been leveraged to establish localization for the ABM on $\Z^2$
under strong disorder \cite{Li22}. For localization results concerning alloy-type ABM models with long-range hopping, we also refer readers to \cite{LSZ25}.

{\bf The problem of establishing localization at the spectral edges for the ABM on $\Z^d$ with $d\geq 4$, and even for the hierarchical ABM as investigated in \cite{JLMS85}, remains open}. (Some attempts have been made to prove localization for higher-dimensional Schrödinger operators with singular potentials in \cite{Imb21}, where the random potential is discrete and assumes $N\gg1$ distinct values.)   A fundamental obstacle lies in the absence of an appropriate characterization of transversality on higher-dimensional lattices. In this article, we establish localization in a neighborhood of the bottom of the spectrum for the higher-dimensional hierarchical ABM in arbitrary dimensions. We consider a significant contribution of this work to be our ability to prove localization using a weaker form of transversality than that required by unique continuation theorems, as well as our uncovering of a { special martingale structure} inherent to higher-dimensional lattices in the course of the proof. To the best of our knowledge, this constitutes the first localization result for a model with an i.i.d. two-point Bernoulli potential in dimensions $d\geq 4$.

Finally, we emphasize that \textbf{the method developed in this paper can also be applied to proving a probabilistic unique continuation result (where the probability depends on the initial data) on $\Z^d$ for arbitrary $d\geq 2$} (see Theorem \ref{UC for d arbitrary}). 
In its proof, the same martingale with a special structure reemerges—a fact we believe is of independent interest.

\subsection{The deterministic hierarchical potential}

In this section, we present the precise definitions of our core models. This class of models was originally introduced in \cite{JLMS84, JLMS85} for the purpose of investigating the quantum tunneling behavior of quantum particles.

Let $d_0>1,\alpha>1$ and consider the {\bf Newtonian scales}
\begin{equation}
    d_{k+1}=d_k^{\alpha} ,\  k\geq 0.
\end{equation}
Let $|\cdot|$ and $|\cdot|_1$ be the $\ell^\infty$-norm and $\ell^1$-norm on $\Z^d$, respectively. Denote for $j\in\Z^d$ and $L\geq 0, $
\[Q_L(j)=\{x\in \Z^d:\ |x-j|\leq L\}.\]
Denote by $\lfloor\cdot\rfloor$ by the integer part of a real number.  Throughout this paper, we let   
\begin{equation}\label{k-th block}
  \Lambda_k(j):=\left\{ x\in \Z^d:\  |x-j|\leq 2\lfloor 3d_k \rfloor \right\},\  k\geq 0.
\end{equation}
In particular,  write  $\Lambda_k=\Lambda_k(0)$.

Now, consider a deterministic  potential $V:\ \Z^d\rightarrow \{0,h\}$ with some $h>4d+1$.
The \textbf{potential well} of $V$ is defined by 
\[
\mcW=\left\{x\in \Z^d:\  V(x)=0   \right\},
\]
and  the \textbf{potential barrier} is defined by 
\[
\mcB=\left\{x\in \Z^d: \ V(x)=h   \right\}.
\]
Here,    $h$ is called  the \textbf{height}  of the potential barrier, and 
 $\alpha>1$ in the Newtonian scales describes \textbf{the width} of the potential barrier.

In this article,  we  mainly  focus on  potentials with a specific geometric hierarchical structure.  Let $N\geq 1$ be a fixed positive integer. 
\begin{Def}\label{1-hierarchical structure}
Let $V:\ \Z^d\to \{0,h\}$ and $\mathcal W$ be as above. We say that \textbf{$V$ is $(1,N)$-hierarchical in  $\Lambda_1(j)$} if 
\[
\mcW \cap \Lambda_1(j)= \bigcup_{1\leq s \leq N_1} C^s_0
\]
with   
\begin{itemize}
    \item $N_1\leq N$,
    \item $\diam(C_0^s)\leq 4 \lfloor 3d_0 \rfloor$ for all $1\leq s\leq N_1$,
    \item $C_0^1 \subset \Lambda_0(j)$ and $\dist(C_0^s,C_0^{s'})\geq 2 d_1$ for any $s\neq s'$,
\end{itemize}
where both $\diam(\cdot)$ and $\dist(\cdot, \cdot)$ are induced by the $\ell^\infty$-norm $|\cdot|$ on $\Z^d$. 
\end{Def}
Next, we iteratively define the hierarchical structure of higher levels.
\begin{Def}\label{k-hierarchical structure}
For $k\geq 2$, we say that \textbf{$V$ is $(k,N)$-hierarchical in $\Lambda_k(j)$} if 
\[
\mcW \cap \Lambda_k(j)= \bigcup_{1\leq s \leq N_k} C^s_{k-1}
\]
with  
\begin{itemize}
    \item $N_k\leq N$,
    \item $C_{k-1}^1 \subset \Lambda_{k-1}(j)$ and $\dist(C_{k-1}^s,C_{k-1}^{s'})\geq 2 d_{k}$ for any $s\neq s'$,
    \item $V$ is $(k-1,N)$-hierarchical in  $\Lambda_{k-1}(j)$,
    \item For each $2\leq s\leq N_{k}$, there exists  some  $\Lambda_{k-1}(j_s)\subset \Lambda_k(j)$ such that $C_{k-1}^s\subset \Lambda_{k-1}(j_s)$ and $V$ is $(k-1,N)$-hierarchical in  $\Lambda_{k-1}(j_s)$.
\end{itemize}
\end{Def}
\begin{Def}
We say  $V:\ \Z^d\to\{0,h\}$ is a \textbf{hierarchical potential} if it is $(k,N)$-hierarchical in $\Lambda_k$ for all $k\geq 1$.
\end{Def}
\begin{rmk}
The parameter $N$  characterizing  the number of potential wells at each hierarchical level, is referred to   the \textbf{well density} (cf. Figure \ref{hierarchical structure figure}).
\end{rmk}

\begin{figure}[htbp]
	\centering

\tikzset{every picture/.style={line width=0.75pt}} 

\begin{tikzpicture}[x=0.75pt,y=0.75pt,yscale=-0.75,xscale=0.75]
\draw   (63,32) -- (538.76,32) -- (538.76,519.77) -- (63,519.77) -- cycle ;
\draw  [fill={rgb, 255:red, 74; green, 144; blue, 226 }  ,fill opacity=1 ][dash pattern={on 0.84pt off 2.51pt}] (252.19,225.96) -- (349.58,225.96) -- (349.58,325.81) -- (252.19,325.81) -- cycle ;
\draw  [color={rgb, 255:red, 0; green, 0; blue, 0 }  ,draw opacity=1 ][fill={rgb, 255:red, 74; green, 144; blue, 226 }  ,fill opacity=1 ][dash pattern={on 0.84pt off 2.51pt}] (91.58,57.77) -- (188.97,57.77) -- (188.97,157.62) -- (91.58,157.62) -- cycle ;
\draw  [fill={rgb, 255:red, 74; green, 144; blue, 226 }  ,fill opacity=1 ][dash pattern={on 0.84pt off 2.51pt}] (212.03,403.56) -- (309.42,403.56) -- (309.42,503.41) -- (212.03,503.41) -- cycle ;
\draw  [fill={rgb, 255:red, 74; green, 144; blue, 226 }  ,fill opacity=1 ][dash pattern={on 0.84pt off 2.51pt}] (441.48,129.52) -- (538.86,129.52) -- (538.86,229.37) -- (441.48,229.37) -- cycle ;
\draw  [fill={rgb, 255:red, 208; green, 2; blue, 27 }  ,fill opacity=1 ] (130.11,97.28) -- (150.43,97.28) -- (150.43,118.11) -- (130.11,118.11) -- cycle ;
\draw  [fill={rgb, 255:red, 208; green, 2; blue, 27 }  ,fill opacity=1 ] (168.65,136.78) -- (188.97,136.78) -- (188.97,157.62) -- (168.65,157.62) -- cycle ;
\draw  [fill={rgb, 255:red, 208; green, 2; blue, 27 }  ,fill opacity=1 ] (95.7,69.05) -- (116.02,69.05) -- (116.02,89.88) -- (95.7,89.88) -- cycle ;
\draw  [fill={rgb, 255:red, 208; green, 2; blue, 27 }  ,fill opacity=1 ] (290.72,265.47) -- (311.04,265.47) -- (311.04,286.3) -- (290.72,286.3) -- cycle ;
\draw  [fill={rgb, 255:red, 208; green, 2; blue, 27 }  ,fill opacity=1 ] (250.57,443.07) -- (270.89,443.07) -- (270.89,463.91) -- (250.57,463.91) -- cycle ;
\draw  [color={rgb, 255:red, 0; green, 0; blue, 0 }  ,draw opacity=1 ][fill={rgb, 255:red, 208; green, 2; blue, 27 }  ,fill opacity=1 ] (480.01,169.02) -- (500.33,169.02) -- (500.33,189.86) -- (480.01,189.86) -- cycle ;
\draw  [fill={rgb, 255:red, 208; green, 2; blue, 27 }  ,fill opacity=1 ] (441.48,129.52) -- (461.8,129.52) -- (461.8,150.35) -- (441.48,150.35) -- cycle ;
\draw  [fill={rgb, 255:red, 208; green, 2; blue, 27 }  ,fill opacity=1 ] (465.1,207.84) -- (485.42,207.84) -- (485.42,228.67) -- (465.1,228.67) -- cycle ;
\draw  [fill={rgb, 255:red, 208; green, 2; blue, 27 }  ,fill opacity=1 ] (518.54,208.53) -- (538.86,208.53) -- (538.86,229.37) -- (518.54,229.37) -- cycle ;
\draw  [fill={rgb, 255:red, 208; green, 2; blue, 27 }  ,fill opacity=1 ] (518.54,129.52) -- (538.86,129.52) -- (538.86,150.35) -- (518.54,150.35) -- cycle ;
\draw  [fill={rgb, 255:red, 208; green, 2; blue, 27 }  ,fill opacity=1 ] (252.19,304.98) -- (272.51,304.98) -- (272.51,325.81) -- (252.19,325.81) -- cycle ;
\draw  [fill={rgb, 255:red, 208; green, 2; blue, 27 }  ,fill opacity=1 ] (279.25,412.49) -- (299.57,412.49) -- (299.57,433.33) -- (279.25,433.33) -- cycle ;
\draw  [fill={rgb, 255:red, 208; green, 2; blue, 27 }  ,fill opacity=1 ] (232.21,478.36) -- (252.53,478.36) -- (252.53,499.19) -- (232.21,499.19) -- cycle ;
\draw    (250.83,224.49) -- (190.33,159.09) ;
\draw [shift={(188.97,157.62)}, rotate = 47.23] [color={rgb, 255:red, 0; green, 0; blue, 0 }  ][line width=0.75]    (10.93,-3.29) .. controls (6.95,-1.4) and (3.31,-0.3) .. (0,0) .. controls (3.31,0.3) and (6.95,1.4) .. (10.93,3.29)   ;
\draw [shift={(252.19,225.96)}, rotate = 227.23] [color={rgb, 255:red, 0; green, 0; blue, 0 }  ][line width=0.75]    (10.93,-3.29) .. controls (6.95,-1.4) and (3.31,-0.3) .. (0,0) .. controls (3.31,0.3) and (6.95,1.4) .. (10.93,3.29)   ;
\draw    (251.96,333.43) -- (348.33,334.61) ;
\draw [shift={(348.33,334.61)}, rotate = 180.7] [color={rgb, 255:red, 0; green, 0; blue, 0 }  ][line width=0.75]    (0,5.59) -- (0,-5.59)(10.93,-3.29) .. controls (6.95,-1.4) and (3.31,-0.3) .. (0,0) .. controls (3.31,0.3) and (6.95,1.4) .. (10.93,3.29)   ;
\draw [shift={(251.96,333.43)}, rotate = 0.7] [color={rgb, 255:red, 0; green, 0; blue, 0 }  ][line width=0.75]    (0,5.59) -- (0,-5.59)(10.93,-3.29) .. controls (6.95,-1.4) and (3.31,-0.3) .. (0,0) .. controls (3.31,0.3) and (6.95,1.4) .. (10.93,3.29)   ;

\draw (562.73,501.52) node [anchor=north west][inner sep=0.75pt]   [align=left] {$\Lambda_k(j)$};
\draw (361.85,309.81) node [anchor=north west][inner sep=0.75pt]   [align=left] {$\Lambda_{k-1}(j)$};
\draw (221.42,172.2) node [anchor=north west][inner sep=0.75pt]   [align=left] {$\sim d_k$};
\draw (285.47,338.04) node [anchor=north west][inner sep=0.75pt]   [align=left] {$\sim d_{k-1}$};
\draw (298.75,248.65) node [anchor=north west][inner sep=0.75pt]   [align=left] {$\Lambda_{k-2}(j)$};

\end{tikzpicture}

\caption{Hierarchical structure with well density $N\leq 5$.}
\label{hierarchical structure figure}
	
\end{figure}
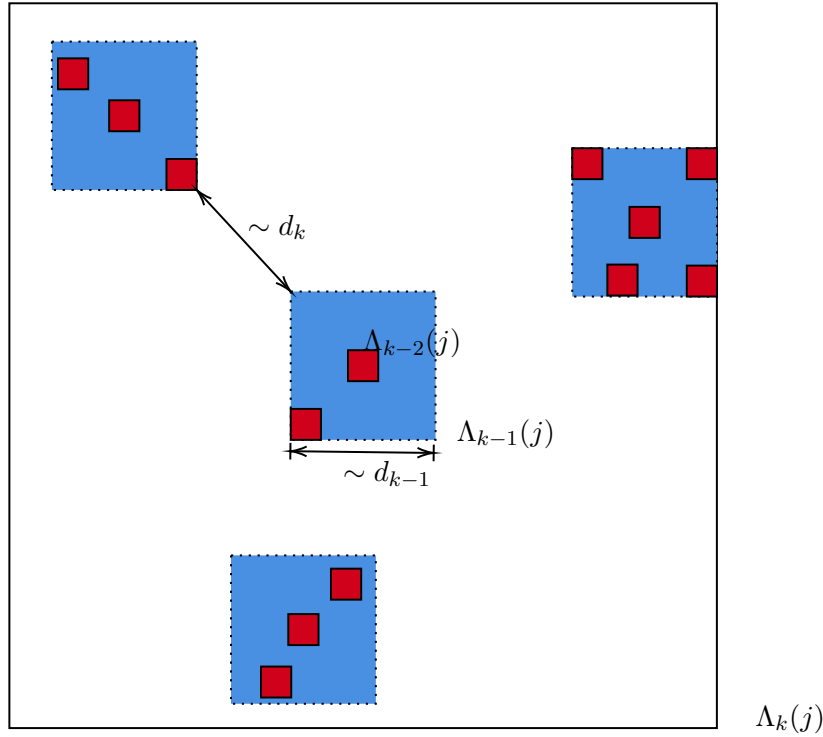

An important example of a hierarchical potential is the \textbf{symmetric hierarchical potential} introduced in \cite{JLMS84, JLMS85}: 
\begin{ex}[cf. \cite{JLMS84, JLMS85}]\label{symmetric hierarchical potential}
The  hierarchical potential $V_{\text{sy}}:\ \Z^d\to\{0,h\}$ is called a \textbf{symmetric hierarchical potential} if, within $\Lambda_1$,   $C_0^1 = \Lambda_0$, and for every $\Lambda_k$ with $k \geq 1$, 
\begin{itemize}
\item $N_k = 2d + 1$,
\item Each component $C_{k-1}^s$ for $2 \leq s \leq 2d + 1$ is obtained via  translating $C_{k-1}^1$ along each  coordinate axis  by the  distance of $\pm 2(\lfloor 3d_k \rfloor - \lfloor 3d_{k-1} \rfloor)$.
\end{itemize}
See \cite[Fig. 2]{JLMS85} for an intuitive illustration.
\end{ex}

\subsection{Main results}
We define
\begin{equation}\label{discrete Laplacian}
  \Delta f(x)=\sum_{y:\ y\sim x} f(y), 
\end{equation}
where  $y\sim x$ means $|x-y|_1=1$.  Let $V_{\text{hi}}:\ \Z^d\to\{0,h\}$ be a hierarchical potential  defined in  the the previous section  satisfying  the following properties:
\begin{itemize}
	\item The height of potential barrier $h>4d+1$, 
	\item The width of the potential barrier $\alpha>1$, 
	\item The well density $N\geq 1$.
\end{itemize}
Consider the deterministic Hamiltonian
\[H_0=2d-\Delta +V_{\text{hi}}.\]
Denote by $\spec(\cdot)$ the spectrum of an operator.  According to \cite[Proposition 3.1]{JLMS85}, the spectrum of $H_0$ satisfies 
\begin{align*}
\spec(H_0) \cap [h,\infty) =[h,h+4d],\ \spec(H_0) \cap [0,h) \subset [0,4d].
\end{align*}
It was established  in  \cite[Theorem 4.2]{JLMS85} that, at the bottom of the spectrum (i.e., $\spec(H_0) \cap [0,h)$), $H_0$  (with a symmetric hierarchical potential)  can  exhibit  the {\bf delocalization behavior}. More precisely,  if we take $V_{\text{hi}} = V_{\text{sy}}$ as in Example \ref{symmetric hierarchical potential}, then the mean square displacement (MSD) 
\[r^2(t)= \sum_{x\in \Z^d} |x|^2 \cdot \left| (e^{-itH_0}P_{[0,4d]}(H_0) \delta_0) (x) \right|^2\]
satisfies  a lower bound
  \begin{align}\label{MSD111}
  r^2(t_k) \gtrsim (2d+1)^{-k} d_k\to  +\infty \ {\rm as} \ k\to\infty
 \end{align}
for a subsequence ${t_k}$ with $t_k \to \infty$,  where $\delta_0(x)=\delta_{x,0}$ and $P_{[0,4d]}(H_0)$ is the spectral projection of $H_0$ on $[0,4d]$.

The celebrated work \cite{JLMS85} showed  that once the i.i.d.  random perturbations with {\bf the absolutely continuous distribution} were introduced into $H_0$, both Anderson localization and dynamical localization (i.e., $\E \sup\limits_{t\in\R} r^2(t)<+\infty$) hold true. This result thus illustrates the phenomenon of the instability of quantum tunneling.

Notably, the proof in \cite{JLMS85} relies on the a priori Wegner estimate and Fröhlich-Spencer estimates \cite{FS83}—the former of which is unavailable for singular random distributions, such as i.i.d. Bernoulli distributions. This limitation motivates our study of localization at the bottom of the spectrum for the randomly perturbed Hamiltonian $H_0$ with Bernoulli random variables. We therefore consider the \textbf{random hierarchical potential}\begin{equation}\label{random hierarchical potential}
V(\omega) = V_{\text{hi}} + \beta V_{\text{r}}(\omega),
\end{equation}
where  $\{V_{\text{r}, x}(\omega)\}_{x \in \mathbb{Z}^d}$ is a family of  i.i.d.  random variables on   the  probability space $\left(\Omega=\{0,1\}^{\Z^d}, \  \mathbb{P}\right)$ with 
\begin{equation}\label{Bernoulli distribution}
  \P(V_{\text{r},x}(\omega)=0)=\P(V_{\text{r},x}(\omega)=1)=\frac{1}{2}\ {\rm for\ all}\ \forall x\in \Z^d.
\end{equation}
For simplicity, we also write   $\{0,1\}^{\Z^d}\ni\omega$ with $\omega(x)=V_{\text{r}, x}(\omega)$  for $x\in \Z^d$.

In this paper, we study the  hierarchical ABM on $\Z^d$ 
\begin{equation}\label{our model}
	H(\omega)=H_0+V(\omega)=2d-\Delta +V_{\rm hi}+\beta V_{\text{r}}(\omega),
\end{equation}
where the  coupling strength $0<\beta\leq 1$ and $V_{\text{r}}(\omega)$ is the random Bernoulli potential. 
A  standard argument (see, e.g., \cite[Lemma 2.1.2]{Kri08}) shows for all $\omega$, 
\begin{equation}\label{spectral inclusion}
  \spec(H(\omega)) \subset [0,4d+\beta]\cup [h,h+4d+\beta]. 
\end{equation} 
\begin{rmk}
The condition $h>4d+1$ is imposed solely to ensure that the spectrum decomposes into two disjoint intervals, as illustrated in \eqref{spectral inclusion}.
\end{rmk}

Our main results are stated as follows.

\begin{thm}\label{low dimension localization}  Let $H(\omega)$ be defined by \eqref{our model} with  $1\leq d \leq 3$. Then for any $0 < \beta \leq 1$, the operator $H(\omega)$ exhibits the Anderson localization in the interval $[0, h) \cap \spec(H(\omega))$ almost surely.
\end{thm}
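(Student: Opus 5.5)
Theorem \ref{low dimension localization} will be proved by a Bourgain--Kenig type multiscale analysis of the finite-volume Green's functions $G_{\Lambda}(E)=(R_{\Lambda}(H(\omega)-E)R_{\Lambda})^{-1}$ at the Newtonian scales $d_k$. The hierarchical structure of $V_{\rm hi}$ will replace the combinatorial decomposition into resonant blocks. Throughout we restrict to energies $E\in[0,4d+\beta]$, which is legitimate by \eqref{spectral inclusion} and $h>4d+1$.

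\emph{Initial scale and barriers.} On the barrier region $\mcB$ we have $V\ge h>E+1$, so any box lying in $\mcB$ at positive distance from $\mcW$ is deterministically non-resonant. A Combes--Thomas estimate then gives exponential decay of $G$ across barriers at a rate $\gtrsim \log(h-4d)$, uniformly in $\omega$. Consequently the only potentially resonant sets at scale $k$ are $O(d_{k-1})$-neighbourhoods of the wells $C_{k-1}^s$, of which there are at most $N$ in each $\Lambda_k(j)$. Their mutual separation $\ge 2d_k\gg d_{k-1}$ lets us use the resolvent identity to decouple them.

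\emph{Inductive step.} Assume that at scale $k-1$, with probability $\ge 1-d_{k-1}^{-p}$, each well-block $\Lambda_{k-1}(j_s)$ is either non-resonant at $E$ or has all its eigenvalues in a $d_{k-1}^{-\kappa}$-neighbourhood of $E$ with exponentially localized eigenfunctions. By the geometric resolvent expansion, $\Lambda_k(j)$ is then good unless the spectra of two separated well-blocks come within $e^{-c d_k}$ of each other and of $E$. Since the blocks sit on disjoint sets of Bernoulli variables, independence reduces the task to a \emph{Wegner-type estimate for a single block}: for fixed $E$,
\[
\P\Bigl(\dist\bigl(E,\spec(H_{\Lambda_{k-1}})\bigr)<e^{-c d_k^{\,\gamma}}\Bigr)\le d_k^{-p}.
\]

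\emph{Main obstacle: the single-block Wegner estimate.} This requires transversality: a change in the Bernoulli variables must move eigenvalues by a non-negligible amount. Here the dimension restriction $d\le3$ enters. I would invoke the probabilistic unique continuation results on $\Z^2$ and $\Z^3$ (Ding--Smart \cite{DS20} and Li--Zhang \cite{LZ22}), together with $\Z^1$ transfer-matrix bounds. These say that with high probability an eigenfunction of energy near the bottom of the spectrum on a box cannot be concentrated on a small-density set; the relevant eigenfunction is the one localized on the well region, where the effective potential is $\beta V_{\rm r}$. Unique continuation gives $\partial E/\partial \omega(x)=\beta|\psi(x)|^2\ge e^{-C d_{k-1}\log d_{k-1}}$ on a positive fraction of sites. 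The Sperner-type/free sites argument of Bourgain \cite{Bou04} and \cite{DS20} then yields the stated bound. This holds provided the fluctuation scale $e^{-C d_{k-1}\log d_{k-1}}$ is much larger than $e^{-c d_k^\gamma}$, which holds because $d_k=d_{k-1}^\alpha$ with $\alpha>1$. The counting of well-blocks is harmless since $N$ is fixed.

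\emph{Conclusion.} With probability summable in $k$ (Borel--Cantelli), the annuli $\Lambda_{k+1}\setminus\Lambda_{k}$ are good for all energies in the spectrum except those exponentially close to eigenvalues of one of at most $N$ blocks. Energies can be removed by a standard elimination via eigenvalue counting: there are polynomially many eigenvalues per block. Combining this with Shnol's theorem (generalized eigenfunctions grow at most polynomially), every generalized eigenfunction with energy in $[0,h)$ decays exponentially. This gives Anderson localization in $[0,h)\cap\spec(H(\omega))$ almost surely.

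The delicate point I anticipate is matching the unique continuation exponent against the Newtonian gap. If the loss in \cite{DS20,LZ22} is $e^{-C L\log L}$, one needs $\alpha$-scaling room; I would first try taking $d_0$ large to absorb the constants.
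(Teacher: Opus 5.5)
Your overall architecture is the paper's: deterministic Neumann-series decay in $\mcB$, a Wegner estimate for each cluster block from unique continuation plus the Bourgain/Ding--Smart free-sites argument, separation of central and satellite blocks by independence and Borel--Cantelli, then Shnol and a coupling lemma. No induction over scales is needed, because everything the argument uses about smaller scales is deterministic. The key such input is the count $\#(\spec(H_{\barLambda_k})\cap[-\iota,4d+\beta+\iota])\le 2N^{k-k^{(1)}}(16d_{k^{(1)}}+1)^d$ of Lemma~\ref{eigenvalue number}. Your inductive hypothesis (``all its eigenvalues in a $d_{k-1}^{-\kappa}$-neighbourhood of $E$'') is not coherent as written and can be dropped.

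The genuine gap is in the step you call the main obstacle. You never say which variables are frozen and which are free, or where the unique-continuation cube sits. Your description (eigenfunction on the wells, effective potential $\beta V_{\rm r}$) points to the well variables, and those cannot be the free sites:
\begin{itemize}
\item $\Lambda_k$ contains at most $N^k$ level-$0$ wells of diameter $\le 4\lfloor 3d_0\rfloor$, a number polylogarithmic in $d_k$.
\item So the factor $\sqrt{\#\mcS}/N_{uc}\ge(\#\mcS)^{-1/2}$ in the Sperner bound is at best of order $N^{-k/2}$.
\item That does not survive the union over up to $\sim N^k$ low-lying eigenvalues in the separation step.
\end{itemize}

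The paper instead proceeds as follows.
\begin{itemize}
\item It freezes $\mcF=\Lambda_k'$ (the wells plus a $2d_k$ collar) and takes as free sites the barrier moat $\mcS=\barLambda_k\setminus\Lambda_k'$, where $V=h+\beta\omega$. This is why its blocks are the $\frac1{10}d_{k+1}$-neighbourhoods $\barLambda_k,\barLambda_k^s$, not your unpadded $\Lambda_{k-1}(j_s)$.
\item Transversality is measured against $\|\psi\|_{\ell^\infty(\Lambda_k')}\ge L_0^{-\varepsilon/4}$, since the mass sits on the wells.
\item The UC cube has size $L_2=L_0^{(1-2\varepsilon)^2}$ with $d_k\ll L_2\ll L_0$, so it fits in the moat and the frozen block is negligible inside it. For $d=2$ this gives the $\varepsilon$-regularity $16d_k\le\varepsilon^2L_2$ needed in \cite{DS20,Li22}. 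For $d=3$ one first follows a cone-property chain from the maximiser out into the moat.
\item All of this is arranged by taking $\varepsilon$ small in terms of $\alpha$ and $k\ge k_{\rm in}$. Enlarging $d_0$ is not available, since $d_0$ is a parameter of the model.
\end{itemize}

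The quantitative mechanism is also not the one you describe. Eigenvalues are not moved through $\partial E/\partial\omega(x)=\beta|\psi(x)|^2$. The transversal set depends on $\omega$, so the Bourgain--Kenig variational argument does not transfer (see the remark after Lemma~\ref{rank-one perturbation}). Instead one uses the discrete rank-one lemma of \cite{DS20} on a ladder of windows $s_\ell$, $0\le\ell\le L_0^{\varepsilon}$, whose step $\approx e^{2C_1L_2\log L_2}$ absorbs the UC loss. The scale condition is therefore $L_0^{\varepsilon}L_2\log L_2\ll L_1$, not merely that the UC loss exceed the target resolution. The outcome is
\[
\P\bigl(\dist(\spec(H_{\barLambda_k}),E)<e^{-L_1}\,\big|\,V_{\mathrm{r},\mcF}\bigr)\le e^{-\varepsilon L_2^{2/3}}+4(\#\mcK)^2L_0^{\varepsilon}\,\frac{\sqrt{\#\mcS}}{N_{uc}},
\]
so what is required is $N_{uc}\gg(\#\mcS)^{1/2}L_0^{2\varepsilon}$, i.e.\ roughly $L^{d/2+}$ transversal sites, not a positive fraction. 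This is exactly where $d\le 3$ enters. For $d=3$ your claim is wrong as stated: \cite{LZ22} gives, deterministically, only $L^{p}$ sites with $p>3/2$, which suffices precisely because $p>d/2$.
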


If additional conditions on $h$, $\alpha$ and $N$ are imposed,   we can establish the Anderson  localization  for the hierarchical ABM in  higher  dimensions. 

\begin{thm}\label{high dimension localization}
Let $H(\omega)$ be defined by \eqref{our model} with  $d \geq 4$. There is some  $h_0=h_0(d)>4d+1$  depending  only on $d$   such that,  if $h>h_0$ and $\alpha > N$, then for any $0<\beta\leq 1$,   the operator $H(\omega)$ exhibits  the Anderson localization in the interval $[0, h) \cap \spec(H(\omega))$ almost surely.
\end{thm}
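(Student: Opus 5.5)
The plan is a multiscale analysis along the Newtonian scales $d_k$. The deterministic hierarchical structure of $V_{\rm hi}$ forces every resonance at scale $k$ to sit inside the $N_k\le N$ wells $C_{k-1}^s$. Probabilistic control comes from the Bernoulli variables inside those wells, via a weak one-dimensional transversality estimate and a martingale argument, in place of a Wegner estimate. Fix an energy window $I=[0,4d+\beta]$, which lies below $h$ and is separated from $[h,h+4d+\beta]$ by \eqref{spectral inclusion}.

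\textbf{Step 1: Combes--Thomas decay through barriers.} On the barrier set $\mcB$ the potential is at least $h$, while $E\le 4d+\beta<h-1$. For any box $\Lambda$, a Combes--Thomas bound therefore gives exponential decay of $G_\Lambda(E)(x,y)$ at a rate $m(h,d)>0$, for paths passing through barrier regions. This rate grows like $\log(h/4d)$ as $h\to\infty$. Consequently, off-diagonal decay of the Green's function on $\Lambda_k(j)$ is governed by the restrictions of $H$ to neighborhoods of the wells.

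\textbf{Step 2: Reduction to $N$ weakly coupled well blocks.} Consider $\Lambda_k(j)$ and its wells $C^s_{k-1}$. These are pairwise separated by at least $2d_k$, and each lies in some $\Lambda_{k-1}(j_s)$ that is $(k-1)$-hierarchical. By Schur complement / resolvent identities, $G_{\Lambda_k(j)}$ is controlled by the Green's functions $G_{\Lambda_{k-1}(j_s)}$ (inductively good) together with tunneling errors of size $e^{-m d_k}$. Resonance at scale $k$ therefore means that $E$ lies within $e^{-c d_k}$ of an eigenvalue of one of the at most $N$ level-$(k-1)$ blocks, or that two blocks resonate with each other.

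The inductive hypothesis at scale $k$ is that, outside probability $d_k^{-p}$, every $\Lambda_k(j)$ is good, meaning $\|G\|\le e^{d_k^{\tau}}$ with off-diagonal decay. This must be required uniformly for all $E$ in $I$, but only up to the $N$ exceptional eigenvalue windows of each block. Eigenvalue localization then follows from the standard double-resolvent argument: a generalized eigenfunction of polynomial growth cannot live on two far-apart bad blocks.

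\textbf{Step 3 (main obstacle): the probability estimate without Wegner.} One must show that the eigenvalues of two disjoint well blocks are separated by more than $e^{-d_k^{\tau}}$ with high probability. Equivalently, a given eigenvalue $\lambda(\omega)$ of a block lies in a fixed tiny interval with small probability.

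I would first condition on all variables except those in a chosen set $S$ of sites inside the well where the normalized eigenfunction $\psi$ has mass at least $e^{-Cd_{k-1}}$. One-dimensional transversality says that flipping $\omega(x)$ from $0$ to $1$ raises $\lambda$ by at least $\beta|\psi(x)|^2$, and by Hellmann--Feynman monotonicity $\lambda$ increases in each coordinate. Ordering the sites of $S$, the process $M_t=\E[\lambda\mid \omega(x_1),\dots,\omega(x_t)]$ is a martingale, and each flip produces an increment of size at least $\beta\min_S|\psi|^2$. A Sperner/Littlewood--Offord-type anticoncentration bound, or a martingale small-ball estimate, then gives
\[
\P(\lambda\in[E,E+\varepsilon])\lesssim |S|^{-1/2},
\]
provided $\varepsilon$ is smaller than the increments. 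The lower bound on $|\psi|$ over many sites is where dimension enters.

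For $d\le3$ I would quote the Ding--Smart / Li--Zhang unique continuation results applied inside the well. For $d\ge4$ I would instead use that $h>h_0(d)$: the block is then a small perturbation, of relative size $O(d/h)$, of the Dirichlet Laplacian on the well cluster. Its ground-state-like eigenfunctions then spread over the well with mass at least $e^{-Cd_{k-1}}$ at enough sites. Only this one-dimensional (single-eigenvalue) transversality is needed, not full unique continuation.

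\textbf{Step 4: closing the induction.} The condition $\alpha>N$ enters in the entropy count. There are at most $N$ wells per level, so the polynomial gain $|S|^{-1/2}\sim d_{k-1}^{-c}$ per resonance must beat the union bound over the $N$ blocks and the eigenvalue pairs, and must propagate across $d_{k+1}=d_k^{\alpha}$. This requires $\alpha>N$, so that the bad probability $d_k^{-p}$ is summable. Borel--Cantelli, combined with Shnol's theorem on generalized eigenfunctions, then yields Anderson localization in $[0,h)\cap\spec(H(\omega))$ almost surely.
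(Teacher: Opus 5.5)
Your Steps 1, 2 and 4 are a reasonable skeleton: the paper also combines barrier decay, separation of $\spec(H_{\barLambda_k})$ from $\spec(H_{\barLambda_k^s})$, Shnol's lemma and Borel--Cantelli. Step 3, however, has a genuine gap, and it is exactly where the paper needs its new idea.

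\textbf{Free sites in the well are too few.} Each hierarchical level multiplies the number of level-$0$ clusters by at most $N$, so $\#(\mcW\cap\Lambda_k)\le N^k(4\lfloor 3d_0\rfloor+1)^d$. This is at most of order $\log d_k$, since $N^k<\alpha^k=\log d_k/\log d_0$. An anticoncentration bound $|S|^{-1/2}$ is therefore $\gtrsim N^{-k/2}$, not $d_{k-1}^{-c}$. It cannot absorb the union over the $\lesssim N^k$ eigenvalues of $H_{\barLambda_k}$ in the window that must be kept away from $\spec(H_{\barLambda_k^s})$.

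\textbf{Free sites in the barrier need open unique continuation.} Because $S$ depends on $\omega$, the available tool is the Bourgain/Ding--Smart Sperner bound $\lesssim\sqrt{\#\mcS}/\#S$ over the free set $\mcS$. So you would need $|\psi|$ to be non-negligible on $\gg L^{d/2}$ sites of an $L$-box. That is precisely the unique continuation input that is open for $d\ge 4$ (see the remark after \eqref{bare bound with Nuc}). Large $h$ does not supply it. In the barrier, $\psi$ is obtained by applying the positive kernel $G_{\mcB}(E)$ to the signed values of $\psi$ next to the wells, so it can cancel on large sets, and expanding in $d/h$ gives no lower bound.

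\textbf{The monotone increment claim is false.} It is not true that flipping $\omega(x)$ raises a labeled eigenvalue by at least $\beta|\psi(x)|^2$. By interlacing, a positive rank-one perturbation cannot push $\lambda_j$ past the next eigenvalue, so the increment is capped by the gap. Also, the increments of $\E[\lambda\mid\omega(x_1),\dots,\omega(x_t)]$ are not flip increments. This is why one has to count eigenvalues in windows, as in Lemma \ref{rank-one perturbation} and Lemma \ref{monotonicity lemma}.

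\textbf{The missing mechanism.} The paper takes the free sites in the non-resonant annulus and interpolates $a$-adic scales $L_j=a^jL_0$ between $d_k$ and $d_k^{\sqrt\alpha}$. At each scale, the cone property produces one boundary site $\bar y_j$ of $\barB_{j-1}$ with $|\mathbf a(\bar y_j)|\ge t_{j-1}$. This site is determined by the randomness already revealed in $\barB_{j-1}$. For $h>h_0(d)$, this one-way transversality dominates the out-and-in tunneling $g_{j-1}$ and the higher-order walks through $\bar y_j$. Hence the rank-one term $-(h+2d+\beta\omega(\bar y_j)-\lambda)^{-1}\mathbf a(\bar y_j)\otimes\mathbf a(\bar y_j)$ controls how the reduced Schur complement changes when $\omega(\bar y_j)$ is flipped. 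The count $\whn_j$ then never increases, and it strictly drops for at least one of the two values of $\omega(\bar y_j)$. The adaptive choice of $\bar y_j$ gives a site-mixed submartingale. Azuma's inequality then yields at least $P/10$ drops among $P\sim c\,\alpha^k$ scales, except with probability $e^{-P}\le d_{k+1}^{-q}$.

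\textbf{Where $\alpha>N$ enters.} The hypothesis is used exactly here, so that $P/10$ exceeds the initial count $\lesssim N^k$ from Lemma \ref{eigenvalue number}. It is not needed for summability, since $\sum_k N^{k}d_{k+1}^{-q}<\infty$ for every $\alpha>1$.
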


\begin{rmk}
Indeed, the proof is also valid for dimensions $1\leq d\leq 3$. The constraints imposed in Theorem \ref{high dimension localization} are motivated by the underlying mechanism of quantum tunneling: localization arises when particle tunneling is suppressed, a condition that is favored by taller (larger 
$h$) and wider (larger $\alpha$) potential barriers, as well as a lower density of potential wells (smaller $N$). Given that the propensity toward delocalization intensifies with increasing spatial dimensions \cite{Bou03, LSZ25b}, the enhanced barrier configurations (large $h$, $\alpha > N$) are precisely the adjustments required to counteract this dimensional delocalization effect.
\end{rmk}

Finally, under the  assumptions of Theorems \ref{low dimension localization} and \ref{high dimension localization}, we  can prove the following   dynamical localization result.
\begin{thm}\label{dynamical localization} Under the assumptions of Theorem \ref{low dimension localization} ($d \leq 3$) and Theorem \ref{high dimension localization} ($d \geq 4$), the following holds  true for any $0 < \beta \leq 1$: for $\P$ a.e. $\omega$,  we have 
 \[\sup_{t\in\R} r^2(t,\omega) = \sup_{t\geq 0} \sum_{x\in \Z^d}  |x|^2\cdot \left| (e^{-itH(\omega)}P_{[0,h) }(H(\omega)) \delta_0) (x)  \right|^2\leq C(\omega)<\infty.\]   
\end{thm}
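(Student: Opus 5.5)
The plan is to derive dynamical localization from the Green's function estimates produced by the multiscale analysis behind Theorems \ref{low dimension localization} and \ref{high dimension localization}. The spectral-theoretic step is the standard passage from \emph{semi-uniformly localized eigenfunctions} (SULE) to a bound on moments; the substantive work is to upgrade the almost-sure exponential decay of eigenfunctions to a version with a controlled center.

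\textbf{Step 1 (uniform Green's function estimates).} I expect the proofs of the two localization theorems to yield the following. For each scale $k$ there is an event $\Omega_k$ with $\P(\Omega_k^c)\le e^{-d_k^{c}}$, or at least summable in $k$. On $\Omega_k$, for every energy $E\in[0,h)$, the following holds for every $\Lambda_k(j)$ with $|j|\le d_{k+1}^{C}$, with at most one exceptional block (or boundedly many) near any given point, which the hierarchical structure should make possible:
\[
|G_{\Lambda_k(j)}(E;x,y)|\le e^{-\gamma|x-y|}\quad\text{for } |x-y|\ge d_k/10 .
\]
The Bernoulli case forces these probability bounds to come from the transversality/martingale argument rather than from Wegner. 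I would take them as given from the main multiscale proposition, which presumably is uniform in $E$ after the usual double-block elimination of energies.

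\textbf{Step 2 (SULE).} By Borel--Cantelli, almost every $\omega$ lies in $\Omega_k$ for all $k\ge k_0(\omega)$. Let $\psi$ be a normalized eigenfunction with energy $E<h$, and let $x_\psi$ be a point maximizing $|\psi|$. Fix $k\ge k_0$ and a point $x$ with $|x-x_\psi|\sim d_{k+1}$. The resolvent identity on a good block containing $x$ gives $|\psi(x)|\le e^{-\gamma' |x-x_\psi|}$. Since the block around $x_\psi$ is the exceptional one, it cannot also be good at $x$. This yields the SULE bound
\[
|\psi(x)|\le C_\epsilon(\omega)\, e^{\epsilon|x_\psi|}\, e^{-\gamma'|x-x_\psi|}.
\]
Counting eigenvalues in a box via the trace bound then gives $\#\{\psi: |x_\psi|\le L\}\le C L^{d}$. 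This is the del Rio--Jitomirskaya--Last--Simon criterion, and it also forces the spectrum in $[0,h)$ to be pure point.

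\textbf{Step 3 (moment bound).} Expand
\[
e^{-itH}P_{[0,h)}(H)\delta_0=\sum_\psi e^{-itE_\psi}\overline{\psi(0)}\,\psi .
\]
Each term satisfies $|\psi(0)|\le C e^{(\epsilon-\gamma')|x_\psi|}$ and $\sum_x|x|^2|\psi(x)|^2\le C(1+|x_\psi|)^2e^{2\epsilon|x_\psi|}$. By Cauchy--Schwarz with the polynomial counting bound,
\[
r^2(t)\le \Big(\sum_\psi |\psi(0)|\,\| |x|\psi\|\Big)^2\le C(\omega)\Big(\sum_{n} n^{d+1} e^{(2\epsilon-\gamma')n}\Big)^2 <\infty
\]
uniformly in $t$, once $\epsilon<\gamma'/2$. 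Since the sum is over the spectral projection onto $[0,h)$, only energies in the first component of \eqref{spectral inclusion} enter, and there localization holds.

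\textbf{Main obstacle.} The hard step is Step 1 as stated: Green's function bounds that hold simultaneously for all energies and all centers up to polynomial distance, with summable probability. Localization alone only gives decay of each eigenfunction with a constant depending on the eigenfunction. I would try to extract the energy-uniform statement by the standard two-block argument: for disjoint distant blocks, the probability that both are $E$-resonant for the same $E$ is bounded by the square of the single-block estimate, times a polynomial number of eigenvalues. Independence holds because the Bernoulli variables in disjoint blocks are independent. The hierarchical wells are deterministic, so the resonance structure is already known, and that should make the exceptional blocks geometrically controlled.
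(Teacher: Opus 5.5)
Your Steps 2 and 3 are the standard SULE-to-moment-bound argument, and they are fine \emph{once} SULE is available. The gap is Step 1: the whole proof rests on it, and it is neither supplied by the paper nor established by your sketch.

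There is no ``main multiscale proposition'' giving energy-uniform decay on all blocks $\Lambda_k(j)$ with $|j|\le d_{k+1}^C$. The only probabilistic input to the localization proofs is the fixed-energy Wegner bound $\P(\dist(\spec(H_\Lambda),E)<e^{-d_{k+1}^{1-\varepsilon}})\le d_{k+1}^{-q}$. This is proved only for the hierarchical blocks $\Lambda=\barLambda_k,\barLambda_k^s$, each a resonant core of size $\sim d_k$ inside a barrier annulus of width $\frac1{10}d_{k+1}$. It holds with merely polynomial probability and a small exponent; for $d\ge 4$ the proof gives $q=\frac{\sqrt{\alpha}-1}{2\alpha\log 5}<\frac1{10}$. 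Stretched-exponential bounds $e^{-d_k^{c}}$ are not available. Resonant blocks also get only a norm bound; off-diagonal decay is obtained on surrounding annuli via Lemma \ref{coupling lemma}.

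With so weak a bound, your two-block argument does not close as formulated:
\begin{itemize}
\item A union over the $\sim d_{k+1}^{Cd}$ blocks with $|j|\le d_{k+1}^C$ swamps $d_{k+1}^{-q}$.
\item So does a union over a polynomial number of eigenvalues of the conditioned block (e.g.\ $\#\barLambda_k\sim d_{k+1}^d$).
\item At an eigenvalue of the first block you do not get the square of the single-block estimate. You get only (number of its eigenvalues in the window)$\,\times\,$(Wegner bound for the second block).
\end{itemize}
What makes it work here are two model-specific facts your sketch does not use:
\begin{itemize}
\item Blocks inside the barrier are deterministically non-resonant (Lemma \ref{non-resonant Green's function}), so only the boundedly many hierarchical well-blocks per level matter.
\item Lemma \ref{eigenvalue number} bounds the number of eigenvalues of $H_{\barLambda_k}$ in $[-\iota,4d+\beta+\iota]$ by $M_k\lesssim N^{k}$, so $N M_k d_{k+1}^{-q}$ is summable.
\end{itemize}
That is exactly Lemma \ref{separation of spectrum lemma}, but it separates only the central block $\barLambda_k$ from its satellites $\barLambda_k^s$. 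SULE with arbitrary centres $x_\psi$ would additionally need separation between satellites and between blocks of different sub-hierarchies. It would also need a decay argument organised around the hierarchy of $x_\psi$ rather than that of the origin. This is plausible by the same conditioning, but it is new work, and it is precisely what your proposal defers.

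The paper sidesteps localisation centres entirely. It sorts eigenfunctions by their value at the origin, setting $I_j=\{r:|\varphi_r(0)|\ge e^{-\frac12\gamma_0 d_{j+1}}\}$. It then argues as follows.
\begin{itemize}
\item If $r\in I_j$, then $\dist(E_r,\spec(H_{\barLambda_k}))\le\frac12 e^{-d_{k+1}^{1-\varepsilon}}$ for every $k\ge j$. Otherwise Lemma \ref{coupling lemma} on the $d_{k+1}$-neighbourhood of $\barLambda_k$, plus the Poisson formula, would give $|\varphi_r(0)|<e^{-\frac12\gamma_0 d_{k+1}}$.
\item The separation lemma already used for Theorems \ref{low dimension localization} and \ref{high dimension localization} then makes every satellite non-resonant at $E_r$ for $k\ge\max(j,K_2(\omega))$.
\item The localization argument then yields $|\varphi_r(x)|\lesssim_d e^{-\gamma_0|x|/500}$ for $|x|>2\max(d_{j+1},d_{K_2+1})$, with a constant uniform in $r$ because $|\varphi_r|\le 1$.
\item A Hilbert--Schmidt count gives $\#I_j\lesssim d_{\max(j,K_2)+1}^{d}$.
\item Summing over the shells $I_i\setminus I_{i-1}$, on which $|\varphi_r(0)|<e^{-\frac12\gamma_0 d_i}$, bounds $r^2(t)$ uniformly in $t$.
\end{itemize}
So the origin plays the role of the centre, and no probabilistic input beyond the central-versus-satellite separation is needed.
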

\begin{rmk}
Considering the delocalization estimate \eqref{MSD111} for the deterministic $H_0$ with a symmetric hierarchical potential,  this theorem  indicates  the instability of quantum tunneling  under   any non-zero  {\bf singular} random perturbations. 
\end{rmk}

\subsection{New ingredients of proof}

The proof follows the spirit of \cite{FS83, FMSS85}. However, in the present work, we must overcome the core challenge posed by the lack of a priori Wegner estimates when dealing with Bernoulli random potentials. The main novel contributions of this paper are as follows:

\begin{itemize}
  \item We establish Wegner estimates for the hierarchical ABM across different dimensions by leveraging distinct forms of transversality. For dimensions $d\le 3$, we employ existing unique continuation results in conjunction with the techniques developed in \cite{DS20}; for dimensions $d\ge 4$, we rely solely on the weaker transversality afforded by the so-called ``cone property'' (cf. Proposition \ref{cone property}). The proof for the case $d\ge 4$ necessitates increasing the height and width of the model’s potential barriers—a step that admits a natural interpretation from the perspective of quantum tunneling: taller and wider barriers compensate for the weaker transversality, thereby suppressing quantum tunneling effects.

  \item The proof for $d\ge 4$ draws substantial inspiration from the methodology in \cite{IM16,Imb21}. For our model, we refine and extend the ``iterative Schur complement argument'' introduced in those works. We devise a simpler and more effective approach to isolate the dominant transversal term in the decomposition (Lemma \ref{key decomposition lemma}), which enables us to eliminate the dependence of the barrier height on the coupling constant (see Section \ref{remove beta dependent}). Additionally, we invoke the approximate orthogonality of eigenvectors to achieve sharp control over the number of eigenvalues within a given interval.

  \item In standard multi-scale analysis, iterations are performed over Newtonian scales. A novel facet of our proof for the higher-dimensional Wegner estimate is the interpolation of a finer 
$a$-adic scale ($a\ge 5$) between consecutive Newtonian scales. On this $a$-adic scale, we prove that the eigenvalue count is strictly monotonic in the large deviation sense. We then revert to the Newtonian scale to derive the desired Wegner estimate. We anticipate that this scale-comparison strategy offers a fresh perspective on multi-scale analysis itself.

    \item For dimensions $d\le 3$, we apply the Sperner family lemma introduced by Bourgain (cf. Lemma 2.1 in \cite{Bou04}, later extended in \cite{DS20}) to bound the relevant probabilities. In contrast, for higher dimensions, we construct a so-called ``site-mixed'' martingale to control the tail probabilities in the Wegner estimate (see Section \ref{martingale section}). This martingale possesses a distinctive characteristic: its randomness stems not only from the potential values at individual sites but also from the selection of the sites themselves. Furthermore, the same martingale argument is utilized to prove a unique continuation result on lattices of arbitrary dimension (see Appendix \ref{Appendix UC}).

\end{itemize}

\subsection{Notations}

The  notations used in this paper can be collected as follows. 
\begin{itemize}
\item  For two nonnegative quantities $f$ and $g$, we write $f\lesssim g$, if there is an absolute constant
$C > 0$ such that $f\leq Cg$. If we want to emphasize that $C$ depends on some parameters
$a,b,\cdots$ independent of $f,g$, then we write $f\lesssim_{a,b,\cdots}g$. We denote $f\sim g$ if  both $f\lesssim g$ and $g\lesssim f$ (this should be distinguished with $x\sim y$ whenever  $x,y\in\Z^d$, which means $|x-y|_1=1$). In some cases, we denote  $f\ll g$  if there is some small  enough $c>0$ independent of $f,g$ so that $f\leq c g.$ 
\item For any $a,b\in\R$, denote $a\wedge b=\min\{a,b\}$ and $a\vee b=\max\{a,b\}$. 
  \item  Denote by $\#$ the cardinality of a set. In particular, when applied to a subset of the spectrum, for instance, $\#(\spec(H)\cap A)$, it means  the cardinality counting multiplicities. 
  \item Throughout this paper, the symbols $\dist(\cdot)$ and $\diam(\cdot, \cdot)$ denote the metric induced by $|\cdot|$ on $\Z^d$ or the standard Euclidean metric on $\R$, respectively, with the understanding that the context will prevent any ambiguity.
  \item Let $\langle \cdot, \cdot \rangle$ denote  the standard inner product on $\R^n$.
  \item For a subset $\Lambda \subset \mathbb{Z}^d$, let $R_{\Lambda}$ denote the restriction operator  on  $\ell^2(\Lambda)$. The restriction of $H$ to $\Lambda$ with Dirichlet boundary condition  is then given by  $H_{\Lambda} = R_{\Lambda} H R_{\Lambda}$. We also denote $V_{\Lambda}=R_{\Lambda}V R_{\Lambda}$; the notations $V_{\text{hi},\Lambda}$ and $V_{\text{r},\Lambda}$ are defined analogously.
  \item For a subset $\Lambda\subset \Z^d$, we define its boundaries to be  
\[\partial^{-}\Lambda =\{y\in \Lambda: \ \exists  x\notin \Lambda\ {\rm such\ that} \ x\sim y\},\]
\[\partial^{+}\Lambda =\{y \notin \Lambda: \ \exists  x\in \Lambda\ {\rm such\ that} \ x\sim y\}.\]
\item By $\bm 1_{A}(\cdot)$ we mean the characteristic  function on $A$. 
\end{itemize}

\subsection{Structure of the paper}

The paper  is organized  as follows. In Section \ref{section transversality}, we introduce  some basic facts on the  transversality analysis. The central part of the work, namely, a proof of  the Wegner estimate involving the Bernoulli random variables, is presented in Section \ref{section dim less 3} ($d \leq 3$) and Section \ref{section dim bigger 4} ($d \geq 4$). In Section \ref{section eliminate the energy}, we  will  eliminate  the energy parameters  from the (probabilistic) Wegner estimate,  leading to the proof  of our main localization results (Theorems \ref{low dimension localization}, \ref{high dimension localization}, and \ref{dynamical localization}).  Supplementary materials, including a probabilistic unique continuation result and some useful lemmas, are  provided in the Appendices.

\section{The transversality analysis}\label{section transversality}
This section presents some basic facts regarding transversality estimates, which play a crucial and fundamental role in the analysis of tight-binding models involving Bernoulli random potentials. In such problems, transversality typically manifests itself by ensuring that the corresponding eigenfunctions do not decay rapidly.

We first introduce the useful cone property with consideration to the propagation of solutions to the discrete Schrödinger equation induced by our hierarchical ABM. Next, we prove several unique continuation results across different spatial dimensions. Of particular importance is a probabilistic unique continuation result (cf. Theorem \ref{UC for d arbitrary}) on $\Z^d$ for $d\geq 4,$ whose proof is based on the cone property combined with the martingale argument.

Consider the  Schr\"odinger equation on $\mathbb{Z}^d$,
\begin{equation}\label{harmonic equation}
\Delta u = W u,
\end{equation}
where $\Delta$ is  defined in \eqref{discrete Laplacian} and $W \in \ell^\infty(\mathbb{Z}^d)$ is a bounded potential. Our aim is to achieve certain  lower bound on the decay rate of its solutions, at least on a portion of the lattice. 
Now,  recalling  \eqref{spectral inclusion} and $h>4d+1\geq 4d+\beta$, we  fix throughout this paper 
\begin{equation}\label{iota}
  \iota = \min\left\{\frac{1}{100} , \frac{h-4d-\beta}{2} \right\}.
\end{equation}
Then $h>4d+\beta+\iota$. We focus on solutions  of  the random equation
\begin{equation}\label{random harmonic equation}
H(\omega)u - \lambda u = \big(2d + V(\omega) - \lambda\big)u - \Delta u = 0,
\end{equation}
where $V(\omega)$ is given by \eqref{random hierarchical potential} and $\lambda \in [-\iota, 4d+\beta+\iota]$. Clearly, \eqref{random harmonic equation} is a special case of \eqref{harmonic equation} with
\[W=2d+V(\omega)-\lambda,\  \| W\|_{\infty}:=\sup_{x\in\Z^d}|W(x)|\leq 2d+h+\beta+ \iota .\]

\subsection{Cone property}
A simple yet important form of transversality is characterized by the "cone property." This property leverages a pigeonhole argument to derive a lower bound for solutions propagating within a specified cone.

In the following, we  let $\{\mathbf{e}_k\}_{k=1}^d$ denote the standard basis vectors of $\mathbb{Z}^d$, namely, for each $1\leq k\leq d$, $ ({\bm e}_k)(j)=\delta_{k,j}$.  For any $x_0 \in \mathbb{Z}^d$, any $1 \leq k \leq d$  and $\sigma \in \{1,-1\}$, define the cone with vertex $x_0$ and direction $\sigma \mathbf{e}_k$  to be   \[\mathfrak{C}(x_0,\sigma \bfe_k)=\{y\sim x_0+\sigma \bfe_k:\ y\neq x_0\}.\]
\begin{prop}[{\bf Cone property}]\label{cone property}
Let $u$ be a solution of \eqref{harmonic equation}. 
  Then we have 
  \begin{equation}\label{cone property inequality}
    \max_{y\in \mathfrak{C}(x_0,\sigma \bfe_k) } |u(y)|\geq \frac{1}{\|W\|_{\infty}+2d-1} |u(x_0)|.
  \end{equation}
\end{prop}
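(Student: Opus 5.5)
Evaluate the equation \eqref{harmonic equation} at the point $z=x_0+\sigma\bfe_k$ rather than at $x_0$. Writing it out gives
\[
\sum_{y\sim z} u(y) = W(z)\,u(z).
\]
The neighbors of $z$ are $x_0$ itself and the $2d-1$ other lattice points $y\sim z$ with $y\neq x_0$. These other points are exactly the cone $\mathfrak{C}(x_0,\sigma\bfe_k)$. So we can isolate $u(x_0)$:
\[
u(x_0) = W(z)\,u(z) - \sum_{y\in \mathfrak{C}(x_0,\sigma\bfe_k)} u(y).
\]
The plan is to bound the right-hand side by the cone maximum. The only subtlety is the term $u(z)$, since $z$ itself does not lie in the cone.

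We would handle $u(z)$ by noting that $z$ is not a neighbor of itself, so $z\notin\mathfrak{C}(x_0,\sigma\bfe_k)$ as the cone is defined. One option is to read the cone as also including $z$, but the stated constant $\|W\|_\infty+2d-1$ counts $\|W\|_\infty$ once and the $2d-1$ cone points once each. That suggests the intended bound is
\[
|u(x_0)| \le \|W\|_\infty |u(z)| + (2d-1)\max_{y\in\mathfrak{C}}|u(y)|.
\]
This gives the inequality directly if $|u(z)|$ is also at most the max over the relevant set. We will therefore take the maximum over $\mathfrak{C}\cup\{z\}$, or verify that the paper's definition intends $z$ to be included, for example through the intended reading $\{y: y\sim x_0+\sigma\bfe_k \text{ or } y = x_0+\sigma\bfe_k,\ y\ne x_0\}$.

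Setting $M=\max\{|u(y)|: y\in \mathfrak{C}\cup\{z\}\}$, the triangle inequality gives
\[
|u(x_0)|\le \|W\|_\infty M + (2d-1)M = (\|W\|_\infty+2d-1)\,M,
\]
which is \eqref{cone property inequality}. The main obstacle is this definitional point about whether $z$ belongs to the cone. If $z$ were genuinely excluded, the inequality could not hold without controlling $u(z)$. We would first check this case against the later uses of the proposition, which propagate lower bounds site by site along direction $\sigma\bfe_k$ and so naturally include the apex $z$. Apart from that, the argument is a single pigeonhole step on one lattice equation.
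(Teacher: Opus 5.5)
Your proof is correct and is the paper's own argument: evaluate \eqref{harmonic equation} at $x_0+\sigma\bfe_k$, isolate $u(x_0)$, and apply the triangle inequality. Your concern about the definition is also justified. The paper's proof tacitly bounds $|u(x_0+\sigma\bfe_k)|$ by the cone maximum, and its explicit listings (e.g.\ $\mathfrak{C}(0,\bfe_2)=\{(0,1),(1,1),(-1,1),(0,2)\}$ in the appendix) include the apex. Under the literal definition the statement fails: on $\mathbb{Z}$ with $W\equiv 1$, the solution $u(x)=\sin(\pi x/3+\pi/3)$ has $u(2)=0$ but $u(0)\neq 0$.
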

\begin{proof}[Proof of Proposition \ref{cone property}.]
Evaluating  at the site $x_0 + \sigma \mathbf{e}_k$ to the equation \eqref{harmonic equation}, we get 
  \[\sum_{ y\sim x_0+\sigma \bfe_k} u(y)=W(x_0+\sigma \bfe_k) \cdot u(x_0+\sigma \bfe_k).\]
  Therefore,
  \begin{align*}
    |u(x_0)| & \leq \sum_{y\sim x_0+\sigma \bfe_k \atop y\neq x_0}|u(y)|+\| W\|_{\infty} |u(x_0+\sigma \bfe_k)| \\
      &\leq \left(\|W\|_{\infty}+2d-1\right)\cdot \max_{y\in \mathfrak{C}(x_0,\sigma \bfe_k) } |u(y)|,
  \end{align*}
which implies \eqref{cone property inequality}.
\end{proof}

An immediate corollary of the cone property is the following lemma concerning solutions to the Dirichlet problem for \eqref{random harmonic equation} within a block. 

\begin{lem}\label{transversality lemma}
Let $B_1=Q_l(x)$ be a block  and let $B_2=Q_{L+\ell}(x)$ be a larger one   with   $L, \ell>0$. Consider a solution $u$ to the Dirichlet problem
    \begin{equation*}
        \begin{cases}
            H(\omega)u - \lambda u = 0 \ \text{in } B_2, \\
            u \equiv 0 \ \text{on } \partial^{+} B_2,
        \end{cases}  
    \end{equation*}
    where $\lambda \in [-\iota, 4d+\beta+\iota]$. Define the influence of $u$ at a boundary point $y \in \partial^+ B_2$ to be 
    \begin{equation}
        \mathcal{I}_u(y) := \left| \sum_{\substack{x \sim y \\ x \in B_2}} u(x) \right|.
    \end{equation}
    Then there exists a point $\bar{y} \in \partial^+ B_2$ such that
    \[
    \mathcal{I}_{u}(\bar{y}) \geq \gamma ^{L + \ell} \cdot \max_{n\in B_1} |u(n)|,
    \]
    where $\gamma = \dfrac{1}{4d + h+\beta }$.
\end{lem}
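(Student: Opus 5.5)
Let $n_0\in B_1$ attain $\max_{n\in B_1}|u(n)|$ and set $x_0:=n_0$. The plan is to apply the cone property (Proposition \ref{cone property}) repeatedly, always in one fixed coordinate direction, until the path reaches $\partial^+ B_2$. Since $u$ solves $H(\omega)u=\lambda u$ in $B_2$ and vanishes on $\partial^+B_2$, we may extend $u$ by zero outside $B_2$. Then the equation $\Delta u = Wu$ with $W=2d+V(\omega)-\lambda$ holds at every site of $B_2$, and $\|W\|_\infty\le 2d+h+\beta+\iota$. The proof of Proposition \ref{cone property} only evaluates the equation at the single point $x_0+\sigma\bfe_k$, so the estimate \eqref{cone property inequality} remains valid whenever $x_0+\sigma\bfe_k\in B_2$. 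The constant is $\|W\|_\infty+2d-1\le 4d+h+\beta+\iota-1\le 4d+h+\beta=\gamma^{-1}$, using $\iota\le 1/100$.

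Fix the direction $\bfe_1$. Starting from $x_0$, choose $x_1\in\mathfrak{C}(x_0,\bfe_1)$ maximizing $|u|$ over that cone, so that $|u(x_1)|\ge\gamma|u(x_0)|$, and iterate. Every point of $\mathfrak{C}(x_0,\bfe_1)$ is either $x_0+2\bfe_1$ or $x_0+\bfe_1\pm\bfe_j$ with $j\neq1$, so the first coordinate strictly increases, by at least $1$, at each step. The point $x_i$ used as the apex at each step satisfies $x_i\in B_2$ and $u(x_i)\neq0$: membership in $B_2$ follows because $u$ is zero outside $B_2$ and nonzero along the chain (provided $u\not\equiv0$ on $B_1$; otherwise the claim is trivial). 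We continue until the first index $m$ with $x_m+\bfe_1\notin B_2$. The first coordinate of $n_0$ is at most $x^{(1)}+\ell$ and the chain stays inside $B_2$, so $m\le L+\ell$ (up to the off-by-one conventions in $Q$), and therefore $|u(x_m)|\ge\gamma^{m}|u(n_0)|\ge\gamma^{L+\ell}\max_{B_1}|u|$.

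It remains to produce the boundary point. At the last step $x_m\in B_2$, $x_m+\bfe_1\notin B_2$, so $\bar y:=x_m+\bfe_1\in\partial^+B_2$. Its neighbours in $B_2$ could include points other than $x_m$ (namely $\bar y\pm\bfe_j$, $j\ne1$), so $\mathcal I_u(\bar y)$ is not simply $|u(x_m)|$. This is the main obstacle. To handle it, apply the cone argument one final time with apex $x_m$ but evaluate the equation at $\bar y$ itself. Since $u(\bar y)=0$, the identity reads $\sum_{x\sim\bar y}u(x)=0$, which only relates the neighbours and gives no direct lower bound. Instead I would use the following trick. Evaluating \eqref{harmonic equation} at $x_m$ yields a neighbour $z\sim x_m$ with $|u(z)|$ comparable to $|u(x_m)|$, and we then adjust the chain: choose the direction and stopping rule so that the terminal boundary point is a \emph{corner-free face point} where exactly one neighbour lies in $B_2$.

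Concretely, with $B_2$ a cube, a point $\bar y$ with $\bar y^{(1)}=x^{(1)}+L+\ell+1$ and $|\bar y^{(j)}-x^{(j)}|\le L+\ell$ for $j\ne1$ has neighbours $\bar y\pm\bfe_j$ outside $B_2$ in the $\bfe_1$-face sense only if they exit the cube. This is the delicate case analysis: one must steer the chain (using the freedom to choose among maximizers, or by absorbing the extra neighbours, which are at most $2d-2$ terms, into a worse constant still dominated by $\gamma$) so that $\mathcal I_u(\bar y)$ is bounded below by $|u(x_m)|$ minus the other contributions. If the clean steering fails, I would instead note that the face sums $\mathcal I_u(y)$ determine, via \eqref{harmonic equation} propagated inward from the face (a reverse cone step in which each interior value is a combination of outer ones with coefficients bounded by $\gamma^{-1}$), the values of $u$ on the adjacent layer. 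A pigeonhole over the finitely many face points then yields some $\bar y$ with $\mathcal I_u(\bar y)\ge\gamma^{L+\ell}\max_{B_1}|u|$, with the slack in $\gamma$ versus $\|W\|_\infty+2d-1$ absorbing the extra combinatorial factors.
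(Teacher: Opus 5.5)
Your chain construction is correct and is the same as the paper's. You extend $u$ by zero and apply the cone property at $x_i+\bfe_1$ as long as that site lies in $B_2$; nonvanishing of $u$ along the chain keeps each $x_i$ in $B_2$, and the first coordinate grows by $1$ or $2$ per step. You stop at the first $x_m$ with $x_m+\bfe_1\notin B_2$.

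The gap is the last step, which you never close. You assert that $\bar y=x_m+\bfe_1$ may have further neighbours $\bar y\pm\bfe_j$ ($j\neq 1$) in $B_2$, call this the main obstacle, and then only sketch remedies. There is no obstacle:
\begin{itemize}
\item $B_2=Q_{L+\ell}(x)$ is an $\ell^\infty$-cube, and $x_m\in B_2$ with $x_m+\bfe_1\notin B_2$. So $x_m^{(1)}=x^{(1)}+L+\ell$, hence $\bar y^{(1)}=x^{(1)}+L+\ell+1$.
\item Every neighbour of $\bar y$ other than $\bar y-\bfe_1=x_m$, namely $\bar y+\bfe_1$ and $\bar y\pm\bfe_j$ for $j\neq 1$, has first coordinate at least $x^{(1)}+L+\ell+1$. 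So it lies outside $B_2$.
\item Thus $x_m$ is the only neighbour of $\bar y$ in $B_2$, and $\mathcal{I}_u(\bar y)=|u(x_m)|\geq\gamma^{m}|u(n_0)|$. This is exactly how the paper finishes.
\end{itemize}
Your own sentence about points with $\bar y^{(1)}=x^{(1)}+L+\ell+1$ already contains this observation; those neighbours always leave the cube.

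Your fallback arguments would not have rescued the proof had the obstacle been real. A lower bound on $|u(x_m)|$ gives no lower bound on $\bigl|u(x_m)+\sum_j u(\bar y\pm\bfe_j)\bigr|$ without an upper bound on the extra terms, which may cancel it. So those terms cannot be ``absorbed into a worse constant''. The ``reverse cone step plus pigeonhole over face points'' is not specified enough to check.

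A smaller point concerns the step count. Bounding $m$ needs a lower bound on $n_0^{(1)}$, not the upper bound you quote, since $m\leq x_m^{(1)}-n_0^{(1)}$. With the fixed direction $\bfe_1$ and $n_0\in Q_\ell(x)$ you only get $m\leq L+2\ell$. To get $L+\ell$, move in direction $\sigma\bfe_1$ with $\sigma=+1$ if $n_0^{(1)}\geq x^{(1)}$ and $\sigma=-1$ otherwise; the same uniqueness argument then applies at $x_m+\sigma\bfe_1$. The paper is equally loose here. Its later application only needs the exponent $aL_{j-1}+L_0$ with $L_0=\diam(\Lambda_k')$, which the fixed-direction count already gives.
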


\begin{proof}[Proof of Lemma \ref{transversality lemma}.]
    Suppose that $|u|$ attains its maximum  (in $B_1$)  at some  $n_0 \in B_1$. We choose the coordinate direction $\mathbf{e}_d$ (the argument works for any direction). Applying Proposition \ref{cone property} with
\begin{equation}\label{cone property parameter}
      W=2d+V(\omega)-\lambda,\  \frac{1}{\|W\|_{\infty}+2d-1} \geq \frac{1}{4d+h+\beta+\iota-1}\geq \gamma,
\end{equation}
 we can construct a chain of sites $n_0, n_1, n_2, \cdots, n_s$ such that
 \begin{equation}\label{Chain 1}
  n_{j+1}\in \mathfrak{C}(n_j,\bfe_d)\cap B_2  \  {\rm for} \ 0\leq j\leq s-1 
\end{equation}
and 
\begin{equation}\label{Chain transversality}
  \ |u(n_j)|\geq \gamma^{j}|u(n_0)| \ {\rm for} \   0\leq j\leq s.
\end{equation}

  \begin{figure}[htbp]
    \centering

\tikzset{every picture/.style={line width=0.75pt}} 

\begin{tikzpicture}[x=0.75pt,y=0.75pt,yscale=-0.55,xscale=0.55]

\draw   (152,95) -- (522.71,95) -- (522.71,465.71) -- (152,465.71) -- cycle ;
\draw   (312.36,255.36) -- (362.36,255.36) -- (362.36,305.36) -- (312.36,305.36) -- cycle ;
\draw  [dash pattern={on 0.84pt off 2.51pt}]  (152,106) -- (337.36,291.36) ;
\draw  [dash pattern={on 0.84pt off 2.51pt}]  (522.71,106) -- (337.36,291.36) ;
\draw    (309.71,283.29) -- (155.71,283.29) ;
\draw [shift={(153.71,283.29)}, rotate = 360] [color={rgb, 255:red, 0; green, 0; blue, 0 }  ][line width=0.75]    (10.93,-3.29) .. controls (6.95,-1.4) and (3.31,-0.3) .. (0,0) .. controls (3.31,0.3) and (6.95,1.4) .. (10.93,3.29)   ;
\draw [shift={(311.71,283.29)}, rotate = 180] [color={rgb, 255:red, 0; green, 0; blue, 0 }  ][line width=0.75]    (10.93,-3.29) .. controls (6.95,-1.4) and (3.31,-0.3) .. (0,0) .. controls (3.31,0.3) and (6.95,1.4) .. (10.93,3.29)   ;
\draw    (312.71,314.29) -- (361.71,314.29) ;
\draw [shift={(361.71,314.29)}, rotate = 180] [color={rgb, 255:red, 0; green, 0; blue, 0 }  ][line width=0.75]    (0,5.59) -- (0,-5.59)(10.93,-3.29) .. controls (6.95,-1.4) and (3.31,-0.3) .. (0,0) .. controls (3.31,0.3) and (6.95,1.4) .. (10.93,3.29)   ;
\draw [shift={(312.71,314.29)}, rotate = 0] [color={rgb, 255:red, 0; green, 0; blue, 0 }  ][line width=0.75]    (0,5.59) -- (0,-5.59)(10.93,-3.29) .. controls (6.95,-1.4) and (3.31,-0.3) .. (0,0) .. controls (3.31,0.3) and (6.95,1.4) .. (10.93,3.29)   ;
\draw [color={rgb, 255:red, 208; green, 2; blue, 27 }  ,draw opacity=1 ][line width=1.5]    (337.36,291.36) -- (328.71,270.29) ;
\draw [shift={(328.71,270.29)}, rotate = 247.7] [color={rgb, 255:red, 208; green, 2; blue, 27 }  ,draw opacity=1 ][fill={rgb, 255:red, 208; green, 2; blue, 27 }  ,fill opacity=1 ][line width=1.5]      (0, 0) circle [x radius= 4.36, y radius= 4.36]   ;
\draw [shift={(337.36,291.36)}, rotate = 247.7] [color={rgb, 255:red, 208; green, 2; blue, 27 }  ,draw opacity=1 ][fill={rgb, 255:red, 208; green, 2; blue, 27 }  ,fill opacity=1 ][line width=1.5]      (0, 0) circle [x radius= 4.36, y radius= 4.36]   ;
\draw [color={rgb, 255:red, 208; green, 2; blue, 27 }  ,draw opacity=1 ][line width=1.5]    (336.71,250.29) -- (328.71,270.29) ;
\draw [shift={(328.71,270.29)}, rotate = 111.8] [color={rgb, 255:red, 208; green, 2; blue, 27 }  ,draw opacity=1 ][fill={rgb, 255:red, 208; green, 2; blue, 27 }  ,fill opacity=1 ][line width=1.5]      (0, 0) circle [x radius= 4.36, y radius= 4.36]   ;
\draw [shift={(336.71,250.29)}, rotate = 111.8] [color={rgb, 255:red, 208; green, 2; blue, 27 }  ,draw opacity=1 ][fill={rgb, 255:red, 208; green, 2; blue, 27 }  ,fill opacity=1 ][line width=1.5]      (0, 0) circle [x radius= 4.36, y radius= 4.36]   ;
\draw [color={rgb, 255:red, 208; green, 2; blue, 27 }  ,draw opacity=1 ][line width=1.5]    (336.71,250.29) -- (328.07,229.21) ;
\draw [shift={(328.07,229.21)}, rotate = 247.7] [color={rgb, 255:red, 208; green, 2; blue, 27 }  ,draw opacity=1 ][fill={rgb, 255:red, 208; green, 2; blue, 27 }  ,fill opacity=1 ][line width=1.5]      (0, 0) circle [x radius= 4.36, y radius= 4.36]   ;
\draw [shift={(336.71,250.29)}, rotate = 247.7] [color={rgb, 255:red, 208; green, 2; blue, 27 }  ,draw opacity=1 ][fill={rgb, 255:red, 208; green, 2; blue, 27 }  ,fill opacity=1 ][line width=1.5]      (0, 0) circle [x radius= 4.36, y radius= 4.36]   ;
\draw [color={rgb, 255:red, 208; green, 2; blue, 27 }  ,draw opacity=1 ][line width=1.5]    (327.71,209.29) -- (328.07,229.21) ;
\draw [shift={(328.07,229.21)}, rotate = 88.97] [color={rgb, 255:red, 208; green, 2; blue, 27 }  ,draw opacity=1 ][fill={rgb, 255:red, 208; green, 2; blue, 27 }  ,fill opacity=1 ][line width=1.5]      (0, 0) circle [x radius= 4.36, y radius= 4.36]   ;
\draw [shift={(327.71,209.29)}, rotate = 88.97] [color={rgb, 255:red, 208; green, 2; blue, 27 }  ,draw opacity=1 ][fill={rgb, 255:red, 208; green, 2; blue, 27 }  ,fill opacity=1 ][line width=1.5]      (0, 0) circle [x radius= 4.36, y radius= 4.36]   ;
\draw [color={rgb, 255:red, 208; green, 2; blue, 27 }  ,draw opacity=1 ][line width=1.5]    (335.71,189.29) -- (327.71,209.29) ;
\draw [shift={(335.71,189.29)}, rotate = 111.8] [color={rgb, 255:red, 208; green, 2; blue, 27 }  ,draw opacity=1 ][fill={rgb, 255:red, 208; green, 2; blue, 27 }  ,fill opacity=1 ][line width=1.5]      (0, 0) circle [x radius= 4.36, y radius= 4.36]   ;
\draw [color={rgb, 255:red, 208; green, 2; blue, 27 }  ,draw opacity=1 ][line width=1.5]    (335.36,169.36) -- (335.71,189.29) ;
\draw [shift={(335.36,169.36)}, rotate = 88.97] [color={rgb, 255:red, 208; green, 2; blue, 27 }  ,draw opacity=1 ][fill={rgb, 255:red, 208; green, 2; blue, 27 }  ,fill opacity=1 ][line width=1.5]      (0, 0) circle [x radius= 4.36, y radius= 4.36]   ;
\draw [color={rgb, 255:red, 208; green, 2; blue, 27 }  ,draw opacity=1 ][line width=1.5]    (343.36,149.36) -- (335.36,169.36) ;
\draw [shift={(343.36,149.36)}, rotate = 111.8] [color={rgb, 255:red, 208; green, 2; blue, 27 }  ,draw opacity=1 ][fill={rgb, 255:red, 208; green, 2; blue, 27 }  ,fill opacity=1 ][line width=1.5]      (0, 0) circle [x radius= 4.36, y radius= 4.36]   ;
\draw [color={rgb, 255:red, 208; green, 2; blue, 27 }  ,draw opacity=1 ][line width=1.5]    (351.36,129.36) -- (343.36,149.36) ;
\draw [shift={(351.36,129.36)}, rotate = 111.8] [color={rgb, 255:red, 208; green, 2; blue, 27 }  ,draw opacity=1 ][fill={rgb, 255:red, 208; green, 2; blue, 27 }  ,fill opacity=1 ][line width=1.5]      (0, 0) circle [x radius= 4.36, y radius= 4.36]   ;
\draw [color={rgb, 255:red, 208; green, 2; blue, 27 }  ,draw opacity=1 ][line width=1.5]    (351.36,129.36) -- (342.71,108.29) ;
\draw [shift={(342.71,108.29)}, rotate = 247.7] [color={rgb, 255:red, 208; green, 2; blue, 27 }  ,draw opacity=1 ][fill={rgb, 255:red, 208; green, 2; blue, 27 }  ,fill opacity=1 ][line width=1.5]      (0, 0) circle [x radius= 4.36, y radius= 4.36]   ;
\draw [shift={(351.36,129.36)}, rotate = 247.7] [color={rgb, 255:red, 208; green, 2; blue, 27 }  ,draw opacity=1 ][fill={rgb, 255:red, 208; green, 2; blue, 27 }  ,fill opacity=1 ][line width=1.5]      (0, 0) circle [x radius= 4.36, y radius= 4.36]   ;
\draw [color={rgb, 255:red, 0; green, 0; blue, 0 }  ,draw opacity=1 ][line width=0.75]  [dash pattern={on 0.84pt off 2.51pt}]  (342.71,82.29) -- (342.71,108.29) ;
\draw [shift={(342.71,82.29)}, rotate = 90] [color={rgb, 255:red, 0; green, 0; blue, 0 }  ,draw opacity=1 ][fill={rgb, 255:red, 0; green, 0; blue, 0 }  ,fill opacity=1 ][line width=0.75]      (0, 0) circle [x radius= 3.35, y radius= 3.35]   ;

\draw (373,282) node [anchor=north west][inner sep=0.75pt]   [align=left] {$B_1$};
\draw (540,463) node [anchor=north west][inner sep=0.75pt]   [align=left] {$B_2$};
\draw (226,261) node [anchor=north west][inner sep=0.75pt]   [align=left] {$L$};
\draw (335,320) node [anchor=north west][inner sep=0.75pt]   [align=left] {$\ell$};
\draw (354,73) node [anchor=north west][inner sep=0.75pt]   [align=left] {$\bar{y}$};
\draw (345.36,152.36) node [anchor=north west][inner sep=0.75pt]   [align=left] {\textcolor[rgb]{0.82,0.01,0.11}{$n_j$}};

\end{tikzpicture}
\caption{Visualization of the proof  of  Lemma \ref{transversality lemma}.}
  \end{figure}
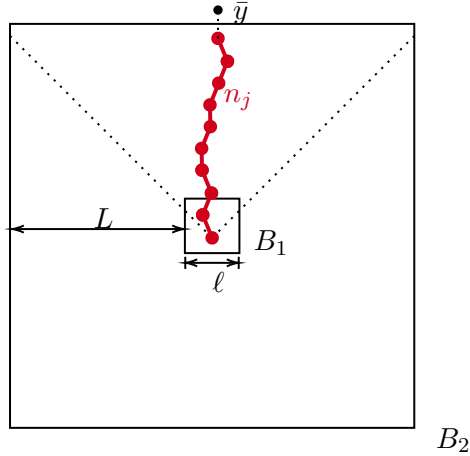

 Regarding \eqref{Chain 1},  if  $n_j \in \partial^- B_2$, then the  Dirichlet boundary condition ensures  that  $n_{j+1}$  remains inside $B_2$. Moreover, \eqref{Chain 1} implies
    \begin{equation}\label{Chain length}
        1 \leq \langle n_{j+1} - n_j, \mathbf{e}_d \rangle \leq 2,
    \end{equation}
    which gives $s \leq \langle n_s - n_0, \mathbf{e}_d \rangle \leq L + \ell$. Consequently,
    \[
    |u(n_s)| \geq \gamma^{L+\ell} |u(n_0)|.
    \]
    The inequality \eqref{Chain length} together with the Dirichlet boundary condition also guarantees that the terminal point $n_s$ lies on  $\partial^- B_2$ in the direction $\mathbf{e}_d$. Taking $\bar{y} = n_s + \mathbf{e}_d$, we obtain
    \[
    \mathcal{I}_u(\bar{y}) = |u(n_s)| \geq \gamma^{L+\ell} \cdot |u(n_0)|.
    \]

\end{proof}

The proof of Lemma \ref{transversality lemma} reveals that the cone property essentially furnishes a transversality estimate only along a one-dimensional subset of $\Z^d$
. This is somewhat elementary, as one typically expects transversality estimates to hold over {\bf a larger portion of the lattice}. We therefore proceed to present several unique continuation results, which yield stronger transversality estimates.

\subsection{The unique continuation results}
Unique continuation results were first employed by Bourgain \cite{BK05} to derive the transversality estimates required for proving Anderson localization in Anderson-Bernoulli models defined on $\mathbb{R}^d$. In the continuous setting, solutions to Equation \eqref{harmonic equation}—where the discrete operator is replaced by the standard Laplacian on $\R^d$—satisfy the following lower bound\[\sup_{|x-y|\leq 1}|u(y)|\geq c' \exp\{-c'|x|^{\frac{4}{3}}\log|x|\}\]
for all $x \in \mathbb{R}^d$ with $|x| > R$, where the constants $c', R > 0$ depend only on $\|W\|_{\infty}$ and   $d$ (cf. \cite[Lemma 3.10]{BK05}). In other words, transversality  estimate holds ``almost everywhere''  on $\mathbb{R}^d$.

However, in the discrete setting (on $\Z^d$), several examples (e.g., \cite[Theorem 2]{Jit07} and \cite[Remark 1.3]{BLMS}) demonstrate that deterministic transversality estimates cannot be expected to hold over a full-dimensional subset of $\Z^d$. The existing results in this framework can be summarized as follows:

For the one-dimensional case on $\Z$, the unique continuation property is deterministic and follows directly from the cone property.

\begin{thm}[{\bf A deterministic unique continuation result  on  $\Z$}]\label{UC for d=1}
Let $u$ satisfy \eqref{random harmonic equation} in an interval $Q_L(j) \subset \mathbb{Z}$. Then there exists a subset $T \subset Q_L(j)$ such that
\begin{equation}\label{transversality set, d=1}
  \# T\geq \frac{1}{2}\cdot \# Q_L(j) 
\end{equation} 
  with 
  \begin{equation}\label{transversality, d=1}
      |u(n)| \geq \gamma ^L \cdot |u(j)|\ {\rm for}\  \forall n\in T.
  \end{equation}
\end{thm}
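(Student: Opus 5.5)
In dimension one the cone $\mathfrak{C}(x_0,\sigma \bfe_1)$ is the single point $x_0+2\sigma$. Proposition \ref{cone property} therefore reads
\[
|u(x_0+2\sigma)|\geq \gamma\,|u(x_0)| \quad \text{for } \sigma\in\{1,-1\},
\]
whenever the equation holds at $x_0+\sigma$. Here I use \eqref{cone property parameter}, which gives $1/(\|W\|_\infty+1)\geq\gamma$. The plan is to iterate this bound along the two arithmetic progressions of step $2$ that start at $j$, one going right and one going left. This is exactly the one-dimensional chain of Lemma \ref{transversality lemma}; in $d=1$ the chain is forced, so it covers a fixed positive proportion of the interval.

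\textbf{Step 1: propagation along even offsets.} Set $T_0=\{j+2m:\ |2m|\leq L\}\subset Q_L(j)$. For $m\geq 1$, apply the cone property at $x_0=j+2(m-1)$ with $\sigma=1$; the equation is needed at $j+2m-1\in Q_L(j)$. Induction then gives
\[
|u(j+2m)|\geq \gamma^{m}|u(j)|,
\]
and similarly on the left with $\sigma=-1$. Since $m\leq L/2\leq L$ and $\gamma<1$, every $n\in T_0$ satisfies $|u(n)|\geq\gamma^{L}|u(j)|$, which is \eqref{transversality, d=1} on $T_0$.

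\textbf{Step 2: counting.} Write $\#Q_L(j)=2\lfloor L\rfloor+1$. The set $T_0$ consists of the points $j+2m$ with $|m|\leq \lfloor L/2\rfloor$, so $\#T_0=2\lfloor L/2\rfloor+1$. When $\lfloor L\rfloor$ is even, $\#T_0=\lfloor L\rfloor+1\geq \tfrac12(2\lfloor L\rfloor+1)$, and \eqref{transversality set, d=1} holds. When $\lfloor L\rfloor$ is odd, $T_0$ falls short by exactly $\tfrac12$. This parity case is the main obstacle.

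To handle the odd case I would add one odd site, for instance $j+1$ or $j-1$, whichever has the larger $|u|$. I would try to bound it below using the equation at $j$:
\[
u(j+1)+u(j-1)=W(j)\,u(j).
\]
When $W(j)\neq 0$, this gives
\[
\max\bigl(|u(j\pm1)|\bigr)\geq \tfrac12|W(j)|\,|u(j)|.
\]
This is useful only if $|W(j)|$ is bounded below, which can fail when $\lambda$ is close to $2+V(j)$. In that case I would fall back on propagating the odd chain from the endpoint $j\pm(2\lfloor L/2\rfloor)$. Alternatively, the intended reading may be that $u$ solves the equation on a slightly larger set, so the chain may use sites at distance $L$ and the factor $\gamma^{L}$ has slack to absorb one extra step, e.g.\ the endpoint site $j\pm \lfloor L\rfloor$. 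I would check which convention the paper uses for "$u$ satisfies the equation in $Q_L(j)$" and choose the extra point accordingly. Either way, the core of the argument is Step 1, and the constant $\tfrac12$ comes from the density of the even sublattice.
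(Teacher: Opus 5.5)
Your Step 1 rests on a false inequality. Read literally, the definition gives $\mathfrak{C}(x_0,\sigma\bfe_1)=\{x_0+2\sigma\}$. However, the proof of Proposition~\ref{cone property} only shows $|u(x_0)|\le |u(x_0+2\sigma)|+\|W\|_\infty|u(x_0+\sigma)|$, so the cone must also contain $x_0+\sigma\bfe_k$. That is how the paper uses it later: the appendix has $(0,1)\in\mathfrak{C}(0,\bfe_2)$, and Lemma~\ref{transversality lemma} uses steps of length $1$ or $2$. In $d=1$ the correct statement is $\max\{|u(x_0+\sigma)|,|u(x_0+2\sigma)|\}\ge\gamma|u(x_0)|$, and $|u(x_0+2\sigma)|$ alone admits no lower bound. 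A one-dimensional solution is fixed by two arbitrary consecutive values. Take $u(j)=1$ and $u(j+1)=1/W(j+1)$, say with $j+1$ a barrier site where $W(j+1)>0$. Then $u(j+2)=W(j+1)u(j+1)-u(j)=0$, so your even-offset set $T_0$ need not be transversal.

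The fix, which is what the paper means by ``follows directly from the cone property'', is to make the chain adaptive as in Lemma~\ref{transversality lemma}. Set $n_0=j$, choose $n_{i+1}\in\{n_i+\sigma,n_i+2\sigma\}$ with $|u(n_{i+1})|\ge\gamma|u(n_i)|$, and continue while $n_i+\sigma\in Q_L(j)$. Since every step has length $1$ or $2$, you get $i\le|n_i-j|\le L$, hence $|u(n_i)|\ge\gamma^{L}|u(j)|$. Moreover the chain cannot jump over two consecutive sites, so it meets each pair $\{j+\sigma(2t-1),\,j+2\sigma t\}$ with $2t\le L$. Using both directions and $j$ itself gives $\#T\ge 2\lfloor L/2\rfloor+1$. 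This is the same count as your $T_0$, but the transversal set may switch parity along the way, and your counterexample shows this is unavoidable.

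On the parity issue, neither of your fallbacks can work when $\lfloor L\rfloor$ is odd, because the constant $\tfrac12$ is then false. Suppose $W=0$ at the sites $j+2m$, for example well sites with $\omega=0$ and $\lambda=2$. Then $u(j+2m)=(-1)^m$, $u(j+2m+1)=0$ solves the equation. The only sites of $Q_L(j)$ with $|u|\ge\gamma^{L}|u(j)|$ are the $\lfloor L\rfloor$ sites $j+2m$, fewer than $\tfrac12\#Q_L(j)=\lfloor L\rfloor+\tfrac12$. Already $L=1$ with $W(j)=0$ is a counterexample, so neither the lower bound via $W(j)$ nor enlarging the domain of the equation can help. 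What is provable is $\#T\ge 2\lfloor L/2\rfloor+1\ge\lfloor L\rfloor$. That is all the paper needs, since the proof of Claim~\ref{Claim UC} only uses $\#T\gtrsim L$.
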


For  the  case of $\Z^2$, a unique continuation result   was first established for the harmonic equation $\Delta u = 0$ (i.e., $W \equiv 0$) in \cite{BLMS}. Building on this work, Ding and Smart adapted the methodology of \cite{BLMS} to prove a probabilistic unique continuation result on $\mathbb{Z}^2$ (cf.  \eqref{transversality set, d=2}), which enabled them to establish localization for the ABM on $\Z^2$ \cite{DS20}. Later, Li \cite{Li22} refined the result of \cite{DS20} by leveraging the Bourgain-Tzafriri restricted invertibility theory. A minor adaptation of the proofs from \cite{Li22, DS20} to our hierarchical ABM framework yields the following result:

\begin{thm}[{\bf A probabilistic unique continuation result on $\Z^2$}]\label{UC for d=2}
For every small $\varepsilon>0$, there exists some  $C_1 \gg 1$ (depending  only on $h,\varepsilon,\beta$) such that if 
\begin{itemize}
  \item [(1)] $Q \subset\Z^2$ a block of  length $L> C_1$,
  \item [(2)] $Q' \subset Q$ a block  of length $\ell <\varepsilon^{2} L$,
  \item [(3)] $\lambda_0\in [-1,9+\beta]$,
  \item [(4)] $\mcE_{uc}^{\varepsilon,L_0}(Q,Q',\lambda_0)$ denotes  the event that
    \begin{equation*}
          \begin{cases}
            |\lambda-\lambda_0|\leq \exp\{-C_1 \cdot (L \log L)\} \\
            H(\omega)u-\lambda u=0 \ {\rm in} \ Q\\
          \end{cases}      
    \end{equation*}    
    implies there exists a subset $T\subset Q\setminus Q'$, such that 
  \begin{equation}\label{transversality set, d=2}
        \# T\geq \varepsilon^3  \cdot L^2
  \end{equation}
  and 
  \begin{equation}\label{transversality, d=2}
      |u(n)| \geq \exp\{-C_1 \cdot (L\log L) \} \cdot \| u\|_{\ell^{\infty}(Q')} \ {\rm for} \ \forall n\in T,
  \end{equation}
\end{itemize}
then we have $\P(\mcE_{uc}^{\varepsilon,L_0}(Q,Q',\lambda_0)|V_{\text{r},Q'})\geq 1 -\exp\{ -\varepsilon L^{\frac{2}{3}}\}$. Here $\P(\cdot|\cdot)$ means  the condition probability on $V_{\text{r},Q'}$ with $V_{\text{r},Q'}$  denoting the random variables restricted in $Q'$ and $\P$ being  only about random variables in $Q$.
\end{thm}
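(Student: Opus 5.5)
Although Theorem \ref{UC for d=2} is stated right after Theorem \ref{UC for d=1}, I will treat it as a minor adaptation of the Ding--Smart probabilistic unique continuation theorem \cite[Theorem 1.6]{DS20}, in the refined form of \cite{Li22}. The plan is to check that every ingredient of that proof uses only properties our potential still has. The proof in \cite{DS20} needs three things. First, an a priori bound $\|W\|_\infty\lesssim 1$ with $W=4+V(\omega)-\lambda$. Second, the potential takes at least two values with a fixed gap, namely $\beta$, on a random set of sites. Third, the sites whose values are random are i.i.d. Bernoulli and independent of everything we condition on. In our setting $V=V_{\rm hi}+\beta V_{\rm r}$, where $V_{\rm hi}$ is deterministic, so $W(x)\in\{4+V_{\rm hi}(x)-\lambda,\ 4+V_{\rm hi}(x)+\beta-\lambda\}$. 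The deterministic part enters only through the bound $\|W\|_\infty\le 4+h+\beta+\iota$. This bound fixes the constant $\gamma$ and therefore forces $C_1$ to depend on $h$ and $\beta$.

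I would carry out the argument in the following steps.

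\textbf{Step 1: deterministic reduction.} Following \cite{BLMS, DS20}, I would show that if $u$ solves $\Delta u=Wu$ in a block $Q$ of side $L$, then $u$ is determined by its values on a thin set: the two bottom rows together with the diagonal ``tilted'' boundary. This is the two-dimensional light-cone propagation, whose one-step version is the cone property (Proposition \ref{cone property}). Propagation across $Q$ costs at most a factor $\exp\{CL\log L\}$, which is the source of the $L\log L$ in \eqref{transversality, d=2}.

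\textbf{Step 2: probabilistic core.} I would reproduce the Ding--Smart argument. Suppose $|u|<e^{-C_1L\log L}\|u\|_{\ell^\infty(Q')}$ on all of $Q\setminus Q'$ except a set of size $\varepsilon^3L^2$. Then, on many disjoint diagonal segments (``tilted squares''), $u$ is nearly zero on a large fraction of the sites. The linear constraints coming from the equation then force the Bernoulli values on those segments to lie in a Sperner-type family, or, in Li's version, force a restricted-invertibility lower bound to fail. Bourgain's Sperner lemma bounds the probability of this for each segment. Multiplying over about $L^{2/3}$ nearly independent scales or segments gives the bound $\exp\{-\varepsilon L^{2/3}\}$.

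\textbf{Step 3: conditioning and energy.} Because $Q'$ has side $\ell<\varepsilon^2L$, the sites inside $Q'$ can be placed in the conditioned $\sigma$-algebra; only the variables in $Q\setminus Q'$ are used in Step 2. This gives the conditional statement. The window $|\lambda-\lambda_0|\le e^{-C_1L\log L}$ is handled by perturbation. Replacing $\lambda$ by $\lambda_0$ changes $W$ by an amount far smaller than the transversality scale, so the event can be defined at the single energy $\lambda_0$ and stays stable over the whole window.

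I expect the main obstacle to be Step 2. Specifically, one has to verify that the deterministic background $V_{\rm hi}$, which varies on the wells $\mathcal W$ and the barriers $\mathcal B$, does not spoil the Sperner/anticoncentration step. In \cite{DS20} the two-valuedness enters only through the gap between the two possible values at a site, and here that gap is exactly $\beta$ at every site regardless of $V_{\rm hi}(x)$. So I expect the argument to go through with constants depending on $\beta$ and on $h$ (through $\|W\|_\infty$), which matches the claimed dependence of $C_1$.
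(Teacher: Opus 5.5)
Your overall framework matches the paper's. The paper also treats the statement as an adaptation of Chapter 5 of \cite{Li22}, and makes two changes to it. First, $\|W\|_\infty$ only changes the propagation constant: Li's bound (5.13) becomes $(30+h+2\beta)^{(t-2)\vee 0}$. Second, the deterministic $V_{\rm hi}$ only translates the affine subspace in Li's anticoncentration estimate (Lemma 1.6), whose bound depends only on the dimension of that subspace. Your ``gap $\beta$'' remark amounts to this second change. Note, though, that the probabilistic input of the unique continuation theorem is this affine-subspace estimate, not Bourgain's Sperner lemma, which \cite{DS20} use only for the Wegner estimate.

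\textbf{The gap: $\varepsilon$-regularity of $Q'$.} The genuine gap is in your Step 3. The Ding--Smart/Li result conditions on the potential on a frozen set $\Theta$, and it applies only when $\Theta$ is $\varepsilon$-regular in $Q$ (Condition 2 of \cite[Lemma 3.5]{Li22}). This means that any family of disjoint tilted squares in which $\Theta$ fails to be $\varepsilon$-sparse has total area at most $\varepsilon\,\#Q$. Conditioning on $V_{\text{r},Q'}$ means taking $\Theta=Q'$, so this hypothesis is exactly what has to be verified. It is also the only place where the assumption $\ell<\varepsilon^2L$ is used.

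Saying that only the variables in $Q\setminus Q'$ are used does not address this. A solid frozen block does fail to be $\varepsilon$-sparse in small tilted squares around it, so you must show that all such bad squares together occupy only an $\varepsilon$-fraction of $Q$. The paper's check is short:
\begin{itemize}
\item A tilted square $R$ in which $Q'$ is not $\varepsilon$-sparse must meet $Q'$ and have side less than $\varepsilon^{-1}\ell$.
\item Hence $R$ lies in the $10\varepsilon^{-1}\ell$-neighborhood of $Q'$.
\item Disjoint such squares therefore have total area at most $(20\varepsilon^{-1}+1)^2\ell^2\le\varepsilon L^2$, once $\ell\le\varepsilon^2L$ and $\varepsilon$ is small.
\end{itemize}

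\textbf{The energy window.} You should drop your perturbative treatment of the energy window. The cited theorem already quantifies over all $\lambda$ with $|\lambda-\lambda_0|\le e^{-C_1L\log L}$, so nothing extra is needed. The argument as written also does not work, for two reasons. The window has the same size as the threshold $e^{-C_1L\log L}$, not a far smaller one. And moving $\lambda$ changes $u$ by an error that can be amplified by $e^{CL}$ across $Q$. That error is controlled by the data on the bottom rows, not by $\|u\|_{\ell^\infty(Q')}$, so it need not fall below $e^{-C_1L\log L}\|u\|_{\ell^\infty(Q')}$.
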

\begin{proof}[Proof of Theorem \ref{UC for d=2}]
The proof of Theorem \ref{UC for d=2}  follows from the argument in Chapter 5 of \cite{Li22}, with two key adaptations. First, the upper bound in \cite[(5.13)]{Li22} is replaced by $(30 + h + 2\beta)^{(t-2)\vee 0}$. Second, our potential $V(\omega)$ is defined by \eqref{random hierarchical potential}, and consequently the distance appearing in \cite[(5.76)]{Li22} becomes
\begin{align*}
&\ \ \ \dist(P_{\mcS_0}(\vec{V}_{\text{hi},t'}+\beta \vec{V}_{\text{r},t'}),(M_{S_0}A_{S_0})^{-1} (\Gamma-v^*|_{S_0}))\\
&=\dist(P_{\mcS_0}(\beta \vec{V}_{\text{r},t'}),(M_{S_0}A_{S_0})^{-1} (\Gamma-v^*|_{S_0})-P_{\mcS_0}(\vec{V}_{\text{hi},t'})). 
\end{align*}
Geometrically, this means that the affine space $(M_{S_0}A_{S_0})^{-1} (\Gamma-v^*|_{S_0})$ is translated by the deterministic vector $P_{\mathcal{S}0}(\vec{V}_{\text{hi},t'})$. Since the probabilistic estimate in \cite[Lemma 1.6]{Li22} depends only on the dimension of this affine space, it remains valid.

According to the above discussion, Theorem \ref{UC for d=2} follows immediately once we verify that $Q'$ is $\varepsilon$-regular in $Q$ (see Condition 2 of \cite[Lemma 3.5]{Li22}). For the definitions  of  ``$\varepsilon$-sparse" and ``$\varepsilon$-regular", we refer to Definitions 3.1–3.4 in \cite{DS20}. 

Suppose that $Q'$ is not $\varepsilon$-sparse in some tilted square $R$. By definition, this implies that the side length of $R$ must be less than $\varepsilon^{-1}\ell$ and that $R$ intersects with  $Q'$. Consequently, $R$ must be contained in the $10\varepsilon^{-1}\ell$-neighborhood of $Q'$, which is denoted  by $\widetilde{Q}'$.

Now, let $R_1, \cdots, R_k \subset Q$ be a collection of disjoint tilted squares,  in each of which $Q'$ is not $\varepsilon$-sparse. Since they are disjoint and are all  contained in $\widetilde{Q}'$, we have

\[\sum_{s}\# R_s \leq \# \widetilde{Q}' \leq (20 \varepsilon^{-1} +1)^2 \ell^2 .\]
Using the assumption  $\ell \leq \varepsilon^2 L$ and taking $\varepsilon > 0$ to be sufficiently small, we obtain
\[(20 \varepsilon^{-1} +1)^2 \ell^2 \leq \varepsilon L^2 =\varepsilon\cdot  \# Q.\]
Thus, $Q'$ is $\varepsilon$-sparse in $Q$ and we finish the proof.
\end{proof}

For the case  of $\Z^3$, Li and Zhang \cite{LZ22} used the cone property and the geometric structure of $\mathbb{Z}^3$ to prove the following unique continuation result.
\begin{thm}[{\bf A deterministic unique continuation result  on  $\Z^3$}, cf. Theorem 1.3 in \cite{LZ22}]\label{UC for d=3} There exists a constant $p>\frac{3}{2}$ such that the following holds. Let $u$ satisfy \eqref{random harmonic equation} in a  block   $Q_L(j) \subset \mathbb{Z}$. Then there is some  $C_1>0$ (depending only  on $h,\beta$) such that,  for all $L\geq C_1$, there exists a subset $T \subset Q_L(j)$ satisfying \begin{equation}\label{transversality set, d=3}
  \# T\geq L^p
\end{equation} 
  so that 
  \begin{equation}\label{transversality, d=3}
      |u(n)| \geq \exp\{-C_1 L\} \cdot |u(j)|\ {\rm for}\ \forall  n\in T.
  \end{equation}
\end{thm}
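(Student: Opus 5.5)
The plan is to deduce the statement directly from \cite[Theorem 1.3]{LZ22} rather than re-derive it. That theorem is deterministic and concerns solutions of $\Delta u = Wu$ on $\Z^3$ for a general bounded potential $W$. Its constants ($p>\frac32$ and the exponential rate) depend only on an upper bound for $\|W\|_\infty$. Equation \eqref{random harmonic equation} is exactly \eqref{harmonic equation} with $W = 2d + V(\omega) - \lambda$ and $d=3$. For every $\omega$ and every $\lambda\in[-\iota,4d+\beta+\iota]$ we have
\[
\|W\|_\infty\le 2d+h+\beta+\iota \le 7+h+\beta .
\]
So the first step is to record this uniform bound; it holds independently of $\omega$ and of the hierarchical structure of $V_{\rm hi}$. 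Neither the randomness nor the Bernoulli distribution plays any role here.

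The second step is to check that the argument of \cite{LZ22} really uses $W$ only through $\|W\|_\infty$. I would re-read their proof along the following lines.
\begin{itemize}
\item Their basic tool is the cone property, Proposition \ref{cone property}. It gives a factor $(\|W\|_\infty+2d-1)^{-1}\ge\gamma$ per step.
\item Iterating it, as in Lemma \ref{transversality lemma}, yields chains from $j$ along which $|u|\ge\gamma^{L}|u(j)|$.
\item They combine chains in several coordinate and diagonal directions, using the geometry of $\Z^3$: planes transverse to a direction, and the fact that the cone in direction $\sigma\bfe_k$ spreads into a two-dimensional cross-section. This produces many disjoint chains and hence a set $T$ with $\#T\ge L^{p}$, $p>\frac32$.
\end{itemize}
Each step consumes only the pointwise relation $\sum_{y\sim x}u(y)=W(x)u(x)$ and the bound $|W(x)|\le\|W\|_\infty$. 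Hence the decay rate $\gamma^{CL}$ can be rewritten as $\exp\{-C_1L\}$ with $C_1 = C\log(4d+h+\beta)$, which depends only on $h,\beta$.

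The final step is bookkeeping. I would choose $C_1$ large enough to absorb both the exponential rate and the threshold $L\ge C_1$ needed in \cite{LZ22} for the counting bound $\#T\ge L^p$ to beat the lower-order terms. I would also note that the block $Q_L(j)$ there is centered, matching our $Q_L(j)$. (The ``$\subset\Z$'' in the statement is a typo for $\Z^3$.)

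The only potential obstacle is the second step: making sure that no part of \cite{LZ22} uses a specific structure of $W$, such as $W\equiv0$ or a particular sign, in the combinatorial counting that yields $p>\frac32$. If some lemma there were stated only for a normalized potential, I would first try rescaling. Failing that, I would redo that lemma with Proposition \ref{cone property} in place of its specialized version, which only changes constants.
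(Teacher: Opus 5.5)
Your proposal matches the paper, which gives no independent proof but imports \cite[Theorem 1.3]{LZ22}; the only observation needed is that $W=2d+V(\omega)-\lambda$ satisfies $\|W\|_\infty\le 2d+h+\beta+\iota$ uniformly in $\omega$ and $\lambda$, so the resulting constants depend only on $h,\beta$. Your heuristic account of how \cite{LZ22} reach $p>\frac32$ (``many disjoint chains'' from the cone property) is not accurate: the paper notes after Lemma~\ref{transversality lemma} that the cone property only gives transversality on one-dimensional sets, and the gain to $p>\frac32$ comes from the finer $\Z^3$-geometric argument in \cite{LZ22}. Your deduction does not rely on that sketch, so this does not affect its validity.
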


While the aforementioned unique continuation results have been established for $d=1,2,3$, they do not ensure that the transversal set $T$ in \eqref{transversality set, d=3} is of full dimension when $d=3$. In the present work, we prove that for $d \geq 3$, transversality holds on a full-dimensional subset with high probability, where the probability in question depends on the initial data.

\begin{thm}\label{UC for d arbitrary}
 Let $d \geq 2$. Consider a solution $u$ on $\mathbb{Z}^d$ to the equation
    \[
    H(\omega) u - \lambda u = 0,
    \]
    subject to the boundary condition $u \equiv u_0$ on the union of two hyperplanes $\mathcal{P}_0 \cup \mathcal{P}_1$, where $\mathcal{P}_k = \{ n \in \mathbb{Z}^d: \ n_d = k \}$.
    
Then there exist  some $\varepsilon=\varepsilon(d)>0$ and  some $C_1 = C_1(h, \beta, d) \gg 1$ with the following property: For any $L \geq C_1 $, we define  $\mathcal{E}_{\mathrm{uc}}(L , u_0, \lambda)$ to be the event  that there exists a subset $T \subset Q_L(0)$ satisfying
   \begin{equation}\label{transversality set, d arbitrary}
        \# T\geq \varepsilon  \cdot L^d
  \end{equation}
  so that 
  \begin{equation}\label{transversality, d arbitrary}
      |u(n)| \geq \exp\{-C_1 L \} \cdot |u(0)| \ {\rm for}\  \forall  n\in T.
  \end{equation}
  Then we have 
    \[
    \mathbb{P}\big( \mathcal{E}_{\mathrm{uc}}(L, u_0, \lambda) \big) \geq 1 - \exp\{-\varepsilon  L\}, 
    \]
    where $\P$ only depends  on random variables in $Q_L(0)$.
  \end{thm}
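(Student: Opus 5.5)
Since $u$ is prescribed on the two hyperplanes $\mathcal P_0\cup\mathcal P_1$, the equation determines $u$ layer by layer in the $\mathbf e_d$ direction:
\[
u(n+\mathbf e_d)=\big(2d+V(n)-\lambda\big)u(n)-u(n-\mathbf e_d)-\sum_{k<d,\ \sigma=\pm1}u(n+\sigma\mathbf e_k).
\]
Hence $u$ on $\mathcal P_{t+1}$ is a function of $u_0$ and of the random variables $\omega(n)$ with $n_d\le t$. Moreover, $\omega(n)$ enters $u(n+\mathbf e_d)$ \emph{affinely}, with coefficient $\beta u(n)$. So the plan is to exploit the layer filtration $\mathcal F_t=\sigma(\omega(n): n_d\le t)$ and build a martingale along this one-dimensional (cone-type) propagation, as in Lemma \ref{transversality lemma}. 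We first normalize $|u(0)|=1$; if $0\notin\mathcal P_0\cup\mathcal P_1$ we move the origin, or simply work with the layer containing $0$.

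\textbf{Step 1 (deterministic chain).} Using Proposition \ref{cone property} exactly as in the proof of Lemma \ref{transversality lemma}, we get from $0$ a chain $n_0=0,n_1,\dots$ moving upward in $\mathbf e_d$ with $|u(n_j)|\ge\gamma^{j}$. This gives, in each layer $\mathcal P_t$ with $0\le t\le L$, a site $x_t\in Q_L(0)$ with $|u(x_t)|\ge \gamma^{2t}$. The chain is $\mathcal F_{t-1}$-measurable. Translating the starting point sideways, and starting from many sites, is how we will try to get many such ``good'' sites per layer.

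\textbf{Step 2 (one-step anti-concentration).} Fix a layer $t$ and a site $x$ in $\mathcal P_t$ with $|u(x)|\ge\delta$, where $x$ is $\mathcal F_{t-1}$-measurable. Then
\[
u(x+\mathbf e_d)=A+\beta\,\omega(x)\,u(x),
\]
with $A$ being $\mathcal F_{t}$-measurable apart from $\omega(x)$. Since $\omega(x)$ is a fair coin, with conditional probability at least $1/2$ we get $|u(x+\mathbf e_d)|\ge \beta\delta/2$. Any site $y$ in $\mathcal P_t$ with $|u(y)|\ge\delta$, and whose neighbours' values are fixed, yields such an independent coin. The randomness of the selection of $x$ (it depends on earlier layers) is what makes the process ``site-mixed''. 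Set $X_t$ to be the indicator that the selected good site in layer $t$ propagates a value $\ge \beta\delta/2$ upward. Then
\[
M_t=\sum_{s\le t}\Big(X_s-\mathbb E[X_s\,|\,\mathcal F_{s-1}]\Big)
\]
is a martingale with bounded increments, and each conditional expectation is at least $1/2$.

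\textbf{Step 3 (counting and Azuma).} To get $\varepsilon L^d$ sites rather than $L$, we run Step 2 in parallel on a grid. We partition $Q_L(0)$ into $\sim L^{d-1}$ vertical columns, spaced by $3$ so that the coin sites are distinct, and seed each column by a horizontal spreading step. Horizontal spreading uses the cone in direction $\pm\mathbf e_k$ for $k<d$ and costs a factor $\gamma$ per unit length, so the values stay $\ge e^{-C_1L}$ throughout. We order all the coin flips lexicographically (layer, then column) into a single filtration, giving a martingale of length $\sim L^d$. Azuma–Hoeffding then gives
\[
\mathbb P\Big(\sum X\le \tfrac14\,\#\{\text{trials}\}\Big)\le e^{-cL^d}\le e^{-\varepsilon L}.
\]
Each success produces a site of $T$ with $|u|\ge (\beta\gamma/2)^{CL}|u(0)|\ge e^{-C_1L}|u(0)|$. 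Since successes sit at distinct sites, $\#T\ge\varepsilon L^d$.

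\textbf{Main obstacle.} The hard step is seeding: ensuring that in \emph{each} column and at each height, a site with $|u|\ge e^{-C_1L}$ is available as an $\mathcal F_{t-1}$-measurable choice. A failed coin does not rule out a large value, but it is not guaranteed either, and the cone property only propagates a single path. My first attempt: whenever a column's tracked value becomes small, re-seed it from a neighbouring successful column via the horizontal cone chain of length $O(L)$. Successes in the martingale count guarantee that such donors exist in a positive fraction of columns. The losses from re-seeding are then charged against the $e^{-C_1L}$ budget by choosing $C_1$ large depending on $h,\beta,d$. If the $e^{-cL^d}$ bound proves too optimistic, the weaker stated bound $e^{-\varepsilon L}$ still leaves room to apply Azuma only along a single chain of length $L$ and take a union over the columns.
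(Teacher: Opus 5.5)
Your Step 2 is correct, and it is the same elementary mechanism the paper uses: $u$ at a site is affine in one fresh coin, with coefficient $\beta$ times a value already known to be large. The gap is in Step 3. It is exactly the ``seeding'' problem you flag, and it is not a technicality but the whole content of the theorem.

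\textbf{Why Step 3 fails.}
\begin{enumerate}
\item A success at $x$ only certifies $|u(x+\mathbf e_d)|\ge \beta\delta/2$, at a site adjacent to $x$. The cone chain through $x$ already supplies a good site in that neighbourhood deterministically, so a success adds at most one extra nearby site.
\item Nothing in your scheme creates good sites spatially separated from the chain. Since $u_0$ may vanish on $\mathcal P_0\cup\mathcal P_1$ away from $0$, every good site must be generated from the single value $u(0)$. You therefore have no source for $\sim L^{d-1}$ seeds that are $\mathcal F_{t-1}$-measurable.
\item Re-seeding does not repair this. The horizontal cone $\mathfrak C(x,\pm\mathbf e_k)$ contains $x\pm\mathbf e_k\pm\mathbf e_d$, so a horizontal chain leaves the layer. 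It does not end at a prescribed column and height.
\item Its selection reads values that depend on current-layer coins, possibly the target's own coin. The seed is then neither $\mathcal F_{t-1}$-measurable nor carrying a fresh coin.
\item ``A positive fraction of columns succeed'' does not give each failed column a nearby donor. With donors at distance $O(L)$ and up to $\sim L$ re-seedings in a column's history, the losses compound to $\gamma^{O(L^2)}$, not $e^{-C_1L}$.
\item Re-seeded chains can coalesce, so successes need not sit at distinct sites.
\item Your fallback (Azuma per column plus a union bound) presupposes the same seeds.
\end{enumerate}

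\textbf{The missing idea:} one coin can act at distance $\sim L$ with no decay. The paper iterates the equation along tilted hyperplanes $\mathcal T_k=\{\langle n,\mathbf e_d-\sum_{i<d}\mathbf e_i\rangle=k\}$ and the mirror family $\mathcal T'_k=\{\langle n,\mathbf e_d+\sum_{i<d}\mathbf e_i\rangle=k\}$. For $n\in\mathcal T_{k+1}$,
\[
u(n)=\sum_{x\in\mathcal T_k}a_x\,(2d+V(x)-\lambda)\,u(x)+\sum_{y\in\mathcal T_{k-1}}b_y\,u(y)+\mathrm{Remains}(u_0).
\]
By backward light cones, $u$ on $\mathcal T_k\cup\mathcal T_{k-1}$ does not depend on the potential on $\mathcal T_k$. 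Each $a_x$ is a signed count of lattice paths, so $|a_x|\ge 1$.

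The argument then runs as follows.
\begin{enumerate}
\item Run a single cone chain $X_j$. Choose $\mathfrak L_j$ through $X_j$ from one of the two families so that the rest of the chain lies strictly above it.
\item The fresh coin $\omega(X_j)$ then shifts $u(n)$ by $\beta a_{X_j}(n)u(X_j)$ simultaneously for all $n$ in a set $\mathfrak e_j$ on the side boundary of the light-cone region $\mathcal I_L$, with $\#\mathfrak e_j\gtrsim L^{d-2}$.
\item For each such $n$, at most one coin value gives $|u(n)|<\tfrac12\beta|u(X_j)|$. Hence with probability at least $1/2$, half of $\mathfrak e_j$ is good.
\item The sets $\mathfrak e_j$ are disjoint. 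The site-mixed filtration $\mathcal B_j\subset\mathcal F_j$ keeps $\omega(X_j)$ unrevealed while $\mathfrak L_j$ and $\mathfrak e_j$ are chosen.
\item Azuma along this one chain of length $T\sim L$ gives $\gtrsim L^{d-1}$ good sites on $\partial^{\mathrm{side}}\mathcal I_L$, with failure probability $e^{-cL}$.
\item Stacking $\sim L/100$ regions $\mathcal I_{L_s}$ with disjoint side boundaries gives $\#T\gtrsim L^d$.
\end{enumerate}
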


The proof of Theorem \ref{UC for d arbitrary} bears strong similarity to that of the high-dimensional Wegner estimate in Section \ref{section dim bigger 4}, as both rely on a martingale argument grounded in the cone property—an approach we deem to be of independent interest. For this reason, we defer the detailed proof of Theorem \ref{UC for d arbitrary} to Appendix \ref{Appendix UC}.

We remark  that, unlike Theorem \ref{UC for d=2}, the probabilistic estimate in Theorem \ref{UC for d arbitrary}  depends on   the initial data $u_0$. However,  for the special case   $d=2$, Theorem \ref{UC for d arbitrary} yields an exponential tail bound for the probabilistic estimate, in contrast to the sub-exponential bound furnished by Theorem \ref{UC for d=2}.
 
This observation naturally gives rise to the following open problem:

\begin{prob}\label{uniform initial data}
Can we obtain a probabilistic  estimate that is independent of  the initial data in Theorem \ref{UC for d arbitrary}, while preserving the essential bounds given in \eqref{transversality set, d arbitrary} and \eqref{transversality, d arbitrary}?
\end{prob}

A positive solution to Problem \ref{uniform initial data} would likely pave the way for a proof of Anderson localization at the bottom of the spectrum for the standard ABM on $\mathbb{Z}^d \ (d\geq 4)$.

\section{Wegner estimate on $\Z^d$ with  $d=1,2,3$}\label{section dim less 3}

In this section, we establish the key Wegner estimate for dimensions $1\leq d \leq 3$, —a result that plays a central role in the proof of Anderson localization. Our approach integrates the unique continuation results presented in Theorems \ref{UC for d=1}, \ref{UC for d=2}, and \ref{UC for d=3} with the free-sites argument originally developed in \cite{Bou04} (see \cite{BK05, DS20} for subsequent generalizations).

\subsection{Preliminary lemmas}
In this part,  we make no restriction on the dimension $d$.   From now on and for simplicity, we omit the explicit dependence of $H(\omega)$ on the randomness $\omega$ when no confusion arises. For a subset $\Lambda \subset \mathbb{Z}^d$, the corresponding Green's function on $\Lambda$ is defined by 
\[G_{\Lambda}(E)=(H_{\Lambda}-E)^{-1},\ E\in \R.\]
We say that  a finite set $\Lambda\subset\Z^d$ is  {non-resonant} if it is contained within the barrier region, i.e.,
\[\Lambda \subset \mcB =\{x\in \Z^d:\ V_{\rm hi}(x)=h \}.\]
Recall that we  have taken  $\iota$ (which essentially depends on $d,h,\beta$)  to be  \eqref{iota}. 

For each non-resonant set, we have 
\begin{lem}\label{non-resonant Green's function}
  If $\Lambda$ is non-resonant, then for any  $E\in [-\iota,4d+\beta+\iota]$, we have
  \begin{align}
   \label{non-resonant L2 norm} \|G_{\Lambda}(E)\|&\leq  \frac{1}{h-4d-\beta-\iota},\\
    \label{no-resonant off-diagonal decay} |G_{\Lambda}(x,y;E)| &\leq  \frac{1}{h-4d-\beta-\iota} \cdot \exp\{-\gamma_0 |x-y|_1\} \ {\rm for}\  \forall x,y \in \Lambda, 
  \end{align}
  where  $\gamma_0=\log(1+\frac{h-4d-\beta-\iota}{2d})>0$.
\end{lem}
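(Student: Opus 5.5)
The plan is to write $H_\Lambda - E = D - \Delta_\Lambda$, where $D = 2d + h + \beta V_{\rm r} - E$ is a diagonal operator on $\Lambda$ and $\Delta_\Lambda = R_\Lambda \Delta R_\Lambda$ is the restricted hopping term. On a non-resonant set we have $V_{\rm hi}\equiv h$, so for $E\le 4d+\beta+\iota$ every diagonal entry satisfies
\[
D(x)\ \ge\ 2d+h-4d-\beta-\iota\ =\ 2d+\delta,\qquad \delta:=h-4d-\beta-\iota>0 .
\]
Since $\|\Delta_\Lambda\|\le 2d$, the operator $H_\Lambda-E$ is bounded below by $\delta$. Because it is self-adjoint, this gives $\|G_\Lambda(E)\|\le \delta^{-1}$, which is \eqref{non-resonant L2 norm}.

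For the off-diagonal decay \eqref{no-resonant off-diagonal decay} I will use a Combes--Thomas conjugation. Fix $y$, take $\mu=\gamma_0$, and let $\rho(x)=|x-y|_1$. Define the conjugated operator
\[
H_\mu := e^{\mu\rho}(H_\Lambda-E)e^{-\mu\rho} = D - \Delta_\mu,\qquad \Delta_\mu(x,z)=e^{\mu(\rho(x)-\rho(z))}\ \text{ for } x\sim z .
\]
Since $|\rho(x)-\rho(z)|\le 1$, each row and each column of $\Delta_\mu$ has at most $2d$ entries, each at most $e^\mu$. Hence $\|\Delta_\mu\|\le 2d e^{\mu}$.

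The main obstacle is that this bound is too crude. It only gives invertibility when $2de^\mu<2d+\delta$, i.e. $\mu<\log(1+\delta/2d)=\gamma_0$, with no margin left. The fix is to split $\Delta_\mu$ into a symmetric part $\Delta_\Lambda$ and an antisymmetric-type correction, and bound the two pieces separately. The difference has entries $e^{\pm\mu}-1$, so $\|\Delta_\mu-\Delta_\Lambda\|\le 2d(e^\mu-1)$. Using this, I will show that $\Re\langle H_\mu f,f\rangle\ge (2d+\delta-2d\cosh\mu)\|f\|^2$. This holds because the symmetrized part of $\Delta_\mu$ has entries $\cosh(\mu(\rho(x)-\rho(z)))\le\cosh\mu$.

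With $\mu=\gamma_0$ we have $\cosh\gamma_0<e^{\gamma_0}=1+\delta/2d$, so the lower bound constant $2d+\delta-2d\cosh\gamma_0$ is strictly positive. Then $\|H_\mu^{-1}\|$ is finite. This yields
\[
|G_\Lambda(x,y;E)|=e^{-\mu\rho(x)}|H_\mu^{-1}(x,y)|\le e^{-\gamma_0|x-y|_1}\,\|H_\mu^{-1}\| .
\]

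This alone does not give the constant $1/\delta$ claimed in the statement. To get exactly that constant, I would instead use a Neumann series in $\Delta_\Lambda D^{-1}$, or rather a random-walk expansion. The entries of $D^{-1}$ are at most $(2d+\delta)^{-1}$, so
\[
G_\Lambda=\sum_{n\ge0} D^{-1}(\Delta_\Lambda D^{-1})^n .
\]
Paths from $x$ to $y$ have length $n\ge|x-y|_1$, and there are at most $(2d)^n$ of them. So
\[
|G_\Lambda(x,y)|\le \sum_{n\ge |x-y|_1}(2d)^n(2d+\delta)^{-n-1}=\delta^{-1}(1+\delta/2d)^{-|x-y|_1},
\]
which is exactly $\delta^{-1}e^{-\gamma_0|x-y|_1}$. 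This path expansion gives both bounds directly: summing over $y$ with the same estimate recovers the norm bound. It avoids the Combes--Thomas delicacy, so it is the route I would actually write up.
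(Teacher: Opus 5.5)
Your proof is correct, and the route you say you would write up is the paper's argument: the Neumann series $G_\Lambda=D^{-1}\sum_n(\Delta_\Lambda D^{-1})^n$, with at most $(2d)^n$ paths of length $n\ge|x-y|_1$, gives exactly $\delta^{-1}\bigl(2d/(2d+\delta)\bigr)^{|x-y|_1}=\delta^{-1}e^{-\gamma_0|x-y|_1}$, and your norm bound from $H_\Lambda-E\ge\delta$ is equivalent to the paper's estimate $\|D^{-1}\|/(1-2d\|D^{-1}\|)=\delta^{-1}$. The Combes--Thomas paragraph can be dropped; as you observed, it only yields the larger constant $(2d\sinh\gamma_0)^{-1}$ in place of $\delta^{-1}$.
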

\begin{proof}[Proof of Lemma \ref{non-resonant Green's function}]
  This proof is based on the Neumann series argument.  Indeed, 
  expanding the Green's function by the Neumann series, we get 
  \begin{align}\label{Neumann series expansion}
  \notag  G_{\Lambda}(E) & = (2d-E+V_{\Lambda}-\Delta_{\Lambda})^{-1} \\
           &=(V_{\Lambda}+2d-E)^{-1} \sum_{s\geq 0} \left(  \Delta_{\Lambda} (V_{\Lambda}+2d-E)^{-1}\right). 
  \end{align}
  Since $\Lambda$ is non-resonant and $E\in [-\iota,4d+\beta+\iota]$, we have
  \[\| (V_{\Lambda}+2d-E)^{-1} \| =\| (\beta V_{\text{r},\Lambda}+h+2d-E)^{-1}\| \leq \frac{1}{h+2d-E}\leq \frac{1}{h-2d-\beta-\iota}.\]
  Since  $\| \Delta_{\Lambda} \| \leq 2d$, we obtain  
  \[\| G_{\Lambda}(E)\| \leq \frac{\|(V_{\Lambda}+2d-E)^{-1}\| }{1-2d \|(V_{\Lambda}+2d-E)^{-1}\| }\leq \frac{1}{h-4d-\beta-\iota}.\]

  For the off-diagonal decay estimate,    we have
  \begin{align*}
     G_{\Lambda}(x,y;E)  & =\sum_{{\rm path}\ g=(x_0=x, x_1,\cdots,x_s=y)\subset \Lambda} \frac{1}{V_{x_0}+2d-E}\cdot \frac{1}{V_{x_1}+2d-E}\cdots \frac{1}{V_{x_s}+2d-E}.
  \end{align*}
 The sum runs over all paths $g=(x_0=x, x_1,\cdots,x_s=y)$ from $x$ to $y$ with $x_{k-1}\sim x_k$. The number of such paths of length $s$ is at most $(2d)^s$, since each site in $\Z^d$ has $2d$ neighbors. Moreover, any such path must have length at least $|x-y|_1$. Therefore,
   \begin{align*}
    |G_{\Lambda}(x,y;E)| & \leq \sum_{s \geq |x-y|_1} (2d)^s \left(\frac{1}{h-2d-\beta-\iota}\right)^{s+1} =\frac{1}{h-4d-\beta-\iota} \left(\frac{2d}{h-2d-\beta-\iota}\right)^{|x-y|_1},
  \end{align*}
  which implies  \eqref{no-resonant off-diagonal decay}.
\end{proof}

Now recall the notations $\Lambda_k, C_k^s, \Lambda_k(j_s)$ from \eqref{k-th block}, Definition \ref{1-hierarchical structure} and Definition \ref{k-hierarchical structure}. Denote by $\overline{\Lambda}_k$ and $\overline{\Lambda}_{k}^s$ the $\frac{1}{10}d_{k+1}$-neighborhood of $\Lambda_k$ and $\Lambda_k(j_s)$ (for $s\geq 2$), respectively.

The following lemma gives an upper bound  on  the number of eigenvalues of $H_{\overline{\Lambda}_k}$ in the low energy region.
\begin{lem}\label{eigenvalue number}
  There is a scale $k^{(1)}$ depending only on $d,\beta,h,\alpha,d_0$ such that for all $k\geq k^{(1)}$, we have 
  \[\#\left( \spec(H_{\barLambda_k})\cap [-\iota,4d+\beta+\iota] \right)\leq 2 N^{k-k^{(1)}} (16d_{k^{(1)}}+1)^{d}. \] 
\end{lem}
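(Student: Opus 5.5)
The plan is to count low eigenvalues of $H_{\barLambda_k}$ through a Dirichlet decoupling: the wells in $\barLambda_k$ are few and sit inside small boxes, and everything else is barrier, where Lemma \ref{non-resonant Green's function} gives a spectral gap. The engine is the min–max principle. Let $U\subset\barLambda_k$ be a union of boxes covering $\mcW\cap\barLambda_k$, and let $P$ be the orthogonal projection onto $\ell^2(U)$. On $\ell^2(\barLambda_k\setminus U)$ the operator $H_{\barLambda_k\setminus U}$ is non-resonant, hence $\ge h-2d$ in the quadratic-form sense; this is the computation behind \eqref{non-resonant L2 norm}. The splitting
\[
H_{\barLambda_k}\ \ge\ H_{U}\oplus H_{\barLambda_k\setminus U}\;-\;\Gamma,
\]
where $\Gamma$ consists of the hopping terms across $\partial U$, does not directly give a good bound. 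It is more robust to argue as follows. Suppose $\psi$ lies in the span of eigenvectors with eigenvalues $\le 4d+\beta+\iota$ and $P\psi=0$. Then $\langle\psi,H\psi\rangle\ge (h-2d)\|\psi\|^2>(4d+\beta+\iota)\|\psi\|^2$, since $h-2d>2d+\beta+\iota$ once $h>4d+1$ and $\iota$ is small; this is a contradiction, so $P$ is injective on that spectral subspace. Hence
\[
\#\bigl(\spec(H_{\barLambda_k})\cap[-\iota,4d+\beta+\iota]\bigr)\le \#U.
\]
Wait—that is the right conclusion, but the diagonal bound needs care. For $\psi$ supported off $U$, write $\langle\psi,H\psi\rangle=\sum(2d+V)|\psi|^2-\langle\psi,\Delta\psi\rangle\ge (2d+h-2d)\|\psi\|^2=h\|\psi\|^2$. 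This holds because $\|\Delta\|\le 2d$ and $V\ge h$ on the support of $\psi$, and $h>4d+\beta+\iota$. So the whole problem reduces to finding a cover $U$ of the wells with small cardinality.

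Next I build $U$ by induction over the hierarchy. At the bottom scale $k^{(1)}$, take $U$ to be all of $\barLambda_{k^{(1)}}$, or rather a cover by boxes of side $\sim 16 d_{k^{(1)}}$. For the inductive step, use Definition \ref{k-hierarchical structure}. The set $\mcW\cap\Lambda_k$ is a union of at most $N$ pieces $C_{k-1}^s$, each lying in a block $\Lambda_{k-1}(j_s)$ in which $V$ is $(k-1,N)$-hierarchical. Iterating down to level $k^{(1)}$, the wells in $\Lambda_k$ are contained in at most $N^{k-k^{(1)}}$ blocks of the form $\Lambda_{k^{(1)}}(j)$, each of cardinality at most $(4\lfloor3d_{k^{(1)}}\rfloor+1)^d\le(16d_{k^{(1)}}+1)^d$. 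The extra factor 2 in the statement then gives room for the part of $\barLambda_k\setminus\Lambda_k$.

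The main obstacle is this annulus $\barLambda_k\setminus\Lambda_k$, the $\tfrac1{10}d_{k+1}$-neighborhood of $\Lambda_k$. The hierarchical definition only describes $\mcW\cap\Lambda_k$, but $\barLambda_k\subset\Lambda_{k+1}$ since $d_{k+1}/10\ll 6d_{k+1}$. I would use the level-$(k+1)$ structure: the wells of $\Lambda_{k+1}$ are $C_k^1\subset\Lambda_k$ together with pieces $C_k^s$ at distance $\ge 2d_{k+1}$ from $C_k^1$. I then need to check that none of these other pieces enters the $\tfrac1{10}d_{k+1}$-neighborhood of $\Lambda_k$. This requires $C_k^1$ to be close to $\Lambda_k$, or to exhaust $\mcW\cap\Lambda_k$; that is exactly why $\barLambda_k$ is defined with the factor $\tfrac1{10}$. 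If the definitions allow a piece $C_k^s$ to come near the annulus despite this, the factor $2$ is meant to absorb one extra copy of the same count. Finally, choosing $k^{(1)}$ large in terms of $d,\alpha,d_0$ ensures the scale inequalities such as $d_{k+1}/10+6d_k<2d_{k+1}$.
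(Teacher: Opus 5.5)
Your argument is correct, but it takes a different route from the paper's.

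\textbf{The paper's route.} The paper counts through eigenfunction decay. Let $U_{k,k^{(1)}}$ be the union of the level-$k^{(1)}$ blocks in $\Lambda_k$. For each eigenfunction $\psi_E$ with $E$ in the window, the paper applies the Poisson formula on $\barLambda_k\setminus U_{k,k^{(1)}}$ together with Lemma \ref{non-resonant Green's function}, which gives the pointwise decay \eqref{k^(1) large 1}. It then shows $\|\psi_E\|^2_{\ell^2(\widehat U_{k,k^{(1)}})}\ge\frac12$ on the $2d_{k^{(1)}}$-thickening; this is where $k^{(1)}$ must be large, so that the tail sum is small. Finally it counts via the Hilbert--Schmidt identity $\#\widehat U\ge\sum_E\|\psi_E\|^2_{\ell^2(\widehat U)}$. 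So the factor $2$ in the statement comes from that $\frac12$, and the $16d_{k^{(1)}}$ comes from the thickening.

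\textbf{Your route.} Your corrected variational step is a Glazman-type codimension argument: a nonzero $\psi$ supported off the wells has $\langle\psi,H_{\barLambda_k}\psi\rangle\ge h\|\psi\|^2>(4d+\beta+\iota)\|\psi\|^2$, so restriction to $\ell^2(U)$ is injective on the spectral subspace. This uses no decay estimates and no thickening. It gives the sharper count $\#U\le N^{k-k^{(1)}}(4\lfloor 3d_{k^{(1)}}\rfloor+1)^d$. Consequently, your remark that the factor $2$ ``gives room for the annulus'' misattributes it; for you it is just slack.

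\textbf{The annulus.} Your handling of $\barLambda_k\setminus\Lambda_k$ is sound once stated firmly. For large $k$ we have $\barLambda_k\subset\Lambda_{k+1}$ and $\diam(\barLambda_k)\le 12d_k+\frac15 d_{k+1}+1<2d_{k+1}$. The $2d_{k+1}$-separation at level $k+1$ then allows at most one piece $C_k^s$ to meet $\barLambda_k$, namely $C_k^1\subset\Lambda_k$ whenever it is nonempty. Any such piece lies in a $(k,N)$-hierarchical block, so the hedge about absorbing ``an extra copy'' is unnecessary. The paper relies on exactly this fact when it asserts that $\barLambda_k\setminus U_{k,k^{(1)}}$ is ``clearly'' non-resonant, so making it explicit is a plus.

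\textbf{What each approach buys.} The paper's heavier route is not needed for the count itself. Its value is the by-products \eqref{k^(1) large 1}--\eqref{k^(1) large 3}: exponential decay of low-energy eigenfunctions away from the wells, and their mass concentration near the wells. These are reused later, for example in \eqref{r_4}, in the lower bound on $\|\psi_{j_1}\|_{\ell^\infty(\Lambda'_k)}$ in Claim \ref{Sperner structure}, and in \eqref{faster derivation decay}. With your approach the lemma follows more cheaply, but those estimates would still have to be proved separately.
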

\begin{proof}[Proof of Lemma \ref{eigenvalue number}]
Let $k \geq k^{(1)}$  and let $E \in [-\iota, 4d+\beta+\iota]$ be an eigenvalue of $H_{\barLambda_k}$ with a corresponding normalized eigenfunction $\psi_E$. Denote by $U_{k,k^{(1)}}$ the union of all $\Lambda_{k^{(1)}}(j_s) \subset \Lambda_{k}$ at the $k^{(1)}$-th hierarchical level, and let $\widehat{U}_{k,k^{(1)}}$ be the $2d_{k^{(1)}}$-neighborhood of $U_{k,k^{(1)}}$. 
We have 
  \[(H_{\barLambda_k}-E)=(H_{U_{k,k^{(1)}}}-E)\oplus (H_{\barLambda_k\setminus U_{k,k^{(1)}}}-E)+\Gamma,\]
where $\Gamma$ is the connecting operator  $U_{k,k^{(1)}}$ and $\barLambda_k\setminus U_{k,k^{(1)}}$ via  $-\Delta$. Applying the Poisson's  formula yields 
\[\psi_E=-\left(G_{U_{k,k^{(1)}}}(E)\oplus G_{\barLambda_k\setminus U_{k,k^{(1)}}}(E)\right)  \Gamma \psi_E.\]
Clearly, $\barLambda_k\setminus U_{k,k^{(1)}}$ is non-resonant. Applying Lemma \ref{non-resonant Green's function}, we obtain that for every $x\in \barLambda_k\setminus \widehat{U}_{k,k^{(1)}},$
\begin{align}\label{k^(1) large 1}
\notag  |\psi_E(x)|& \leq \sum_{w\in \partial^+ U_{k,k^{(1)}} \atop w'\in \partial^-U_{k,k^{(1)}},\ w\sim w'} \left| G_{\barLambda_k\setminus U_{k,k^{(1)}}}(x,w;E) \right| \cdot |\psi_E(w')| \\
      \notag       &\leq  \frac{2d}{h-4d-\beta-\iota} \cdot \sum_{w\in \partial^+ U_{k,k^{(1)}} } \exp\{-\gamma_0 |x-w|_1\}  \lesssim_{d,h,\beta} \sum_{s\geq \dist(x,U_{k,k^{(1)}})}s^{d-1} \exp\{-\gamma_0 s\} \\
      &\leq \exp\{-\frac{1}{2}\gamma_0 \cdot \dist(x,U_{k,k^{(1)}})\}.
\end{align}
From \eqref{k^(1) large 1}, we have 
\begin{align}\label{k^(1) large 2}
 \notag \|\psi_E\|_{\ell^2(\barLambda_k\setminus \widehat{U}_{k,k^{(1)}})}^2 & \leq \sum_{x\in \barLambda_k\setminus \widehat{U}_{k,k^{(1)}}} \exp\{-\gamma_0\cdot \dist(x,U_{k,k^{(1)}})\} \\
   & \leq \exp\{-\gamma_0 d_{k^{(1)}}\}. 
\end{align}
Indeed, \eqref{k^(1) large 2} also means that 
\begin{equation}\label{k^(1) large 3}
  \|\psi_E\|_{\ell^2(\widehat{U}_{k,k^{(1)}})}^2 \geq 1-\exp\{-\gamma_0 d_{k^{(1)}}\}\geq \frac{1}{2}. 
\end{equation}
The estimates  \eqref{k^(1) large 1}, \eqref{k^(1) large 2} and \eqref{k^(1) large 3}  remain  valid provided $\dist(x,U_{k,k^{(1)}})\geq 2d_{k^{(1)}}$ is chosen sufficiently large, or   for $k^{(1)}$ sufficiently large (depending only on $d,h,\beta,\alpha,d_0$).  Combining  those estimates together, we can apply the following Hilbert-Schmidt argument: 
\begin{align*}
  \# \widehat{U}_{k,k^{(1)}} & = \sum_{x\in \widehat{U}_{k,k^{(1)}}} \|\delta_x \|^2 \geq \sum_{x\in \widehat{U}_{k,k^{(1)}}} \sum_{E\in \spec(H_{\barLambda_k})\cap [-\iota,4d+\beta+\iota]} |\psi_E(x)|^2  \\
      &=\sum_{E\in \spec(H_{\barLambda_k})\cap [-\iota,4d+\beta+\iota]} \|\psi_E\|_{\ell^2(\widehat{U}_{k,k^{(1)}})}^2 \geq \frac{1}{2} \cdot \# \left(  \spec(H_{\barLambda_k})\cap [-\iota,4d+\beta+\iota] \right),
\end{align*}
which yields 
\[\# \left(  \spec(H_{\barLambda_k})\cap [-\iota,4d+\beta+\iota] \right) \leq 2 \# \widehat{U}_{k,k^{(1)}} \leq 2 N^{k-k^{(1)}} (16d_{k^{(1)}}+1)^d. \]
Here, we used the fact that there are at most $N^{k-k^{(1)}}$-many $\Lambda_{k^{(1)}}(j_s)\subset \Lambda_k$ at the $k^{(1)}$-th hierarchical level, and the $2d_{k^{(1)}}$-extension of each $\Lambda_{k^{(1)}}(j_s)$ has its cardinality less than $(16d_{k^{(1)}}+1)^d$.

\end{proof}

\begin{rmk}\label{Lambda' has less eigenvalue number}
  Indeed, if  $\Lambda'_k$ denotes  the $2d_k$-neighborhood of $\Lambda_k$, then the similar argument as in  the proof of Lemma \ref{eigenvalue number} will  lead to 
   \[\# \left(  \spec(H_{\Lambda'_k})\cap [-\iota,4d+\beta+\iota] \right) \leq 2 \# \widehat{U}_{k,k^{(1)}} \leq 2 N^{k-k^{(1)}} (16d_{k^{(1)}}+1)^d. \]
\end{rmk}

\subsection{Proof of the Wegner estimate for $d=1,2,3$}
Now let  $1\leq d\leq 3$. The remaining part of this section is devoted to proving the following Wegner estimate. 
\begin{thm}[{\bf Wegner estimate for $d=1,2,3$}]\label{Wegner estimate for d=1,2,3}
Let $d=1,2, 3$. For any $0<\beta \leq 1$, there exist  some  $q>0$,   $\varepsilon>0$ and    $k_{\text{in}}>0$ (all of them depend only on $h,\beta,\alpha,d_0,N$) such that,  for all $k \geq k_{\text{in}}$ and $E\in [-\frac{\iota}{2},4d+\beta+\frac{\iota}{2}]$, we have 
\begin{equation}\label{Wegner poly probability}
\P\left( \dist\big(\spec(H_{\Lambda}), E\big) < \exp\{-d_{k+1}^{1-\varepsilon}\} \right) \leq d_{k+1}^{-q},
\end{equation}
where $\Lambda$ can be either $\overline{\Lambda}_k$ or $\overline{\Lambda}_k^s$ (with $s\geq 2$), and the probability $\P$ is taken only over the random variables  within $\Lambda$.
\end{thm}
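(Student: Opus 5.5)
The plan is to follow the Bourgain free-sites scheme as adapted by Ding--Smart \cite{DS20}, but now driven by the hierarchical structure. Two observations set it up. First, the eigenvalues of $H_\Lambda$ in $[-\iota,4d+\beta+\iota]$ are few: by Lemma \ref{eigenvalue number} (and Remark \ref{Lambda' has less eigenvalue number}) there are at most $2N^{k-k^{(1)}}(16d_{k^{(1)}}+1)^d \lesssim N^k$ of them. Since $N^k$ is only $\exp\{O(\log\log d_k)\}$ in size, this count is subpolynomial in $d_{k+1}$. Second, the corresponding eigenfunctions live, up to errors $\exp\{-\tfrac12\gamma_0\,\mathrm{dist}(x,U_{k,k^{(1)}})\}$, on the wells. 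This follows from \eqref{k^(1) large 1} and Lemma \ref{non-resonant Green's function}.

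By a union bound over these eigenvalues, it suffices to prove the following. Fix a box $B\subset\Lambda$ of side $\sim d_{k^{(1)}}$ around a well, on which a normalized eigenfunction $\psi$ with $|E_j-E|<\delta:=\exp\{-d_{k+1}^{1-\varepsilon}\}$ carries mass $\gtrsim (N^k d_{k^{(1)}}^d)^{-1}$. Show that the probability of such an eigenvalue existing is $\le d_{k+1}^{-q'}$ with $q'>q$.

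The core step is an eigenvalue-variation (Sperner) argument. I would choose a block $Q\subset\Lambda$ of side $L\sim d_{k+1}^{1-\varepsilon/2}$ containing $B$. Next I apply the unique continuation results to $\psi$ on $Q$:
\begin{itemize}
\item Theorem \ref{UC for d=1} when $d=1$;
\item Theorem \ref{UC for d=2} when $d=2$, with $Q'=B$ and $\ell\sim d_{k^{(1)}}<\varepsilon^2L$, on an event of conditional probability $\ge 1-\exp\{-\varepsilon L^{2/3}\}$;
\item Theorem \ref{UC for d=3} when $d=3$.
\end{itemize}
These give a set $T\subset Q$ with $\#T\gtrsim L^{p}$, $p>0$, on which $|\psi(n)|\ge \exp\{-C_1L\log L\}\,\|\psi\|_{\ell^\infty(B)}$.

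By first-order perturbation (Hellmann--Feynman), flipping $\omega(n)$ from $0$ to $1$ at $n\in T$ moves $E_j$ by at least $\beta|\psi(n)|^2$. This is $\gg\delta$, because $L\log L\ll d_{k+1}^{1-\varepsilon}$. I would make this rigorous as in \cite{DS20}: a second-order Taylor bound, together with control of the spectral gap around $E_j$ using the eigenvalue count above. The upshot is that the family $\mathcal F$ of configurations $\omega|_T$ for which $\mathrm{dist}(\mathrm{spec}\,H_\Lambda,E)<\delta$ is, after fixing the other sites, close to an antichain in $\{0,1\}^T$, up to multiplicity at most the eigenvalue count. Bourgain's Sperner lemma \cite[Lemma 2.1]{Bou04} then bounds the probability by $\lesssim N^k d_{k^{(1)}}^d(\#T)^{-1/2}\le d_{k+1}^{-q'}$.

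The main obstacle I expect is making the transversality robust. The unique continuation statements are for exact solutions of $H u=\lambda u$ in $Q$, whereas $\psi$ is an eigenfunction of $H_\Lambda$ with eigenvalue only $\delta$-close to $E$. Moreover, the set $T$ depends on $\omega$ itself, so one must avoid circularity in conditioning. To handle this I would follow \cite{DS20,Li22}: use the energy window $|\lambda-\lambda_0|\le\exp\{-C_1L\log L\}$ built into Theorem \ref{UC for d=2}, and a net of energies $\lambda_0$ of cardinality $\exp\{O(L\log L)\}$. The resulting losses are absorbed into the subexponential error $\exp\{-\varepsilon L^{2/3}\}$, after adjusting $\varepsilon$, and then $k_{\rm in}$ is chosen large. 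A second point needing care is normalisation. The lower bound is relative to $\|\psi\|_{\ell^\infty(B)}$, which is only $\gtrsim (N^kd_{k^{(1)}}^d)^{-1/2}$. This loss is harmless against $\exp\{-C_1L\log L\}$.
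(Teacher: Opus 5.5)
There is a genuine gap at the Sperner step, and it is exactly the point that restricts the paper to $d\le 3$.

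The transversal set $T=T(\omega,\psi)$ is produced from the eigenfunction, so it depends on the whole configuration, including $\omega|_T$ itself. You therefore cannot fix the sites outside $T$ and treat the resonant event as a near-antichain in $\{0,1\}^T$, and the bound $(\#T)^{-1/2}$ does not follow.

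The paper handles this as follows:
\begin{enumerate}
\item It freezes the fixed set $\mcF=\Lambda_k'$ and keeps all of $\mcS=\barLambda_k\setminus\Lambda_k'$ free.
\item It splits the event according to the majority value $i\in\{0,1\}$ of $\omega$ on the transversal sites, so that all admissible flips are perturbations of one sign.
\item It shows, via Lemma \ref{rank-one perturbation} and eigenvalue monotonicity, that flipping any nonempty subset of $T'(\omega,\psi_{j_1},i)$ leaves $\mcE_{j_1,j_2,\ell,i}$.
\item It then applies the $\rho$-Sperner theorem of \cite{DS20} with $\rho=\frac12 N_{uc}/\#\mcS$.
\end{enumerate}
The resulting bound is $2\sqrt{\#\mcS}/N_{uc}$, not $N_{uc}^{-1/2}$. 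So one needs $N_{uc}\gg(\#\mcS)^{1/2}$, i.e.\ a transversal set of size $L^{d/2+}$ at the scale of the free region. This is why the inputs $\varepsilon L_2$, $\varepsilon^3L_2^2$, and $L_2^p$ with $p>\frac32$ (Theorem \ref{UC for d=3}) are needed, and why ``$p>0$'' is not enough.

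As a sanity check: if your bound were valid, the one-dimensional chains from the cone property would already give the Wegner estimate in every dimension. The paper notes this is open (remark after \eqref{bare bound with Nuc}).

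Relatedly, a single-eigenvalue Hellmann--Feynman or second-order Taylor bound does not control the shift when other eigenvalues lie within the gap scale $s_\ell$. Lemma \ref{rank-one perturbation} instead treats the whole cluster $\lambda_{j_1},\dots,\lambda_{j_2}$ and shows that $\operatorname{trace}\mathbf{1}_{[r_1,\infty)}$ increases.

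Two quantitative steps also fail as written.

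First, the scales are in the wrong order. With $L\sim d_{k+1}^{1-\varepsilon/2}$ one has $L\log L\gg d_{k+1}^{1-\varepsilon}$. Hence $\beta|\psi(n)|^2\ge e^{-2C_1L\log L}$ is far smaller than $\delta=e^{-d_{k+1}^{1-\varepsilon}}$, and a flip cannot push anything out of the $\delta$-window. The unique continuation scale must be a strictly smaller power than the resonance threshold. The paper proves the bound at threshold $e^{-L_1}$ with $L_1=L_0^{1-2\varepsilon}$, using transversality at $L_2=L_1^{1-2\varepsilon}$.

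Second, a net of $\exp\{O(L\log L)\}$ energies cannot be absorbed into failure probabilities $\exp\{-\varepsilon L^{2/3}\}$, since the union bound then exceeds $1$. No net is needed. $E$ is fixed, and a single application of Theorem \ref{UC for d=2} at $\lambda_0=E$ covers every eigenvalue within $e^{-C_1L\log L}$ of $E$. That contains the $\delta$-window once the scales are ordered correctly.

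Finally, the ``exact solution'' obstacle you raise does not arise. $\psi$ extended by zero solves $H\psi=\lambda\psi$ exactly on $\Lambda\supset Q$, and for $d=1,3$ the unique continuation results are deterministic. The real difficulty is the $\omega$-dependence of $T$ in the Sperner step.
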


\begin{proof}[Proof of Theorem \ref{Wegner estimate for d=1,2,3}]
Indeed, since  each  $\overline{\Lambda}_k^s$ shares  the same hierarchical structure as $\overline{\Lambda}_k$, it  suffices  to analyze the case of  $\Lambda= \overline{\Lambda}_k$. 

Let $\Lambda'_k$ denote the $2d_k$-neighborhood of $\Lambda_k$. We define the set of frozen sites  to be  $$\mcF = \Lambda'_k$$ and the complementary set of {\bf free sites}  as $$\mcS = \barLambda_k \setminus \Lambda'_k.$$ 
Choose $0<\varepsilon \ll1$   and $C_1 \gg1$  (both  depending only on $h, \alpha, \beta$) such that the unique continuation Theorems \ref{UC for d=1}, \ref{UC for d=2} and \ref{UC for d=3} are applicable, and  in addition $(1-\varepsilon)(1-2\varepsilon) > 1/\alpha$. Define the length scales
\[L_0=\diam(\barLambda_k),\ L_1=L_0^{1-2\varepsilon},\ L_2=L_1^{1-2\varepsilon}.\]
With the  choice of $\varepsilon$,  we then have
\begin{equation}\label{DS20 scale relationship}
  \diam(\Lambda_k')\sim d_k=d_{k+1}^{\frac{1}{\alpha}}\sim L_0^{\frac{1}{\alpha}}\ll L_2. 
\end{equation}

For  $k\gg1$, we have the following claim about the transversality estimate.
\begin{claim}\label{Claim UC}
Denote by $\mcE_{uc}=\mcE_{uc}(\barLambda_k, E)$ the event that the following holds: For any eigenvalue $\lambda$ with $|\lambda - E| \leq \exp\{-C_1 L_2 \log L_2\}$ and any corresponding eigenfunction $\psi_{\lambda}$ of $H_{\barLambda_k}$, one can find a subset $T(\omega, \psi_{\lambda}) \subset \mcS$ on which $\psi_{\lambda}$ is transversal, i.e.,
\[|\psi_{\lambda}(x)|\geq \exp\{-C_1L_2\log L_2\}\|\psi_{\lambda}\|_{\ell^{\infty}(\Lambda_k')} .\]
Moreover,
\begin{equation}\label{rho Sperner}
  \# T(\omega,\psi_\lambda)\geq N_{uc}:= 
  \begin{cases}
    \varepsilon L_2, \ d=1\\
    \varepsilon^3 L^2_2, \ d=2\\
     L_2^{p}, \ d=3,
  \end{cases}
\end{equation}
where $p>\frac{3}{2}$ is given in Theorem \ref{UC for d=3}. 
Then we have 
\[\P(\mcE_{uc}|V_{\text{r},\mcF })\geq 1-\exp\{-\varepsilon L^{\frac{2}{3}}_2\}.\]
\end{claim}
\begin{proof}[Proof of Claim \ref{Claim UC}]

Let $k\gg1$ be  such that $L_2>C_1$. 

For $d=1$, assume $\psi_{\lambda}$ attains its maximum within $\Lambda_k'$ at $x_0$. Observe that by \eqref{DS20 scale relationship},  we have 
\[\Lambda_k'\subset Q_{L_2 -8d_{k}}(x_0)\subset Q_{L_2}(0)\subset Q_{\frac{1}{2}L_0}(0)=\barLambda_k.\] 
Now we can  apply Theorem \ref{UC for d=1} to the cube $Q_{L_2 -8d_{k}}(x_0)$. Deterministically,  there exists a $\overline{T}(\omega,\psi_{\lambda})\subset Q_{L_2 -8d_{k}}(x_0)$ such that 
\[\# \overline{T}(\omega,\psi_\lambda) \geq \frac{1}{2}\cdot \# Q_{L_2 -8d_{k}}(x_0) =L_2-8d_k,\]
and for every $x\in \overline{T}(\omega,\psi_\lambda)$,
\[|\psi_\lambda(x)|\geq \gamma^{L_2-8d_k}|\psi_{\lambda}(x_0)|\geq  \exp\{-C_1 L_2\log L_2\}|\psi_{\lambda}(x_0)|.\]
Finally, we take $T(\omega,\psi_\lambda)=\overline{T}(\omega,\psi_\lambda)\setminus \Lambda_k'\subset \mcS$, and therefore 
\[\# T (\omega,\psi_\lambda) \geq L_2-8d_k-\# \Lambda_k'\geq \varepsilon L_2 .\]

For $d=3$, assume again that $\psi_{\lambda}$ attains its maximum within $\Lambda_k'$ at some  $x_0$. We first apply cone property (Proposition \ref{cone property}) to obtain a chain $(x_0,n_1,n_2, \cdots, x'_0)$ in the direction $\bfe_d$ starting at $x_0$ and terminating at $x'_0$. For this chain, inequality \eqref{Chain length} holds. Consequently, we may assume without loss of generality that $|x_0'|\in \{2 L_2+8d_k,2 L_2+8d_k-1\}$. The length of the chain is then at most $2L_2+16d_0$, and therefore
\[|\psi_{\lambda}(x_0')|\geq \gamma^{2 L_2+16d_k} |\psi_{\lambda}(x_0)|.\]
Notice that $Q_{L_2}(x_0')\subset \mcS$. Apply Theorem \ref{UC for d=3} to the cube $Q_{L_2}(x_0')$. Deterministically,  there exists a $T(\omega,\psi_{\lambda})\subset Q_{L_2}(x_0')\subset \mcS $ such that 
\[\# T(\omega,\psi_\lambda) \geq L_2^p,\]
and for every $x\in T(\omega,\psi_\lambda)$,
\begin{align*}
  |\psi_\lambda(x)| & \geq \exp\{-C_1  L_2\}  |\psi_{\lambda}(x_0')|\\
    &\geq  \exp\{-C_1 L_2\} \gamma^{2 L_2+16d_k} |\psi_{\lambda}(x_0)|\geq  \exp\{-C_1 L_2\log L_2\}|\psi_{\lambda}(x_0)|.
\end{align*}

The argument as above for $d=1$ and $d=3$ is deterministic, that is, it holds for all $\omega\in\Omega$ and all   $\lambda\in [-\iota,4d+\beta+\iota]$ (which contains the interval $[-\exp\{-C_1L_2\log L_2\}+E, \exp\{-C_1L_2\log L_2 \}+E]$, supposing $k\gg1$). Consequently, we have
\[\P(\mcE_{uc}|V_{\text{r},\mcF })=1 \geq 1-\exp\{-\varepsilon L^{\frac{2}{3}}_2\}.\]

For $d=2$, we have
\[\Lambda_k'\subset Q_{\frac12 L_2}(0) \subset \barLambda_k.\]
From  \eqref{DS20 scale relationship}, it follows that
\[\diam(\Lambda_k')\leq 16d_k \leq \varepsilon^2 L_2=\varepsilon^2 \cdot \diam(Q_{\frac{1}{2} L_2}(0)).\]
Therefore, we can  apply Theorem \ref{UC for d=2} to the block $Q_{\frac{1}{2} L_2}(0)$. This yields that for all eigenvalues $\lambda$ satisfying
\[|\lambda-E|\leq \exp\{-C_1 L_2\log L_2 \},\]
there is a set $T(\omega,\psi_{\lambda}) \subset Q_{\frac{1}{2}L_2}(0)\setminus \Lambda_k' \subset \mcS$ such that
\[\# T(\omega,\psi_\lambda) \geq \varepsilon^3 L_2^2,\]
and for every $x\in T(\omega,\psi_\lambda)$,
\begin{align*}
  |\psi_\lambda(x)| \geq  \exp\{-C_1 L_2\log L_2\} \|\psi_{\lambda}\|_{\ell^2(\Lambda_k')}.
\end{align*}
Finally, Theorem \ref{UC for d=2} also provides the probability estimate
\[\P(\mcE_{uc}|V_{\text{r},\mcF })\geq  1-\exp\{-\varepsilon L^{\frac{2}{3}}_2\}.\]
Combining the above discussions, we complete the proof of the Claim.
\end{proof}

With Claim \ref{Claim UC}  in hand, we can  proceed with the proof of the Wegner estimate. Let
\[\lambda_1(H_{\barLambda_k})\geq \lambda_2(H_{\barLambda_k})\geq \cdots \geq \lambda_{\# \barLambda_k} (H_{\barLambda_k})\]
denote the eigenvalues of $H_{\barLambda_k}$ arranged in the non-increasing order,  with  corresponding normalized eigenfunctions $\{\psi_j\}_{1\leq j \leq \# \barLambda_k}$.
For  $1\leq j_1, j_2 \leq \# \barLambda_k = L_0^d$ and an integer $0\leq \ell \leq L_0^{\varepsilon}$, denote by $\mcE_{j_1,j_2,\ell}$ the event that
\[|\lambda_{j_1}-E|\vee|\lambda_{j_2}-E|<s_{\ell} \ {\rm and} \ |\lambda_{j_1-1}-E|\wedge|\lambda_{j_2+1}-E|\geq s_{\ell},\]
where 
\[s_{\ell}=\exp\{-L_1+(2C_1L_2\log L_2 -2\gamma_0 d_{k}+C_2)\ell\}\]
with  $C_2\gg1$  will be specified  later.  By our choice of $\ell$ and \eqref{DS20 scale relationship}, we ensure  that 
\begin{align}\label{bound on s_l}
  s_{\ell}&\leq \exp\{-L_1+(2C_1L_2\log L_2 -2 \gamma_0 d_{k}+C_2)\cdot L_0^{\varepsilon}\} \\
 \notag    & \leq \exp\{-L_1 +2C_1 L_1^{1-2\varepsilon+\frac{\varepsilon}{1-2\varepsilon}}\log L_2 \}\leq \exp\{-\frac{1}{2}L_1\}\\
 \notag    &\leq \exp\{-C_1 L_2 \log L_2\}.
\end{align}

\begin{figure}[htbp]
  \centering

\tikzset{every picture/.style={line width=0.75pt}} 

\begin{tikzpicture}[x=0.75pt,y=0.75pt,yscale=-1,xscale=1]

\draw    (134.71,146) -- (226.43,146) ;
\draw [shift={(226.43,146)}, rotate = 180] [color={rgb, 255:red, 0; green, 0; blue, 0 }  ][line width=0.75]    (0,5.59) -- (0,-5.59)   ;
\draw    (65.29,146) -- (100,146) ;
\draw [shift={(100,146)}, rotate = 45] [color={rgb, 255:red, 0; green, 0; blue, 0 }  ][line width=0.75]    (-5.59,0) -- (5.59,0)(0,5.59) -- (0,-5.59)   ;
\draw    (261.14,146) -- (295.86,146) ;
\draw [shift={(295.86,146)}, rotate = 180] [color={rgb, 255:red, 0; green, 0; blue, 0 }  ][line width=0.75]    (0,5.59) -- (0,-5.59)   ;
\draw    (100,146) -- (134.71,146) ;
\draw [shift={(134.71,146)}, rotate = 180] [color={rgb, 255:red, 0; green, 0; blue, 0 }  ][line width=0.75]    (0,5.59) -- (0,-5.59)   ;
\draw    (226.43,146) -- (261.14,146) ;
\draw [shift={(261.14,146)}, rotate = 45] [color={rgb, 255:red, 0; green, 0; blue, 0 }  ][line width=0.75]    (-5.59,0) -- (5.59,0)(0,5.59) -- (0,-5.59)   ;
\draw    (295.86,146) -- (337.57,146) ;
\draw [shift={(337.57,146)}, rotate = 45] [color={rgb, 255:red, 0; green, 0; blue, 0 }  ][line width=0.75]    (-5.59,0) -- (5.59,0)(0,5.59) -- (0,-5.59)   ;
\draw    (337.57,146) -- (366.29,146) ;
\draw [shift={(366.29,146)}, rotate = 180] [color={rgb, 255:red, 0; green, 0; blue, 0 }  ][line width=0.75]    (0,5.59) -- (0,-5.59)   ;
\draw    (366.29,146) -- (458,146) ;
\draw [shift={(458,146)}, rotate = 180] [color={rgb, 255:red, 0; green, 0; blue, 0 }  ][line width=0.75]    (0,5.59) -- (0,-5.59)   ;
\draw    (458,146) -- (492.71,146) ;
\draw [shift={(492.71,146)}, rotate = 45] [color={rgb, 255:red, 0; green, 0; blue, 0 }  ][line width=0.75]    (-5.59,0) -- (5.59,0)(0,5.59) -- (0,-5.59)   ;
\draw    (492.71,146) -- (527.43,146) ;

\draw (90,116) node [anchor=north west][inner sep=0.75pt]  [color={rgb, 255:red, 208; green, 2; blue, 27 }  ,opacity=1 ] [align=left] {$\lambda_{j_1-1}$};
\draw (249,117) node [anchor=north west][inner sep=0.75pt]  [color={rgb, 255:red, 208; green, 2; blue, 27 }  ,opacity=1 ] [align=left] {$\lambda_{j_1}$};
\draw (326,117) node [anchor=north west][inner sep=0.75pt]  [color={rgb, 255:red, 208; green, 2; blue, 27 }  ,opacity=1 ] [align=left] {$\lambda_{j_2}$};
\draw (480,118) node [anchor=north west][inner sep=0.75pt]  [color={rgb, 255:red, 208; green, 2; blue, 27 }  ,opacity=1 ] [align=left] {$\lambda_{j_2+1}$};
\draw (290,155) node [anchor=north west][inner sep=0.75pt]   [align=left] {$E$};
\draw (346,155) node [anchor=north west][inner sep=0.75pt]   [align=left] {$E+s_{\ell}$};
\draw (435,156) node [anchor=north west][inner sep=0.75pt]   [align=left] {$E+s_{\ell+1}$};

\end{tikzpicture}
\caption{The event $\mcE_{j_1,j_2,\ell}$.}

\end{figure}

From now on, we take  condition  probability on $V_{\text{r},\mcF}$. This leads to a cylindrical decomposition of the probability space: 
\[\{0,1\}^{\barLambda_k}=\bigcup_{v\in \{0,1\}^{\mcF}} \{0,1\}^{\mcS}\times \{V_{\text{r},\mcF}=v  \}:= \bigcup_{v\in \{0,1\}^{\mcF}} \mcC_v. \]
In this representation,  both   $\mcE_{uc}$ and $\mcE_{j_1,j_2,\ell}$ can be viewed as subsets of $\mcC_v$. 

Recall  the definition  of $N_{uc}$ given by \eqref{rho Sperner}.  For $i=0,1$, let $\mcE_{j_1,j_2,\ell,i}$ denote the event that 
\[\mcE_{j_1,j_2,\ell} \ {\rm and} \ \#\left( \left\{|\psi_{j_1}|\geq \exp\{-C_1L_2\log L_2\}\|\psi_{j_1}\|_{\ell^{\infty}(\Lambda_k')}\right\} \cap \{V_{\text{r},\barLambda_k}=i\}\setminus \mcF    \right)\geq \frac{1}{2} N_{uc} .\]
By the pigeonhole principle, \eqref{bound on s_l} and Claim \ref{Claim UC},   we have 
\begin{equation}\label{i=0,1 event inclusion}
  \mcE_{j_1,j_2,\ell}\cap \mcE_{uc}\subset \mcE_{j_1,j_2,\ell,0}\cup  \mcE_{j_1,j_2,\ell,1}.
\end{equation}

Now if $\omega\in \mcE_{j_1,j_2,\ell,i}, x\in \mcS, \omega(x)=i$ and 
\begin{equation}\label{r_3 transversality previous}
  |\psi_{j_1}(x)|\geq \exp\{-C_1L_2\log L_2\}\|\psi_{j_1}\|_{\ell^{\infty}(\Lambda_k')}, 
\end{equation}
then we change the value of $V_{\text{r}}$ at $x$, i.e.,  take 
\begin{equation*}
  \omega_x(y)=\begin{cases}
    \omega(y), \ {\rm if} \ y \neq x,\\
    1-\omega(x), \ {\rm if} \ y=x.
  \end{cases}
\end{equation*}

\begin{claim}\label{Sperner structure}
  $\omega_x\notin \mcE_{j_1,j_2,\ell,i}.$
\end{claim}
\begin{proof}[Proof of Claim \ref{Sperner structure}]
We only consider the case $i=0$, as the proof  of   the case $i=1$ is analogous. Define the shifted operator
\[\widetilde{H}_{\barLambda_k}(\omega)=H_{\barLambda_k}(\omega)-E+s_\ell,\]
of which all eigenvalues are $\widetilde{\lambda}j=\lambda_j-E+s_{\ell}$ (For $i=1$, the corresponding operator is $-(H_{\barLambda_k}(\omega)-E-s_{\ell})$).  Set
\begin{equation}\label{r_1,r_2}
  r_1=2s_\ell,\ r_2=s_{\ell}+s_{\ell+1}.
\end{equation}
Then since $\omega\in \mcE_{j_1,j_2,\ell}$,  the following ordering holds true:
\[0<\widetilde{\lambda}_{j_1}\leq \wtlambda_{j_2}<r_1<r_2<\wtlambda_{j_2+1}.\]

Now recall the set $\widehat{U}_{k,k^{(1)}}\subset\Lambda_k \subset \Lambda_k'$ defined in the proof of Lemma \ref{eigenvalue number}. 
Since $E \in [-\frac{\iota}{2}, 4d+\beta+\frac{\iota}{2}]$, and assuming $k\gg1$ such that 
\[|E-\lambda_{j_1}|\leq s_{\ell} \leq \exp\{-C_1L_2\log L_2\}\ll \frac{\iota}{2}, \]
we have
\begin{equation}\label{lambda still less than h}
  \lambda_{j_1}\in [-\iota,4d+\beta+\iota].
\end{equation}
This implies that the estimate \eqref{k^(1) large 3} is valid for $\psi_{j_1}$, and
\begin{align*}
  \|\psi_{j_1}\|_{\ell^{\infty}(\Lambda_k')} & \geq \|\psi_{j_1}\|_{\ell^{\infty}(\widehat{U}_{k,k^{(1)}})} \geq \sqrt{\frac{1}{2 \# \widehat{U}_{k,k^{(1)}}}} \geq \sqrt{\frac{1}{2 N^{k-k^{(1)}} (16d_{k^{(1)}}+1)^d}} \\
    &\geq L_0^{-\frac{\varepsilon}{4}}\sim d_{0} ^{-\frac{\varepsilon}{4}\cdot \alpha ^{k+1}},
\end{align*}
where in  the last inequality  we used  $k\gg1$. Combining the above estimates  with \eqref{r_3 transversality previous} yields
\begin{equation}\label{r_3 transversality}
  |\psi_{j_1}(x)|\geq  L_0^{-\frac{\varepsilon}{4}} \exp\{-C_1L_2\log L_2\} \geq \exp\{-2C_1L_2\log L_2\}:= r_3.
\end{equation}

Next, recalling \eqref{bound on s_l}, we set 
\begin{equation}\label{r_5}
  r_5= 2\exp\{-C_1 L_2 \log L_2\}\geq \exp\{-C_1 L_2 \log L_2\}+s_\ell > r_2.
\end{equation}
Consider now  a $j$ such that $r_2<\wtlambda_j<r_5$. Then $|\lambda_j-E|\leq r_5-s_{\ell}<r_5$. 
By  similar argument leading  to \eqref{lambda still less than h}, we get   $\lambda_j\in \spec(H_{\barLambda_k})\cap[-\iota,4d+\beta+\iota]$.
Consequently, the estimate \eqref{k^(1) large 1} is applicable to the eigenfunctions corresponding to such $\lambda_j$. Simultaneously, notice that $d(x,\widehat{U}_{k,k^{(1)}})\geq 2d_k$ by $x\notin \Lambda_k'$. We thus obtain 
\begin{equation}\label{r_4}
      \sum_{j:r_2<\wtlambda_j<r_5}|\psi_j(x)|^2 \leq \# \barLambda_k \cdot \exp\{-\gamma_0 d_k\}\leq \exp\{-2\gamma_0 d_k\}:= r_4.
\end{equation}
The proof will be completed after applying the following lemma concerning rank-one perturbations, which is adapted from \cite[Lemma 5.1]{DS20}.
\begin{lem}\label{rank-one perturbation}
Suppose that  the real symmetric matrix \( A \in  \mathbb{R}^{n\times n}  \) has eigenvalues  
\[
\lambda_1 \geq \lambda_2 \geq \cdots \geq \lambda_n \in \mathbb{R}
\]  
with orthonormal eigenbasis \( v_1, v_2, \cdots, v_n \in \mathbb{R}^n \). Then for every $\beta>0$, there is some   $0<c\ll1$ (depending only  on $\beta$) such that, if  
\begin{enumerate}
    \item \(0 < r_1 < r_2 < r_3 < r_4 , r_5 < 1\),  
    \item \(r_1 \leq c \min\{r_3 r_5, r_2 r_3 / r_4\}\),
    \item \(0 < \lambda_j \leq \lambda_i < r_1 < r_2 < \lambda_{i-1}\),  
    \item \(v_{j}^2(x) \geq r_3\),  
    \item \(\displaystyle\sum_{r_2 < \lambda_s < r_5} v_{s}^2(x) \leq r_4\),  
\end{enumerate}
then 
\[
\operatorname{trace} \mathbf{1}_{[r_1, \infty)}(A) < \operatorname{trace} \mathbf{1}_{[r_1, \infty)}(A + \beta e_x \otimes e_x),
\]
where \( e_x \in \mathbb{R}^n \) is the \(x\)-th standard basis element.

\end{lem}
\begin{rmk}
In the continuous model on $\R^d$, \cite{BK05} handled  the rank-one perturbations on a free site by continuing the Bernoulli variable   and   using  eigenvalue variations. This approach seems invalid  in the discrete setting, because the transversal set $T(\omega,\psi_{\lambda})$ is not the whole space, but depends on $\omega$,  which  cannot allow  to take differentiability. 
Furthermore, we note that the assumption $r_4 < r_5$ in \cite[Lemma 5.1]{DS20} is, in fact, unnecessary as seen from the  proof.  We have therefore omitted it here.
\end{rmk}

Having established Lemma \ref{rank-one perturbation}, we recall the parameters $r_1,r_2,r_3,r_4,r_5$ defined in \eqref{r_1,r_2}, \eqref{r_3 transversality}, \eqref{r_4} and \eqref{r_5}. To verify the hypothesis of Lemma \ref{rank-one perturbation}, we compute
\begin{equation*}
  r_2 r_3/r_4 = r_1 \frac{1+\exp\{2C_2L_2\log L_2-\frac{1}{2}\gamma_0 d_k+C_2\}}{2}\cdot \exp\{-2C_1L_2\log L_2+2\gamma_0 d_k\} \geq \frac{e^{C_2}}{2} r_1.
\end{equation*}
Consequently,
\begin{align*}
  \min \{r_3 r_5,r_2 r_3/r_4\} & \geq  \min\{ 2\exp\{-3C_1 L_2 \log L_2\}    , \frac{e^{C_2}}{2} r_1 \} >c r_1.
\end{align*}
Here we choose $C_2\gg1$  and use estimate \eqref{bound on s_l}, which guarantees that for $k\gg1$,
\[r_1\leq \exp\{-\frac{1}{2}L_1\} \ll 2\exp\{-3C_1 L_2 \log L_2\}  .\]
Therefore, Lemma \ref{rank-one perturbation} is applicable to the rank-one perturbation
\[\widetilde{H}_{\barLambda_k}(\omega_x)=\widetilde{H}_{\barLambda_k}(\omega) +\beta e_x\otimes e_x,\]
yielding
\[
\operatorname{trace} \mathbf{1}_{[r_1, \infty)}(\widetilde{H}_{\barLambda_k}(\omega)) < \operatorname{trace} \mathbf{1}_{[r_1, \infty)}(\widetilde{H}_{\barLambda_k}(\omega_x)).
\]
This inequality is precisely equivalent to $\lambda_{j_2}(\omega_x) > s_{\ell+1}$. Hence, $\omega_x \notin \mcE_{j_1,j_2,\ell}$, and we obtain
\[\omega_x \notin \mcE_{j_1,j_2,\ell,0} \subset \mcE_{j_1,j_2,\ell}.\]

\end{proof}

Indeed, Claim \ref{Sperner structure} holds in a more general sense. Recall the event  $\mcE_{j_1,j_2,\ell,i}$. Define
\[T'(\omega,\psi_{j_1},i)= \left\{|\psi_{j_1}|\geq \exp\{-C_1L_2\log L_2\}\|\psi_{j_1}\|_{\ell^{\infty}(\Lambda_k')}\right\} \cap \{V_{\text{r},\barLambda_k}=i\}\setminus \mcF  .\] 
Now suppose $\omega\in \mcE_{j_1,j_2,\ell,i}$ and $\widetilde{\omega}$ is obtained from $\omega$ by modifying it only on the sites in $T'(\omega,\psi_{j_1},i)$,  i.e.,
\[\{y:\ \omega (y) \neq \widetilde{\omega}(y)\} \subset T'(\omega,\psi_{j_1},i) .\]
We still restrict our attention to the case $i=0$. Choose an arbitrary site $x \in {y:\ \omega (y) \neq \widetilde{\omega}(y)}$. The proof of Claim \ref{Sperner structure} shows that $\lambda_{j_2}(\omega_x) > s_{\ell+1}$. Now, by the monotonicity of eigenvalues under positive perturbations, we have
\[\lambda_{j_2}(\widetilde{\omega})>\lambda_{j_2}(\omega_x)>s_{\ell+1},\]
which again implies 
\begin{equation}\label{general Sperner structure}
  \widetilde{\omega}\notin \mcE_{j_1,j_2,\ell,0}.
\end{equation}

If we identify a $\omega$ with the subset of $\mcS$ given by
\[\omega\Leftrightarrow \{x\in \mcS:\ \omega(x)=1 \},\]
then \eqref{general Sperner structure} precisely means that $\mcE_{j_1,j_2,\ell,0}$ is a $\rho$-Sperner family of $\mcS$ (for the definition of $\rho$-Sperner, see \cite[Definition 4.1]{DS20}), where
\begin{equation}\label{Sperner rho final}
  \rho= \frac{\frac{1}{2} N_{uc}}{\# \mcS }.
\end{equation}\label{Sperner estimate for i=0}
Thus, using \cite[Theorem 4.2]{DS20}   enables  us to get  
\begin{align}
  \P(\mcE_{j_1,j_2,\ell,0}| V_{\text{r},\mcF} ) \leq \left(\#\mcS\right)^{-\frac{1}{2}} \cdot \rho^{-1}=\frac{2\sqrt{\# \mcS}}{N_{uc}}.
\end{align}
For $i=1$, if we identify $\omega$ with 
\[\omega\Leftrightarrow \{x\in \mcS:\ \omega(x)=0 \},\]
then a similar  argument will yield  
\begin{align}\label{Sperner estimate for i=1}
  \P(\mcE_{j_1,j_2,\ell,1}| V_{\text{r},\mcF} ) \leq \left(\#\mcS\right)^{-\frac{1}{2}} \cdot \rho^{-1}=\frac{2\sqrt{\# \mcS}}{N_{uc}}.
\end{align}

Finally, the proof of Theorem \ref{Wegner estimate for d=1,2,3} can  be completed once we established  the following claim.

\begin{claim}\label{Wegner set inclusion}
  There exists  a subset $\mcK\subset \{1,2,\cdots,\# \barLambda_k\}$ depending only on the randomness in the frozen sites  set $\mcF$ such that,  $\# \mcK\leq L_0^{\frac{\varepsilon}{2}}$ and 
  \[\{\omega:\  \dist(H_{\barLambda_k},E)< \exp\{-L_1\}\} \cap \{V_{\text{r},\mcF}=v \}\subset \bigcup_{j_1,j_2\in \mcK \atop 0\leq \ell \leq L_0^{\varepsilon} }\mcE_{j_1,j_2,\ell}.\]
\end{claim}

Indeed, supposing Claim \ref{Wegner set inclusion} holds true, then by Claim \ref{Claim UC}, together with \eqref{i=0,1 event inclusion}, \eqref{Sperner estimate for i=0} and \eqref{Sperner estimate for i=1}, we obtain
\begin{align}\label{bare bound with Nuc}
\notag  \P \left(\dist(H_{\barLambda_k},E)< \exp\{-L_1\} | V_{\text{r},\mcF} \right) & \leq \P(\mcE_{uc}^c| V_{\text{r},\mcF} ) +\P\left(\bigcup_{j_1,j_2\in \mcK \atop 0\leq \ell \leq L_0^{\varepsilon} }\mcE_{j_1,j_2,\ell} \bigg| V_{\text{r},\mcF} \right) \\
  \notag      & \leq \exp\{-\varepsilon L_2^{\frac{2}{3}}\} + \sum_{j_1,j_2\in \mcK \atop 0\leq \ell \leq L_0^{\varepsilon}} \P\left(\mcE_{j_1,j_2,\ell} \cap \mcE_{uc} | V_{\text{r},\mcF} \right) \\
    \notag    &\leq  \exp\{-\varepsilon L_2^{\frac{2}{3}}\} + \sum_{j_1,j_2\in \mcK \atop 0\leq \ell \leq L_0^{\varepsilon}} \sum_{i=0,1} \P\left(\mcE_{j_1,j_2,\ell,i} | V_{\text{r},\mcF} \right) \\
      & \leq \exp\{-\varepsilon L_2^{\frac{2}{3}}\} + (\#\mcK)^2 L_0^{\varepsilon} \frac{4\sqrt{\# \mcS}}{N_{uc}}.
\end{align}
Recall that $N_{uc}$ is given in  \eqref{rho Sperner}. Consequently,
\begin{equation*}
    \P \left(\dist(H_{\barLambda_k},E)< \exp\{-L_1\} | V_{\text{r},\mcF} \right)  \leq 
    \begin{cases}
      8\varepsilon^{-1}L_0^{\frac{1}{2}+2\varepsilon}L_2^{-1}, \ d=1, \\
        8\varepsilon^{-3}L_0^{1+2\varepsilon}L_2^{-2}, \ d=2,\\
      8 L_0^{\frac{3}{2}+2\varepsilon}L_2^{-p} , \ d=3.
    \end{cases}
\end{equation*}
This implies
\[\P \left(\dist(H_{\barLambda_k},E)<\exp\{-d_{k+1}^{1-\varepsilon}\} | V_{\text{r},\mcF} \right) \leq L_0^{-q'} \leq d_{k+1}^{-q} \]
for some   $q,q'>0$. Finally, applying the law of total expectation  can  remove the conditioning, and then  yield  \eqref{Wegner poly probability}. The entire proof is conducted under the assumption that $k\gg1$. We choose $k_{\text{in}}$ to be large enough (depending  only on $h,\beta,\alpha,d_0,N$) to ensure the validity of the proof provided  $k\geq k_{\text{in}}$.

\end{proof}

\begin{rmk}
From \eqref{bare bound with Nuc}, it is clear that for the above argument to hold in more general dimensions $d\geq 4$, one  needs to prove in the unique continuation lemma a lower bound of   $N_{uc}\gtrsim L^{\frac{d}{2}+}$ for the number of sites where the  transversality occurs. This remains unsettled. 
\end{rmk}

Finally, we prove Claim \ref{Wegner set inclusion}, thus  finish this section.
\begin{proof}[Proof of Claim \ref{Wegner set inclusion}]
The proof is based on a bootstrap estimate first introduced in \cite{Bou04}, and subsequently employed in \cite{BK05,DS20,LZ22,Li22,LSZ25}.
 
Now, for each $v\in \{0,1\}^{\mcF}$, we take probability condition on the event $\{V_{\text{r},\mcF}=v\}$. The remaining randomness is then denoted by $\zeta \in \{0,1\}^{\mcS}$. We define  
 \begin{equation}\label{mcK}
    \mcK(v)=\left\{ j:\  \exists \zeta \in \{0,1\}^{\mcS} \ {\rm such\ that} \ |\lambda_j(\zeta,v)-E|\leq \exp\{-L_2 \}       \right\}.
  \end{equation}
  
Since $V_{\text{r},\mcF}\equiv v$ has been fixed, we omit the dependence of $\lambda_j$ on $v$ for  simplicity. The function $\lambda_j(\zeta)$ can be naturally extended to a continuous domain by allowing the variable $\zeta$ to vary in $[0,1]^{\mcS}$ instead of $\{0,1\}^{\mcS}$. 

By the first-order eigenvalue variation formula, for every $x\in \mcS$,
\begin{equation}\label{first derivation 1}
  \left|\frac{\partial}{\partial \zeta (x)} \lambda_j(\zeta)\right|=\beta |\psi_j(\zeta)(x)|^2 \leq 1.
\end{equation}
This derivation is valid because the Kato–Rellich theorem guarantees the analyticity of $\lambda_j(\zeta)$ with respective to each single coordinate $\zeta(x)$. Applying the mean value theorem yields 
\begin{equation}\label{mean value theorem}
  \left|   \lambda_j(\zeta_1)-\lambda(\zeta_2)  \right| \leq |\zeta_1-\zeta_2|_1 \leq \#\mcS \cdot \|\zeta_1-\zeta_2\|_{\infty}.
\end{equation}

Now for any eigenvalue $\lambda_j(\zeta),\zeta\in [0,1]^{\mcS}$, if  
\begin{equation}\label{loose bound}
  |\lambda-E|\leq \exp\{-\frac{1}{2}\gamma_0 d_k\},
\end{equation}
then a similar argument  leading to \eqref{lambda still less than h} will ensure $\lambda_j(\zeta)\in [-\iota,4d+\beta+\iota]$. Then the estimate \eqref{k^(1) large 1} will be valid, implying
\begin{equation}\label{faster derivation decay}
  |\psi_j(\zeta)(x)|   \leq \exp\{-\frac{1}{2}\gamma_0 \cdot \dist(x,U_{k,k^{(1)}})\}\leq \exp\{-\gamma_0 d_k\}
\end{equation}
for every $x\in \mcS$. 

For any  $j\in \mcK$, by our definition \eqref{mcK}, there exists  a $\zeta_0\in [0,1]^{\mcS}$ such that
\[|\lambda_j(\zeta_0)-E|\leq \exp\{-L_2 \}.\]
Pave $[0,1]^{\mcS}$ with $e^{-100L_2}$-size cubes (in  the supremum  norm) and assume $\zeta_0\in B$ ($B$  is  of $\exp\{-100L_2\}$-size). Thus for all $\zeta \in B,$  we can apply \eqref{mean value theorem} to control
\begin{align*}
  |\lambda_j(\zeta)-E| & \leq |E-\lambda_j(\zeta_0)| + |\lambda_j(\zeta_0)-\lambda_j(\zeta)| \\
           & \leq \exp\{-L_2\}+ L_0^d \cdot \|\zeta-\zeta_0\|_{\infty} \\
           & \leq \exp\{-L_2\}+ L_0^d \exp\{-100L_2\} \ll \exp\{-\frac{1}{2}\gamma_0 d_k \}.
\end{align*}
This implies \eqref{loose bound} holds for all $\zeta\in B$, and therefore \eqref{faster derivation decay} is valid in the whole $B$, which gives us the refined estimate on the derivative 
\begin{equation}\label{first derivation 2}
  \left|\frac{\partial}{\partial \zeta (x)} \lambda_j(\zeta)\right|=\beta |\psi_j(\zeta)(x)|^2 \leq \exp\{-2\gamma_0 d_k\}.
\end{equation}
Apply \eqref{first derivation 2}, instead of \eqref{first derivation 1}, in the mean value theorem. We obtain 
\begin{align*}
  |\lambda_j(\zeta)-E| \leq \exp\{-L_2\}+ L_0^d \exp\{-2\gamma_0 d_k\}\cdot \|\zeta-\zeta_0\|_{\infty} 
\end{align*}
holds for any $\zeta \in B$. We can propagate the above estimate iteratively over the entire $[0,1]^{\mcS}$ via the $\exp\{-100L_2\}$-size pavements, and finally deduce that 
\begin{align*}
  |\lambda_j(\zeta)-E| & \leq \exp\{-L_2\}+ L_0^d \exp\{-2\gamma_0 d_k\}\cdot \|\zeta-\zeta_0\|_{\infty} \\
      & \leq \exp\{-L_2\}+ L_0^d \exp\{-2\gamma_0 d_k\} \leq \exp\{-\frac{1}{2}\gamma_0 d_k\}
\end{align*}
holds for any $\zeta\in [0,1]^{\mcS}$. Again, by a similar argument  leading to \eqref{lambda still less than h}, we obtain 
\[\lambda_j(\zeta)\in [-\iota,4d+\beta+\iota]\ {\rm for}\ \forall j\in \mcK(v),\ \forall \zeta\in [0,1]^{\mcS}.\]
Therefore, we can apply Lemma \ref{eigenvalue number} to deduce that 
\[\# \mcK(v)\leq \# \left(\spec(H_{\barLambda_k})\cap [-\iota,4d+\beta+\iota] \right) \leq 2 N^{k-k^{(1)}} (16d_{k^{(1)}}+1)^{d} \ll L_0^{\frac{\varepsilon}{2}},\]
supposing that $k\gg1$. 

Finally, suppose $\zeta \in \{\omega: \ \dist(H_{\barLambda_k},E)< \exp\{-L_1\}\} \cap \{V_{\text{r},\mcF}=v \} $. Then there exists an eigenvalue $\lambda_{j_0}(\zeta)$ satisfying 
\[|\lambda_{j_0}(\zeta)-E|<d \exp\{-L_1\}\leq \exp\{-L_2\}.\]
Therefore, by our definition of $\mcK(v)$, we have $j\in \mcK(v)$. Recall that we let $0\leq \ell\leq L_0^{\varepsilon}$ and $s_0=\exp\{-L_1\}$. Thus the interval $(E-s_0,E+s_0)$ contains $\lambda_{j_0}(\zeta)$. Moreover, since
\[\#\mcK(v)\leq L_0^{\frac{\varepsilon}{2}}\ll L_0^{\varepsilon},\]
there must be some $0\leq \ell' \leq L_0^{\varepsilon}$ such that 
\[\big( (E-s_{\ell'+1},E-s_{\ell'}]\cup[E+s_{\ell'},E+s_{\ell'+1}) \big)\cap \{\lambda_j(\zeta):\ j-1 \ {\rm or} \ j \ {\rm or}\ j+1\in\mcK(v)\}=\emptyset .\]
Now from  $\lambda_{j_0}(\zeta)\in (E-s_0,E+s_0)$, it follows  that  
\[\{j\in \mcK(v):\ E-s_{\ell'}<\lambda_j(\zeta)<E+s_{\ell'} \}\neq \emptyset.\]
 Therefore, we can define
\begin{align*}
   &j_1=\min\{j\in \mcK(v):\ E-s_{\ell'}<\lambda_j(\zeta)<E+s_{\ell'} \},\\
   & j_2=\max\{j\in \mcK(v):\ E-s_{\ell'}<\lambda_j(\zeta)<E+s_{\ell'} \},
\end{align*}
and then  $\mcE_{j_1,j_2,\ell'}$ happens. Hence $\zeta\in \mcE_{j_1,j_2,\ell'}$, which completes the proof.

\end{proof}

\section{Wegner estimate on $\Z^d$ for  $d\geq 4$}\label{section dim bigger 4}

In this section, we establish a Wegner estimate in higher dimensions ($d\geq4$). For this higher-dimensional setting, we do not resort to results pertaining to unique continuation arguments; instead, we rely solely on the transversality estimate obtained on one-dimensional subsets, as furnished by the cone property (Lemma \ref{transversality lemma}). Although the transversality in this framework is weakened—manifesting specifically in the relatively small number of points where transversality holds—we can suppress the quantum tunneling effect of particles by tuning the height and width of the hierarchical potential barriers. This adjustment enhances the transversality at individual points, ultimately offsetting the substantial loss of transversality inherent to the higher-dimensional lattice.

The proof of the higher-dimensional Wegner estimate draws substantial inspiration from the eigenvalue shift analysis presented in \cite{Imb21}. The methodology in \cite{Imb21} demonstrates that transversality at a single point can displace at least one eigenvalue far from the target energy $E$, with such a shift occurring over a dyadic scale. However, in the multi-scale analysis employed previously, we have established that the scales typically encountered are Newtonian scales. A key insight underpinning the present proof of the higher-dimensional Wegner estimate is therefore the interpolation of $a$-adic scales between consecutive Newtonian scales, which enables us to derive estimates of higher precision. At the final stage of establishing the probabilistic bounds, we uncover an important martingale structure. In summary, the derivation of the Wegner estimate on $\Z^d$ for $d\ge 4$ constitutes the most critical novel contribution of this work.

The proof of  the higher-dimensional  Wegner estimate   is largely inspired by the analysis of eigenvalue movement   presented in \cite{Imb21}. The technique  of \cite{Imb21} demonstrates  that the transversality at a single point can move at least one eigenvalue far away from the energy $E$, and such movement happens under a dyadic scale.  
However, in the  multi-scale analysis employed earlier, we have shown that the scales that we typically encounter  are {\bf Newtonian scales}. Therefore, a key idea in the present proof for higher dimensional Wegner estimate  is to interpolate $a$-adic scales between two Newtonian scales, thereby obtaining estimates of higher precision. At the final stage of proving the probabilistic estimate, we discover an important martingales structure.  In a word, the proof of Wegner estimate on $\Z^d$ for $d\geq 4$ is the  most important new ingredient of our work. 

We still use the notation $\overline{\Lambda}_k$ and $\overline{\Lambda}_{k}^s$ to stand for the $\frac{1}{10}d_{k+1}$-neighborhood of $\Lambda_k$ and $\Lambda_k(j_s)$ (for $s\geq 2$), respectively. The Wegner estimate in higher dimensions  is  
\begin{thm}[{\bf Wegner estimate for $d\geq 4$}]\label{Wegner estimate for d>3}
 Let $d\geq 4$. For any $0<\beta \leq 1$, there is  some $h_0=h_0(d) \gg 1$ such that,  if $h>h_0,\alpha > N$, then the following holds ture: There exist  some  $q>0$,  $\varepsilon>0$ and  $k_{\text{in}}$ (all of them only  depending  on $h,\beta,\alpha,d_0,N$) such that,  for all $k \geq k_{\text{in}}$ and all $E\in [-\frac{\iota}{2},4d+\beta+\frac{\iota}{2}]$, we have 
\begin{equation}\label{Wegner poly probability 2}
\P\left( \dist\big(\spec(H_{\Lambda}), E\big) < \exp\{-d_{k+1}^{1-\varepsilon}\} \right) \leq d_{k+1}^{-q},
\end{equation}
where $ {\Lambda}$ can be either $\overline{\Lambda}_k$ or $\overline{\Lambda}_k^s$ (with $s\geq 2$), and the probability $\P$ is taken only over the random variables  within $\Lambda$.
\end{thm}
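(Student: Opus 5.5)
As in the $d\le 3$ case, it suffices to treat $\Lambda=\barLambda_k$, since each $\barLambda_k^s$ has the same hierarchical structure. I keep the frozen/free decomposition $\mcF=\Lambda_k'$, $\mcS=\barLambda_k\setminus\Lambda_k'$, and condition on $V_{\text{r},\mcF}=v$. Lemma \ref{eigenvalue number} and Remark \ref{Lambda' has less eigenvalue number} bound the number of relevant eigenvalues by $M_k:=2N^{k-k^{(1)}}(16d_{k^{(1)}}+1)^d$. The bootstrap argument of Claim \ref{Wegner set inclusion} carries over verbatim and gives an index set $\mcK(v)$ with $\#\mcK(v)\le M_k$. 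What fails in high dimension is the Sperner step: the cone property (Lemma \ref{transversality lemma}) yields transversality only along a one-dimensional chain of length $\sim L$, not on $\gtrsim L^{d/2+}$ sites. The plan is to replace the Sperner bound by an eigenvalue-counting argument driven by single-site moves, organised along finer $a$-adic scales, and to control probabilities with a martingale.

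\textbf{Step 1 (scales and transversal sites).} Fix $a\ge5$ and interpolate shells $\Lambda_k'=S^{(0)}\subset S^{(1)}\subset\cdots\subset S^{(J)}=\barLambda_k$ with radii growing by a factor $a$. Then $J\sim(\alpha-1)\log_a d_k$, since $d_{k+1}=d_k^{\alpha}$.
\begin{itemize}
\item For an eigenfunction $\psi$ with eigenvalue within $\exp\{-d_{k+1}^{1-\varepsilon}\}$ of $E$, note that $\psi$ carries mass $\ge 1/(2M_k)$ on $\widehat U_{k,k^{(1)}}$, by \eqref{k^(1) large 3}.
\item Apply Proposition \ref{cone property} repeatedly to get a chain from the maximiser in $\Lambda_k'$ into every shell $\Delta S^{(j)}=S^{(j)}\setminus S^{(j-1)}$.
\item On that chain, $|\psi|\ge\gamma^{a^j d_k}$.
\end{itemize}
Conversely, by Lemma \ref{non-resonant Green's function}, all other eigenfunctions near $E$ have amplitude $\le e^{-\gamma_0\,\dist}$ at those sites. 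Because $h>h_0(d)$ forces $\gamma_0=\log(1+\frac{h-4d-\beta-\iota}{2d})$ to dominate $\log(1/\gamma)$ up to a constant, the competing contributions $r_4$ beat the transversality $r_3$. This is the condition in Lemma \ref{rank-one perturbation}, and it is where large $h$ is essential.

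\textbf{Step 2 (key decomposition and monotone counting).} Using an iterative Schur complement onto $S^{(j-1)}$, I would isolate the dominant transversal term, so that flipping a single site $x\in\Delta S^{(j)}$ on the chain with the unfavourable value shifts the count $\operatorname{trace}\mathbf 1_{[r_1,\infty)}$ by one, in the spirit of Lemma \ref{rank-one perturbation}. The window radius should be $s_j$, decreasing geometrically in $j$.

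Define $X_j$ as the number of eigenvalues of $H_{S^{(j)}}$ in the $s_j$-window around $E$. Approximate orthogonality of the eigenvectors then shows that in the large-deviation sense $X_j$ is strictly decreasing in $j$ with probability bounded below, say $\ge 1/2$, at each step. The condition $\alpha>N$ enters here: the number of shells $J$ exceeds the number of eigenvalues $M_k\lesssim N^{k}$ by a power of $d_{k+1}$.

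\textbf{Step 3 (site-mixed martingale).} Reveal the randomness shell by shell. The chain site in $\Delta S^{(j)}$ is chosen depending on the already-revealed configuration, which is the site-mixed feature of the martingale. The indicator that the flip at step $j$ decreases $X_j$ then has conditional probability $\ge 1/2$. Azuma's inequality shows that, outside an event of probability $e^{-cJ}$, at least $J/4$ decrements occur. Since $X_0\le M_k\ll J/4$, this forces no eigenvalue within $\exp\{-d_{k+1}^{1-\varepsilon}\}$ of $E$. Averaging over $v$ gives \eqref{Wegner poly probability 2} with some $q>0$.

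\textbf{Main obstacle.} I expect Step 2 to be the hardest: proving that a single-site flip genuinely moves an eigenvalue out of the shrinking window uniformly over the $a$-adic scales, without the constants depending on $\beta$. This is the decomposition-lemma issue. My first attempt would be to compute the Schur complement $E$-derivative bounds explicitly and to choose $a$ and $h_0$ so that the geometric losses telescope.
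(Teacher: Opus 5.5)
Your architecture matches the paper's: $a$-adic interpolation, a per-scale strict decrease of an eigenvalue count with conditional probability $\ge\frac12$, a site-mixed martingale with Azuma, and $\alpha>N$ to beat $M_k\lesssim N^k$. However, the two load-bearing steps do not work as written.

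\textbf{Single-site mechanism.} The comparison in Step 1 between the transversality $r_3$ and the competitors $r_4$ cannot be settled by taking $h$ large. Since $e^{-\gamma_0}=\frac{2d}{h-2d-\beta-\iota}$ and $\gamma=\frac{1}{4d+h+\beta}$, you have $e^{-\gamma_0}/\gamma>2d$ for every $h$. So at a chain site at distance $D$, the bound $e^{-2\gamma_0D}$ on the other near-$E$ eigenfunctions exceeds $\gamma^{2D}$ by a factor $\ge(2d)^{2D}$. Indeed all near-$E$ eigenfunctions live in the same well $\Lambda_k'$ and may be equally large at the chosen site. There is also a measurability problem: for the martingale, the site must be a function of already revealed randomness, but a chain built from an eigenfunction of $H_{S^{(j)}}$ or $H_{\barLambda_k}$ depends on the very variable you intend to flip.

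The missing idea, which is the obstacle you flagged in Step 2, is the paper's route:
\begin{itemize}
\item Use Schur complements $\wtSj_E(B_j)$ on the \emph{fixed} block $B_j=\Lambda_k'$. The matrices then have the same size, and $\|\diffSj_E\|\le g_{j-1}$ with Corollary \ref{Spectral stability lemma} gives the deterministic bound $\whn_j\le\whn_{j-1}$. You would otherwise have to prove an order-preserving comparison of this kind for $X_j$ on growing domains.
\item Solve the fixed-point problem $\lambda\in\spec(\wtSjminusone_\lambda(B_{j-1}))$, with $\lambda$ depending only on $V_{\text{r},\barB_{j-1}}$.
\item Apply Lemma \ref{transversality lemma} to the Dirichlet eigenvector of $H_{\barB_{j-1}}$, getting influence $\ge t_{j-1}$ on $\partial^+\barB_{j-1}$.
\item Choose $\bar y$ maximizing $|\bfa(y)|$ over $y\in\partial^+\barB_{j-1}$, where $\bfa(y)$ collects all $\whn$ near-$E$ eigenvectors, and Schur-complement once more onto that $\whn$-dimensional space.
\end{itemize}
Lemma \ref{key decomposition lemma} then isolates the $\omega(\bar y)$-dependence as $\frac{-1}{h+2d+\beta\omega(\bar y)-\lambda}\mcM$ plus small errors. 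Because $\spread(\mcM)\ge\frac12|\bfa(\bar y)|^2$, the flip changes the spread by $\gtrsim\beta\gamma^2t_{j-1}^2\gg\sqrt{\varepsilon_j}$, which forces an eigenvalue out of the $2\sqrt{\varepsilon_j}$-window for one value of $\omega(\bar y)$. No smallness of the competitors is needed. Approximate orthogonality (Lemma \ref{orthogonality lemma}) transfers this back to $\whn_j$. Large $h$ only absorbs $(2d)^{\mathrm{length}}$ path-counting factors, and the $\beta$-dependence is removed by expanding $K_1$ up to length $\mcL$.

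\textbf{Endpoint of the scales.} A site in $\Delta S^{(j)}$ lies at distance $\gtrsim a^{j-1}d_k$ from $\Lambda_k'$, where all near-$E$ eigenvectors are exponentially small in $a^{j-1}d_k$. Flipping it can therefore move near-$E$ eigenvalues only by that much, so the windows must shrink like $\gamma^{Ca^jd_k}$, doubly exponentially in $j$ rather than geometrically. Running the shells up to $S^{(J)}=\barLambda_k$ then leaves a final window $\approx e^{-cd_{k+1}}\ll\exp\{-d_{k+1}^{1-\varepsilon}\}$. Then $X_J=0$ says nothing about the $\exp\{-d_{k+1}^{1-\varepsilon}\}$-window, so the last sentence of Step 3 does not follow. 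If instead you keep the windows $\ge\exp\{-d_{k+1}^{1-\varepsilon}\}$, the outer flips are too weak.

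The paper stops at $L_P\le d_{k+1}^{1/\sqrt\alpha}$. This still gives $P\sim(\sqrt\alpha-1)\log_a d_k\gtrsim\alpha^k\gg N^k$ steps; the ratio $(\alpha/N)^k$, not a power of $d_{k+1}$, is how $\alpha>N$ enters. The window is then $\varepsilon_P=\gamma^{6.2L_P}\gg\exp\{-d_{k+1}^{1-\varepsilon}\}$ once $\varepsilon<1-\alpha^{-1/2}$. The paper then crosses the non-resonant annulus $\barLambda_k\setminus\barB_P$ by a separate bootstrap. The Schur complements onto $\Lambda_k'$ of $H_{\barB_P}$ and $H_{\barLambda_k}$ differ by at most $g_P=\gamma^{9.7L_P}\ll\varepsilon_P$, an out-and-in walk bound. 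A one-way tunnelling estimate across the annulus would only give an error bound $e^{-\gamma_0 aL_P}\ge\gamma^{aL_P}\gg\varepsilon_P$. The fixed-point argument then yields $\P\big(\dist(\spec(H_{\barLambda_k}),E)<\varepsilon_P/10\big)\le d_{k+1}^{-q}$.
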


Again, because the sets $\overline{\Lambda}_k^s$ share the same hierarchical structure as $\overline{\Lambda}_k$, we only focus on the case of $\Lambda = \overline{\Lambda}_k$.

\subsection{Refinement of the scales}
We still let $\Lambda'_k$ denote the $2d_k$-neighborhood of $\Lambda_k$, so that $\Lambda_k'\subset \barLambda_k$ and 
\[\diam(\Lambda_k')\approx 16 d_k, \ \diam(\barLambda_k)\sim d_{k+1} .\]
Set $L_0=\diam(\Lambda_k')$ and introduce the following \textbf{$a$-adic scales} for some  integer $a\geq 2$: 
\begin{equation}\label{a-adic scale}
  L_{j+1}=a \cdot L_j, \ j\geq 0.
\end{equation}
Let $P$ be the unique integer satisfying $L_P\leq d_{k+1}^{1/ \sqrt{\alpha}}=d_{k}^{\sqrt{\alpha}} <L_{P+1}$. Consequently, 
\begin{equation}\label{P quantitive}
  P\sim \log_a  \left( \frac{d_k^{\sqrt{\alpha}}}{16 d_k}\right) \sim \frac{\sqrt{\alpha}-1}{\log a} \cdot \log d_k \sim   (\sqrt{\alpha}-1)\frac{\log d_0}{\log a} \cdot \alpha^k.
\end{equation} 
For $0\leq j \leq P$, we define $B_j\equiv \Lambda_k'$ and let $\overline{B}_j$ be the $a L_j$-neighborhood of $B_j$. In this way,  we obtain a nested block structure interpolating between $\Lambda_k'$ and $\barLambda_k$: 
\[\Lambda_k'\subset \barB_0\subset \barB_1\cdots \subset \barB_P\subset \barLambda_k \]
with corresponding scales 
\begin{equation}\label{interpolate a-adic scale}
  \diam(\Lambda_k') = L_0 <L_1<L_2<\cdots<L_P\sim d_k^{\sqrt{\alpha}}\ll d_{k+1}\sim \diam(\barLambda_k).
\end{equation}
The value of $a$ will be chosen at the end of the proof (it  suffices  to take $a \geq 5$) and will then remain fixed throughout.

\subsection{Schur complements and the smallness in different scales}
In this part, we will study the movement of eigenvalues under the $a$-adic scale, using the iterative Schur complement method in \cite{IM16,Imb21}. The following fundamental lemma is adapted from \cite[Lemma 1.4]{IM16}, and we omit its proof.
\begin{lem}\label{fundamental schur lemma}
Let \( K \) be a \((p+q) \times (p+q)\) symmetric matrix in block form
\[
K = \begin{pmatrix}
A & B \\
C & D
\end{pmatrix}
\]
with \( A \) a \( p \times p \) matrix, \( D \) a \( q \times q \) matrix, and \( C = B^{\top} \) ($B^{\top}$ the denotes adjoint matrix of $B$). 
Assume that 
\[
\|(D - E)^{-1}\| \leq \epsilon^{-1}, \  \|B\| \leq \delta, \  \|C\| \leq \delta,\ E\in\R.
\]
Define the Schur complement with respect to \(\lambda\):
\[
S_\lambda = A - B(D - \lambda)^{-1}C.
\]
Let \(\epsilon\) and \(\delta / \epsilon\) be small enough and \(|\lambda - E| \leq \epsilon / 2\). Then

\begin{enumerate}
    \item[(i)] If \(\varphi\) is an eigenvector of  \(S_\lambda\) with eigenvalue \(\lambda\), then 
    \[
    \langle \varphi, -(D - \lambda)^{-1}C\varphi\rangle
    \]
    is an eigenvector of  \(K\) with eigenvalue \(\lambda\), and all eigenvectors of \(K\) with eigenvalue \(\lambda\) are of this form.
    
    \item[(ii)]
    \[
    \|S_\lambda - S_E\| \leq 2\left(\frac{\delta}{\epsilon}\right)^2 |\lambda - E|.
    \]
    
    \item[(iii)] The spectrum of \(K\) in \([E - \epsilon / 2, E + \epsilon / 2]\) is in close agreement with that of \(S_E\) in the following sense. 
    If $\lambda$ is an eigenvalues of \(K\) in \([E - \epsilon / 2, E + \epsilon / 2]\), 
    then there exists a corresponding eigenvalue $\tilde{\lambda}$ of \(S_E\), such that
    \[
    |\lambda - \tilde{\lambda}| \leq 2(\frac{\delta }{ \epsilon} )^2 |\lambda - E|. 
    \]
\end{enumerate}
\end{lem}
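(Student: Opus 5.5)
The plan is to prove the three items of Lemma~\ref{fundamental schur lemma} directly from the block structure of $K$, using a Neumann-series control of $(D-\lambda)^{-1}$ near $E$. The first step is that, for $|\lambda-E|\le \epsilon/2$, the resolvent identity $(D-\lambda)^{-1}=(D-E)^{-1}\big(I-(\lambda-E)(D-E)^{-1}\big)^{-1}$ together with $\|(\lambda-E)(D-E)^{-1}\|\le 1/2$ gives $\|(D-\lambda)^{-1}\|\le 2\epsilon^{-1}$. In particular $D-\lambda$ is invertible, so $S_\lambda$ is well defined and real symmetric, since $C=B^{\top}$.

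For (i) we read the statement as saying that the vector $(\varphi,-(D-\lambda)^{-1}C\varphi)$ is an eigenvector; the inner-product bracket in the display is a typo for this column vector. Writing an eigenvector of $K$ with eigenvalue $\lambda$ as $(\varphi,\chi)$, the second block row $C\varphi+(D-\lambda)\chi=0$ forces $\chi=-(D-\lambda)^{-1}C\varphi$. Substituting into the first row gives $(A-\lambda)\varphi-B(D-\lambda)^{-1}C\varphi=0$, that is, $S_\lambda\varphi=\lambda\varphi$. Conversely, any such $\varphi$ produces an eigenvector of $K$, and $\varphi\neq0$ because $\chi$ is determined by $\varphi$. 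This is pure algebra.

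For (ii) we write $S_\lambda-S_E=-B\big[(D-\lambda)^{-1}-(D-E)^{-1}\big]C=-(\lambda-E)\,B(D-\lambda)^{-1}(D-E)^{-1}C$. Its norm is at most $\delta^2\cdot 2\epsilon^{-1}\cdot\epsilon^{-1}|\lambda-E|=2(\delta/\epsilon)^2|\lambda-E|$.

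For (iii), let $\lambda\in[E-\epsilon/2,E+\epsilon/2]$ be an eigenvalue of $K$. By (i), $\lambda$ is an eigenvalue of $S_\lambda$. By (ii), $S_E=S_\lambda+(S_E-S_\lambda)$ is a symmetric perturbation of norm at most $2(\delta/\epsilon)^2|\lambda-E|$. Weyl's inequality then yields an eigenvalue $\tilde\lambda$ of $S_E$ with $|\lambda-\tilde\lambda|\le 2(\delta/\epsilon)^2|\lambda-E|$.

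The only delicate point is multiplicity, if one wants a matching that counts eigenvalues with multiplicity; the displayed statement only asks for existence. If a counting version is needed, I would use the same perturbation argument for each eigenvalue together with a continuity or monotonicity argument in $\lambda$: the eigenvalues $\mu_i(\lambda)$ of $S_\lambda$ are Lipschitz in $\lambda$ with constant at most $2(\delta/\epsilon)^2<1$ by (ii). Hence each equation $\mu_i(\lambda)=\lambda$ has at most one solution in the window. This makes the correspondence injective, and the dimension of the eigenspace of $K$ at $\lambda$ equals the multiplicity of $\lambda$ as an eigenvalue of $S_\lambda$ by (i). The smallness hypotheses on $\epsilon$ and $\delta/\epsilon$ are used exactly to make this Lipschitz constant less than $1$.
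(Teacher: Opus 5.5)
Your proof is correct, and you read the bracket in (i) as intended: it denotes the column vector $(\varphi,-(D-\lambda)^{-1}C\varphi)$. The paper gives no proof of this lemma; it defers to \cite[Lemma 1.4]{IM16}. What you wrote is the standard argument: block elimination gives (i); the resolvent identity with $\|(D-\lambda)^{-1}\|\le 2\epsilon^{-1}$ gives (ii) with exactly the stated constant; and (i), (ii) and Corollary~\ref{Spectral stability lemma} together give (iii). Only $|\lambda-E|\le\epsilon/2$ is used, not the smallness of $\epsilon$.

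Your last paragraph takes a genuinely different route to something the paper does later, and it is worth keeping. Remark~\ref{need additional orthonormal vector argument} says that (iii) alone does not give the multiplicity-respecting inclusion \eqref{schur complement spec inclusion}. In Claim~\ref{intermediate comparison claim 2} the paper recovers the count it needs through approximate orthogonality of the vectors $\mathbf{w}_i$, a spectral projection, and Lemma~\ref{orthogonality lemma}.

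Your fixed-point argument yields \eqref{schur complement spec inclusion} directly:
\begin{itemize}
\item for distinct eigenvalues $\lambda^*$ of $K$ in the window, the index sets $\{i:\mu_i(\lambda^*)=\lambda^*\}$ are disjoint, because each $\mu_i(\lambda)-\lambda$ has at most one zero;
\item by (i), each set has size equal to the multiplicity of $\lambda^*$;
\item $|\lambda^*-\mu_i(E)|=|\mu_i(\lambda^*)-\mu_i(E)|\le\|S_{\lambda^*}-S_E\|$.
\end{itemize}
This is an injection into the eigenvalues of $S_E$ counted with multiplicity. Applied to $K=\widetilde{S}^{(j)}_E(B_j)$ in the block form \eqref{eigenbasis block form}, it proves Claim~\ref{intermediate comparison claim 2} without the orthogonality step.

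One correction: the Lipschitz bound does not follow from (ii), which only compares with the base point $E$. The resolvent identity at two arbitrary points of the window gives $\|S_\lambda-S_{\lambda'}\|\le 4(\delta/\epsilon)^2|\lambda-\lambda'|$, which is still $<1$ once $\delta/\epsilon<1/2$. More simply, $\frac{d}{d\lambda}S_\lambda=-B(D-\lambda)^{-2}B^{\top}\le 0$. So each $\mu_i$ is non-increasing, and $\mu_i(\lambda)-\lambda$ is strictly decreasing on the window with no smallness assumption on $\delta/\epsilon$ at all.
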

\begin{rmk}\label{need additional orthonormal vector argument}
    Let \(\lambda_1 \leq \lambda_2 \leq \cdots \leq \lambda_m\) be the eigenvalues of \(K\) lying in \([E - \epsilon / 2, E + \epsilon / 2]\). 
    Then by the above lemma, for each $\lambda_i$ one can find a corresponding eigenvalue  $\tilde{\lambda}_i$ of $S_E$ to approximate it, with the ordered list 
    \[\tilde{\lambda}_1 \leq \tilde{\lambda}_2 \leq \cdots \leq \tilde{\lambda}_m .\]
    However, such list does not necessarily coincide with the actual ordering (with multiplicity) of the eigenvalues of $S_E$. Hence, Lemma \ref{fundamental schur lemma}  {\textbf{does not}}  imply 
    \begin{equation}\label{schur complement spec inclusion}
      \#\left(\spec(K)\cap [E-\frac{\epsilon}{2},E+\frac{\epsilon}{2}]\right) \leq \# \left( \spec(S_E) \cap [E-\frac{\epsilon}{2}-\frac{\delta^2}{\epsilon},E+\frac{\epsilon}{2}+\frac{\delta^2}{\epsilon}]\right) .
    \end{equation}
    Therefore, to obtain \eqref{schur complement spec inclusion},  some  extra efforts  are required.
\end{rmk}

Now for each $0\leq j\leq P$, we write the matrix $H_{\barB_j}$ in the block form
\[
H_{\barB_j} = \begin{pmatrix}
H_{B_j} & \Gamma_j ^{\top}\\
\Gamma_j & H_{\barB_j\setminus B_j}
\end{pmatrix},
\]
where the connecting matrix $\Gamma_j:\ \ell^2(B_j)\rightarrow \ell^2(\barB_j\setminus B_j)$ comes from  $-\Delta$. The Schur complement in scale $L_j$ with respect to $\mcE\in\R$ is defined by 
\begin{equation}\label{schur complement in j-step}
  \wtSj_\mcE(B_j):= H_{B_j}-\Gamma_j^{\top}(H_{\barB_j\setminus B_j}-\mcE)^{-1}\Gamma_j.
\end{equation}
We will study the movement of eigenvalues of the Schur complements from  scale $L_j$ to scale $L_{j+1}$. Throughout this procedure, we define the following smallness: Recall $\gamma=\frac{1}{4d+h+\beta}$ in Lemma \ref{transversality lemma}, and we set
\begin{itemize}
  \item \textbf{(Spectral distance)} $\varepsilon_j=   \gamma^{(a+1+0.2)L_j} $,
  \item \textbf{(Transversality)} $t_j=\gamma^{(a+1+0.1)L_j}$,
  \item \textbf{(The smallness generated by out-and-in random walks)} $g_j=\gamma^{(2a-0.3)L_j}$.
\end{itemize}
(Indeed, the factor $0.1$ can be replaced by any sufficiently small quantity.) The reason for this definition of smallness will be explained later.
Certainly,
\begin{equation}\label{relationship of smallness}
  \varepsilon_{j+1}\ll t_{j+1}\ll g_j \ll \varepsilon_j \ll t_j. 
\end{equation}
Keeping \eqref{relationship of smallness} in mind will help us better understand the eigenvalue behavior discussed in the following subsection.

\subsection{Movement of eigenvalues}
In this part,  we discuss the movement of eigenvalues. Let $E\in [-\frac{\iota}{2},4d+\beta+\frac{\iota}{2}]$. Denote 
\begin{equation}\label{j-step number of eigenvalues}
  \whn_j(B_j)=\#\left(\spec(\wtSj_E(B_j) )\cap  I_{\varepsilon_{j}}(E)\right),
\end{equation}
where  $I_{\delta}(E):=[E-\delta,E+\delta]$. Obviously, by our definition,  $\whn_j(B_j)$ only depends on the randomness in $\barB_j$.

Inspired by \cite[Proposition 3.2]{Imb21}, we will show that, with probability at least $\frac{1}{2}$, the quantity \eqref{j-step number of eigenvalues} has strict monotonicity property. More precisely, the following lemma holds for $j\geq 1$ if we take $k$ sufficiently large. 
\begin{lem}[{\bf Probabilistic strict monotonicity}]\label{monotonicity lemma}
Assume $a\geq 5$, the height of the barrier $h\gg1$  and the randomness in $\barB_{j-1}$ (i.e.,  $V_{{\rm r},\barB_{j-1}}$) has been fixed.  Then 
  \begin{enumerate}
    \item Deterministically,  we have $\whn_{j}(B_j)\leq \whn_{j-1}(B_{j-1})$.
    \item There exists  a site, $\bar{y}=\bar{y}(V_{{\rm r},\barB_{j-1}})\in \partial^+\barB_{j-1}$, depending only on the randomness in $\barB_{j-1}$ such that,  after further fixing the randomness in $\barB_{j}\setminus (\barB_{j-1}\cup\{\bar{y}\})$, we have
  \begin{equation}\label{monotonicity probabilistic estimate}
    \P\left(\whn_{j}(B_{j}) <\whn_{j-1}(B_{j-1}) \ {\rm or}\ \whn_{j}(B_{j}) =\whn_{j-1}(B_{j-1})=0 \  \bigg| \ V_{{\rm r},\barB_{j}\setminus \{\bar{y}\} } \right)\geq \frac{1}{2}.
  \end{equation}
  The above $\P$ is only about the randomness in $\barB_j$. In other words, the above event fails for at most one value of the random variable $V_{\text{r}}(\bar{y})=\omega(\bar{y})\in \{0,1\}$.
  \end{enumerate}
\end{lem}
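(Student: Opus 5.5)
The plan is to compare the two Schur complements $\wtSj_E(B_j)$ and $\wtSjminusone_E(B_{j-1})$ directly. Both act on $\ell^2(\Lambda_k')$, since $B_j=B_{j-1}=\Lambda_k'$, so they are matrices of the same size. I would then read off both parts of the lemma from eigenvalue perturbation theory.

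\textbf{Step 1 (the out-and-in correction).} Write $D_j=\barB_j\setminus B_j$. The second resolvent identity gives
\[
\wtSj_E-\wtSjminusone_E=-\Gamma^{\top}\big[(H_{D_j}-E)^{-1}-(H_{D_{j-1}}-E)^{-1}\oplus 0\big]\Gamma .
\]
\begin{itemize}
\item Both $D_j$ and $D_{j-1}$ lie in the barrier region $\mcB$, so they are non-resonant, and Lemma \ref{non-resonant Green's function} applies.
\item The bracket is a sum over walks that start next to $B$, exit $\barB_{j-1}$ through $\partial^+\barB_{j-1}$, and return to $B$. Such a walk has length at least $2aL_{j-1}$.
\item The path expansion in the proof of Lemma \ref{non-resonant Green's function} bounds each step by $2d/(h-2d-\beta-\iota)$, giving a factor of order $(2d/h)^{2aL_{j-1}}$.
\end{itemize}
For $h>h_0(d,a)$ this is at most $\gamma^{(2a-0.3)L_{j-1}}=g_{j-1}$; the slack $0.3$ absorbs the factor $(2d)^{2a}$ and the polynomial entropy. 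Then Weyl's inequality gives
\[
\whn_j\le\#\big(\spec\wtSjminusone_E\cap I_{\varepsilon_j+g_{j-1}}(E)\big)\le\whn_{j-1},
\]
because $\varepsilon_j+g_{j-1}\ll\varepsilon_{j-1}$ by \eqref{relationship of smallness}. This proves part (1), and it is deterministic.

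\textbf{Step 2 (choice of $\bar y$).} If $\whn_{j-1}=0$, part (1) already gives the event in \eqref{monotonicity probabilistic estimate}. Otherwise:
\begin{itemize}
\item Take the eigenvalue of $\wtSjminusone_E$ in $I_{\varepsilon_{j-1}}(E)$ and use Lemma \ref{fundamental schur lemma} (i) and (iii) to produce an eigenvector $u$ of $H_{\barB_{j-1}}$ with eigenvalue $\lambda\in[-\iota,4d+\beta+\iota]$ close to $E$. The vector $u$ restricted to $B$ is the corresponding Schur eigenvector $\varphi$.
\item Apply Lemma \ref{transversality lemma} to $u$ with $B_1=B$ and $B_2=\barB_{j-1}$, so that $L+\ell\le(a+1)L_{j-1}$. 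This yields $\bar y\in\partial^+\barB_{j-1}$ with $\mathcal I_u(\bar y)\ge\gamma^{(a+1)L_{j-1}}\|u\|_{\ell^\infty(B)}$.
\item Since $\|u\|_{\ell^\infty(B)}\gtrsim d_{k+1}^{-d}\|u\|_{\ell^2}$, the loss is polynomial, so $\mathcal I_u(\bar y)\ge t_{j-1}\|u\|$.
\end{itemize}
Everything here depends only on $V_{\mathrm r,\barB_{j-1}}$, as the lemma requires. I would pick $u$ canonically, say via the top eigenvalue in the window.

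\textbf{Step 3 (flipping $\omega(\bar y)$).} Now freeze everything in $\barB_j$ except $\omega(\bar y)$. Write $w=\Gamma^{\top}(H_{D_{j-1}}-E)^{-1}\Gamma'e_{\bar y}$, where $\Gamma'$ is the coupling of $\bar y$ to $\barB_{j-1}$. Schur-complementing $\bar y$ first shows that the two values of $\omega(\bar y)$ change $\wtSj_E$ by a term
\[
\big(c_1-c_0\big)\,w\otimes w+O(\text{higher walks}),\qquad |c_1-c_0|\gtrsim \beta/h^2 .
\]
The eigen-relation links $\langle w,\varphi\rangle$ to $\mathcal I_u(\bar y)$, so the Rayleigh-quotient shift is at least of order $t_{j-1}^2=\gamma^{(2a+2.2)L_{j-1}}$. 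Since $a\ge5$, this is $\gg\varepsilon_j=\gamma^{a(a+1.2)L_{j-1}}$.

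\textbf{Main obstacle.} The delicate point, which I expect to be the hardest, is the comparison of the shift with $g_{j-1}$. The shift $t_{j-1}^2$ is smaller than $g_{j-1}$, so Weyl's inequality alone cannot conclude. Instead I would argue with a monotone rank-one perturbation, as in Lemma \ref{rank-one perturbation}. Suppose both values of $\omega(\bar y)$ gave $\whn_j=\whn_{j-1}\neq0$. Then all $\whn_{j-1}$ eigenvalues would stay inside $I_{\varepsilon_j}$ under a positive semidefinite rank-one change whose component along the relevant spectral subspace is at least $t_{j-1}^2\gg\varepsilon_j$. The eigenvalues outside $I_{\varepsilon_{j-1}}$ are far away, and Step 1 controls the remainder terms. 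Together these force a contradiction by interlacing. Verifying the exponent bookkeeping, and that the eigenvector overlap survives passing from $\varphi$ to the actual spectral subspace of $\wtSj_E$, is where I expect most of the work to lie.
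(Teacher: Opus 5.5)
\paragraph{Where the argument breaks.} Part (1) is correct and is the paper's argument. Your monotone rank-one idea for part (2) is a legitimate alternative route. However, Step 3 has a genuine gap in how $\bar y$ is chosen.

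Flipping $\omega(\bar y)$ changes $\wtSj_E$ \emph{exactly} by $c\,v\otimes v$, where
\[
c=\frac{\beta}{1+\beta G(\bar y,\bar y)}>0,\qquad G=G_{\barB_j\setminus B_j}(E)\big|_{\omega(\bar y)=0},\qquad v=\Gamma_j^{\top}Ge_{\bar y}.
\]
This is Krein's formula; the sign is positive because $H-E>0$ on the barrier region. The quantity you must bound from below is
\[
\langle v,\varphi\rangle=\sum_{y'\in\partial^+\barB_{j-1}}G(\bar y,y')\,a(y'),
\]
where $a(y')$ is, up to sign and a negligible correction from evaluating at $E$ instead of $\lambda$, the influence $\mathcal{I}_u(y')$. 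Your ``higher walks'' are exactly the terms with $y'\neq\bar y$.

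Lemma \ref{transversality lemma} bounds $|a(\bar y)|\ge t_{j-1}$ only at the chain endpoint. At neighbouring boundary sites, all one knows a priori is $|a(y')|\le\gamma^{(a-0.1)L_{j-1}}$. So $|a(y')|$ may exceed $|a(\bar y)|$ by a factor as large as $\gamma^{-1.2L_{j-1}}$. Since $|G(\bar y,y')|\lesssim h^{-1}(2d/h)^{|y'-\bar y|_1}$ while $G(\bar y,\bar y)\approx h^{-1}$, these terms can cancel the principal term $G(\bar y,\bar y)a(\bar y)$ entirely. Hence ``the eigen-relation links $\langle w,\varphi\rangle$ to $\mathcal{I}_u(\bar y)$'' yields no lower bound.

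The missing idea is the paper's: choose $\bar y$ to \emph{maximize} the influence over $\partial^+\barB_{j-1}$. The paper maximizes $|\bfa(y)|$; for your argument, maximizing $|a_1(y)|$ suffices. The maximum is still $\ge t_{j-1}$, and then
\[
|\langle v,\varphi\rangle|\ge|a(\bar y)|\Big(G(\bar y,\bar y)-\sum_{y'\neq\bar y}|G(\bar y,y')|\Big)\gtrsim t_{j-1}/h .
\]
In the paper, the same maximality is what bounds $\mcR_1$, $\mcR_2$ and the $z\sim\bar y$ cross terms by $|\bfa(\bar y)|^2$.

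\paragraph{Two smaller points, and what the repaired route gives.} First, Lemma \ref{fundamental schur lemma}(iii) goes from $\spec(K)$ to $\spec(S_E)$, not back. Producing $\lambda\in\spec(H_{\barB_{j-1}})$ therefore needs the paper's fixed-point iteration. You also need $|\lambda-E|\lesssim g_{j-1}$, not merely $\le\varepsilon_{j-1}$. This holds automatically in the only relevant case: if $\whn_j=\whn_{j-1}\ge 1$ for some outer configuration, ordered spectral stability forces all $\whn_{j-1}$ eigenvalues of $\wtSjminusone_E$ in $I_{\varepsilon_{j-1}}(E)$ into $I_{\varepsilon_j+g_{j-1}}(E)$. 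Otherwise the lemma holds trivially for any $\bar y$.

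Second, interlacing alone does not give the contradiction; a trace argument does. Suppose both values of $\omega(\bar y)$ fail. Write $A_0=\wtSj_E|_{\omega(\bar y)=0}$, $A_1=A_0+c\,v\otimes v$, and let $P_0$ be the spectral projection of $A_0$ on $I_{\varepsilon_j}(E)$.
\begin{enumerate}
\item The $\whn_{j-1}$ cluster eigenvalues of $A_0$ and $A_1$ sit at the same indices, lie in $I_{\varepsilon_j}(E)$, and are separated from the rest by a gap $\gtrsim\varepsilon_{j-1}$.
\item Ky Fan plus Davis--Kahan show the cluster trace increases by at least $c\langle v,P_0v\rangle-O(g_{j-1}^2/\varepsilon_{j-1})$.
\item The increase is at most $2\whn_{j-1}\varepsilon_j$.
\item The bound $\|(I-P_0)\varphi\|\lesssim g_{j-1}/\varepsilon_{j-1}$ transfers the overlap, giving $c\langle v,P_0v\rangle\gtrsim\beta\gamma^2t_{j-1}^2$.
\end{enumerate}
These bounds are incompatible for $a\ge 5$, consistent with the requirement \eqref{range of a}.

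Repaired this way, your route bypasses Lemma \ref{key decomposition lemma}, the spread case analysis and Claim \ref{intermediate comparison claim 2}. It also has a real advantage. No $\beta$-independent remainder competes with the $\beta\gamma^2$ principal term, so $h_0$ comes out independent of $\beta$. The extended walk expansion of Section \ref{remove beta dependent} is then unnecessary.
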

\begin{proof}[Proof of Lemma \ref{monotonicity lemma}]
  We denote by $v\in \{0,1\}^{\barB_{j-1}}$ the randomness in $\barB_{j-1}$, and by $\zeta\in \{0,1\}^{\barB_j\setminus\barB_{j-1}}$ the randomness in $\barB_j\setminus \barB_{j-1}$. After fixing the random variables $V_{\text{r},\barB_{j-1}}=v$, we may assume without loss of generality that there is at least one realization 
  \[V_{\text{r},\barB_j\setminus \barB_{j-1}}=\zeta_0\]
  such that $\whn_{j}(B_j)> 0$. Otherwise 
  \[\P\left(\whn_j(B_j)=0 \big| V_{\text{r},\barB_j}=v \right)=1 \]
  and Lemma \ref{monotonicity lemma} holds trivially.

We first introduce the following difference of Schur complements at the same energy $\mcE$ but on two successive scales:  
\begin{equation}\label{difference in scales}
    \diffSj_\mcE := \wtSj_\mcE(B_j)-\wtSjminusone_\mcE(B_{j-1}).
\end{equation}

\begin{claim}\label{estimate on difference in scales}
    Assume $\mcE\in [-\iota,4d+\beta+\iota]$. Then 
  \[ \| \diffSj_\mcE  \| \leq g_{j-1}=\gamma^{(2a-0.3)L_{j-1}}.\]
\end{claim}
\begin{proof}[Proof of Claim \ref{estimate on difference in scales}]
  Simple computation shows that 
  \begin{align*}
    \diffSj_\mcE &= \Gamma_{j-1}^{\top}(H_{\barB_{j-1}\setminus B_{j-1}}-\mcE)^{-1}\Gamma_{j-1}-\Gamma_j^{\top}(H_{\barB_j\setminus B_j}-\mcE)^{-1}\Gamma_j. 
  \end{align*}
  Indeed, noticing that $B_j=B_{j-1}$ and both $\Gamma_j$ and $\Gamma_{j-1}$ essentially connect site $w \in \partial^-B_j$ with one of its neighbor $w'\in \partial^+ B_j$, we can cursorily view $\Gamma_j$ and $\Gamma_{j-1}$ the same. Therefore,
 \begin{align}\label{sandwich 1}
  \notag  \diffSj_\mcE &= \Gamma_{j}^{\top}[ G_{\barB_{j-1}\setminus B_{j-1}}(\mcE)\oplus G_{\barB_{j}\setminus \barB_{j-1}}(\mcE) ]\Gamma_{j}-\Gamma_j^{\top}G_{\barB_j\setminus B_j}(\mcE)\Gamma_j \\
         &= \Gamma_{j}^{\top} \bigg( [ G_{\barB_{j-1}\setminus B_{j-1}}(\mcE)\oplus G_{\barB_{j}\setminus \barB_{j-1}}(\mcE) ]-G_{\barB_j\setminus B_j}(\mcE) \bigg) \Gamma_j .
  \end{align}
  We denote $\Gamma_{\partial \barB_{j-1}}$ the connecting matrix between $\partial^-\barB_{j-1}$ and $\partial^+\barB_{j-1}$ via $-\Delta$.
  By the  resolvent  identity, we get 
  {\small
  \begin{align}\label{vanish term}
    \notag &G_{\barB_{j-1}\setminus B_{j-1}}(\mcE)  \oplus G_{\barB_{j}\setminus \barB_{j-1}}(\mcE) -G_{\barB_j\setminus B_j}(\mcE) \\
   \notag  &  =  [G_{\barB_{j-1}\setminus B_{j-1}}(\mcE) \oplus G_{\barB_{j}\setminus \barB_{j-1}}(\mcE) ] \Gamma_{\partial \barB_{j-1}}G_{\barB_j\setminus B_j}(\mcE)  \\
         & =[G_{\barB_{j-1}\setminus B_{j-1}}(\mcE) \oplus G_{\barB_{j}\setminus \barB_{j-1}}(\mcE) ] \Gamma_{\partial \barB_{j-1}}[G_{\barB_{j-1}\setminus B_{j-1}}(\mcE) \oplus G_{\barB_{j}\setminus \barB_{j-1}}(\mcE) ] \\
     \notag    & \quad  -[G_{\barB_{j-1}\setminus B_{j-1}}(\mcE) \oplus G_{\barB_{j}\setminus \barB_{j-1}}(\mcE) ] \Gamma_{\partial \barB_{j-1}} G_{\barB_j\setminus B_j}(\mcE)  \Gamma_{\partial \barB_{j-1}}[G_{\barB_{j-1}\setminus B_{j-1}}(\mcE) \oplus G_{\barB_{j}\setminus \barB_{j-1}}(\mcE) ].
  \end{align}
  }
  Notice that if both $w,w'\in \partial^+ B_j$, then the element of the first term in \eqref{vanish term}
  \[[G_{\barB_{j-1}\setminus B_{j-1}}(\mcE) \oplus G_{\barB_{j}\setminus \barB_{j-1}}(\mcE) ]  \Gamma_{\partial \barB_{j-1}} [G_{\barB_{j-1}\setminus B_{j-1}}(\mcE) \oplus G_{\barB_{j}\setminus \barB_{j-1}}(\mcE) ] (w,w')\]
  will vanish. Putting  the remaining terms  in \eqref{sandwich 1} yields 
  \begin{align}\label{random walk expansion}
      \diffSj_\mcE (x,y) &= - \sum_{g\in \mcG_{\text{out,in}}(x,y) } G_{\barB_{j-1}\setminus B_{j}}(w,x_1;\mcE)   G_{\barB_j\setminus B_j}(x_1',x_2';\mcE)  G_{\barB_{j-1}\setminus B_{j}}(x_2,w';\mcE).  
  \end{align} 
  Here, $\mcG_{\text{out,in}}(x,y)$ contains all the random walks (which lie in $\barB_j$)
  \[g:\ x\sim w\rightarrow x_1\sim x_1'\rightarrow x_2' \sim x_2\rightarrow w'\sim y \]
  starting at $x$ and terminating at $y$ with $x,y\in \partial^-B_j;  w,w'\in \partial^+ B_j;  x_1,x_2\in \partial^-\barB_{j-1}$ and $x_1',x_2'\in \partial^+\barB_{j-1}$.  (Indeed the summation in \eqref{random walk expansion} is only about the sites $w,w',x_1,x_2,x_1',x_2'$ in $g$, and the summation about the remaining sites in $g$ is actually hidden in each Green's function via the Neumann series expansion.)  Such walks first start in $B_j$, then go out of $\barB_{j-1}$, and finally turn back to  $B_j$ (see Figure  \ref{smallness graph}). Therefore, we must have 
  \begin{equation}\label{length of in and out path}
    \length(g)\geq 2\ \dist(\partial^+ B_j ,\partial^-\barB_{j-1})+4 \geq 2a L_{j-1}+2, 
  \end{equation}
where $\length(\cdot)$ denotes the length of a random walk.

  \begin{figure}[htbp]
    \centering

\tikzset{every picture/.style={line width=0.75pt}} 

\begin{tikzpicture}[x=0.75pt,y=0.75pt,yscale=-0.85,xscale=0.85]

\draw   (76,18) -- (547.71,18) -- (547.71,489.71) -- (76,489.71) -- cycle ;
\draw   (291.5,233.5) -- (332.21,233.5) -- (332.21,274.21) -- (291.5,274.21) -- cycle ;
\draw   (231,173) -- (392.71,173) -- (392.71,334.71) -- (231,334.71) -- cycle ;
\draw [color={rgb, 255:red, 208; green, 2; blue, 27 }  ,draw opacity=1 ]   (284.71,261.29) -- (297.71,262.29) ;
\draw [shift={(297.71,262.29)}, rotate = 4.4] [color={rgb, 255:red, 208; green, 2; blue, 27 }  ,draw opacity=1 ][fill={rgb, 255:red, 208; green, 2; blue, 27 }  ,fill opacity=1 ][line width=0.75]      (0, 0) circle [x radius= 3.35, y radius= 3.35]   ;
\draw [shift={(284.71,261.29)}, rotate = 4.4] [color={rgb, 255:red, 208; green, 2; blue, 27 }  ,draw opacity=1 ][fill={rgb, 255:red, 208; green, 2; blue, 27 }  ,fill opacity=1 ][line width=0.75]      (0, 0) circle [x radius= 3.35, y radius= 3.35]   ;
\draw [color={rgb, 255:red, 208; green, 2; blue, 27 }  ,draw opacity=1 ]   (237.71,221.29) -- (284.71,261.29) ;
\draw [shift={(284.71,261.29)}, rotate = 40.4] [color={rgb, 255:red, 208; green, 2; blue, 27 }  ,draw opacity=1 ][fill={rgb, 255:red, 208; green, 2; blue, 27 }  ,fill opacity=1 ][line width=0.75]      (0, 0) circle [x radius= 3.35, y radius= 3.35]   ;
\draw [shift={(237.71,221.29)}, rotate = 40.4] [color={rgb, 255:red, 208; green, 2; blue, 27 }  ,draw opacity=1 ][fill={rgb, 255:red, 208; green, 2; blue, 27 }  ,fill opacity=1 ][line width=0.75]      (0, 0) circle [x radius= 3.35, y radius= 3.35]   ;
\draw [color={rgb, 255:red, 208; green, 2; blue, 27 }  ,draw opacity=1 ]   (223.71,220.29) -- (237.71,221.29) ;
\draw [shift={(237.71,221.29)}, rotate = 4.09] [color={rgb, 255:red, 208; green, 2; blue, 27 }  ,draw opacity=1 ][fill={rgb, 255:red, 208; green, 2; blue, 27 }  ,fill opacity=1 ][line width=0.75]      (0, 0) circle [x radius= 3.35, y radius= 3.35]   ;
\draw [shift={(223.71,220.29)}, rotate = 4.09] [color={rgb, 255:red, 208; green, 2; blue, 27 }  ,draw opacity=1 ][fill={rgb, 255:red, 208; green, 2; blue, 27 }  ,fill opacity=1 ][line width=0.75]      (0, 0) circle [x radius= 3.35, y radius= 3.35]   ;
\draw [color={rgb, 255:red, 208; green, 2; blue, 27 }  ,draw opacity=1 ]   (353.71,165.29) .. controls (301.71,-1.71) and (84.71,72.29) .. (223.71,220.29) ;
\draw [shift={(223.71,220.29)}, rotate = 46.8] [color={rgb, 255:red, 208; green, 2; blue, 27 }  ,draw opacity=1 ][fill={rgb, 255:red, 208; green, 2; blue, 27 }  ,fill opacity=1 ][line width=0.75]      (0, 0) circle [x radius= 3.35, y radius= 3.35]   ;
\draw [shift={(353.71,165.29)}, rotate = 252.7] [color={rgb, 255:red, 208; green, 2; blue, 27 }  ,draw opacity=1 ][fill={rgb, 255:red, 208; green, 2; blue, 27 }  ,fill opacity=1 ][line width=0.75]      (0, 0) circle [x radius= 3.35, y radius= 3.35]   ;
\draw [color={rgb, 255:red, 208; green, 2; blue, 27 }  ,draw opacity=1 ]   (353.71,165.29) -- (353.71,182.29) ;
\draw [shift={(353.71,182.29)}, rotate = 90] [color={rgb, 255:red, 208; green, 2; blue, 27 }  ,draw opacity=1 ][fill={rgb, 255:red, 208; green, 2; blue, 27 }  ,fill opacity=1 ][line width=0.75]      (0, 0) circle [x radius= 3.35, y radius= 3.35]   ;
\draw [shift={(353.71,165.29)}, rotate = 90] [color={rgb, 255:red, 208; green, 2; blue, 27 }  ,draw opacity=1 ][fill={rgb, 255:red, 208; green, 2; blue, 27 }  ,fill opacity=1 ][line width=0.75]      (0, 0) circle [x radius= 3.35, y radius= 3.35]   ;
\draw [color={rgb, 255:red, 208; green, 2; blue, 27 }  ,draw opacity=1 ]   (353.71,182.29) -- (323.71,227.29) ;
\draw [shift={(323.71,227.29)}, rotate = 123.69] [color={rgb, 255:red, 208; green, 2; blue, 27 }  ,draw opacity=1 ][fill={rgb, 255:red, 208; green, 2; blue, 27 }  ,fill opacity=1 ][line width=0.75]      (0, 0) circle [x radius= 3.35, y radius= 3.35]   ;
\draw [shift={(353.71,182.29)}, rotate = 123.69] [color={rgb, 255:red, 208; green, 2; blue, 27 }  ,draw opacity=1 ][fill={rgb, 255:red, 208; green, 2; blue, 27 }  ,fill opacity=1 ][line width=0.75]      (0, 0) circle [x radius= 3.35, y radius= 3.35]   ;
\draw [color={rgb, 255:red, 208; green, 2; blue, 27 }  ,draw opacity=1 ]   (323.71,227.29) -- (323.71,240.29) ;
\draw [shift={(323.71,240.29)}, rotate = 90] [color={rgb, 255:red, 208; green, 2; blue, 27 }  ,draw opacity=1 ][fill={rgb, 255:red, 208; green, 2; blue, 27 }  ,fill opacity=1 ][line width=0.75]      (0, 0) circle [x radius= 3.35, y radius= 3.35]   ;
\draw [shift={(323.71,227.29)}, rotate = 90] [color={rgb, 255:red, 208; green, 2; blue, 27 }  ,draw opacity=1 ][fill={rgb, 255:red, 208; green, 2; blue, 27 }  ,fill opacity=1 ][line width=0.75]      (0, 0) circle [x radius= 3.35, y radius= 3.35]   ;
\draw [color={rgb, 255:red, 74; green, 144; blue, 226 }  ,draw opacity=1 ][line width=1.5]    (318.71,257.29) -- (319.71,344.29) ;
\draw [shift={(319.71,344.29)}, rotate = 89.34] [color={rgb, 255:red, 74; green, 144; blue, 226 }  ,draw opacity=1 ][fill={rgb, 255:red, 74; green, 144; blue, 226 }  ,fill opacity=1 ][line width=1.5]      (0, 0) circle [x radius= 4.36, y radius= 4.36]   ;
\draw [shift={(318.71,257.29)}, rotate = 89.34] [color={rgb, 255:red, 74; green, 144; blue, 226 }  ,draw opacity=1 ][fill={rgb, 255:red, 74; green, 144; blue, 226 }  ,fill opacity=1 ][line width=1.5]      (0, 0) circle [x radius= 4.36, y radius= 4.36]   ;

\draw (334.21,277.21) node [anchor=north west][inner sep=0.75pt]   [align=left] {$B_j$};
\draw (394.71,337.71) node [anchor=north west][inner sep=0.75pt]   [align=left] {$\barB_{j-1}$};
\draw (549.71,492.71) node [anchor=north west][inner sep=0.75pt]   [align=left] {$\barB_j$};
\draw (184.71,64.71) node [anchor=north west][inner sep=0.75pt]   [align=left] {\textcolor[rgb]{0.82,0.01,0.11}{$g$}};
\draw (202,228) node [anchor=north west][inner sep=0.75pt]  [color={rgb, 255:red, 208; green, 2; blue, 27 }  ,opacity=1 ] [align=left] {$x_1'$};
\draw (232,199) node [anchor=north west][inner sep=0.75pt]  [color={rgb, 255:red, 208; green, 2; blue, 27 }  ,opacity=1 ] [align=left] {$x_1$};
\draw (266,263) node [anchor=north west][inner sep=0.75pt]  [color={rgb, 255:red, 208; green, 2; blue, 27 }  ,opacity=1 ] [align=left] {$w$};
\draw (359,143) node [anchor=north west][inner sep=0.75pt]  [color={rgb, 255:red, 208; green, 2; blue, 27 }  ,opacity=1 ] [align=left] {$x_2'$};
\draw (355.71,185.29) node [anchor=north west][inner sep=0.75pt]  [color={rgb, 255:red, 208; green, 2; blue, 27 }  ,opacity=1 ] [align=left] {$x_2$};
\draw (303,214) node [anchor=north west][inner sep=0.75pt]  [color={rgb, 255:red, 208; green, 2; blue, 27 }  ,opacity=1 ] [align=left] {$w'$};
\draw (209,353) node [anchor=north west][inner sep=0.75pt]   [align=left] {\textcolor[rgb]{0.29,0.56,0.89}{transversality path}};

\end{tikzpicture}

\caption{Smallness generates from random walks.}

\label{smallness graph}

  \end{figure}
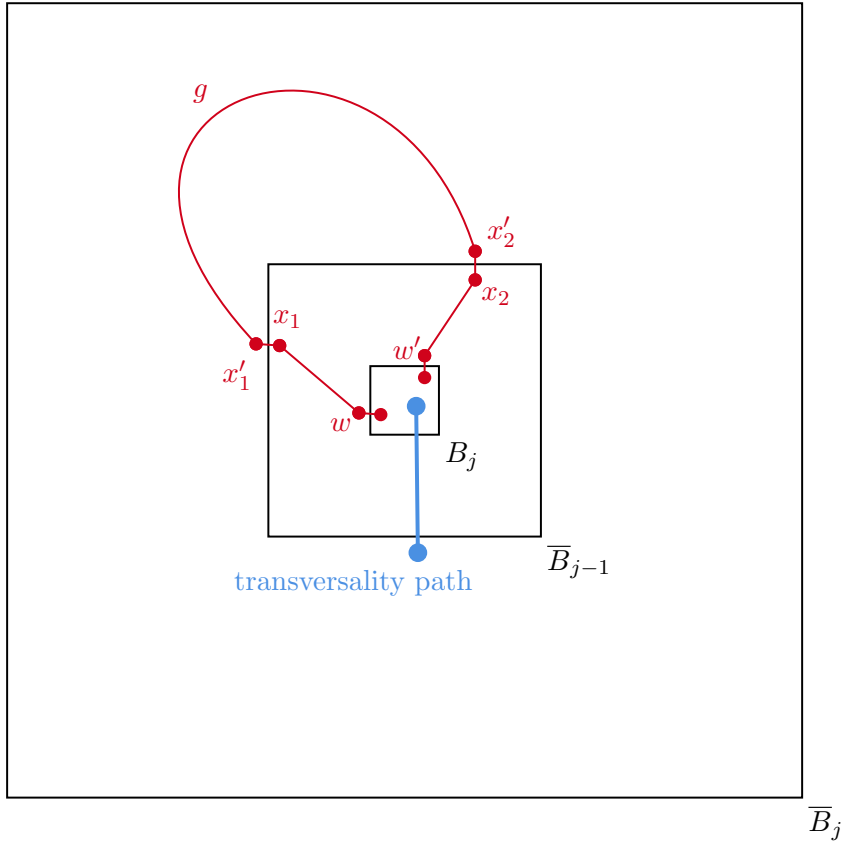

  Now, noticing that the annulus $\barLambda_k\setminus \Lambda'_k$ is non-resonant and $\barB_{j-1}\setminus B_{j},\barB_{j}\setminus B_{j}$ are both subset of $\barLambda_k\setminus \Lambda'_k$ (therefore are both non-resonant), we can apply Lemma \ref{non-resonant Green's function} to the expansion \eqref{random walk expansion}. Recall that 
  \[\frac{2d}{h-2d-\beta-\iota}<1.\]
  We have 
  \begin{align*}
    |\diffSj_\mcE (x,y)| & \leq \sum_{g\in \mcG_{\text{out,in}(x,y)}} \left(\frac{1}{h-4d-\beta-\iota}\right)^3 \cdot \exp\{-\gamma_0 (|w-x_1|_1+|x_1'-x_2'|_1 +|x_2-w'|_1)\} \\
        & \leq \sum_{g\in \mcG_{\text{out,in}(x,y)}} \left(\frac{1}{h-4d-\beta-\iota}\right)^3 \cdot  \left(\frac{2d}{h-2d-\beta-\iota}\right)^{2aL_{j-1}+2-4} \\
        & \leq (\# \partial^+B_j)^2\cdot (\# \partial^-\barB_{j-1})^2 \cdot (\# \partial^+ \barB_{j-1})^2\cdot \left(\frac{1}{h-4d-\beta-\iota}\right)^3 \cdot \left(\frac{2d}{h-2d-\beta-\iota}\right)^{2aL_{j-1}-2}  \\
        & \leq \left(\frac{2d}{h-2d-\beta-\iota}\right)^{(2a-0.1)L_{j-1}} 
  \end{align*}  
  supposing that $k\gg1$  (to ensure $L_{j-1}\geq d_k\gg1$). Therefore,  by the  Schur's  test, 
  \begin{align}\label{barrier high 1}
  \notag  \| \diffSj_\mcE \| & \leq \max_{x\in B_j} \sum_{y\in B_j} |\diffSj_\mcE (x,y)| \leq \# B_j \cdot \left(\frac{2d}{h-2d-\beta-\iota}\right)^{(2a-0.1)L_{j-1}}\\
            &\leq \left(\frac{2d}{h-2d-\beta-\iota}\right)^{(2a-0.2)L_{j-1}} \leq \gamma^{(2a-0.3)L_{j-1}}.
  \end{align}
  The final inequality in \eqref{barrier high 1} requires
  \[\left(\frac{2d}{h-2d-\beta-\iota}\right)^{2a-0.2} \leq \left(\frac{1}{4d+h+\beta}\right)^{2a-0.3} =\gamma^{2a-0.3},\]
  which holds for  sufficiently large  $h$. Recall that $0<\beta\leq 1$ and \eqref{iota}, the above inequality can be ensured by 
  \begin{equation}\label{essential barrier high 1}
      \left(\frac{2d}{h-2d-2}\right)^{2a-0.2} \leq \left(\frac{1}{4d+h+1}\right)^{2a-0.3} .
  \end{equation}
  Since $a$ will be selected later, the required largeness of $h$ depends essentially only on the dimension $d$.
\end{proof}

\begin{rmk}
  The discussion above allows us to interpret the smallness parameter $g_{j-1}$ in the following way. If the expansion \eqref{random walk expansion} is carried out more fully by applying the Neumann series argument  to the Green's function, each site visited by the random walk contributes a factor of order $\gamma$. The length estimate \eqref{length of in and out path} then yields a total smallness of about $\gamma^{(2a-)L_{j-1}}$, which we denote  by $g_{j-1}$.
 Moreover, the proof of Lemma \ref{transversality lemma} shows that the chain obtained via the cone property (recall \eqref{Chain transversality}) originates in $B_j$ and extends to $\partial^+ B_{j-1}$ (see the blue path in Figure  \ref{smallness graph}). Hence the transversality smallness $t_{j-1}$ is significantly larger than $g_{j-1}$, because the latter corresponds to a walk that returns to $B_j$ and then proceeds an additional distance of order $aL_{j-1}$-length. We therefore expect
  \begin{equation}\label{transversality is much larger}
      t_{j-1}/ g_{j-1}\sim \gamma^{-(a-)L_{j-1}}\gg 1.
  \end{equation}
  The comparison in \eqref{transversality is much larger} can be made even stronger by increasing the barrier height $h$, which reduces the value of $\gamma$. This can be interpreted as that the larger $h$ strengthens the transversality at a single site.
\end{rmk}

Recall that we fix $E\in  [-\frac{\iota}{2},4d+\beta+\frac{\iota}{2}]$. Thus using  Lemma \ref{estimate on difference in scales} yields 
\begin{equation}\label{difference at E}
  \| \diffSj_E \|< g_{j-1}. 
\end{equation}

For any eigenvalue $\lambda_0$ of $\wtSj_E(B_j)$ lying in $I_{\varepsilon_j}(E)$, we use \eqref{difference at E} and the spectral stability lemma (Corollary \ref{Spectral stability lemma}) to claim that, there exist a $\widetilde{\lambda}_0 \in \spec(\wtSjminusone_E(B_{j-1}))$ such that 
\begin{align}\label{distance tildelambda0 and E}
 \notag |\widetilde{\lambda}_0-E|& \leq |\widetilde{\lambda}_0-\lambda_0|+|\lambda_0-E|\leq \| \diffSj_E \|+\varepsilon_j \leq \gamma^{(2a-0.3)L_{j-1}} +\gamma^{a(a+1+0.2)L_{j-1}}\\
     & \leq 2 \gamma^{(2a-0.3)L_{j-1}} \ll \varepsilon_{j-1}.
\end{align}
Moreover, in Corollary \ref{Spectral stability lemma},  this correspondence preserves the ordering of the eigenvalues (counting multiplicities). Hence,
  \[\whn_j(B_j) \leq \# \left(\spec(\wtSjminusone_{E}(B_{j-1}))\cap I_{\varepsilon_{j-1} }(E)\right)=\whn_{j-1}(B_{j-1})\]
happens deterministically.

Under the configuration $(v,\zeta_0)$, $\whn_j(B_j)>0$ implies that the $\lambda_0$ exists, and thus if we take $\lambda_1=\lambda_1(v)$ as the eigenvalue of $\wtSjminusone_E(B_{j-1})$ closest to $E$, it will satisfy  \eqref{distance tildelambda0 and E} and 
\begin{equation}\label{distance lambda1 and E}
  |\lambda_1-E|\leq 2 \gamma^{(2a-0.3)L_{j-1}}.
\end{equation}
Obviously, by our choice of $\lambda_1$, it only depends on the randomness in $\barB_{j-1}$.


Next, we introduce the following difference of Schur complements on the same scale but at two distinct energies:
\begin{equation}\label{difference in energy}
  \partialSjminusone_{\mcE,\mcE'}= \wtSjminusone_\mcE(B_{j-1})-\wtSjminusone_{\mcE'}(B_{j-1}).
\end{equation}

\begin{claim}\label{estimate on difference in energy}
  Assume both $\mcE$ and $\mcE'$ are in $ [-\iota,4d+\beta+\iota]$. Then 
  \[ \| \partialSjminusone_{\mcE,\mcE'}  \| \leq \gamma^{1.9}|\mcE-\mcE'|.\]
\end{claim}

\begin{proof}[Proof of Claim \ref{estimate on difference in energy}]
   Simple computation shows that 
  \begin{align*}
    \partialSjminusone_{\mcE,\mcE'} &= \Gamma_{j-1}^{\top}(H_{\barB_{j-1}\setminus B_{j-1}}-\mcE)^{-1}\Gamma_{j-1}-\Gamma_{j-1}^{\top}(H_{\barB_{j-1}\setminus B_{j-1}}-\mcE')^{-1}\Gamma_{j-1} \\
        &=(\mcE-\mcE')\Gamma_{j-1}^{\top}G_{\barB_{j-1}\setminus B_{j-1}}(\mcE)G_{\barB_{j-1}\setminus B_{j-1}}(\mcE')\Gamma_{j-1}.
  \end{align*}
  Clearly, $\|\Gamma_{j-1}\|\leq \|-\Delta\| =2d$. Since $\mcE,\mcE'\in [-\iota,4d+\beta+\iota]$ and $\barB_{j-1}\setminus B_{j-1}$ is non-resonant, we can apply Lemma \ref{non-resonant Green's function} to  obtain 
  \begin{align}\label{barrier high 2}
   \notag \| \partialSjminusone_{\mcE,\mcE'}  \| & \leq (2d)^2 |\mcE-\mcE'|\cdot  \| G_{\barB_{j-1}\setminus B_{j-1}}(\mcE) \|\cdot \| G_{\barB_{j-1}\setminus B_{j-1}}(\mcE') \| \\
      \notag   & \leq \left(\frac{2d}{h-4d-\beta-\iota}\right)^{2} |\mcE-\mcE'| \\
         &\leq \gamma^{1.9} |\mcE-\mcE'|.
  \end{align}
  Inequality \eqref{barrier high 2} requires 
    \[\left(\frac{2d}{h-2d-\beta-\iota}\right)^{2} \leq \left(\frac{1}{4d+h+\beta}\right)^{1.9} =\gamma^{1.9},\]
      which holds for a sufficiently large choice of $h$. Again, the above inequality can be ensured by 
      \begin{equation}\label{essential barrier high 2}
            \left(\frac{2d}{h-2d-2}\right)^{2} \leq \left(\frac{1}{4d+h+1}\right)^{1.9} ,
      \end{equation}
      and the required largeness of $h$ depends essentially only on the dimension $d$.
\end{proof}
\begin{rmk}\label{applicable region}
  In the Wegner estimate, we fix $E\in [-\frac{\iota}{2},4d+\beta+\frac{\iota}{2}] $. Therefore Lemma \ref{non-resonant Green's function}, Claim \ref{estimate on difference in scales} and Claim \ref{estimate on difference in energy} are  applicable for all energy in $I_{\frac{\iota}{2}}(E)\subset [-\iota,4d+\beta+\iota]$. 
\end{rmk}

Now for our fixed $V_{\text{r},\barB_{j-1}}=v\in\{0,1\}^{\barB_{j-1}} $ such that $\zeta_0$ exists, we have proven  that (in \eqref{distance lambda1 and E}) there must be a $\lambda_1=\lambda_1(v)\in \spec(\wtSjminusone_E(B_{j-1}))$ satisfying $|\lambda_1-E|\leq 2g_{j-1}$. Therefore, applying Claim \ref{estimate on difference in energy} yields 
\[\|\partialSjminusone_{E,\lambda_1} \|\leq \gamma^{1.9} \cdot 2g_{j-1}.\]
Then using  Corollary \ref{Spectral stability lemma},   we obtain that, there exists a $\lambda_2=\lambda_2(v)\in \spec(\wtSjminusone_{\lambda_1}(B_{j-1}))$ such that 
\[|\lambda_1-\lambda_2|\leq \|\partialSjminusone_{E,\lambda_1} \| \leq \gamma^{1.9} \cdot 2g_{j-1}. \]
Thus, again applying Claim \ref{estimate on difference in energy} yields 
\[\|\partialSjminusone_{\lambda_1,\lambda_2} \|\leq \gamma^{1.9\times 2} \cdot 2g_{j-1}.\]
Iterating  such a  fixed-point argument, we can obtain that  for $s\geq 2$, there exists  a $\lambda_{s+1}=\lambda_{s+1}(v)\in \spec(\wtSjminusone_{\lambda_s}(B_{j-1}))$ such that 
\[|\lambda_s-\lambda_{s+1}|\leq \|\partialSjminusone_{\lambda_{s-1},\lambda_s} \| \leq \gamma^{1.9s} \cdot 2g_{j-1}\]
and 
\[\|\partialSjminusone_{\lambda_{s},\lambda_{s+1}} \|\leq \gamma^{1.9(s+1)} \cdot 2g_{j-1}.\]
Obviously,  one can ensure all the $\lambda_s\in I_{\frac{\iota}{2}}(E)$ by letting $d_k$ sufficiently large, and by Remark \ref{applicable region}, Claim \ref{estimate on difference in energy} is always applicable during each step of the iterations. The limit  of $\lambda_s,s\rightarrow\infty$ is then denoted by $\mu$, which solves the fixed-point equation 
\begin{equation}\label{fixed point equation 1}
  \mu\in \spec(\wtSjminusone_{\mu}(B_{j-1}))
\end{equation}
in the region 
\[|\mu-E|\leq |E-\lambda_1|+\sum_{s\geq 1}|\lambda_{s+1}-\lambda_s| \leq 2g_{j-1} \sum_{s\geq 0}\gamma^{1.9 s} < 3g_{j-1}. \]
Therefore, we can choose $\lambda$ as the solution of \eqref{fixed point equation 1},  which is closest to $E$, and thus 
\begin{equation}\label{distance of lambda and E}
  \lambda \in \spec(\wtSjminusone_{\lambda}(B_{j-1})),\ |\lambda-E|<3g_{j-1}.
\end{equation}
Clearly, such $\lambda=\lambda(v)$ only depends on the randomness in $\barB_{j-1}$.

We will use the spectral information of $\wtSjminusone_{\lambda}(B_{j-1})$ as an  intermediate term. Denote 
\begin{equation}\label{intermediate term 1}
  \whn=\# \left(\spec(\wtSjminusone_{\lambda}(B_{j-1}))\cap I_{\frac{\varepsilon_{j-1}}{2}}(E)\right).
\end{equation}
\begin{claim}\label{intermediate comparison claim 1}
  $\whn\leq \whn_{j-1}(B_{j-1})$.
\end{claim}
\begin{proof}[Proof of Claim \ref{intermediate comparison claim 1}]
  By Claim \ref{estimate on difference in energy},
  \[\| \partialSjminusone_{\lambda,E}\|\leq \gamma^{1.9}|E-\lambda|<3  \gamma^{1.9} g_{j-1}.\]
  Therefore, by the spectral stability lemma (Corollary \ref{Spectral stability lemma}), each eigenvalue of $\wtSjminusone_{\lambda}$ is approximated  by an eigenvalue of $\wtSjminusone_{E}$ within an error of at most $3\gamma^{1.9}g_{j-1}$. This correspondence preserves the ordering of the eigenvalues (counting multiplicities). Hence,
  \[\whn\leq \# \left(\spec(\wtSjminusone_{E}(B_{j-1}))\cap I_{\frac{\varepsilon_{j-1}}{2}+3  \gamma^{1.9} g_{j-1} }(E)\right). \]
  Finally, notice that 
  \[3  \gamma^{1.9} g_{j-1} = 3  \gamma^{1.9+(2a-0.3)L_{j-1}}\ll \frac{\varepsilon_{j-1}}{2} .\]
  We have 
  \[\whn\leq \# \left(\spec(\wtSjminusone_{E}(B_{j-1}))\cap I_{\varepsilon_{j-1} }(E)\right)=\whn_{j-1}(B_{j-1}).\]
\end{proof}

\par

By our construction of $\lambda=\lambda(v)$, \eqref{distance of lambda and E} holds. Hence, Lemma \ref{fundamental schur lemma} implies that 
\begin{equation}\label{lambda lies in HBj-1 spectrum}
  \lambda\in \spec(H_{\barB_{j-1}}).
\end{equation}
Moreover, if $\psi$ is a corresponding eigenvector of $H_{\barB_{j-1}}$ satisfying 
\[H_{\barB_{j-1}}\psi=\lambda \psi,\]
then $\varphi=R_{B_{j-1}}\psi$ is an eigenvector of $\wtSjminusone_\lambda(B_{j-1})$ for the eigenvalue $\lambda$, and
\begin{equation}\label{eigenvector correspondence}
  \psi =    \langle \varphi, -G_{\barB_{j-1}\setminus B_{j-1}}(\lambda)  \Gamma_{j-1} \varphi\rangle.
\end{equation}
This correspondence is one to one. For simplicity, we denote 
\[\widetilde{G}^{(j-1)}_{\lambda}= -G_{\barB_{j-1}\setminus B_{j-1}}(\lambda)  \Gamma_{j-1}. \]
Recall that we can compute $\diffSj_{\lambda}$ as in Claim \ref{estimate on difference in scales} by \eqref{sandwich 1} and \eqref{vanish term}. That is to say 
\begin{align*}
    \diffSj_\lambda &= \wtSj_{\lambda}(B_j)-\wtSjminusone_{\lambda}(B_{j-1}) \\
      &=   -\Gamma_{j-1}^{\top} G_{\barB_{j-1}\setminus B_{j-1}}(\lambda)  \Gamma_{\partial \barB_{j-1}}  G_{\barB_j\setminus B_j}(\lambda)  \Gamma_{\partial \barB_{j-1}} G_{\barB_{j-1}\setminus B_{j-1}}(\lambda) \Gamma_{j-1} \\
      &= - (\widetilde{G}^{(j-1)}_{\lambda})^{\top} \Gamma_{\partial \barB_{j-1}} G_{\barB_j\setminus B_j}(\lambda)  \Gamma_{\partial \barB_{j-1}} \widetilde{G}^{(j-1)}_{\lambda}
\end{align*}
We choose the orthonormal eigenbasis of $\wtSjminusone_{\lambda}(B_{j-1})$ in $\ell^2(B_{j-1})=\ell^2(B_j)$ as 
\[\{\varphi_1,\varphi_2,\cdots, \varphi_{\whn},\varphi_{\whn+1},\cdots,\varphi_{\# B_j}\},\]
which have their  corresponding eigenvalues  $\mcE_1,\mcE_2\cdots ,\mcE_{\# B_j}$  with 
\[\lambda=\mcE_1 ,\mcE_2,\cdots,\mcE_{\whn}\in I_{\frac{\varepsilon_{j-1}}{2}}(E)\]
and other eigenvalues are not in  $I_{\frac{\varepsilon_{j-1}}{2}}(E)$. Under such basis, the elements of the matrix $\diffSj_{\lambda}$ can be written   as 
\begin{equation}\label{elements under eigenbasis}
  \diffSj_{\lambda,m,m'}=-\left\langle \Gamma_{\partial \barB_{j-1}}  \wtGjminusone_{\lambda} \varphi_m,   G_{\barB_j\setminus B_j}(\lambda)  \Gamma_{\partial \barB_{j-1}} \wtGjminusone_{\lambda} \varphi_{m'} \right\rangle.
\end{equation}
Since $\varphi_1$ is an eigenvector of $\wtSjminusone_{\lambda}(B_{j-1})$ for the  eigenvalue $\lambda$, \eqref{eigenvector correspondence} tells us that $\psi_1=\langle\varphi_1,\wtGjminusone_{\lambda}\varphi_1\rangle$ is a solution of the Dirichlet boundary problem 
\begin{equation}\label{psi1 satisfies the Dirichlet problem}
  H_{\barB_{j-1}} \psi_1=\lambda \psi_1 .
\end{equation}
This allows us to apply Lemma \ref{transversality lemma} to obtain
\[\max_{y\in \partial ^+\barB_{j-1}} \mcI_{\psi_1} (y) \geq \gamma^{aL_{j-1}+L_0} \cdot \max_{n\in B_{j-1}}|\psi_1(n)| .\]
Since  $\varphi_1=R_{B_{j-1}}\psi_1$, we have 
\begin{equation*}
  \max_{y\in \partial ^+\barB_{j-1}} \mcI_{\psi_1} (y)  \geq \gamma^{(a+1)L_{j-1}} \|\varphi_1\|_{\ell^{\infty}(B_{j-1})}.
\end{equation*}
Trivially, one has 
\[\| \varphi_1\| _{\ell^{\infty}(B_{j-1})}\geq \sqrt{\frac{1}{\# B_{j-1}}} \|\varphi_1\| =L_0^{-\frac{d}{2}}.\]
We have
\begin{equation}\label{transversality of psi1}
\max_{y\in \partial ^+\barB_{j-1}} \mcI_{\psi_1} (y)  \geq \gamma^{(a+1)L_{j-1}} L_0^{-\frac{d}{2}} \geq \gamma^{(a+1+0.1)L_{j-1}}=t_{j-1}.
\end{equation}

Building on \eqref{elements under eigenbasis}, we define for each $y\in \partial^+\barB_{j-1}$,
\begin{equation}\label{define a(y)}
  a_m(y):= \Gamma_{\partial \barB_{j-1}}  \wtGjminusone_{\lambda} \varphi_m (y)
\end{equation}
and the vector in $\R^{\whn}$
\begin{equation}\label{define vector bfa(y)}
  \mathbf{a}(y)=(a_1(y),a_2(y),\cdots,a_{\whn}(y)).
\end{equation}
Clearly, \eqref{define a(y)} and \eqref{define vector bfa(y)} also depend only on the randomness in $\barB_{j-1}$. In particular, by our definition, 
 \begin{equation}\label{a1 is influence}
 |a_1(y)|=\mcI_{\psi_1}(y) .
 \end{equation}

We choose $\bar{y}=\bar{y}(v)\in\partial^+\barB_{j-1}$ to be a point so that 
\[|\bfa(y)|=\left(\sum_{1\leq m\leq \whn} |a_m(y)|^2\right)^{\frac{1}{2}}\]
 attains its maximum. This maximum depends only on $\lambda$ and the randomness in $\barB_{j-1}$. Since $\lambda$ itself is taken as the eigenvalue closest to $E$, it is determined solely by the randomness in $\barB_{j-1}$. Hence $\bar{y}$ also depends essentially only on the randomness in $\barB_{j-1}$. If the maximum is attained at more than one point, we select any one $\bar{y}$  among them. 
Hence, by \eqref{transversality of psi1} and \eqref{a1 is influence}, we have 
\begin{equation}\label{transversality of norm of bfa}
  |\bfa(\bar{y})| \geq \max_{y\in \partial ^+\barB_{j-1}} \mcI_{\psi_1} (y)  \geq \gamma^{(a+1+0.1)L_{j-1}}=t_{j-1}.
\end{equation}

Having chosen $\bar{y}$, we further take condition probability  on the randomness in $\barB_{j}\setminus(\barB_{j-1}\cup\{\bar{y}\})$, that is, we fix the configuration
\[V_{\text{r},\barB_{j}\setminus(\barB_{j-1}\cup\{\bar{y}\})}=\zeta'\in \{0,1\}^{\barB_{j}\setminus(\barB_{j-1}\cup\{\bar{y}\})}.\]

Now we use the eigenbasis $\{\varphi_1,\cdots ,\varphi_{\# B_j}\}$ to do further Schur complement argument. At the energy $\mcE$, we write $\wtSjminusone_{\mcE}(B_{j-1})$ and $\wtSj_{\mcE}(B_j)$ in the following block forms (notice that the eigenbasis diagonalizes $\wtSjminusone_{\lambda}(B_{j-1})$):
\begin{equation}\label{eigenbasis block form}
\wtSjminusone_{\mcE}(B_{j-1}) = \begin{pmatrix}
q(\mcE) & r(\mcE) \\
s(\mcE) & t(\mcE)
\end{pmatrix}, \ 
\wtSj_{\mcE} (B_{j}) = \begin{pmatrix}
\tilde{q}(\mcE) & \tilde{r}(\mcE) \\
\tilde{s}(\mcE) & \tilde{t}(\mcE)
\end{pmatrix}.
\end{equation}
Here $q,\tilde{q}$ are restricted to the subspace spanned by $\{\varphi_1,\cdots,\varphi_{\whn}\}$, and $t,\tilde{t}$ to the subspace spanned by $\{\varphi_{\whn+1,}\cdots,\varphi_{\# B_j}\}$. We define the following Schur complements:
\begin{equation}\label{eigenbasis schur complement}
   \smallSjminusone_{\mcE}=q(\mcE)-r(\mcE)(t(\mcE)-\mcE)^{-1} s(\mcE) ,\ \smallSj_{\mcE}=\tilde{q}(\mcE)-\tilde{r}(\mcE)(\tilde{t}(\mcE)-\lambda)^{-1}\tilde{s}(\mcE).
\end{equation}
In particular, since  $\{\varphi_1,\cdots,\varphi_{\# B_j }\}$ diagonalizes $\wtSjminusone_{\lambda} (B_{j-1})$, we have 
\[ r( \lambda)=0,\ s(\lambda) =0 .\]

Since we have already conditioned on the randomness in region $\barB_{j}\setminus\{\bar{y}\}$ (i.e.,  $(v,\zeta')$), we next wish to decompose $\smallSj_E$ in order to extract the principal part depending  only  on the random variable $\omega(\bar{y})$.

\begin{lem}\label{key decomposition lemma}
   Assume that we choose the $a$-adic scale with $a\geq 5$. After conditioning on the randomness in $\barB_{j}\setminus \{\bar{y}\}$, we can decompose 
  {\small
  \begin{align*}
    \smallSj_E & = \frac{-1}{h+2d+\beta \omega(\bar{y})-\lambda}\left(\bfa(\bar{y})\otimes \bfa(\bar{y})+ \sum_{z\sim \bar{y} \atop z\in \partial^+\barB_{j-1}} \frac{1}{h+2d+\beta\omega(z)-\lambda } \cdot(\bfa(\bar{y}) \otimes  \bfa(z)+\bfa(z)\otimes\bfa(\bar{y})  )\right) \\
    \notag & \ \ \ + \mcD + \mcR(\omega(\bar{y})) .
  \end{align*}
  }
  Here $\mcD$ is a deterministic term, and the random term $\mcR(\omega(\bar{y}))$ satisfies
  \begin{equation}\label{estimate on mcR}
    \|\mcR(\omega(\bar{y}))\| \leq \frac{1}{2}\gamma^{2.5} |\bfa(\bar{y})|^2 + \frac{1}{2}\gamma^{0.5L_{j-1}} |\bfa(\bar{y})|^2.
  \end{equation}
\end{lem}
\begin{proof}[Proof of Lemma \ref{key decomposition lemma}]
 Consider 
 \begin{equation}\label{initial decomposition}
   \smallSj_E=\smallSjminusone_{\lambda}+(\smallSj_{\lambda}-\smallSjminusone_\lambda)+(\smallSj_E-\smallSj_{\lambda}).
 \end{equation}
 We analyze the above terms separately.\\

 \noindent {\Large   {\textbf{(Term $\smallSjminusone_{\lambda}$)}}  }\\
 
 \noindent We set 
 \begin{equation}\label{mcD1}
   \mcD_1=\smallSjminusone_{\lambda}.
 \end{equation}
 Obviously, $\mcD_1$ is deterministic, as it  only depends  on the randomness in $\barB_{j-1}$.\\

 \noindent {\Large {\textbf{(Term $(\smallSj_{\lambda}-\smallSjminusone_\lambda)$)}}     }\\

 \noindent By \eqref{eigenbasis block form} and \eqref{eigenbasis schur complement}, direct computation leads to 
\begin{equation}\label{baby decomposition}
  \smallSj_{\lambda}-\smallSjminusone_\lambda =(\tilde{q}(\lambda)-q(\lambda))- \tilde{r}(\lambda)(\tilde{t}(\lambda)-\lambda)^{-1}\tilde{s}(\lambda).
\end{equation}
Since $r(\lambda)=0$ and $s(\lambda)=0$, it is easy to see that  $\tq(\lambda)-q(\lambda),\tr(\lambda),\ts(\lambda)$ and $\tt(\lambda)-t(\lambda)$ contain elements of $\diffSj_{\lambda}$ under the eigenbasis, i.e.,  \eqref{elements under eigenbasis}.
Now under the notation \eqref{define a(y)} and \eqref{define vector bfa(y)}, \eqref{elements under eigenbasis} becomes
\begin{equation}\label{elements in bfa}
    \diffSj_{\lambda,m,m'}=-\sum_{x,y\in \partial^+ \barB_{j-1}}a_{m}(x)G_{\barB_j\setminus B_j }(x,y;\lambda) a_{m'}(y).
\end{equation}
We expand $G_{\barB_{j}\setminus B_j}(\lambda)$ via the Neumann series similar to  \eqref{Neumann series expansion}, leading to the following random walk expansion:
\begin{align}\label{K random walk expansion}
  G_{\barB_{j}\setminus B_j}(x,y;\lambda) =\sum_{g\in \mcG_{\partial^+ \barB_{j-1}}(x,y)}  \frac{1}{V_{x}+2d-\lambda}\cdot \frac{1}{V_{x_1}+2d-\lambda}\cdots \frac{1}{V_{x_{s-1}}+2d-\lambda}\cdot \frac{1}{V_{y}+2d-\lambda}.
\end{align}
Here for $x,y\in \partial^+ \barB_{j-1}$, the set $\mcG_{\partial^+ \barB_{j-1}}(x,y)$ contains all random walks  $g$ contained in $\barB_{j}\setminus B_j$ with 
\[g:\ x\sim x_1\sim x_2\cdots \sim x_{s-1}\sim y,\ s=\length(g) .\]
We decompose $\mcG_{\partial^+ \barB_{j-1}}(x,y)$ into disjoint unions 
\[\mcG_{\partial^+ \barB_{j-1}}(x,y) =\mcG_{\partial^+ \barB_{j-1}}^{(0)} (x,y)\cup \mcG_{\partial^+ \barB_{j-1}}^{(1)} (x,y)\cup \mcG_{\partial^+ \barB_{j-1}}^{(2)} (x,y) ,\]
with
\[\mcG_{\partial^+ \barB_{j-1}}^{(0)} (x,y)= \left\{ g\in \mcG_{\partial^+ \barB_{j-1}} (x,y) :\  \bar{y}\notin g \right\},\]
\[\mcG_{\partial^+ \barB_{j-1}}^{(1)} (x,y)= \left\{ g\in \mcG_{\partial^+ \barB_{j-1}} (x,y) : \ \bar{y}\in g ,\ \length(g)=0 \ {\rm or} \ 1 \right\},\]
\[\mcG_{\partial^+ \barB_{j-1}}^{(2)} (x,y)= \left\{ g\in \mcG_{\partial^+ \barB_{j-1}} (x,y) : \ \bar{y}\in g ,\ \length(g)\geq 2 \right\}.\]
Clearly, $\mcG_{\partial^+ \barB_{j-1}}^{(1)}$ is not empty if and only if $x=\bar{y}$ or $y=\bar{y}$. Therefore, \eqref{K random walk expansion} can be decomposed into 
\begin{equation}\label{decomposition of K}
  K(x,y):= G_{\barB_{j}\setminus B_j}(x,y;\lambda) =K_0(x,y)+K_1(x,y)+K_2(x,y),
\end{equation}
where 
\begin{equation}\label{Ki comes from graph decomposition}
  K_i(x,y)= \sum_{g\in \mcG_{\partial^+ \barB_{j-1}}(x,y)}  \frac{1}{V_{x}+2d-\lambda}\cdot \frac{1}{V_{x_1}+2d-\lambda}\cdots \frac{1}{V_{y}+2d-\lambda}, \ i=1,2,3 .
\end{equation}
Clearly, our decomposition ensures that $K_1(x,y),K_2(x,y)$ depend only  on $V_{\text{r}}(\bar{y})=\omega(\bar{y})$, whereas $K_0(x,y)$ does not. Since $\barB_{j}\setminus B_j$ is non-resonant, we have 
\begin{equation}\label{K1 formula 1}
  K_1(\bar{y},\bar{y})= \frac{1}{h+2d+\beta \omega(\bar{y})-\lambda} 
\end{equation}
and
\begin{equation}\label{K1 formula 2}
  K_1(\bar{y},z)=K_1(z,\bar{y})=\frac{1}{h+2d+\beta \omega(\bar{y})-\lambda} \cdot \frac{1}{h+2d+\beta \omega(z)-\lambda} \ {\rm for}\ \forall z\sim \bar{y},z\in \partial^+\barB_{j-1}.
\end{equation} 
Moreover, the random walks in $\mcG_{\partial^+ \barB_{j-1}}^{(2)}$ have length larger than $2$.
We use $\lambda\in [-\iota,4d+\beta+\iota]$ to estimate
\begin{align}\label{K2 estimate}
\notag  |K_2(x,y)| & \leq \sum_{t \geq (|x-\bar{y}|_1+|\bar{y}-y|_1)\vee 2 \atop r+s=t,r>|x-\bar{y}|_1,s>|\bar{y}-y|_1} \sum_{ g_1 =(x_0=x, x_1,\cdots,x_r=\bar{y}) \atop g_1\in \barB_j\setminus B_j} \sum_{ g_2 =(x_r=\bar{y}, x_{r+1},\cdots,x_r=\bar{y}) \atop g_2\in \barB_j\setminus B_j} \frac{1}{V_{x}+2d-\lambda}\cdots \frac{1}{V_{y}+2d-\lambda}  \\
 \notag  &\leq \sum_{t \geq (|x-\bar{y}|_1+|\bar{y}-y|_1)\vee 2 } (t+1) (2d)^t \left(\frac{1}{h-2d-\beta-\iota}\right)^{t+1} \\
\notag & \lesssim_d \left(\frac{2d}{h-2d-\beta-\iota}\right)^{(|x-\bar{y}|_1+|\bar{y}-y|_1)\vee 2 +1}\\
      &<\gamma^{0.9[(|x-\bar{y}|_1+|\bar{y}-y|_1)\vee 3]}.
\end{align}
Here the validity of \eqref{K2 estimate} also requires 
\begin{equation}\label{essential barrier high 3}
   \frac{2d}{h-2d-\beta-\iota}\leq \frac{2d}{h-2d-2}\lesssim_d   \left(\frac{1}{4d+h+1}\right) ^{1.9} \leq  \gamma^{0.9},
\end{equation}
which will hold when $h$ is sufficiently large (depending only on $d$). 
We first discuss the term $\tq(\lambda)-q(\lambda)$ in \eqref{baby decomposition}. For $m,m'\in \{1,\cdots,\whn \}$, using \eqref{elements in bfa} and the decomposition \eqref{decomposition of K} yields 
\begin{align*}
  (\tq(\lambda)-q(\lambda))_{m,m'} & = \diffSj_{\lambda,m,m'}= -\sum_{x,y\in \partial^+ \barB_{j-1}}a_{m}(x) K(x,y) a_{m'}(y)\\
    &=-\sum_{x,y\in \partial^+ \barB_{j-1}}a_{m}(x)\left(K_0(x,y) +K_1(x,y)+K_2(x,y) \right)a_{m'}(y).
\end{align*}
We set 
\begin{equation}\label{mcD2}
  \mcD_{2,m,m'}=-\sum_{x,y\in \partial^+ \barB_{j-1}}a_{m}(x)K_0(x,y)a_{m'}(y),
\end{equation}
which is deterministic since $\bfa(x),\bfa(y)$ and $K_0(x,y)$ are independent of $\omega(\bar{y})$.
Moreover, recalling t \eqref{K1 formula 1} and \eqref{K1 formula 2}, a direct computation yields 
{\small
\begin{align*}
  &\sum_{x,y\in \partial^+ \barB_{j-1}}a_{m}(x)K_1(x,y)a_{m'}(y) \\
  =& \frac{1}{h+2d+\beta\omega(\bar{y})-\lambda}a_{m}(\bar{y}) a_{m'}(\bar{y}) + \sum_{z\sim \bar{y} \atop z\in \partial^+\barB_{j-1}} \frac{a_m(\bar{y})a_{m'}(z)+a_m(z)a_{m'}(\bar{y})}{(h+2d+\beta\omega(\bar{y})-\lambda)\cdot (h+2d+\beta\omega(z)-\lambda)} \\
  =&\frac{1}{h+2d+\beta \omega(\bar{y})-\lambda}\left(\bfa(\bar{y})\otimes \bfa(\bar{y})+ \sum_{z\sim \bar{y} \atop z\in \partial^+\barB_{j-1}} \frac{1}{h+2d+\beta\omega(z)-\lambda } \cdot(\bfa(\bar{y}) \otimes  \bfa(z)+\bfa(z)\otimes\bfa(\bar{y})  )\right)_{m,m'}.
\end{align*}
}
Next, we set
\begin{equation}\label{mcR1}
  \mcR_{1,m,m'}(\omega(\bar{y}))=-\sum_{x,y\in \partial^+ \barB_{j-1}}a_{m}(x)K_2(x,y)a_{m'}(y).
\end{equation}
\begin{claim}\label{estimate on mcR1}
  We have 
  \[\|\mcR_{1}(\omega(\bar{y}))\| \leq \frac{1}{2}\gamma^{2.5} |\bfa(\bar{y})|^2. \]
\end{claim}
\begin{proof}[Proof of Claim \ref{estimate on mcR1}]
  By our definition,
  \[\mcR_1(\omega(\bar{y}))=-\sum_{x,y\in \partial^+ \barB_{j-1}}K_2(x,y)\cdot \bfa(x)\otimes \bfa(y).\]
  Hence, using the Minkowski inequality and \eqref{K2 estimate} yields
  \begin{align*}
    \| \mcR_{1}(\omega(\bar{y})) \| & \leq \sum_{x,y\in \partial^+ \barB_{j-1}} |K_2(x,y)|\cdot \| \bfa(x)\otimes \bfa(y) \| = \sum_{x,y\in \partial^+ \barB_{j-1}} |K_2(x,y)|\cdot | \bfa(x)|\cdot |\bfa(y) | \\
          &\leq |\bfa(\bar{y})|^2 \sum_{x,y\in \partial^+ \barB_{j-1}} \gamma^{0.9[(|x-\bar{y}|_1+|\bar{y}-y|_1)\vee 3]} \leq |\bfa(\bar{y})|^2 \sum_{x,y\in \Z^d } \gamma^{0.9[(|x-\bar{y}|_1+|\bar{y}-y|_1)\vee 3]} \\
          &\leq |\bfa(\bar{y})|^2 \sum_{r,s\geq 0} (2d)^{r+s}\gamma^{0.9[(r+s)\vee 3]} \lesssim_d |\bfa(\bar{y})|^2 \gamma^{0.9\times 3} \\
          &\leq \frac{1}{2}\gamma^{2.5} |\bfa(\bar{y})|^2 
  \end{align*}
  supposing $h$ is large enough (depending only on $d$) to make sure  $0<\gamma\ll1$.  In the above argument, we also use the fact that $\bar{y}$ is chosen to maximize $|\bfa(y)|$.

\end{proof}
Summarize the above discussions, we have decomposed
{\small
\begin{align}\label{further baby decomposition 1}
  \tq(\lambda)-q(\lambda)&= \mcD_2 +\mcR_1 (\omega(\bar{y}))\\
  \notag &\ \ \ -\frac{1}{h+2d+\beta \omega(\bar{y})-\lambda}\left(\bfa(\bar{y})\otimes \bfa(\bar{y})+ \sum_{z\sim \bar{y} \atop z\in \partial^+\barB_{j-1}} \frac{1}{h+2d+\beta\omega(z)-\lambda } \cdot(\bfa(\bar{y}) \otimes  \bfa(z)+\bfa(z)\otimes\bfa(\bar{y})  )\right).
\end{align}
}

For the remaining term $\tilde{r}(\lambda)(\tilde{t}(\lambda)-\lambda)^{-1}\tilde{s}(\lambda)$ in \eqref{baby decomposition}, we denote it by $\mcR_2(\omega(\bar{y}))$. It can be estimated  as follows:
\begin{claim}\label{estimate on mcR2}
  For 
  \begin{equation}\label{mcR2}
    \mcR_2(\omega(\bar{y}))=-\tilde{r}(\lambda)(\tilde{t}(\lambda)-\lambda)^{-1}\tilde{s}(\lambda),
  \end{equation}
  we have 
  \[ \|\mcR_2(\omega(\bar{y})) \|\leq \frac{1}{6}\gamma^{(a-2)L_{j-1}} |\bfa(\bar{y})|^2 .\] 
\end{claim}
\begin{proof}[Proof of Claim \ref{estimate on mcR2}]
  For $m,m'\in \{1,\cdots,\whn\}$, we expand elements in $\mcR_2(\omega(\bar{y}))$ into 
  \begin{align*}
    \mcR_{2,m,m'}(\omega(\bar{y}))=-\sum_{ \whn+1\leq \tm,\tm'\leq  \# B_j } \diffSj_{\lambda,m,\tm}\cdot [(\tt(\lambda)-\lambda)^{-1}]_{\tm,\tm'}\cdot \diffSj_{\lambda,\tm',m'}.
  \end{align*}
  By \eqref{elements in bfa}, this can be further expanded into 
  \begin{align}\label{mcR2 expansion}
        &\mcR_{2,m,m'}(\omega(\bar{y}))\\
     \notag   &=-\sum_{x,x',y,y'\in \partial^+\barB_{j-1}} \sum_{ \whn+1\leq \tm,\tm'\leq  \# B_j } a_{m}(x)K(x,x') a_{\tm}(x')\cdot [(\tt(\lambda)-\lambda)^{-1}]_{\tm,\tm'}\cdot a_{\tm'}(y')K(y',y) a_{m'}(y).
  \end{align}
  Now, recall that we choose the eigenbasis such that $\mcE_m\notin I_{\frac{\varepsilon_{j-1}}{2}}(E)$ for $\whn\leq m\leq \# B_j$, and we have \eqref{distance of lambda and E}. Thus 
  \[\dist(\spec(t(\lambda)),\lambda)> \frac{\varepsilon_{j-1}}{2}-3g_{j-1}.\]
  Moreover, since $\tt(\lambda)-t(\lambda)$ is the restriction of $\diffSj_{\lambda}$ on the subspace spanned by ${\varphi_{\whn+1},\cdots,\varphi_{\#B_j}}$, applying Claim \ref{estimate on difference in scales} yields 
  \[\| \tt(\lambda)-t(\lambda) \| \leq \|\diffSj_{\lambda}\|\leq g_{j-1}.\]
  Therefore, using  Corollary \ref{Spectral stability lemma},  we have 
  \begin{align*}
      \dist(\spec(\tt(\lambda)),\lambda) & >\dist(\spec(t(\lambda)),\lambda)-g_{j-1}>\frac{\varepsilon_{j-1}}{2}-4g_{j-1} \\
        &=\frac{1}{2}\gamma^{(a+1+0.2)L_{j-1}}-4\gamma^{(2a-0.3)L_{j-1}} >\frac{\varepsilon_{j-1}}{3},
  \end{align*}
  which implies 
  \begin{equation}\label{resolvent of tilde t}
    \|(\tt(\lambda)-\lambda)^{-1}\|< \frac{3}{\varepsilon_{j-1}}.
  \end{equation}

  On the other hand, the definition \eqref{define a(y)} ensures that 
  \begin{align*}
    |a_m(y)| & = |\Gamma_{\partial \barB_{j-1}}  \wtGjminusone_{\lambda} \varphi_m (y)| =| \Gamma_{\partial \barB_{j-1}} G_{\barB_{j-1}\setminus B_{j-1}}(\lambda)  \Gamma_{j-1} \varphi_m(y)| \\
       & \leq \sum_{w\in \partial^{-}\barB_{j-1} \atop  w\sim y} \sum_{w'\in \partial^+ B_{j-1} \atop y'\in \partial^-B_{j-1} , w' \sim y'}  |G_{\barB_{j-1}\setminus B_{j-1}}(w,w';\lambda)|\cdot |\varphi_m(y')| \\ 
       &\leq \#\partial^+ B_{j-1} \cdot \frac{(2d)^2}{h-4d-\beta-\iota} \left(\frac{2d}{h-2d-\beta-\iota}\right)^{\dist(\partial^+B_{j-1},\partial^-\barB_{j-1})}.
  \end{align*}
  Here we used Lemma \ref{non-resonant Green's function} to estimate $G_{\barB_{j-1}\setminus B_{j-1}}(\lambda)$. Now by the fact that  
  \[\dist(\partial^+B_{j-1},\partial^-\barB_{j-1}) =aL_{j-1}-1\]
  and taking $h$ large enough (depending on $d,\beta$), we obtain 
  \begin{equation}\label{decay of a(y)}
    |a_m(y)|<\gamma^{(a-0.1)L_{j-1}}\  {\rm for}\ \forall 1\leq m\leq \#B_j,\  \forall y\in \partial^+\barB_{j-1}.
  \end{equation}

  We can use \eqref{resolvent of tilde t} and \eqref{decay of a(y)} to bound 
  \begin{align*}
    \sum_{ \whn+1\leq \tm,\tm'\leq  \# B_j }  \bigg|a_{\tm}(x')\cdot [(\tt(\lambda)-\lambda)^{-1}]_{\tm,\tm'}\cdot a_{\tm'}(y')\bigg|& \leq  (\# B_j)^2 \gamma^{2(a-0.1)L_{j-1}} \cdot \frac{3}{\gamma^{(a+1+0.2)L_{j-1}}} \\
      &< \gamma^{(a-1-0.5)L_{j-1}}.
  \end{align*}
  Plugging this estimate into \eqref{mcR2 expansion} gives  
  \begin{align*}
    |\mcR_{2,m,m'}(\omega(\bar{y}))| &\leq\gamma^{(a-1-0.5)L_{j-1}} \bigg| \sum_{x,x',y,y'\in \partial^+\barB_{j-1}} a_{m}(x)K(x,x')  K(y',y) a_{m'}(y) \bigg| \\
    &\leq \gamma^{(a-1-0.5)L_{j-1}} |\bfa(\bar{y})|^2 \cdot (\# \partial^+\barB_{j-1})^4 \|K\|^2.
  \end{align*}
  In the above argument,  we again used the fact that $\bar{y}$ maximizes $|\bfa(y)|$. Finally, recall that $K$ is the Green's function $G_{\barB_{j}\setminus B_j}(x,y;\lambda)$. Applying Claim \ref{non-resonant Green's function} and the Schur's test yields
  \begin{align*}
    \|\mcR_{2}(\omega(\bar{y}))\| & \leq \frac{\gamma^{(a-1-0.5)L_{j-1}} }{(h-4d-\beta-\iota)^2}|\bfa(\bar{y})|^2 \cdot (\# \partial^+\barB_{j-1})^4 \cdot (\# B_{j})^2 \\
       &\ll \frac{1}{6}\gamma^{(a-2)L_{j-1}} |\bfa(\bar{y})|^2.
  \end{align*}
The absorption of terms such as $\# B_{j}$ is possible because we can take $k$ sufficiently large, thereby making $L_{j-1}$ large enough.
\end{proof}

It follows from \eqref{baby decomposition}, \eqref{further baby decomposition 1} and \eqref{mcR2} that
{\small
\begin{align}\label{second term decomposition}
  \smallSj_{\lambda}-\smallSjminusone_\lambda &= \mcD_2 +\mcR_1 (\omega(\bar{y})) +\mcR_2(\omega(\bar{y})) \\
  \notag &\ \ \ -\frac{1}{h+2d+\beta \omega(\bar{y})-\lambda}\left(\bfa(\bar{y})\otimes \bfa(\bar{y})+ \sum_{z\sim \bar{y} \atop z\in \partial^+\barB_{j-1}} \frac{1}{h+2d+\beta\omega(z)-\lambda } \cdot(\bfa(\bar{y}) \otimes  \bfa(z)+\bfa(z)\otimes\bfa(\bar{y})  )\right).
\end{align}
}

\noindent {\Large  {\textbf{(Term $(\smallSj_E-\smallSj_{\lambda})$)}}  }\\

\noindent For this term, we compute it as 
\begin{align}\label{baby decomposition 2}
 \notag \smallSj_E-\smallSj_{\lambda} &=(\tq(E)-\tq(\lambda)) +\tr(\lambda) (\tt(\lambda)-\lambda)^{-1} \ts(\lambda)  -\tr(E) (\tt(E)-E)^{-1} \ts(E) \\
                     & =(q(E)-q(\lambda)) \\
         \notag                      & \ \ \ +[(\tq(E)-q(E))-(\tq(\lambda) -q(\lambda)) ]\\
         \notag                     &\ \ \ +\tr(\lambda) (\tt(\lambda)-\lambda)^{-1} \ts(\lambda)  -\tr(E) (\tt(E)-E)^{-1} \ts(E).  
\end{align}
We set 
\begin{equation}\label{mcD3}
  \mcD_3= (q(E)-q(\lambda)),
\end{equation}
which is deterministic, since by our construction,  $q(\mcE)$ and $\lambda$ are independent of $\omega(\bar{y})$. 

Next, for the second line in \eqref{baby decomposition 2}, we set 
\begin{equation}\label{mcR3}
  \mcR_3(\omega(\bar{y}))= (\tq(E)-q(E))-(\tq(\lambda) -q(\lambda)).
\end{equation}
\begin{claim}\label{estimate on mcR3}
  We have 
  \[\|\mcR_3(\omega(\bar{y})) \| \leq \frac{1}{6}\gamma^{(a-2)L_{j-1}} |\bfa(\bar{y})|^2 .\]
\end{claim}
\begin{proof}[Proof of Claim \ref{estimate on mcR3}]
Indeed, $\mcR_3(\omega(\bar{y}))$ can be viewed  as the matrix 
\[\diffSj_{E}-\diffSj_{\lambda} \]
restricted to the subspace spanned by $\{\varphi_1,\cdots,\varphi_{\whn}\}$. Therefore,
\[\|\mcR_3(\omega(\bar{y})) \|\leq \| \diffSj_{E}-\diffSj_{\lambda}  \| .\]
Expanding the Green's function in \eqref{random walk expansion} further by a Neumann series yields 
\begin{equation}\label{scales not derivation}
  \diffSj_{\mcE}(x,y) =-\sum_{g\in \mcG_{\text{out,in}}(x,y) } \frac{1}{V_{x_1}+2d-\mcE} \cdot \frac{1}{V_{x_2}+2d-\mcE} \cdots \frac{1}{V_{x_{s-1}}+2d-\mcE} 
\end{equation}
with the random walk 
\[g=(x_0=x,x_1,x_2,\cdots,x_{s-1},x_s=y),x_{i-1}\sim x_i\]
in $\mcG_{\text{out,in}}(x,y)$. (For the definition of $\mcG_{\text{out,in}}(x,y)$, see the proof of Claim \ref{difference in scales}.) 

Taking the  derivative  about $\mcE$ on \eqref{scales not derivation} and applying \eqref{length of in and out path} yield  that 
\begin{align*}
  |\frac{d}{d \mcE} (\diffSj_{\mcE}(x,y))| & \leq \sum_{g\in \mcG_{\text{out,in}}(x,y) } \left(\frac{1}{h-2d-\beta-\iota}\right) ^{\length(g)} \\
     &\leq \sum_{s\geq 2aL_{j-1}+2}  \left(\frac{2d}{h-2d-\beta-\iota}\right) ^s <\gamma^{(2a-0.1)L_{j-1}}
\end{align*}
holds for all $\mcE\in [-\iota,4d+\beta+\iota]$ and $x,y\in \partial^-B_j$,  provided 
\[ \left(\frac{2d}{h-2d-\beta-\iota}\right)^{2a}< \gamma^{(2a-0.1)}. \]
This can be ensured by 
\begin{equation}\label{essential barrier high 4}
  \left(\frac{2d}{h-2d-2}\right)^{2a}< \left(\frac{1}{4d+h+1}\right)^{(2a-0.1)},
\end{equation}
which requires $h$ to be sufficiently large (depending only on $d$). Thus by the mean value theorem and \eqref{distance of lambda and E},  
\[ |\diffSj_{E}(x,y)-\diffSj_{\lambda}(x,y)| < \gamma^{(2a-0.1)L_{j-1}} |\lambda-E| < 3 g_{j-1} \gamma^{(2a-0.1)L_{j-1}} .\]
Apply the Schur's  test, we get  
\begin{align*}
  \| \diffSj_E-\diffSj_{\lambda}  \|& \leq (\# \partial^-B_j)^2 \cdot 3 g_{j-1} \gamma^{(2a-0.1)L_{j-1}} \leq   \gamma^{(4a-0.5)L_{j-1}} \\
     & = \gamma^{(2a-2-0.7)L_{j-1}} t^2_{j-1} \leq \frac{1}{6} \gamma^{(a-2)L_{j-1}} |\bfa(\bar{y})|^2.
\end{align*}
In the above estimate, we used \eqref{transversality of norm of bfa}. Therefore, we  have proven   
\begin{equation}\label{bound on first part of mcR3}
  \|\mcR_3(\omega(\bar{y})) \| \leq \frac{1}{6}\gamma^{(a-2)L_{j-1}} |\bfa(\bar{y})|^2.
\end{equation}
\end{proof}

Finally, for the third line in \eqref{baby decomposition 2}, we set 
\begin{equation}\label{mcR4}
  \mcR_4(\omega(\bar{y})) =\tr(\lambda) (\tt(\lambda)-\lambda)^{-1} \ts(\lambda)  -\tr(E) (\tt(E)-E)^{-1} \ts(E).
\end{equation}
\begin{claim}\label{estimate on mcR4}
  Assume that we choose the $a$-adic scale with $a\geq 5$. Then we have
  \[\|  \mcR_4(\omega(\bar{y}))  \| \leq \frac{1}{6}\gamma^{0.5L_{j-1}} |\bfa(\bar{y})|^2 .\]
\end{claim}
\begin{proof}[Proof of Claim \ref{estimate on mcR4}]
  Compute $\mcR_4(\omega(\bar{y}))$ as follows:
  \begin{align*}
    \mcR_4(\omega(\bar{y})) &=        (\tr(\lambda) -\tr(E)) (\tt(\lambda)-\lambda)^{-1} \ts(\lambda) \\
             \notag               &\qquad + \tr(E)(\tt(\lambda)-\lambda)^{-1} (\ts(\lambda)-\ts(E)) \\
                   \notag         &\qquad + \tr(E) (\tt(\lambda)-\lambda)^{-1} \cdot [(\tt(E)-\tt(\lambda))-(E-\lambda) ]\cdot (\tt(E)-E)^{-1} \ts(E).
  \end{align*}

We have the following  available estimates:
\begin{itemize} 
 \item By $r(\lambda)=0,s(\lambda)=0$, $|\lambda-E|<3g_{j-1}$ and Claim \ref{estimate on difference in energy}, we have 
             \[\| s(E) \|,\ \| r(E)\| \leq \| \partialSjminusone_{\lambda,E}\|\leq \gamma^{1.9} \cdot 3g_{j-1} \]
             and 
           \[\| \ts(\lambda)-\ts(E) \|,\ \| \tr(\lambda)-\tr(E)\| \leq \| \partialSj_{\lambda,E}\|\leq \gamma^{1.9} \cdot 3g_{j-1} .\]
  \item By $r(\lambda)=0,s(\lambda)=0$ and Claim \ref{difference in scales}, we have 
               \[\|\tr(\lambda)\|, \|\ts(\lambda)\|,\   \| \tr(E)-r(E)\|, \ \| \ts(E)-s(E) \|,\  \|\tt(E)-t(E)\| \leq g_{j-1}.\] 
       Therefore, we also have 
       \[  \| \ts(E) \|,\ \| \tr(E)\| \leq g_{j-1}+ \gamma^{1.9} \cdot 3g_{j-1}\leq 2 g_{j-1}.   \]
  \item By similar argument leading  to \eqref{resolvent of tilde t}, and the fact that 
      \[ \| \tt(\lambda)-\tt(E)\| \leq \| \partialSj_{\lambda,E}\|\leq \gamma^{1.9} g_{j-1},\]
        we have  
        \[\|(\tt(\lambda)-\lambda)^{-1} \|, \ \|(\tt(E)-E)^{-1} \| <\frac{4}{\varepsilon_{j-1}}. \]
\end{itemize}
Use the estimates listed above, we can control  the norm of $\mcR_4(\omega(\bar{y}))$ as 
\begin{align}\label{bound on mcA}
  \notag \|\mcR_4(\omega(\bar{y}))\| &\lesssim \frac{g_{j-1}^2 }{\varepsilon_{j-1}} + \frac{g_{j-1}^3 }{\varepsilon_{j-1}^2} = \gamma^{(3a-1-0.8)L_{j-1}} +\gamma^{(4a-2-1.4)L_{j-1}}\\
    &\lesssim \gamma^{[3a-1-0.8-2(a+1+0.1)]L_{j-1}}t^2_{j-1} \leq \frac{1}{6}\gamma^{0.5L_{j-1}} t^2_{j-1}.
\end{align}
The validity of \eqref{bound on mcA} requires 
\begin{equation}\label{range of a}
  3a-1-0.8-2(a+1+0.1)>0.5,
\end{equation}
which means we need to take $a\geq 5$. Thus, using \eqref{transversality of norm of bfa} and \eqref{bound on mcA} yields 
\[\| \mcR_4(\omega(\bar{y})) \|\leq \frac{1}{6}\gamma^{0.5L_{j-1}} |\bfa(\bar{y}) |^2 .\]

\end{proof}

Combining \eqref{baby decomposition 2}, \eqref{mcD3}, \eqref{mcR3} and \eqref{mcR4} together  yields the decomposition
\begin{equation}\label{third term decomposition}
  \smallSj_E-\smallSj_{\lambda} = \mcD_3+\mcR_3(\omega(\bar{y}))+\mcR_4(\omega(\bar{y})).
\end{equation}

 \noindent {\Large  {\textbf{(Summarize all terms)}}     }\\

\noindent Finally, by \eqref{initial decomposition}, \eqref{mcD1}, \eqref{second term decomposition} and \eqref{third term decomposition}, we take 
\[\mcD=\mcD_1+\mcD_2+\mcD_3\]
and 
\[\mcR(\omega(\bar{y}))= \mcR_1(\omega(\bar{y}))+\mcR_2(\omega(\bar{y}))+\mcR_3(\omega(\bar{y}))+\mcR_4(\omega(\bar{y})).\]
Then the decomposition in Lemma \ref{key decomposition lemma} is obtained. Using Claim \ref{estimate on mcR1}, Claim \ref{estimate on mcR2}, Claim \ref{estimate on mcR3} and Claim \ref{estimate on mcR4} on the random parts  yield \eqref{estimate on mcR}. We have finished  the proof of Lemma \ref{key decomposition lemma}.
\end{proof}

At this stage, we are approaching the final step  of proving Lemma \ref{monotonicity lemma}. Define 
\begin{equation*}
  \whn_s=\#\left(   \spec(\smallSj_E) \cap I_{2\sqrt{\varepsilon_j}}(E)   \right).
\end{equation*}
Now we choose $a\geq 5$. We will show that, after conditioning on the randomness in $\barB_{j}\setminus \{\bar{y}\}$, the event
\begin{equation}\label{immediate monotonicity}
  \whn_s <\whn 
\end{equation}
fails for at most one value of $\omega(\bar{y})$. By construction, the matrix $\smallSj_E$ has order exactly equaling  to $\whn$ (which is also deterministic now). Thus deterministically we have
\[\whn_s\leq \whn.\]
For simplicity, we denote the principal part in Lemma \ref{key decomposition lemma} by 
{\small

\[  \mcM =\left(\bfa(\bar{y})\otimes \bfa(\bar{y})+ \sum_{z\sim \bar{y} \atop z\in \partial^+\barB_{j-1}} \frac{1}{h+2d+\beta\omega(z)-\lambda } \cdot(\bfa(\bar{y}) \otimes  \bfa(z)+\bfa(z)\otimes\bfa(\bar{y})  )\right), \]
}
which is deterministic and independent of $\omega(\bar{y})$.\\

\noindent \textbf{(Case 1: $\whn=1$.)}\\

\noindent When $\whn=1$, then $\smallSj_E$ is a scalar. Since we have the influence estimate 
\begin{equation}\label{value influence}
 \left| \ \frac{1}{h+2d+\beta\omega(\bar{y})-\lambda} \bigg|_{\omega(\bar{y})=0}^{\omega(\bar{y})=1}  \  \right| \geq \frac{\beta}{(h-2d-\beta-\iota)^2}\geq \gamma^2 \beta, 
\end{equation}
using the decomposition in Lemma \ref{key decomposition lemma} yields
\begin{align*}
  \left| \  \smallSj_E \big|_{\omega(\bar{y})=0}^{\omega(\bar{y})=1} \ \right| & \geq \gamma^2 \beta |\mcM| - |\mcR(1)-\mcR(0) | \\
                          &\geq \gamma^2 \beta |\mcM| -2\left( \frac{1}{2}\gamma^{2.5} |\bfa(\bar{y})|^2 + \frac{1}{2}\gamma^{0.5L_{j-1}} |\bfa(\bar{y})|^2 \right) \\
                          &  \geq \gamma^2 \beta |\mcM| -2 \gamma^{2.5} |\bfa(\bar{y})|^2.
\end{align*}
Now,   the second part in $\mcM$ can be bounded by 
\begin{align*}
  &\left|\sum_{z\sim \bar{y} \atop z\in \partial^+\barB_{j-1}} \frac{1}{h+2d+\beta\omega(z)-\lambda } \cdot(\bfa(\bar{y}) \otimes  \bfa(z)+\bfa(z)\otimes\bfa(\bar{y})  ) \right|  \leq  \frac{4d}{h-2d-\beta-\iota} |\bfa(\bar{y})|^2,
\end{align*}
where we used the fact that $\bar{y}$ is chosen to maximize $|\bfa(y)|$. Thus if we choose $h$ large (depending only on $d$) such that
\begin{equation}\label{essential barrier high 5}
  \frac{4d}{h-2d-\beta-\iota} \leq \frac{4d}{h-2d-2}<\frac{1}{2},
\end{equation}
then we  have 
\[|\mcM|\geq \left( 1- \frac{4d}{h-2d-\beta-\iota} \right) |\bfa(\bar{y})|^2 \geq \frac{1}{2}|\bfa(\bar{y})|^2 .\]
Therefore, combining $a\geq 5$ and \eqref{transversality of norm of bfa} yields
\begin{align}\label{beta dependence of height 1}
 \notag \left| \  \smallSj_E \big|_{\omega(\bar{y})=0}^{\omega(\bar{y})=1} \ \right| & \geq (\frac{1}{2}\gamma^2 \beta -2\gamma^{2.5})\cdot |\bfa(\bar{y})|^2 \\
            &\geq \frac{\beta}{4}\gamma^2 t^2_{j-1} \\
         \notag      &=\frac{\beta}{4}\gamma^2 \gamma^{2(a+1+0.1)L_{j-1}} \gg 4 \gamma^{\frac{a}{2}(a+1+0.2)L_{j-1}} = 4\sqrt{\varepsilon_{j}}.
\end{align}
Here, the validity of \eqref{beta dependence of height 1} requires 
\[4\gamma^{0.5}\leq \beta,\]
which requires  $h$ \textbf{{to be sufficiently large (depending on $\beta$). } } However, this $\beta$-dependence can be removed by a technical argument, which we will present in the next subsection. 
The estimate above on the influence of $\smallSj_E$ implies that for at most one value of $\omega(\bar{y})$ do we have $|\smallSj_E-E|\leq 2\sqrt{\varepsilon_j}$. Consequently, inequality \eqref{immediate monotonicity} fails for at most one value of $\omega(\bar{y})\in \{0,1\}$. \\ 

\noindent \textbf{(Case 2: $\whn\geq 2$.)}\\

\noindent When $\whn\geq 2$, then the spread of matrix $\smallSj_E$ is well-defined (see \eqref{definition of spread} for the definition). Since the matrix $\bfa(\bar{y})\otimes \bfa(\bar{y})$ is of rank-one, we have 
\[\spread (\bfa(\bar{y})\otimes \bfa(\bar{y}))=|\bfa(\bar{y})|^2.\]
Moreover, clearly we have 
\begin{align*}
 & \spread\left( \sum_{z\sim \bar{y} \atop z\in \partial^+\barB_{j-1}} \frac{1}{h+2d+\beta\omega(z)-\lambda } \cdot(\bfa(\bar{y}) \otimes  \bfa(z)+\bfa(z)\otimes\bfa(\bar{y})  ) \right) \\
 \leq & 2\cdot \left\| \sum_{z\sim \bar{y} \atop z\in \partial^+\barB_{j-1}} \frac{1}{h+2d+\beta\omega(z)-\lambda } \cdot(\bfa(\bar{y}) \otimes  \bfa(z)+\bfa(z)\otimes\bfa(\bar{y})  ) \right\| \\
 \leq &  \frac{8d}{h-2d-\beta-\iota} |\bfa(\bar{y})|^2,
\end{align*}
where we used that $\bar{y}$ maximizes $|\bfa(y)|$ and $\|\mathbf{v}\otimes \mathbf{w} \|=|\mathbf{v} |\cdot |\mathbf{w}|$ for any vectors  $\mathbf{v},\mathbf{w}$. Again, choose $h$ large enough  (depending only on $d$) and we can ensure 
\begin{equation}\label{essential barrier high 6}
  \frac{8d}{h-2d-\beta-\iota} \leq \frac{8d}{h-2d-2}<\frac{1}{2}.
\end{equation}
Therefore, applying  spread estimate argument, Corollary \ref{spread lemma}, yields
\begin{equation}\label{spread of mcM}
 \spread(\mcM)\geq \frac{1}{2}|\bfa(\omega(\bar{y}))|^2.
\end{equation}
Now set 
\[M_1=\smallSj_E \big|_{\omega(\bar{y})=0},\ M_2 = \smallSj_E \big|_{\omega(\bar{y})=0}^{\omega(\bar{y})=1}. \]
Then 
\[M_2= \frac{1}{h+2d+\beta\omega(\bar{y})-\lambda} \bigg|_{\omega(\bar{y})=0}^{\omega(\bar{y})=1} \cdot \mcM + (\mcR(1)-\mcR(0)). \]
Applying Corollary \ref{spread lemma}, we can obtain 
\begin{align}\label{beta dependence of height 2}
\notag  \spread(M_2)& \geq \left|  \frac{1}{h+2d+\beta\omega(\bar{y})-\lambda} \bigg|_{\omega(\bar{y})=0}^{\omega(\bar{y})=1}  \right|\cdot \spread(\mcM)-\spread(\mcR(1)-\mcR(0)) \\
   \notag & \geq \frac{1}{2}\gamma^2 \beta |\bfa(\bar{y})|^2- 2\| \mcR(1)-\mcR(0)\| \geq (\frac{1}{2}\gamma^2 \beta -2\gamma^{2.5})\cdot |\bfa(\bar{y})|^2  \\
      &\geq   \frac{\beta}{4}\gamma^2 t^2_{j-1} \gg 10\sqrt{\varepsilon_{j}}.
\end{align}
Here, we again require  $h$  to be  \textbf{{large enough (depending on $\beta$)}}.

Now, without loss of generality, we assume that at the value $\omega(\bar{y})=0$, $\whn_s=\whn$. This implies 
\[\spec(M_1)\subset I_{2\sqrt{\varepsilon_j}}(E),\]
and therefore 
\[\spread(M_1)\leq 4 \sqrt{\varepsilon_j}.\]
Thus, for the another value $\omega(\bar{y})=1$, we apply Corollary \ref{spread lemma} to obtain 
\begin{align*}
  \spread(\smallSj_E \big|_{\omega(\bar{y})=1}) & =\spread(M_1+M_2) \geq \spread(M_2)-\spread(M_1) \\
   &\geq 10 \sqrt{\varepsilon_j}-4\sqrt{\varepsilon_j} = 6\sqrt{\varepsilon_j}.
\end{align*} 
This means that at least one eigenvalue of $\smallSj_E$ at $\omega(\bar{y})=1$ is out of $I_{2\sqrt{\varepsilon_j}}(E)$. Thus, \eqref{immediate monotonicity} holds for $\omega(\bar{y})=1$. The above discussion implies that  \eqref{immediate monotonicity} fails for at most one value of $\omega(\bar{y})\in \{0,1\}$.\\

Now, recalling  the Claim \ref{intermediate comparison claim 1}, the following claim will conclude  the proof of Lemma \ref{monotonicity lemma}.

\begin{claim}\label{intermediate comparison claim 2}
  $\whn_{j}(B_j) \leq \whn_s$.
\end{claim}
\begin{proof}[Proof of  Claim \ref{intermediate comparison claim 2}]
  Since $\smallSj_E$ is the Schur complement of
  \begin{equation*}
    \wtSj_{E} (B_{j}) = \begin{pmatrix}
\tilde{q}(E) & \tilde{r}(E) \\
\tilde{s}(E) & \tilde{t}(E)
\end{pmatrix}.
  \end{equation*}
  In the previous content, we already show that $\| \tr(E)\|,\| \ts(E)\|\leq 2g_{j-1} $ and $\|(\tt(E)-E)^{-1} \| \leq \frac{4}{\varepsilon_{j-1}}$. 
  Assume $\lambda_1 \leq \cdots\leq \lambda_{\whn_j(B_j)}$ are the   all eigenvalues  of $\wtSj_E(B_j)$ in $I_{\varepsilon_j}(E)$ (counting the multiplicities).
  Since $\varepsilon_j \ll \frac{\varepsilon_{j-1}}{8}$, Lemma \ref{fundamental schur lemma} ensures that for each $\lambda_i$, there is an  eigenvalue $\wtlambda_i$ of $\smallSj_E$ such that 
  \[|\lambda_i-\wtlambda_i|\leq (\frac{g_{j-1}}{2\varepsilon_{j-1}})^2 |\lambda_i-E|\leq \gamma^{2(a-1-0.5)L_{j-1}} |\lambda_i-E|.\]
  Therefore,
  \[|\wtlambda_i-E|\leq |\lambda_i-E|+|\lambda_i-\wtlambda_i|\leq 2\varepsilon_j .\]
  Thus, $\wtlambda_i \in I_{2\sqrt{\varepsilon_j}}(E) $. 

  However,  the Remark \ref{need additional orthonormal vector argument} reveals that the correspondence of $\lambda_i$ and $\wtlambda_i$ does not necessarily coincide with the actual ordering (with multiplicities) in the spectrum (just like Corollary \ref{Spectral stability lemma}). Thus, we require a further analysis centered on approximate orthogonality. By Lemma \ref{fundamental schur lemma}, we can set 
  \[\bfv_i=\langle \bfw_i,-(\tt(E)-\lambda_i)^{-1}\ts(E) \bfw_i\rangle,\ \| \bfw_i \|=1,\ 1\leq i\leq \whn_j(B_j)\]
  to be the eigenvector of $\wtSj_E(B_j)$ with respect to $\lambda_i$. Since $\bfv_i,\bfv_{i'},i\neq i'$ are multually orthogonal, we have 
  \[|\langle \bfw_i,\bfw_{i'} \rangle |=| \langle  \bfw_i,\tr(E) (\tt(E)-\lambda_i)^{-1} (\tt(E)-\lambda_{i'})^{-1} \ts(E)\bfw_{i'}  \rangle| ,\ i\neq i'.\]
  Since 
  \[|\lambda_i-E|,\ |\lambda_{i'}-E|\leq \varepsilon_j\ll \varepsilon_{j-1},\]
  we have $\| (\tt(E)-\lambda_{i})^{-1} \|,\ \| (\tt(E)-\lambda_{i'})^{-1}\|\leq \frac{5}{\varepsilon_{j-1}}$. Therefore, for $i\neq i'$,  we have 
  \begin{equation}\label{orthogonality 1}
      |\langle \bfw_i,\bfw_{i'} \rangle | \lesssim (\frac{g_{j-1}}{\varepsilon_{j-1}})^2 =\gamma^{2(a-1-0.5)L_{j-1}}.
  \end{equation}
  Moreover, by Lemma \ref{fundamental schur lemma}, $\bfw_i$ is the eigenvector of the Schur complement
  \[\mcS_i=\tq(E)-\tr(E)(\tt(E)-\lambda_i)^{-1}\ts(E) .\]
  Therefore,
  \begin{align*}
    \| (\smallSj_E-E) \bfw_i \| &= \|  (\smallSj_E-E) \bfw_i - (\mcS_i-\lambda_i) \bfw_i\| \\
         & \leq (1+\| \tr(E) (\tt(E)-\lambda_i)^{-1} (\tt(E)-E)^{-1} \ts(E) \|) \cdot |E-\lambda_i| \\
         & \leq 2 \varepsilon_{j}.
  \end{align*}

  Now denote the spectral projection of $\smallSj_E$ on $I_{2\sqrt{\varepsilon_j}}(E)$ by $P_{I_{2\sqrt{\varepsilon_j}}(E)}$, and the spectral projection on $\R\setminus I_{2\sqrt{\varepsilon_j}}(E)$ by $P_{I_{2\sqrt{\varepsilon_j}}(E)^c}$. We have 
  \[\| (\smallSj_E-E) \bfw_i \| \geq 2 \sqrt{\varepsilon_j} \| P_{I_{2\sqrt{\varepsilon_j}}(E)^c} \bfw_i\|. \]
  Hence, 
  \begin{equation}\label{nearly eigenvector}
    \| P_{I_{2\sqrt{\varepsilon_j}}(E)^c} \bfw_i\|\leq \sqrt{\varepsilon_j}.
  \end{equation}
  Denote $P_{I_{2\sqrt{\varepsilon_j}}(E)}\bfw_i$ by $\widetilde{\bfw}_i$. Then 
  \begin{equation}\label{orthogonality 2}
    \|\widetilde{\bfw}_i \|^2 =1-\| P_{I_{2\sqrt{\varepsilon_j}}(E)^c} \bfw_i\|^2=1+\mcO(\varepsilon_j).
  \end{equation}
  By \eqref{orthogonality 1} and \eqref{nearly eigenvector}, for $i\neq i'$,  we also have 
  \begin{align}\label{orthogonality 3}
    |\langle \widetilde{\bfw}_{i},\widetilde{\bfw}_{i'}\rangle| & = |\langle \bfw_i-P_{I_{2\sqrt{\varepsilon_j}}(E)^c}\bfw_i,\bfw_{i'}-P_{I_{2\sqrt{\varepsilon_j}}(E)^c}\bfw_{i'}\rangle| \\
    \notag      &\leq  \gamma^{2(a-1-0.5)L_{j-1}} + 3 \sqrt{\varepsilon_j} \\
    \notag    &=  \gamma^{2(a-1-0.5)L_{j-1}} + 3 \gamma^{\frac{a}{2}(a+1+0.2)L_{j-1}} =\mcO( \gamma^{2(a-1-0.5)L_{j-1}}).
  \end{align}
Then  combining   \eqref{orthogonality 2} and \eqref{orthogonality 3}  implies  
  \begin{equation}
    |\langle \widetilde{\bfw}_{i},\widetilde{\bfw}_{i'}\rangle| =\delta_{i,i'} +\mcO(\gamma^{2(a-1-0.5)L_{j-1}}), \ 1\leq i,i'\leq \whn_j(B_j). 
  \end{equation}
   Moreover,  we have  $ \whn_{j}(B_j)\leq \# B_j $ and 
  \[ \# B_j \cdot \gamma^{2(a-1-0.5)L_{j-1}}\leq L_0^d \gamma^{2(a-1-0.5)L_{j-1}} \ll 1 \]
  supposing $k\gg1$. Thus, we can apply Lemma \ref{orthogonality lemma} to conclude that $\widetilde{\bfw}_i,1\leq i\leq \whn_j(B_j)$ are linearly independent in the subspace ${\rm Range}(P_{I_{2\sqrt{\varepsilon_j}}(E)} )$ (the range of  the operator $P_{I_{2\sqrt{\varepsilon_j}}(E)}$). Therefore,
  \[ \whn_j(B_j)\leq  {\rm dim} \ {\rm Range} (P_{I_{2\sqrt{\varepsilon_j}}(E)})= \whn_s.\]

\end{proof}

Finally, combining  Claim \ref{intermediate comparison claim 1}, \eqref{immediate monotonicity} and Claim \ref{intermediate comparison claim 2} together, we have proven  that, after conditioning on the randomness in $\barB_{j}\setminus \{\bar{y}\}$, the event that 
\[\whn_j(B_j)\leq \whn_s< \whn\leq \whn_{j-1}(B_{j-1})\]
occurs  for at least one value of $V_{\text{r},\bar{y}}=\omega(\bar{y})\in \{0,1\}$. Therefore, \eqref{monotonicity probabilistic estimate} is proved.

\end{proof}

\subsection{Removing the $\beta$-dependence of the barrier height}\label{remove beta dependent}
In the proof above, we observe that the required barrier height $h$ depends on the coupling strength $\beta$, and as $\beta \to 0$, the height $h$ tends to  infinity (see \eqref{beta dependence of height 1} and \eqref{beta dependence of height 2}).  This is definitely not satisfactory if one tries to extend the  present method to handle the ABM on $\Z^d$ for $d\geq 4$. However, using the trick described below, we can remove this $\beta$-dependence of $h$, thereby obtain  a non‑perturbative lower bound on the barrier height with respect to the  coupling strength.

From \eqref{beta dependence of height 1} and \eqref{beta dependence of height 2}, where the $\beta$-dependence of $h$ originates from, it is clear that the dependence is due primarily to the quantity
\[\gamma^{2.5}|\bfa(\bar{y})|^2 ,\]
which arises from estimating the remainder term $\mathcal{R}_1(\omega(\bar{y}))$ (See Claim \ref{estimate on mcR1}).  Since the remainder term $\mcR_1(\omega(\bar{y}))$ comes from the $K_2$ in decomposition \eqref{decomposition of K}, consequently, to improve the estimate of this remainder term,  it needs to  expand the random leading term $K_1(x,y)$ into   higher orders terms.  So we redefine 
\[\mcG_{\partial^+ \barB_{j-1}}(x,y) =\mcG_{\partial^+ \barB_{j-1}}^{(0)} (x,y)\cup \mcG_{\partial^+ \barB_{j-1}}^{(1)} (x,y)\cup \mcG_{\partial^+ \barB_{j-1}}^{(2)} (x,y) \]
with
\[\mcG_{\partial^+ \barB_{j-1}}^{(0)} (x,y)= \left\{ g\in \mcG_{\partial^+ \barB_{j-1}} (x,y) :\  \bar{y}\notin g \right\},\]
\[\mcG_{\partial^+ \barB_{j-1}}^{(1)} (x,y)= \left\{ g\in \mcG_{\partial^+ \barB_{j-1}} (x,y) : \ \bar{y}\in g ,\ \length(g)\leq \mcL \right\},\]
\[\mcG_{\partial^+ \barB_{j-1}}^{(2)} (x,y)= \left\{ g\in \mcG_{\partial^+ \barB_{j-1}} (x,y) : \ \bar{y}\in g ,\ \length(g)\geq \mcL+1 \right\}.\]
Here, $\mcL$ is a large length parameter that will be selected later in the argument. The decomposition $K_0,K_1,K_2$ will still be given by \eqref{Ki comes from graph decomposition}.  In  this setup, applying the argument used in the proof of Claim \ref{estimate on mcR1} directly yields that the upper bound on $\mcR_1(\omega(\bar{y}))$  will become 
\begin{equation}\label{adjusting estimate on mcR1}
  \| \mcR_1(\omega(\bar{y}))\| \leq \frac{1}{2} \gamma^{0.85(\mcL+2)} |\bfa(\bar{y})|^2.
\end{equation}
However, the leading term (which comes from graphs in $\mcG_{\partial^+ \barB_{j-1}}^{(1)} (x,y)$ and $K_1$) in Lemma \ref{key decomposition lemma} will contain more terms, and will  become 
\begin{equation}\label{adjust leading term 1}
  - \sum_{x,y\in \partial^+ \barB_{j-1}} K_1(x,y) {\bf a}(x) \otimes {\bf a}(y).
\end{equation}
If we denote 
\begin{equation*}
  \mcG_{\partial^+ \barB_{j-1}}^{(1)} = \bigcup_{x,y\in \partial^+ \barB_{j-1}} \mcG_{\partial^+ \barB_{j-1}}^{(1)} (x,y),
\end{equation*}
then \eqref{adjust leading term 1} becomes
\begin{equation}\label{adjust leading term 2}
  -\sum_{g\in \mcG_{\partial^+ \barB_{j-1}}^{(1)}} \frac{1}{h+2d+\beta \omega(x_0)-\lambda}\cdot \frac{1}{h+2d+\beta\omega(x_1)-\lambda} \cdots   \frac{1}{h+2d+\beta\omega(x_s)-\lambda} \bfa(x_1)\otimes \bfa(x_s),
\end{equation}
where  $\bar{y}\in g=(x_0,\cdots,x_s),x_{i}\sim x_{i+1}$ and $x_0,x_s \in \partial^+ \barB_{j-1} $. We denote  by 
\[\mathfrak{t}(g)=\#\{0\leq i \leq s: \ x_s = \bar{y}\} \]
the time of the random walk $g$ arriving at  $\bar{y}$.
We do a further decomposition on $\mcG_{\partial^+ \barB_{j-1}}^{(1)}$ based on the length of random walks
\begin{equation*}
  \mcG_{\partial^+ \barB_{j-1}}^{(1)}=\bigcup_{0\leq s\leq \mcL} \mcG_{\partial^+ \barB_{j-1}}^{(1),s},
\end{equation*}
where 
\begin{equation*}
  \mcG_{\partial^+ \barB_{j-1}}^{(1),s}=\left\{  g\in \mcG_{\partial^+ \barB_{j-1}}^{(1)}: \ \length(g)=s \right\}.
\end{equation*}
Using the notation introduced above, equation \eqref{adjust leading term 2} becomes 
\begin{align}
 \mathfrak{L}(\omega(\bar{y})):=  &-  \frac{1}{h+2d+\beta \omega(\bar{y})-\lambda} \bfa(\bar{y})\otimes \bfa(\bar{y}) \\
   \notag  & -\sum_{1\leq s\leq \mcL} \sum_{g\in \mcG_{\partial^+ \barB_{j-1}}^{(1),s}} \left(\frac{1}{h+2d+\beta \omega(\bar{y})-\lambda}\right)^{\mathfrak{t}(g)} \prod_{x_i\in g \atop x_i\neq \bar{y}} \frac{1}{h+2d+\beta\omega(x_i)-\lambda} \cdot \bfa(x_0)\otimes \bfa(x_s) \\
  \notag :=& -  \frac{1}{h+2d+\beta \omega(\bar{y})-\lambda} \bfa(\bar{y})\otimes \bfa(\bar{y}) + \mathfrak{L}'(\omega(\bar{y})).
\end{align}
Clearly, the estimates in  Claim \ref{estimate on mcR2}, Claim \ref{estimate on mcR3} and Claim \ref{estimate on mcR2} remain valid, and thus the Lemma \ref{key decomposition lemma} will become 
  \begin{align*}
    \smallSj_E & = \frac{-1}{h+2d+\beta \omega(\bar{y})-\lambda} =\mathfrak{L}(\omega(\bar{y})) + \mcD + \mcR(\omega(\bar{y})) .
  \end{align*}
  Here $\mcD$ is a deterministic term, and the random term $\mcR(\omega(\bar{y}))$ satisfies
  \begin{equation*}
    \|\mcR(\omega(\bar{y}))\| \leq \frac{1}{2}\gamma^{0.85(\mcL+1)} |\bfa(\bar{y})|^2 + \frac{1}{2}\gamma^{0.5L_{j-1}} |\bfa(\bar{y})|^2.
  \end{equation*}
  Choose  $k$ sufficiently large to ensure $L_{j-1}\geq L_0\sim d_{k} \geq 10 \mcL$. Then this  will yield  
  \begin{equation}\label{adjust estimate on mcR}
     \|\mcR(\omega(\bar{y}))\| \leq \gamma^{0.85(\mcL+1)} |\bfa(\bar{y})|^2 .
  \end{equation}
  Moreover, for the higher order leading term $\mathfrak{L}'(\omega(\bar{y}))$, we have 
  \begin{align*}
      \sup_{\omega(\bar{y})\in (0,1)}\left\| \frac{d }{d  \omega(\bar{y})} \mathfrak{L}'(\omega(\bar{y})) \right\| & \leq  \beta \cdot \sum_{1\leq s\leq \mcL}   \sum_{g\in \mcG_{\partial^+ \barB_{j-1}}^{(1),s}} \mathfrak{t}(g) \left(\frac{1}{h-2d-\beta-\iota}\right)^{s+2} |\bfa(x_0)|\cdot |\bfa(x_s)| \\
          & \leq  \beta |\bfa(\bar{y})|^2 \sum_{1\leq s\leq \mcL} (s+1) \cdot \left(\# \mcG_{\partial^+ \barB_{j-1}}^{(1),s} \right) \cdot \left(\frac{1}{h-2d-\beta-\iota}\right)^{s+2}.
  \end{align*}
  In the above estimate, we used that $\bar{y}$ maximizes $|\bfa(y)|$, and that $\mathfrak{t}(g)\leq \length(g)+1$. Since the random walks in $\mcG_{\partial^+ \barB_{j-1}}^{(1),s}$ must pass through the fixed site $\bar{y}$, we have $\# \mcG_{\partial^+ \barB_{j-1}}^{(1),s}\leq (s+1) (2d)^s$. Therefore,
  \begin{align}\label{essential barrier high 7}
     \notag   \sup_{\omega(\bar{y})\in (0,1)}  \left\| \frac{d}{d \omega(\bar{y})} \mathfrak{L}'(\omega(\bar{y})) \right\| & \leq  \beta  |\bfa(\bar{y})|^2 \sum_{1\leq s\leq \mcL} (s+1)^2 \cdot (2d)^s \cdot \left(\frac{1}{h-2d-\beta-\iota}\right)^{s+2} \\
       \notag      & \lesssim_d \beta (\frac{2d}{h-2d-\beta-\iota})^3 |\bfa(\bar{y})|^2 \\
             &\leq \frac{1}{10} \beta \gamma^{2} |\bfa(\bar{y})|^2, 
  \end{align}
 which is  uniform  about $\mcL\geq 1$.
  The validity of \eqref{essential barrier high 7} also needs the largeness of $h$, which  only depends   on $d$. Hence, applying the mean value theorem, we have 
  \begin{equation}\label{hihger order leading term small}
    \| \mathfrak{L}'(0) -\mathfrak{L}'(1)\| \leq \frac{1}{5} \beta \gamma^2 |\bfa(\bar{y})|^2.
  \end{equation}
  We have the following cases: \\

  \noindent \textbf{(Case 1: $\whn=1$.)}\\

  By \eqref{value influence} and \eqref{hihger order leading term small}, we have 
  \[\|\mathfrak{L}(1)-\mathfrak{L}(0) \|\geq \gamma^2 \beta |\bfa(\bar{y})|^2  - \| \mathfrak{L}'(0) -\mathfrak{L}'(1)\| \geq \frac{1}{2}\gamma^2 \beta |\bfa(\bar{y}) |^2, \]
  and thus by \eqref{adjust estimate on mcR}, the estimate \eqref{beta dependence of height 1} becomes 
  \begin{align}\label{no beta dependence of height 1}
 \notag \left| \  \smallSj_E \big|_{\omega(\bar{y})=0}^{\omega(\bar{y})=1} \ \right| & \geq (\frac{1}{2}\gamma^2 \beta -2\gamma^{0.85(\mcL+2)})\cdot |\bfa(\bar{y})|^2 \\
            &\geq \frac{\beta}{4}\gamma^2 t^2_{j-1}\gg  4\sqrt{\varepsilon_{j}}.
  \end{align}
  Now, \eqref{no beta dependence of height 1} only requires us to  choose $\mcL$ large enough  so that 
  \[ \gamma^{0.85(\mcL+2)-2}\ll \beta \Leftrightarrow \mcL\gg \log \beta / \log \gamma.\]
  So we remove the $\beta$-dependence of $h$ in this case.\\

  \noindent \textbf{(Case 2: $\whn\geq 2$.)}\\

  The case $\whn\geq 2$ can be handled in a similar manner. By \eqref{value influence}, \eqref{hihger order leading term small} and Corollary \ref{spread lemma}, we have 
  \begin{align*}
    \spread(\mathfrak{L}(1)-\mathfrak{L}(0)) &\geq  \gamma^2 \beta |\bfa(\bar{y})|^2 -\spread(\mathfrak{L}'(1)-\mathfrak{L}'(0)) \\
                               &\geq  \gamma^2 \beta |\bfa(\bar{y})|^2 -2\| \mathfrak{L}'(1)-\mathfrak{L}'(0)\| \\
                               &\geq  \frac{1}{2} \gamma^2 \beta |\bfa(\bar{y})|^2.
  \end{align*}
  Therefore, by \eqref{adjust estimate on mcR}, the estimate \eqref{beta dependence of height 2} becomes 
  \begin{align}\label{no beta dependence of height 2}
\notag  \spread(M_2)& \geq \spread(\mathfrak{L}(1)-\mathfrak{L}(0))-\spread(\mcR(1)-\mcR(0)) \\
   \notag & \geq \frac{1}{2}\gamma^2 \beta |\bfa(\bar{y})|^2- 2\| \mcR(1)-\mcR(0)\| \geq (\frac{1}{2}\gamma^2 \beta -2\gamma^{0.85(\mcL+2)})\cdot |\bfa(\bar{y})|^2  \\
      &\geq   \frac{\beta}{4}\gamma^2 t^2_{j-1} \gg 10\sqrt{\varepsilon_{j}}
\end{align}
provided  $\mcL$ large enough. We remove the $\beta$-dependence of $h$ again.

Finally, since the largeness requirements for the barrier height $h$ arising elsewhere depend only on the dimension $d$ (see \eqref{essential barrier high 1}, \eqref{essential barrier high 2}, \eqref{essential barrier high 3}, \eqref{essential barrier high 4}, \eqref{essential barrier high 5}, \eqref{essential barrier high 6} and \eqref{essential barrier high 7}), the final choice of $h_0$ depends solely on $d$ as well. 
Moreover, observing the requirement \eqref{essential barrier high 1}, if we assume $h_0(d)\sim d^n$ grows polynomially in $d$, we must have 
\[d^{-(2a-0.2)(n-1)} \lesssim d^{-(2a-0.3)n},\]
which gives $n>20a-2$. Consequently, with the choice of  $a\geq 5$, we obtain at least that $h_0(d)\gg d^{50}\gg 4d+1$. This is the condition we imposed on the barrier height in Theorem \ref{high dimension localization}.

\subsection{A martingale type  argument}\label{martingale section}

Based on the discussions above, we have shown that there exists a large constant $h_0(d)$ such that, if we choose $h>h_0(d)$, then for all sufficiently large scales $k$,  Lemma \ref{monotonicity lemma} holds true. We denote by $k^{(2)}$ (depending only  on $h,\beta,d_0$) the smallest $k$  for  which the above argument remains valid. 

We now proceed to prove the Wegner estimate, i.e., Theorem \ref{Wegner estimate for d>3}.  Lemma \ref{monotonicity lemma} indicates  that the transversality at some point on the boundary of $\barB_{j-1}$ can, with high probability, move at least one eigenvalue away from the energy $E$. Therefore, we want to select a sufficiently large collection of points so that all eigenvalues closed  to $E$ are successively shifted away. This procedure will  generate  a martingale with a particular structure, which then allows us to complete the proof of Theorem \ref{Wegner estimate for d>3} via a large deviation estimate.

\begin{proof}[Proof of Theorem \ref{Wegner estimate for d>3}]

Throughout the proof, we let $k\geq k_{\text{in}}>\max \{k^{(1)}, k^{(2)} \}$, where $k^{(1)}$ is the lower bound of scales such that Lemma \ref{eigenvalue number} holds true. 

Recall that we take the $a$-adic scales  as in  \eqref{a-adic scale} and \eqref{interpolate a-adic scale}. Moreover, recall that we set $B_j\equiv \Lambda'_k$ and $\barB_j$ the $aL_{j-1}$-neighborhood of $B_j$ for $0\leq j\leq P$. In this proof, we just let $a=5$. 

We construct a filtration of the probability space $\{0,1\}^{\barLambda_k}$ as follows.

First, we set 
\begin{equation}\label{mcF0}
  \mcF_0= \sigma(\barB_0)
\end{equation}
to be the $\sigma$-algebra generated by the randomness in $\barB_0$. Letting $\baromega_0\in \{0,1\}^{\barB_0}$ denote the configuration of this randomness, $\mcF_0$ induces the cylindrical decomposition
\[\{0,1\}^{\barLambda_k}=\bigcup_{\baromega_0} \{\baromega_0\}\times \{0,1\}^{\barLambda_k\setminus \barB_0 }:= \bigcup_{\baromega_0} \mcC_{\baromega_0}.\]
Now we can apply Lemma \ref{monotonicity lemma}. If $\baromega_0$ is fixed, then we can find a point $\bar{y}_1 =\bar{y}_1(\baromega_0)\in \partial^+\barB_0$ (that is to say $\bar{y}_1$ is $\mcF_0$-measurable) satisfying the conclusions of Lemma \ref{monotonicity lemma}. 
Next,  for each fixed $\baromega_0$ (and then $\bar{y}_1$ is fixed), we set the configuration of the randomness in $\barB_1\setminus \{\bar{y}_1\}$ to be $\hatomega_0 \in \{0,1\}^{\barB_1\setminus (\barB_0\cup \{\bar{y}_1\})}$. Let $\mcQ_0$ denote the $\sigma$-algebra matching with the following cylindrical decomposition 
\[ \{0,1\}^{\barLambda_k}=\bigcup_{\baromega_0,\hatomega_0} \{(\baromega_0,\hatomega_0)\}\times \{0,1\}^{(\barLambda_k\setminus \barB_1)\cup \{\bar{y}_1\} }:= \bigcup_{\baromega_0,\hatomega_0}\mcC_{\baromega_0,\hatomega_0}. \]
Then, clearly $\mcF_0\subset \mcQ_0$.

If we condition on $\mcQ_0$, i.e.,  we just consider the randomness in each cylinder $\mcC_{\baromega_0,\hatomega_0}$, the randomness of $V_{\text{r},\bar{y}_1}=\omega(\bar{y}_1)$ is still free and an application of the Lemma \ref{monotonicity lemma} yields 
\begin{equation}\label{prob about bar y 1}
   \P_{\omega(\bar{y}_1)}    \left(\whn_{1}(B_{1}) <\whn_{0}(B_{0}) \ {\rm or}\ \whn_{1}(B_{1}) =\whn_{0}(B_{0})=0 \  \bigg| \ V_{\text{r},\barB_{j}\setminus \{\bar{y}\} }=(\baromega_0,\hatomega_0) \right)\geq \frac{1}{2}.
\end{equation}
Here $\P_{\omega(\bar{y}_1)}$ means that indeed the probability is only about the randomness of $\omega(\bar{y}_1)$. We denote the above event by 
\begin{equation*}
   \event_1: \ \whn_{1}(B_{1}) <\whn_{0}(B_{0}) \ {\rm or}\ \whn_{1}(B_{1}) =\whn_{0}(B_{0})=0 \,
\end{equation*}
and let $\mcZ_1=\mathbf{1}_{\event_1}(\omega(\bar{y}_1))$. Then $\mcZ_1$ is $\mcF_1$-measurable, where
\[\mcF_1=\sigma(\barB_1)\] 
is the the $\sigma$-algebra generated by the randomness in $\barB_1$. Clearly, $\mcQ_0\subset \mcF_1$, and if we set the configuration of $\omega(\bar{y}_1)$ to be $\baromega_1$, then $\mcF_1$ will match with the following cylindrical decomposition 
\[ \{0,1\}^{\barLambda_k}=\bigcup_{\baromega_0,\hatomega_0,\baromega_1} \{(\baromega_0,\hatomega_0,\baromega_1)\}\times \{0,1\}^{(\barLambda_k\setminus \barB_1)} :=\bigcup_{\baromega_0,\hatomega_0,\baromega_1}\mcC_{\baromega_0,\hatomega_0,\baromega_1}. \]
Moreover, by \eqref{prob about bar y 1}, we have 
\begin{equation}\label{expectation 1}
  \E(\mcZ_1)=\E(\E(\mcZ_1 \ | \ \mcC_{\baromega_0,\hatomega_0})) \geq \frac{1}{2}.
\end{equation}

Now, since $\mcF_1$ realizes  the randomness in $\barB_1$, applying Lemma \ref{monotonicity lemma} again will yield a $\mcF_1$-measurable $\bar{y}_2=\bar{y}_2(\baromega_0,\hatomega_0,\baromega_1)\in \partial^+ \barB_1$. If we condition on $\mcF_1$, then the value of $\mcZ_1$ and the choice of $\bar{y}_2$ will be fixed. We then define $\mcQ_1$ as the $\sigma$-algebra matching with the following cylindrical decomposition 
\[ \{0,1\}^{\barLambda_k}=\bigcup_{\baromega_0,\hatomega_0,\baromega_1,\hatomega_1} \{(\baromega_0,\hatomega_0,\baromega_1,\hatomega_1)\}\times \{0,1\}^{(\barLambda_k\setminus \barB_2)\cup \{\bar{y}_2\} } := \bigcup_{\baromega_0,\hatomega_0,\baromega_1,\hatomega_1}\mcC_{\baromega_0,\hatomega_0,\baromega_1,\hatomega_1}, \]
where $\hatomega_1\in \{0,1\}^{\barB_2\setminus (\barB_1\cup \{\bar{y}_2\})}$ is the configuration of the randomness in ${\barB_2\setminus (\barB_1\cup \{\bar{y}_2\})}$. Then, clearly $\mcF_1\subset \mcQ_1$. If we denote the event 
\begin{equation*}
   \event_2:\  \ \whn_{2}(B_{2}) <\whn_{1}(B_{1}) \ {\rm or}\ \whn_{2}(B_{2}) =\whn_{1}(B_{1})=0 ,
\end{equation*}
then using  Lemma \ref{monotonicity lemma} yields that in each cylinder $\mcC_{\baromega_0,\hatomega_0,\baromega_1,\hatomega_1}$,
\begin{equation}\label{prob about bar y 2}
   \P_{\omega(\bar{y}_2)}    \left(  \event_2 \ \bigg| \ \mcC_{\baromega_0,\hatomega_0,\baromega_1,\hatomega_1} \right)\geq \frac{1}{2}.
\end{equation}
Therefore, we can define $\mcZ_2=\mathbf{1}_{\event_2}(\omega(\bar{y}_2))$, which is measurable in the $\sigma$-algebra 
\[\mcF_2=\sigma(\barB_2).\]
Moreover, since $\mcZ_1$ is $\mcF_1$-measurable, $\mcF_1\subset \mcQ_1\subset \mcF_2$ and \eqref{prob about bar y 2}, we have 
\begin{equation}\label{expectation 1}
  \E(\mcZ_2\ | \ \mcZ_1)=\E(\E(\mcZ_2 \ | \ \mcC_{\baromega_0,\hatomega_0,\baromega_1,\hatomega_1} ) | \mcZ_1) \geq \frac{1}{2}.
\end{equation}

By iterating the above construction, we obtain the following filtration
\[\mcF_0\subset \mcQ_0\subset \mcF_1\subset\mcQ_1\subset \cdots \subset \mcF_{P-1}\subset\mcQ_{P-1}\subset \mcF_{P}:=\sigma(\barLambda_k) \]
together with a (site-mixed) martingale $(\bar{y}_{j+1},\mcZ_j),0 \leq j\leq P$ satisfying:
\begin{itemize}
  \item We set $\mcZ_0=0$. We have  $(\bar{y}_{j+1},\mcZ_{j})$ is $\mcF_{j}$-measurable. The realization of $\mcZ_j$ happens between the two filtration elements $\mcQ_{j-1}\subset \mcF_j$. Moreover, 
        \begin{equation}\label{martingale expectation}
            \E(\mcZ_j\ | \ \mcZ_{j-1})\geq \frac{1}{2},\ 1\leq j\leq P.
        \end{equation}
  \item The $\sigma$-algebra $\mcF_j$ match with the cylindrical decomposition 
     \[ \{0,1\}^{\barLambda_k}=\bigcup_{\baromega_0,\hatomega_0,\cdots,\baromega_j} \{(\baromega_0,\hatomega_0,\cdots,\baromega_j)\}\times \{0,1\}^{(\barLambda_k\setminus \barB_j)}:= \bigcup_{\baromega_0,\hatomega_0,\cdots,\baromega_j}\mcC_{\baromega_0,\hatomega_0,\cdots,\baromega_j} \]
       and the $\sigma$-algebra $\mcQ_j$ match with the cylindrical decomposition
  \[ \{0,1\}^{\barLambda_k}=\bigcup_{\baromega_0,\hatomega_0,\cdots,\baromega_j,\hatomega_j} \{(\baromega_0,\hatomega_0,\cdots,\baromega_j,\hatomega_j)\}\times \{0,1\}^{(\barLambda_k\setminus \barB_{j+1})\cup \{\bar{y}_{j+1}\} } := \bigcup_{\baromega_0,\hatomega_0,\cdots,\baromega_j,\hatomega_j}\mcC_{\baromega_0,\hatomega_0,\cdots,\baromega_j,\hatomega_j}. \]     
\end{itemize}
Here we say the martingale is \textbf{``site-mixed"}, because during the process the site $\bar{y}_j$ itself is a random variable, and the filtration $\mcQ_{j-1}$ is constructed based on the realization of $\bar{y}_j$. (For a visual illustration of this construction, see Figure  \ref{martingale Wegner fig}.)

\begin{figure}[htbp]
  \centering

\tikzset{every picture/.style={line width=0.75pt}} 

\begin{tikzpicture}[x=0.75pt,y=0.75pt,yscale=-0.9,xscale=0.9]

\draw   (17.82,22.82) -- (626.54,22.82) -- (626.54,631.54) -- (17.82,631.54) -- cycle ;
\draw   (316,321) -- (328.36,321) -- (328.36,333.36) -- (316,333.36) -- cycle ;
\draw   (195.57,200.57) -- (448.79,200.57) -- (448.79,453.79) -- (195.57,453.79) -- cycle ;
\draw   (295.36,300.36) -- (349,300.36) -- (349,354) -- (295.36,354) -- cycle ;
\draw   (275.02,280.02) -- (369.34,280.02) -- (369.34,374.34) -- (275.02,374.34) -- cycle ;
\draw   (253.31,258.31) -- (391.04,258.31) -- (391.04,396.04) -- (253.31,396.04) -- cycle ;
\draw  [color={rgb, 255:red, 208; green, 2; blue, 27 }  ,draw opacity=1 ][fill={rgb, 255:red, 208; green, 2; blue, 27 }  ,fill opacity=1 ] (298.91,312.15) .. controls (298.84,310.39) and (297.35,308.97) .. (295.59,308.97) .. controls (293.83,308.97) and (292.46,310.39) .. (292.53,312.15) .. controls (292.6,313.91) and (294.09,315.34) .. (295.85,315.34) .. controls (297.61,315.34) and (298.98,313.91) .. (298.91,312.15) -- cycle ;
\draw  [color={rgb, 255:red, 208; green, 2; blue, 27 }  ,draw opacity=1 ][fill={rgb, 255:red, 208; green, 2; blue, 27 }  ,fill opacity=1 ] (335.91,374.15) .. controls (335.84,372.39) and (334.35,370.97) .. (332.59,370.97) .. controls (330.83,370.97) and (329.46,372.39) .. (329.53,374.15) .. controls (329.6,375.91) and (331.09,377.34) .. (332.85,377.34) .. controls (334.61,377.34) and (335.98,375.91) .. (335.91,374.15) -- cycle ;
\draw  [color={rgb, 255:red, 208; green, 2; blue, 27 }  ,draw opacity=1 ][fill={rgb, 255:red, 208; green, 2; blue, 27 }  ,fill opacity=1 ] (394.91,279.15) .. controls (394.84,277.39) and (393.35,275.97) .. (391.59,275.97) .. controls (389.83,275.97) and (388.46,277.39) .. (388.53,279.15) .. controls (388.6,280.91) and (390.09,282.34) .. (391.85,282.34) .. controls (393.61,282.34) and (394.98,280.91) .. (394.91,279.15) -- cycle ;
\draw  [dash pattern={on 4.5pt off 4.5pt}] (122.18,127.18) -- (522.18,127.18) -- (522.18,527.18) -- (122.18,527.18) -- cycle ;
\draw  [color={rgb, 255:red, 208; green, 2; blue, 27 }  ,draw opacity=1 ][fill={rgb, 255:red, 208; green, 2; blue, 27 }  ,fill opacity=1 ] (198.91,424.15) .. controls (198.84,422.39) and (197.35,420.97) .. (195.59,420.97) .. controls (193.83,420.97) and (192.46,422.39) .. (192.53,424.15) .. controls (192.6,425.91) and (194.09,427.34) .. (195.85,427.34) .. controls (197.61,427.34) and (198.98,425.91) .. (198.91,424.15) -- cycle ;
\draw    (122.18,541.18) -- (522.18,541.18) ;
\draw [shift={(522.18,541.18)}, rotate = 180] [color={rgb, 255:red, 0; green, 0; blue, 0 }  ][line width=0.75]    (0,5.59) -- (0,-5.59)(10.93,-3.29) .. controls (6.95,-1.4) and (3.31,-0.3) .. (0,0) .. controls (3.31,0.3) and (6.95,1.4) .. (10.93,3.29)   ;
\draw [shift={(122.18,541.18)}, rotate = 0] [color={rgb, 255:red, 0; green, 0; blue, 0 }  ][line width=0.75]    (0,5.59) -- (0,-5.59)(10.93,-3.29) .. controls (6.95,-1.4) and (3.31,-0.3) .. (0,0) .. controls (3.31,0.3) and (6.95,1.4) .. (10.93,3.29)   ;
\draw    (17.82,647.54) -- (626.54,647.54) ;
\draw [shift={(626.54,647.54)}, rotate = 180] [color={rgb, 255:red, 0; green, 0; blue, 0 }  ][line width=0.75]    (0,5.59) -- (0,-5.59)(10.93,-3.29) .. controls (6.95,-1.4) and (3.31,-0.3) .. (0,0) .. controls (3.31,0.3) and (6.95,1.4) .. (10.93,3.29)   ;
\draw [shift={(17.82,647.54)}, rotate = 0] [color={rgb, 255:red, 0; green, 0; blue, 0 }  ][line width=0.75]    (0,5.59) -- (0,-5.59)(10.93,-3.29) .. controls (6.95,-1.4) and (3.31,-0.3) .. (0,0) .. controls (3.31,0.3) and (6.95,1.4) .. (10.93,3.29)   ;

\draw (213,313.71) node [anchor=north west][inner sep=0.75pt]   [align=left] {\textcolor[rgb]{0.82,0.01,0.11}{\textbf{$\cdots$}}};
\draw (294.53,315.15) node [anchor=north west][inner sep=0.75pt]   [align=left] {\textcolor[rgb]{0.82,0.01,0.11}{$\bar{y}_1$}};
\draw (334.72,377.15) node [anchor=north west][inner sep=0.75pt]   [align=left] {\textcolor[rgb]{0.82,0.01,0.11}{$\bar{y}_2$}};
\draw (393.72,282.15) node [anchor=north west][inner sep=0.75pt]   [align=left] {\textcolor[rgb]{0.82,0.01,0.11}{$\bar{y}_3$}};
\draw (197.72,427.15) node [anchor=north west][inner sep=0.75pt]   [align=left] {\textcolor[rgb]{0.82,0.01,0.11}{$\bar{y}_P$}};
\draw (328,336.36) node [anchor=north west][inner sep=0.75pt]   [align=left] {$\barB_0$};
\draw (348,357) node [anchor=north west][inner sep=0.75pt]   [align=left] {$\barB_1$};
\draw (368.34,377.34) node [anchor=north west][inner sep=0.75pt]   [align=left] {$\barB_2$};
\draw (410,435.34) node [anchor=north west][inner sep=0.75pt]   [align=left] {$\barB_{P-1}$};
\draw (498,508.34) node [anchor=north west][inner sep=0.75pt]   [align=left] {$\barB_P$};
\draw (605,613.71) node [anchor=north west][inner sep=0.75pt]   [align=left] {$\barLambda_k$};
\draw (300,551.71) node [anchor=north west][inner sep=0.75pt]   [align=left] {$L_P\sim d_k^{\sqrt{\alpha}}$};
\draw (307,656.71) node [anchor=north west][inner sep=0.75pt]   [align=left] {$\sim d_{k+1}$};

\end{tikzpicture}

\caption{The construction of the chain of sites in the martingale.}
\label{martingale Wegner fig}
  
\end{figure}
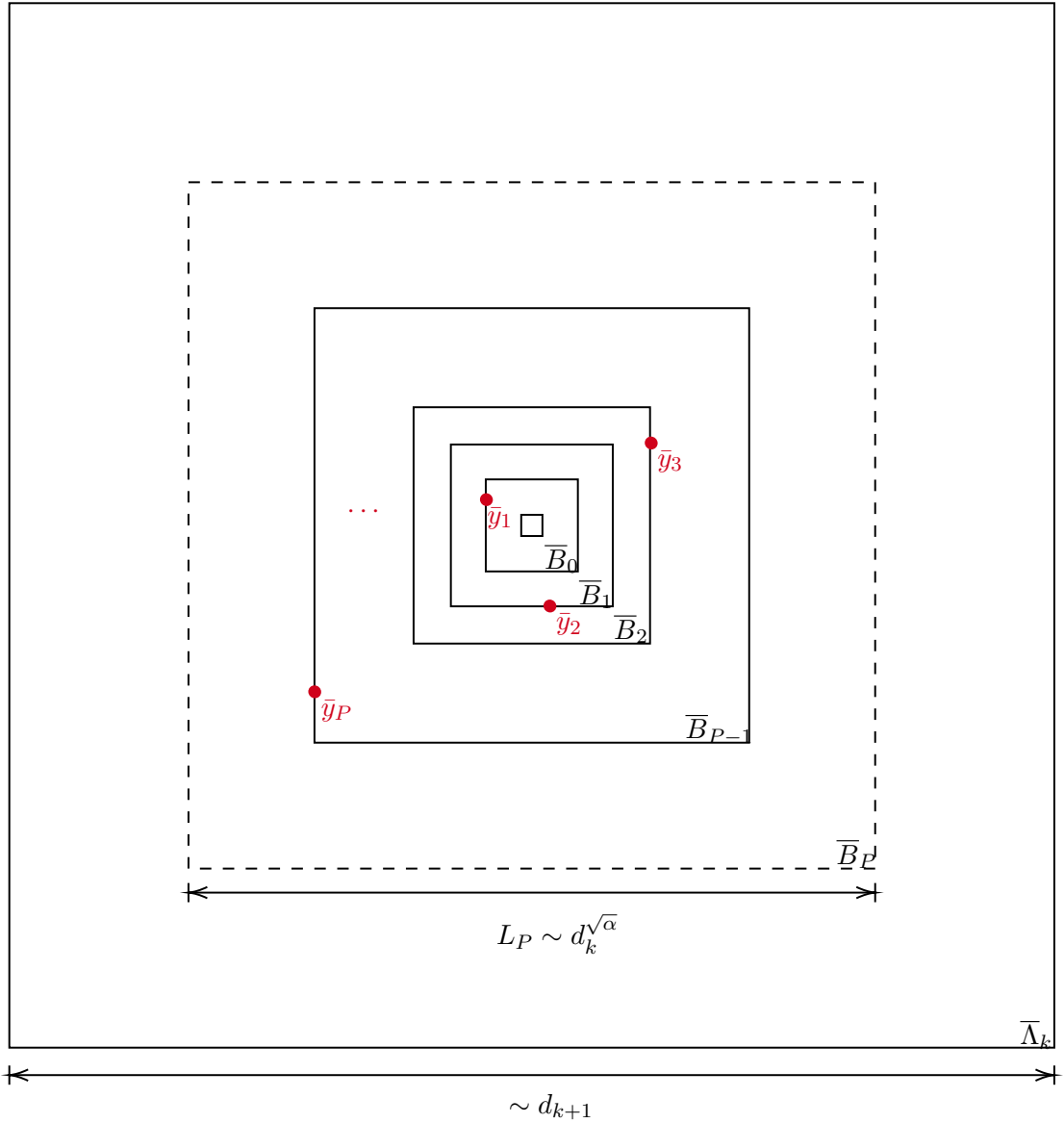

Now,  we let 
\begin{equation}\label{mcX}
  \mcX_j=\mcZ_1+\mcZ_2+\cdots + \mcZ_j,\ 1\leq j\leq P.
\end{equation}
Moreover, we let $\widetilde{\mcZ}_j=\mcZ_j-\frac{1}{2}$ and 
\begin{equation}\label{mcY}
  \mcY_j=\widetilde{\mcZ}_1+\widetilde{\mcZ}_2+\cdots+\widetilde{\mcZ}_j=\mcX_j-\frac{j}{2},  \ 1\leq j\leq P.
\end{equation}
Then applying \eqref{martingale expectation} yields 
\[\E(\widetilde{\mcZ}_{j+1} \ | \ \widetilde{\mcZ}_j)\geq 0\]
and therefore 
\[\E(\mcY_{j+1}  \ |\ \mcY_j)\geq \mcY_j.\]
This means $(\mcY_j)_{1\leq j\leq P}$ is a submartingale. Moreover, by our construction, 
\[|\mcY_{j+1}-\mcY_j|=|\widetilde{\mcZ}_{j+1}|\leq \frac{1}{2}.\]
Hence we can apply the Azuma's inequality and \eqref{P quantitive} to obtain 
\begin{align}\label{Azuma 1}
  \P\left(\mcX_P\leq \frac{1}{10}P\right) & = \P\left(\mcY_P\leq -(\frac{1}{2}-\frac{1}{10})P\right) \\
     \notag          &\leq \exp\left\{-2 \frac{\big((\frac{1}{2}-\frac{1}{10})P\big)^2}{\sum_{j=1}^P (\frac{1}{2})^2}\right\} \leq \exp\{-P\}\\
     \notag   &\leq \exp\left\{     -\frac{1}{2} \frac{\sqrt{\alpha}-1}{\alpha\cdot \log 5} \cdot \log d_{k+1}    \right\}=d_{k+1}^{-q}
\end{align}
for some constant $q>0$.

Clearly, the event $\mcX_P > \frac{1}{10}P$ represents  that the strict monotonicity in \eqref{monotonicity probabilistic estimate} happens more than $\frac{1}{10}P$ times, and therefore (Lemma \ref{monotonicity lemma} tells us that deterministically $\whn_{j}(B_j)\leq \whn_{j-1}(B_{j-1})$)
\begin{equation}\label{decreasing final}
  \whn_P(B_P)\leq \max\left\{ \ \whn_0(B_0)-\frac{1}{10}P, \ 0 \ \right\}.
\end{equation}
Now, recall that the $\whn_0(B_0)$ is the number of eigenvalues (counting multiplicities) of the Schur complement 
 \begin{equation}\label{schur complement in 0-step}
  \widetilde{S}^{(0)}_E (B_0)= H_{B_0}-\Gamma_0^{\top}(H_{\barB_0\setminus B_0}-E)^{-1}\Gamma_0
\end{equation}
in the interval $I_{\varepsilon_0}(E)$ for  $\varepsilon_0=\gamma^{6.2L_0}$. Since $\barB_0\setminus B_0$ is non-resonant, by Lemma \ref{non-resonant Green's function},  we get  
\[\| \widetilde{S}^{(0)}_E (B_0)- H_{B_0}  \|\leq \| \Gamma_0\|^2\cdot \|G_{\barB_0\setminus B_0}(E) \|\leq \frac{4d^2}{h-4d-\beta-\iota}\leq \frac{\iota}{4}. \]
The last inequality follows from our choice $h\geq h_0(d)\gg d^{50}$ with $d\geq 4$ (so that $\iota=\frac{1}{100}$ by \eqref{iota}). Therefore, applying Corollary \ref{Spectral stability lemma} will yield 
\begin{equation}\label{bound whn0 1}
  \whn_0(B_0)\leq \#\left( \spec(H_{B_0})\cap I_{\varepsilon_0+\frac{\iota}{4}}(E) \right).
\end{equation}
Recall that we fix  $E\in [-\frac{\iota}{2},4d+\beta+\frac{\iota}{2}]$ and $\varepsilon_0\ll \frac{\iota}{4} $ if we choose $k$ large. So
\[I_{\varepsilon_0+\frac{\iota}{4}}(E) \subset [-\iota,4d+\beta+\iota].\]
By \eqref{bound whn0 1}, $B_0=\Lambda'_k$ and Remark \ref{Lambda' has less eigenvalue number}, we obtain 
\begin{equation}\label{bound whn0 2}
  \whn_0(B_0)\leq \# \left(  \spec(H_{\Lambda'_k})\cap [-\iota,4d+\beta+\iota] \right) \leq 2 N^{k-k^{(1)}} (16d_{k^{(1)}}+1)^d. 
\end{equation}
Therefore, by \eqref{decreasing final} and \eqref{P quantitive}, the event $\mcX_P > \frac{1}{10}P$ implies 
\begin{align}\label{eigenvalue vanish}
  \whn_P(B_P) & \leq \max \left\{ 2 N^{k-k^{(1)}} (16d_{k^{(1)}}+1)^d - C\cdot(\sqrt{\alpha}-1)\frac{\log d_0}{\log a} \cdot \alpha^k ,\ 0\right\} =0.
\end{align}
The validity of \eqref{eigenvalue vanish} can be ensured by taking sufficiently large $k$ (depending only  on $h,\beta,d_0,\alpha,N$), since {\textbf{we assume $\alpha>N$}} and $k^{(1)}$ depends only on $h,\beta,d_0,\alpha,N$. Thus, by \eqref{Azuma 1},  we have  
\begin{equation}
  \P\left(\whn_P(B_P) =0 \right) \geq \P(\mcX_P > \frac{1}{10}P)\geq 1-d_{k+1}^{-q}.
\end{equation}

Finally, we return the control of $\whn_P(B_P)$ back to counting  the number of eigenvalues of $H_{\barLambda_k}$. Assume there exists  a $\lambda$ lying in $\spec(H_{\barB_P})\cap I_{\varepsilon_P/2}(E)$.  Then by Lemma \ref{fundamental schur lemma},  $\lambda$ is also an eigenvalue of the Schur complement $\widetilde{S}^{(P)}_\lambda(B_P)$. Moreover, since we choose $h>h_0(d)$, we can use Claim \ref{estimate on difference in energy} to deduce that 
\[ \|\widetilde{S}^{(P)}_\lambda(B_P)-\widetilde{S}^{(P)}_E(B_P)  \|\leq \gamma^{1.9}|\lambda-E|\leq \frac12 \gamma^{1.9} \varepsilon_P .\]
Therefore, by Corollary \ref{Spectral stability lemma}, there must be an eigenvalue $\wtlambda$ of $\widetilde{S}^{(P)}_E(B_P)$ such that 
\[|\wtlambda-E|\leq |\wtlambda-\lambda|+|\lambda-E|\leq \left(\frac{1}{2}\gamma^{1.9}+\frac{1}{2}\right)\varepsilon_P<\varepsilon_P,\]
which means $\whn_P(B_P)>0$. Thus, \eqref{eigenvalue vanish} tells us 
\begin{align}\label{before bootstrap}
  \P\left(\dist(\spec(H_{\barB_P}) , E) <\varepsilon_P=\gamma^{6.2L_P} \right) \leq \P(\whn_P(B_P)>0)\leq d_{k+1}^{-q}.
\end{align}

\bigskip

However, \eqref{before bootstrap} only ensures an exponential decay upper bound with respect the block length $\diam(\barB_P)=L_P$. But we need a sub-exponential decay upper bound in \eqref{Wegner poly probability 2}. So we need the following additional bootstrap argument. 

Denote the Schur complement of $H_{\barLambda_k}-\mcE$ about the block matrix restricted in $B_P=\Lambda'_k$ by 
\[\widetilde{S}^{\barLambda_k}_\mcE (B_P)= H_{B_P}-\Gamma_P^{\top} (H_{\barLambda_k \setminus B_P}-E)^{-1}\Gamma_P . \]
For $\mcE,\mcE'\in [-\iota,4d+\beta+\iota]$, the argument used in the proof of Claim \ref{difference in scales} can be applied analogously to  yield 
\begin{equation}\label{final difference in scales}
  \| \widetilde{S}^{\barLambda_k}_\mcE (B_P) - \widetilde{S}^{(P)}_\mcE (B_P) \| \leq \gamma^{(2a-0.3)L_P}=\gamma^{9.7L_P}.
\end{equation}
Now assume there is a $\lambda_0$ lying in $\spec(\widetilde{S}^{\barLambda_k}_E (B_P) )\cap I_{\varepsilon_P/10 }(E)$. By \eqref{final difference in scales},  we have 
\[   \| \widetilde{S}^{\barLambda_k}_E (B_P) - \widetilde{S}^{(P)}_E (B_P) \| \leq \gamma^{(2a-0.3)L_P}=\gamma^{9.7L_P}=g_P, \]
and thus Corollary \ref{Spectral stability lemma} implies there is an eigenvalue $\wtlambda_0$ of $\widetilde{S}^{(P)}_E (B_P)$ such that 
\[|E-\wtlambda_0|\leq |\lambda_0-E|+|\lambda_0-\wtlambda_0|< g_P +\frac{\varepsilon_P}{10}\leq \frac{\varepsilon_P}{5}. \]
Using Claim \ref{estimate on difference in energy}, we also have 
\[  \|  \widetilde{S}^{(P)}_E (B_P) -\widetilde{S}^{(P)}_{\wtlambda_0} (B_P)\| \leq \gamma^{1.9} |E-\wtlambda_0|,\]
and thus Corollary \ref{Spectral stability lemma} implies there is an eigenvalue $\wtlambda_1$ of $\widetilde{S}^{(P)}_{\wtlambda_0} (B_P)$ such that 
\[|\wtlambda_1-\wtlambda_0|\leq\gamma^{1.9 } |E-\wtlambda_0|. \]
Therefore, the fixed-point argument   leading to \eqref{distance of lambda and E} will help us to find a $\wtlambda$ such that 
\begin{equation}\label{bootstrap fixed point}
  \wtlambda\in \spec(\widetilde{S}^{(P)}_{\wtlambda} (B_P) ),\ |\wtlambda-E|<2|E-\wtlambda_0|<\frac{2}{5}\varepsilon_P.
\end{equation}
By Lemma \ref{fundamental schur lemma} and \eqref{bootstrap fixed point}, we  yield $\wtlambda\in \spec(H_{\barB_P})$, and therefore 
\[ \dist(\spec(H_{\barB_P}) , E) <\frac{2}{5}\varepsilon_P<\varepsilon_P .\]
That is to say, by \eqref{before bootstrap}, 
\begin{align}\label{after bootstrap}
 \P\left(\dist(\spec(H_{\barLambda_k}) , E) <\frac{1}{10}\varepsilon_P\right) \leq  \P\left(\dist(\spec(H_{\barB_P}) , E) <\varepsilon_P \right) \leq d_{k+1}^{-q}.
\end{align}
Finally, noticing that 
 \[ \frac{1}{10}\varepsilon_P \sim \exp\{-C\cdot d_{k+1}^{\frac{1}{\sqrt{\alpha}}} \} \ll \exp\{-d_{k+1}^{1-\varepsilon}\} \]
if we choose $\varepsilon$ to be smaller than $1-\alpha^{-1/2}$, we finish the proof of the Wegner estimate \eqref{Wegner poly probability 2}.

The entire proof is conducted under the assumption that $k$ is sufficiently large. We choose $k_{\text{in}}$ to be large enough (depending only on $h,\beta,\alpha,d_0,N$) to ensure the validity of the proof when $k\geq k_{\text{in}}$.

\end{proof}

\begin{rmk}
The final bootstrap argument relies on the existence of an annular non-resonant region surrounding $\barB_P$, in which  a similar structure also presents in  the important Bourgain's geometric lemma  \cite{Bou07} in the quasi-periodic Schr\"odinger operators case.
\end{rmk}

\section{Elimination of the energy and the proof of the localization}\label{section eliminate the energy}
In this section, we will use the Wegner estimates Theorem \ref{Wegner estimate for d=1,2,3} and Theorem \ref{Wegner estimate for d>3} to prove the our main theorems on the localization. 

From now on, for every $(k,N)$-hierarchical structure we simply write $2\leq s\leq N$ instead of $2\leq s\leq N_k\leq N$. This simplified  notation does not affect any of the estimates, since we always have the bound $ N_k \leq N$.

\subsection{Eliminating  the energy}
Recall that we use the notation $\overline{\Lambda}_k$ and $\overline{\Lambda}_{k}^s$ to stand for the $\frac{1}{10}d_{k+1}$-neighborhood of $\Lambda_k$ and $\Lambda_k(j_s)$ (for $s\geq 2$). 

To prove the Anderson localization, it's sufficient to establish exponential decay of all generalized eigenfunctions in the corresponding spectral interval. Since we have \eqref{spectral inclusion}, the well-known Shnol's lemma tells us that the spectral interval  $[0,4d+\beta]$ contains  the following generalized eigenvalues (of  full spectral measure): 
\begin{equation}\label{generalized eigenvalue}
  \mcE=\mcE(\omega)\in [0,4d+\beta] \ {\rm with\ generalized \ eigenfunction \ } \psi.
\end{equation}
Here the generalized eigenfunction satisfies 
\begin{equation}\label{generalized eigenfunction}
  |\psi(m_{\mcE})| \geq 1, \ |\psi(x)|\leq C_1(d) (|m_{\mcE}|+1)^{\frac{d}{2}+1}\cdot (|x|+1)^{\frac{d}{2}+1} \ {\rm for \ all\ } x\in \Z^d. 
\end{equation}
We emphasize that both  the generalized eigenvalue $\mcE$ and the site $m_{\mcE}=m_{\mcE}(\omega)$ depend on the random potential $V_{\text{r}}=\omega$, while the constant $C_1(d)$ only depends on  $d$ (For the statement and proof of Shnol's lemma, see e.g. \cite[Proposition 7.4]{Kir08}). 

Now we choose an arbitrary generalized eigenvalue $\mcE\in [0,4d+\beta]$. Then  we have 
\begin{lem}
  For any $\omega\in\Omega$, there is a scale $K_1=K_1(\mcE,\omega,h,\beta,d)>0$  such that, for  $\forall k\geq K_1$, we have 
  \begin{equation}\label{approximate the generalized eigenvalue}
      \dist\left(\mcE,\spec(H_{\barLambda_k})\cap [-\frac{\iota}{2},4d+\beta+\frac{\iota}{2}]\right)< \exp\left\{  -\frac{1}{50}\gamma_0 d_{k+1}      \right\}.
  \end{equation}

\end{lem}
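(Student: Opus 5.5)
The plan is a standard cut-off argument: truncate the generalized eigenfunction $\psi$ to $\barLambda_k$ and use it as a trial vector for $H_{\barLambda_k}$. Fix $\omega$, the generalized eigenvalue $\mcE\in[0,4d+\beta]$ and the site $m_{\mcE}$ from \eqref{generalized eigenfunction}. Take $K_1$ large enough that $m_{\mcE}\in\Lambda_k$ for all $k\geq K_1$. Since $\barLambda_k$ is the $\frac1{10}d_{k+1}$-neighborhood of $\Lambda_k$, and $\diam(\Lambda_k)\sim d_k\ll d_{k+1}$, every site of $\partial^-\barLambda_k$ is then at distance $\gtrsim\frac1{10}d_{k+1}$ from $m_{\mcE}$. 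Set $\phi=R_{\barLambda_k}\psi$. Then $(H_{\barLambda_k}-\mcE)\phi$ is supported on $\partial^-\barLambda_k$, and its values there are sums of $\psi$ over $\partial^+\barLambda_k$. Consequently $\|(H_{\barLambda_k}-\mcE)\phi\|\lesssim_d\|\psi\|_{\ell^2(\partial^+\barLambda_k)}$, while $\|\phi\|\geq|\psi(m_{\mcE})|\geq1$.

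The key step is to show that $\psi$ is exponentially small on $\partial^{\pm}\barLambda_k$. The paper has no a priori decay for $\psi$; only the polynomial bound \eqref{generalized eigenfunction} is available. So I would use the non-resonant annulus. The region $A=\barLambda_k\setminus\Lambda'_k$, suitably enlarged past $\partial^+\barLambda_k$, is contained in the barrier $\mcB$. This uses the hierarchical structure: the wells outside $\Lambda_k$ but inside $\Lambda_{k+1}$ lie at distance $\geq 2d_{k+1}$ from $C_k^1\subset\Lambda_k$, so the $\frac15 d_{k+1}$-neighborhood of $\Lambda_k$ minus $\Lambda_k'$ is free of wells for $k$ large. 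Take the annulus $A'=\{x:\,2d_k<\dist(x,\Lambda_k)<\frac15d_{k+1}\}\subset\mcB$. Since $\mcE\in[0,4d+\beta]$, apply Poisson's formula on $A'$:
\[
\psi(x)=-\sum G_{A'}(x,w;\mcE)\,\psi(w'),
\]
with the sum over the inner and outer boundary pairs. Lemma \ref{non-resonant Green's function} then gives, for $x\in\partial^{\pm}\barLambda_k$ (at distance $\gtrsim\frac1{10}d_{k+1}$ from both boundaries of $A'$),
\[
|\psi(x)|\lesssim \#\partial A'\cdot \sup_{A'\text{ bdry}}|\psi|\cdot e^{-\gamma_0 d_{k+1}/10}.
\]
The boundary values are polynomially bounded by \eqref{generalized eigenfunction}, namely by $C(\omega)d_{k+1}^{d+2}$. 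The polynomial factors, including $\#\partial^+\barLambda_k\lesssim d_{k+1}^{d-1}$, are absorbed once $k\geq K_1$, giving $\|(H_{\barLambda_k}-\mcE)\phi\|\leq e^{-\frac1{20}\gamma_0d_{k+1}}$, which is better than the required $\frac1{50}$ rate.

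Next, by the spectral theorem, $\dist(\mcE,\spec(H_{\barLambda_k}))\leq\|(H_{\barLambda_k}-\mcE)\phi\|/\|\phi\|<e^{-\frac1{50}\gamma_0d_{k+1}}$. The eigenvalue realizing this distance lies within $\frac\iota2$ of $\mcE\in[0,4d+\beta]$ once $k$ is large, hence it lies in $[-\frac\iota2,4d+\beta+\frac\iota2]$, which is \eqref{approximate the generalized eigenvalue}.

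I expect the main obstacle to be the geometric verification that the required annulus around $\Lambda_k$, of width comparable to $d_{k+1}$, contains no wells. This has to be extracted carefully from Definition \ref{k-hierarchical structure}, in particular the separation $\dist(C^s_k,C^{s'}_k)\geq2d_{k+1}$ together with $\diam(C^1_k)\lesssim d_k$. The constants $\frac1{10}$ versus $2$ leave room for this. Everything else is routine bookkeeping of polynomial factors against $e^{-c\gamma_0d_{k+1}}$, and $K_1$ depends on $\omega$ only through $m_{\mcE}$ and the constant in \eqref{generalized eigenfunction}.
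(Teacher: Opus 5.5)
Your proposal is correct and follows essentially the same route as the paper. You truncate $\psi$ to $\barLambda_k$, use Poisson's formula with Lemma \ref{non-resonant Green's function} and the polynomial bound \eqref{generalized eigenfunction} to make $\psi$ exponentially small on $\partial^{\pm}\barLambda_k$, and conclude from the spectral expansion of the truncated vector; the only difference is that you apply Poisson's formula on a finite well-free annulus, whereas the paper uses the whole barrier region $\mcB$, which is a harmless and slightly cleaner variant since Lemma \ref{non-resonant Green's function} is stated for finite sets.
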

\begin{proof}
We decompose 
\[H-\mcE = (H_{\mcB}-\mcE)\oplus (H_{\mcW}-\mcE)+\Gamma,\]
where $\Gamma$ is the connecting matrix between the potential barrier $\mcB$ and the potential well $\mcW$. Since  $\mcB$ is non-resonant, Lemma \ref{non-resonant Green's function}, \eqref{generalized eigenfunction} and
\[(H-\mcE)\psi=0,\]
we use the Poisson's formula to get for all $x\in \mcB$,
\begin{align}\label{Poisson Pieres}
  |\psi(x)|&=\left|   -\sum_{w\in \partial^-\mcB \atop w'\in \partial^+\mcB } G_{\mcB}(\mcE)(x,w) \cdot \psi(w')     \right| \\
\notag  &\leq \frac{2d C_1(d) (|m_{\mcE}|+1)^{\frac{d}{2}+1}}{h-4d-\beta-\iota}  \sum_{w\in \partial^-\mcB} \exp\{-\gamma_0 |x-w|_1\} (2+|w|)^{\frac{d}{2}+1} \\
 \notag & \lesssim_{d,h,\beta,\omega,\mcE } \sum_{w\in \partial^-\mcB} \exp\{-\gamma_0 |x-w|_1\} (2+|x|+|w-x|)^{\frac{d}{2}+1} \\
 \notag &   \lesssim_{d,h,\beta,\omega,\mcE } (2+|x|)^{\frac{d}{2}+1} \sum_{k \geq \dist(x,\partial^-\mcB)} k^{d+\frac{d}{2}+1} \exp\{-\gamma_0 \cdot k\} \\
\notag & \lesssim_{d,h,\beta,\omega,\mcE } (2+|x|)^{\frac{d}{2}+1} \exp \{-\gamma_0 \dist(x,\partial^-\mcB) \}.
\end{align}
Therefore, if $x\in \partial^{\pm}\barLambda_k$, then we have 
\[\dist(x,\partial^-\mcB) \geq \frac{1}{10}d_{k+1},\]
which combined with \eqref{Poisson Pieres} implies 
\begin{equation}\label{Poisson Pieres 2}
  |\psi(x)| \lesssim_{d,h,\beta,\omega,\mcE } (2+\frac{1}{10}d_{k+1})^{\frac{d}{2}+1} \exp \{-\frac{1}{10}\gamma_0 d_{k+1} \}.
\end{equation} 
Take $\psi_{\barLambda_k}=R_{\barLambda_k}\psi$. It's easy to check that $(H-\mcE)\psi_{\barLambda_k}$ supports only in $\partial^{\pm}\barLambda_k$  with 
\begin{align}\label{Poisson Pieres 3}
 \notag \| (H-\mcE) \psi_{\barLambda_k} \| & \leq 2d\cdot \sqrt{\# \partial^{\pm}\barLambda_k} \cdot \| \psi \|_{\ell^{\infty}( \partial^{\pm}\barLambda_k )} \\
     \notag  & \lesssim_{d,h,\beta,\omega,\mcE } (d_{k+1})^{{d}+1} \exp \{-\frac{1}{10}\gamma_0 d_{k+1} \} \\
         &\leq \exp\{-\frac{1}{11}\gamma_0 d_{k+1}\}. 
\end{align}
The validity of \eqref{Poisson Pieres 3} requires $k$ to be  large enough (depending only on $d,h,\beta,\omega,\mcE$). 

Now let $\mcE_1,\mcE_2,\cdots,\mcE_{\# \barLambda_k}$ denote all  the eigenvalues of $H_{\barLambda_k}$ with corresponding normalized eigenvectors $\varphi_1,\cdots ,\varphi_{\# \barLambda_k}$. Since $\{\varphi_s\}_{1\leq s\leq \#\barLambda_k}$ forms an eigenbasis of $\ell^{2}(\barLambda_k)$, we have 
\[\psi_{\barLambda_k}=\sum_{s} c_s\varphi_s.\] 
Under this representation, using \eqref{Poisson Pieres 3} yields 
\begin{align*}
\left(\sum_{s:\ |\mcE_s-\mcE|\geq \exp\{-\frac{1}{50}\gamma_0 d_{k+1}\}}  c_s^2\right)^{\frac{1}{2}} \exp\{-\frac{1}{50}\gamma_0 d_{k+1}\} & \leq \left(\sum_{s} |\mcE_s-\mcE|^2 c_s^2\right)^{\frac{1}{2}}  = \|(H_{\barLambda_k}-\mcE)\psi_{\barLambda_k} \| \\
       & \leq  \| (H-\mcE) \psi_{\barLambda_k} \|\leq \exp\{-\frac{1}{11}\gamma_0 d_{k+1}\}.
\end{align*}
Therefore,
\begin{equation}\label{coefficient 1}
  \sum_{s:\ |\mcE_s-\mcE|\geq \exp\{-\frac{1}{50}\gamma_0 d_{k+1}\}}  c_s^2 \leq \exp\{-\frac{1}{11}\gamma_0 d_{k+1}\}.
\end{equation}
Moreover, if we let $k$ be sufficiently large so that  $m_{\mcE}\in \barLambda_k$ (thus the largeness here depends  on $\omega,\mcE$), we will obtain 
\begin{equation}\label{coefficient 2}
  \| \psi_{\barLambda_k} \|^2  =\sum_{s}c_s^2 \geq |\psi(m_{\mcE})|^2 \geq 1.
\end{equation}
Combining \eqref{coefficient 1} and \eqref{coefficient 2} yields 
\[ \sum_{s:\ |\mcE_s-\mcE|< \exp\{-\frac{1}{50}\gamma_0 d_{k+1}\} } c_s^2\geq 1- \exp\{-\frac{1}{11}\gamma_0 d_{k+1}\}>0,\]
which means that there is some  $\mcE_{s'}\in \spec(H_{\barLambda_k})$ such that 
\[|\mcE_{s'}-\mcE|< \exp\{-\frac{1}{50}\gamma_0 d_{k+1}\}\ll \frac{\iota}{2}.\]
Since we have chosen  $\mcE\in [0,4d+\beta]$, we obtain $\mcE_{s'}\in [-\frac{\iota}{2},4d+\beta+\frac{\iota}{2}]$. This proves \eqref{approximate the generalized eigenvalue}.

\end{proof}

The next  lemma  concerns the separation of the spectrum, which is useful for  eliminating  the energy dependence in the Wegner estimate.
\begin{lem}[{\bf Separation of the spectrum}]\label{separation of spectrum lemma}
For almost every $\omega$,  there exists  a scale $K_2=K_2(\omega,h,d_0,\beta,N,\alpha)>0$ such that, for $\forall k\geq K_2$, we have 
  \begin{equation}\label{separation event}
    \dist\left( \spec(H_{\barLambda_k}),\spec(H_{\barLambda^s_k})\right)\geq \exp \{-d_{k+1}^{1-\varepsilon} \}.
  \end{equation}
\end{lem}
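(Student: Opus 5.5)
The plan is to combine the Wegner estimate (Theorem \ref{Wegner estimate for d=1,2,3} for $d\le 3$, Theorem \ref{Wegner estimate for d>3} for $d\ge 4$) with independence and Borel--Cantelli. First I would check that for $s\ge 2$ the blocks $\barLambda_k$ and $\barLambda_k^s$ are disjoint. Indeed $C_{k-1}^s\subset\Lambda_{k-1}(j_s)$ and $\dist(C_{k-1}^1,C_{k-1}^s)\ge 2d_k$. The structure sits inside $\Lambda_k(j)$, so I would view $\barLambda_k$ as a neighbourhood of level-$k$ blocks, where the relevant separation comes from the level-$(k+1)$ structure with $\dist\ge 2d_{k+1}$. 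The $\frac1{10}d_{k+1}$-neighbourhoods then remain disjoint, since $\diam\Lambda_k\sim d_k\ll d_{k+1}$. Consequently the potentials $V_{\text{r},\barLambda_k}$ and $V_{\text{r},\barLambda_k^s}$ are independent.

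Next, condition on the randomness in $\barLambda_k^s$. This fixes the finite set $\spec(H_{\barLambda_k^s})$, which has at most $\#\barLambda_k^s\lesssim d_{k+1}^d$ elements. Only eigenvalues $E'$ of $H_{\barLambda_k^s}$ lying in $[-\frac{\iota}{2},4d+\beta+\frac{\iota}{2}]$ matter. For the others, the spectrum of $H_{\barLambda_k}$ near them lies in $[h,\infty)$ or is otherwise out of range. In that case I would restrict the statement to the low-energy part of the spectrum, which is what is used afterwards, or handle the gap $[4d+\beta+\iota/2,h)$ using \eqref{spectral inclusion}. For each relevant $E'$, the Wegner estimate applied to $\Lambda=\barLambda_k$ at energy $E'$ gives
\[\P\big(\dist(\spec(H_{\barLambda_k}),E')<\exp\{-d_{k+1}^{1-\varepsilon}\}\big)\le d_{k+1}^{-q}.\]
Summing over $E'$ yields a bound of order $d_{k+1}^{d-q}$, and summing over $s\le N$ multiplies this by $N$.

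The main obstacle is that $q$ may be smaller than $d$, so the union bound over the $\sim d_{k+1}^d$ eigenvalues of the conditioned block fails. I would remove this loss using Lemma \ref{eigenvalue number}, applied to $\barLambda_k^s$, which has the same hierarchical structure. Only $2N^{k-k^{(1)}}(16d_{k^{(1)}}+1)^d$ eigenvalues lie in the low-energy window. Because $\alpha>1$, this count grows like $e^{Ck}$, whereas $d_{k+1}^{q}=d_0^{q\alpha^{k+1}}$ grows doubly exponentially. The resulting probability bound $N\cdot 2N^{k}C\, d_{k+1}^{-q}$ is therefore summable in $k$.

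Finally, Borel--Cantelli applied to the events $\{\dist(\spec(H_{\barLambda_k}),\spec(H_{\barLambda_k^s}))<\exp\{-d_{k+1}^{1-\varepsilon}\}\ \text{for some } s\}$ shows that almost surely only finitely many of them occur. This gives the required scale $K_2(\omega)$, taken beyond $k_{\text{in}}$ and $k^{(1)}$.
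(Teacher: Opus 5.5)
Your argument is correct and is essentially the paper's proof with the roles of the two blocks interchanged. The paper conditions on $V_{\text{r},\barLambda_k}$ and applies the Wegner estimate on $\barLambda_k^s$ at each of the at most $2N^{k-k^{(1)}}(16d_{k^{(1)}}+1)^d$ eigenvalues of $H_{\barLambda_k}$ in $[-\frac{\iota}{2},4d+\beta+\frac{\iota}{2}]$ (Lemma \ref{eigenvalue number}), then concludes with the same union bound over $2\le s\le N$ and Borel--Cantelli.

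Your caveat about high energies is accurate. The paper's argument likewise controls only the low-lying part of one of the two spectra, that of $H_{\barLambda_k}$ rather than $H_{\barLambda_k^s}$. Either low-energy version suffices for the later applications, where the energy always lies in $[0,4d+\beta]$.
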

\begin{proof}[Proof of Lemma \ref{separation of spectrum lemma}]
  We take condition probability  on $V_{\text{r},\barLambda_k}$, i.e.,  the randomness in $\barLambda_k$. Then the spectrum 
  \[\spec(H_{\barLambda_k})\cap [-\frac{\iota}{2},4d+\beta+\frac{\iota}{2}]:= \{E_{i}\}_{1\leq i\leq M}\]
  is now fixed, where we denote
  \[M=\# \left(    \spec(H_{\barLambda_k})\cap [-\frac{\iota}{2},4d+\beta+\frac{\iota}{2}]  \right).\]
  By Lemma \ref{eigenvalue number}, we have $M\leq 2 N^{k-k^{(1)}} (16d_{k^{(1)}}+1)^{d}$.

  Now we can apply the Wegner estimates, Theorem \ref{Wegner estimate for d=1,2,3} and Theorem \ref{Wegner estimate for d>3}. We let $k\geq k_{\text{in}}$. For each pair $(\barLambda_k^s,E_i)$ with $2\leq s\leq N$ and $1\leq i\leq M$,  we have 
  \[ \P\left( \dist\big(\spec(H_{\barLambda_k^s}), E_i \big) < \exp\{-d_{k+1}^{1-\varepsilon}\} \ \bigg| \ V_{\text{r},\barLambda_k}   \right) \leq d_{k+1}^{-q}  \]
  and therefore, 
  \begin{align}\label{not separation event}
    &\P\left(\dist\big( \spec(H_{\barLambda_k}),\spec(H_{\barLambda^s_k})\big)< \exp \{-d_{k+1}^{1-\varepsilon} \}  \ \bigg| \ V_{\text{r},\barLambda_k} \right) \\
    \notag =&\P\left( \exists2\leq s\leq N, \ 1\leq i\leq M \ {\rm s.t.,} \ \dist\big(\spec(H_{\barLambda_k^s}), E_i \big) < \exp\{-d_{k+1}^{1-\varepsilon}\} \ \bigg| \ V_{\text{r},\barLambda_k} \right) \\
    \notag \leq & N\cdot M\cdot d_{k+1}^{-q} \leq 2 N^{1-k^{(1)}} (16d_{k^{(1)}}+1)^{d} \cdot N^k d_0^{-q\cdot \alpha^k}.
  \end{align}
  Clearly, $\sum\limits_{k\geq 0} N^k d_0^{-q\cdot \alpha^k}<\infty$. Hence, by the Borel-Cantelli lemma, the event in \eqref{not separation event} almost surely does not occur for all sufficiently large $k$. This means almost surely there will be a $K_2>0$ (depending  on $\omega$, $h,d_0,\beta,N,\alpha$) such that,  for any $k\geq K_2$, \eqref{separation event} happens.

\end{proof}

Since the parameters $h,d_0,\beta,N,\alpha$ have been fixed, in the remainder of the proof we will suppress their dependences  of quantities  in the notation. 

\subsection{Proof of the Anderson localization}
For almost every $\omega\in\Omega$, we obtain the following scales:
\begin{itemize}
  \item $k\geq k_{\text{in}}\ \Rightarrow$ The Wegner estimates hold true,
  \item $k\geq K_1(\omega,\mcE)\ \Rightarrow$ Lemma \ref{approximate the generalized eigenvalue} holds true, 
  \item $k\geq K_2(\omega)\ \Rightarrow$ Lemma \ref{separation of spectrum lemma} holds true.
\end{itemize}

\begin{proof}[Proof of Theorem \ref{low dimension localization} and Theorem \ref{high dimension localization}]
For almost every $\omega$ (fixed), we arbitrarily choose a generalized eigenvalue $\mcE(\omega)$ in $[0,4d+\beta]$ with the generalized eigenfunction $\psi$. 

We take 
\[ k\geq \max\{ k_{\text{in}}, K_1(\omega,\mcE),K_2(\omega)\}:= k_{\text{AL}}(\omega).\]
Then, we can apply Lemma \ref{approximate the generalized eigenvalue} and Lemma \ref{separation of spectrum lemma} to obtain
\begin{align*}
  \dist(\mcE,\spec(H_{\barLambda_k^s}) ) \geq  \exp \{-d_{k+1}^{1-\varepsilon} \}- \exp\left\{  -\frac{1}{50}\gamma_0 d_{k+1}      \right\} \geq \frac{1}{2}\exp \{-d_{k+1}^{1-\varepsilon} \} 
\end{align*}
for any $2\leq s\leq N$. Therefore, 
\begin{equation}\label{resonant block L2 norm}
  \| G_{\barLambda_k^s}(\mcE)\|\leq 2 \exp\{d_{k+1}^{1-\varepsilon}\}\ {\rm for} \ \forall   2\leq s\leq N.
\end{equation}
Let $D_k$ denote the $2d_{k}$-neighborhood of $\Lambda_k$, and $D_k^s$ the $2d_k$-neighborhood of $\Lambda_k(j_s)$. Denote the annular region $D_{k+1}\setminus \Lambda_k$ by $\mcA_k$. Then we have the following geometric structure: 
\begin{itemize}
  \item $\mcA_k$ and $\mcA_{k+1}$ have an overlap of size  $2d_{k+1}$, and $\mcA_{k-1}\cap\mcA_{k+1}=\emptyset$, 
  \item $\barLambda_k^s\subset \mcA_k$, and  $\mcA_k\setminus (\cup_{2\leq s\leq N} \barLambda_k^s)$ is non-resonant. Moreover, $\dist(\barLambda_k^s,D_k)\geq \frac{9}{5}d_{k+1}$ and $\dist(\barLambda_k^s,\barLambda_k^{s'})\geq \frac{17}{10}d_{k+1},s\neq s'$ .
\end{itemize}
Now take $L=d_{k},L_1=\diam(\mcA_k)\approx 16 d_{k+1}\sim d_k^{\alpha}$ and $L'=\diam(\barLambda_k^s)\approx \frac{1}{5}d_{k+1}$. Then $1\ll L \ll L'\leq \frac{1}{2}L_1$. Moreover, we have 
\[\diam (D_k^s)\sim L, \ {\rm the \ number \ of \ } D_k^s \ {\rm in} \ \mcA_k \ {\rm is \  less \ than \ } N \ \Rightarrow {\rm denote} \ \mathfrak{S}=\cup_s D_k^s, \]
\[\diam(\barLambda_k^s)\sim L',\  \barLambda_k^s \ {\rm contains\ a}  \  \frac{L'}{10}{\rm-neighborhood\ of \ } D_k^s\ \Rightarrow {\rm denote} \ \mathfrak{S}' =\cup_s \barLambda_k^s.\]
Thus, we can construct a class of $L$-size blocks (denoted by $\mathfrak{F}$) to cover  $\mcA_k\setminus \mathfrak{S}'$ such that,  every element of $\mathfrak{F}$ is contained in $\mcA_{k}\setminus (\cup_s \Lambda_k(j_s))$. Moreover, we can ensure that for each $n\in \mcA_k\setminus\mathfrak{S}$, there is a $\Lambda'\in \mathfrak{F}$ such that 
\[Q_{\frac{L}{5}}(n)\subset \Lambda'.\]
We denote $\widetilde{\mcA}_k=\cup_{\mathfrak{F}}\Lambda'$.
Therefore, in $\mcA_k$,  the geometric structure condition of   Lemma \ref{coupling lemma} is satisfied. So applying  Lemma \ref{coupling lemma} combined with  \eqref{resonant block L2 norm} implies 
\begin{equation}\label{good annuls off diagonal decay}
        |G_{\mcA_k}(x,y;\mcE) | \leq \exp\{-\frac{1}{2}\gamma_0 |x-y|\} \ {\rm if} \ |x-y|\geq \frac{1}{10} d_{k+1} \geq \frac{1}{200} L_1.
\end{equation}
Moreover, we also have 
\begin{equation}\label{good covering off diagonal decay}
        |G_{\widetilde{\mcA}_k}(x,y;\mcE) | \leq \exp\{-\frac{1}{2}\gamma_0 |x-y| \} \ {\rm if} \ |x-y|\geq  L = d_k.
\end{equation}

We additionally denote by $\widetilde{D}_k$ the $d_k$-neighborhood of $\Lambda_k$, which is an intermediate block satisfying $\Lambda_{k}\subset \widetilde{D}_k\subset D_k$. Now for any $x\notin \widetilde{D}_{k_{\text{AL}}}$, by our construction,  there will be a $k\geq k_{\text{AL}}$ such that $x\in \widetilde{D}_{k+1}\setminus \widetilde{D}_k$. We distinguish the following cases: \\
\underline{If $\dist(x,\partial^+ \Lambda_k)\geq \frac{1}{10}d_{k+1}$}, then we see  $x\in \mcA_k$ and $\dist(x,\mcA_k)\geq \frac{1}{10}d_{k+1}\geq \frac{1}{200}\diam(\mcA_k)\geq \frac{|x|}{200}$. 
Applying the Poisson's formula  together with  \eqref{generalized eigenfunction},  \eqref{good annuls off diagonal decay} will lead to 
\begin{align}\label{exp decay 1}
  |\psi(x)| & \leq \sum_{w\in \partial^-\mcA_k \atop w'\sim w,w'\in \partial^+\mcA_k } |G_{\mcA_k}(x,w;\mcE)|\cdot |\psi(w')| \\
        \notag   & \lesssim_d (|m_{\mcE}|+1)^{\frac{d}{2}+1} \cdot (\# \partial^-\mcA_k) \exp\{-\frac{1}{2}\gamma_0 \cdot \frac{1}{20}d_{k+2}\} \\
  \notag         & \lesssim_{d,\omega,\mcE} \exp\{-\frac{1}{500}\gamma_0 |x|\}.
\end{align}\\
\underline{If $\dist(x,\partial^+\Lambda_k)\leq \frac{1}{10}d_{k+1}$}, we see  $x\in \widetilde{\mcA}_k$. In this case, since 
\[\dist(x,\barLambda_{k}^s)\geq \dist (\barLambda_k^s,\Lambda_k)-\dist(x,\Lambda_k)\geq \frac{9}{5}d_{k+1} -\frac{1}{10}d_{k+1} \geq \dist(x,\Lambda_k),\]
we must have $\dist(x,\widetilde{A}_k)=\dist(x,\partial^+\Lambda_k)$. Moreover, recalling  $x\notin \widetilde{D}_k$, we have $|x|\geq 7d_k$ and $\dist(x,\partial^+\Lambda_k)\geq d_k$, which also leads to 
\[\dist(x,\widetilde{A}_k)=\dist(x,\partial^+\Lambda_k)\geq |x|-6d_k\geq \frac{1}{7}|x|. \] 
Applying the Poisson's formula together with \eqref{generalized eigenfunction} and \eqref{good covering off diagonal decay}  will lead to 
\begin{align}\label{exp decay 2}
  |\psi(x)| & \leq \sum_{w\in \partial^-\widetilde{\mcA}_k \atop w'\sim w,w'\in \partial^+\widetilde{\mcA}_k } |G_{\widetilde{\mcA}_k}(x,w;\mcE)|\cdot |\psi(w')| \\
        \notag   & \lesssim_d (|m_{\mcE}|+1)^{\frac{d}{2}+1} \cdot (\# \partial^+\widetilde{\mcA}_k) \exp\{-\frac{1}{2}\gamma_0 \cdot \dist(x,\widetilde{\mcA}_k)\} \\
     \notag      & \lesssim_{d,\omega,\mcE} \exp\{-\frac{1}{14}\gamma_0 |x|\}.
\end{align}

Combining \eqref{exp decay 1} and \eqref{exp decay 2}, we have 
\begin{equation}\label{exp decay 3}
|\psi(x)|\lesssim_{d,\omega,\mcE} \exp\{-\frac{1}{500}\gamma_0 |x|\}\ {\rm for}\ \forall x\notin \widetilde{D}_{k_{\text{AL}}}.
\end{equation}

Hence,  $\psi$ exhibits exponential decay, and consequently $\mcE$ is an eigenvalue. This completes the proof of Theorem \ref{low dimension localization} and Theorem \ref{high dimension localization}.

\end{proof}

\subsection{Proof of the dynamical localization}
Having established Anderson localization, we now proceed to prove the dynamical localization.
\begin{proof}[Proof of Theorem \ref{dynamical localization}]
Since the Anderson localization (Theorem \ref{low dimension localization}, Theorem \ref{high dimension localization}) holds almost surely, we may assume (i.e.,  choose $\omega$ such that Anderson localization happens)
\[\spec(H)\cap [0,h) = \{E_r\}_{r \in \mathfrak{r}} \]
consists of eigenvalues of $H$ in the spectral interval $[0,4d+\beta]$, indexed by a countable set $\mathfrak{r}$ (counting multiplicities). Let $\varphi_r$ denote the corresponding normalized eigenfunction  of $E_r$. Then we have 
\[P_{[0,h)}(H)\delta_0 =\sum_{r} \varphi_r(0) \varphi_r,\]
and therefore,
\begin{align}\label{MSD}
  r^2(t,\omega) &= \sum_{z\in \Z^d} |n|^2 \cdot \left|  \sum_{r} e^{-itE_r}\varphi_r(0)\varphi_r(n) \right|^2 \\
  \notag & \leq \sum_{n\in \Z^d}|n|^2 \left(\sum_r |\varphi_r(0)|\cdot |\varphi_r(n)|\right)^2 \\
  \notag  & \leq \sum_{n\in \Z^d} \sum_{r} \sum_{t} |\varphi_r(0)| \cdot |\varphi_t(0)| \cdot |n|^2 \cdot |\varphi_r(n)|\cdot |\varphi_t(n)|.
\end{align}

Now for $j\geq 1$, let 
\[I_j=\left\{r:\ |\varphi_r(0)| \geq \exp\{-\frac{1}{2}\gamma_0 d_{j+1} \}  \right\}.\]
\begin{claim}\label{resonant in center claim}
  If $r\in I_j$, then for each $k\geq j$, we must have 
\begin{equation}\label{resonant in center}
  \dist(E_r,\spec(H_{\barLambda_k}))\leq \frac{1}{2}\exp \{-d_{k+1}^{1-\varepsilon}\}.
\end{equation}
\end{claim}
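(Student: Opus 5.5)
The plan is to mimic the proof of the lemma giving \eqref{approximate the generalized eigenvalue}, with the normalized eigenfunction $\varphi_r$ in place of the generalized eigenfunction $\psi$. The lower bound $|\varphi_r(0)|\geq \exp\{-\frac12\gamma_0 d_{j+1}\}$ will replace $|\psi(m_{\mcE})|\geq 1$. The upper bound $\|\varphi_r\|_\infty\leq 1$ will replace the polynomial bound \eqref{generalized eigenfunction}.

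First, I will obtain exponential smallness of $\varphi_r$ near $\partial^{\pm}\barLambda_k$. Since $E_r\in[0,4d+\beta]$ and $\mcB$ is non-resonant, I decompose $H-E_r=(H_{\mcB}-E_r)\oplus(H_{\mcW}-E_r)+\Gamma$ as in \eqref{Poisson Pieres}. Lemma \ref{non-resonant Green's function} together with $|\varphi_r|\leq 1$ then gives, for $x\in\mcB$,
\[
|\varphi_r(x)|\lesssim_{d,h,\beta}\sum_{w\in\partial^-\mcB}e^{-\gamma_0|x-w|_1}.
\]
This bound now has no polynomial weight and no $\omega$-dependent constant. For $x\in\partial^{\pm}\barLambda_k$ we have $\dist(x,\mcW)\geq\frac1{10}d_{k+1}$. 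Hence
\[
\|(H_{\barLambda_k}-E_r)R_{\barLambda_k}\varphi_r\|\lesssim d_{k+1}^{d}\exp\{-\tfrac1{10}\gamma_0 d_{k+1}\}\leq \exp\{-\tfrac1{11}\gamma_0 d_{k+1}\}.
\]
This holds for $k\geq k_{\text{in}}$, which is large enough depending only on the fixed parameters.

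Second, I expand $R_{\barLambda_k}\varphi_r=\sum_s c_s\phi_s$ in the eigenbasis of $H_{\barLambda_k}$. Set $\eta=\frac12\exp\{-d_{k+1}^{1-\varepsilon}\}$. The spectral argument gives
\[
\sum_{s:\,|\mcE_s-E_r|\geq\eta}c_s^2\leq \eta^{-2}\exp\{-\tfrac2{11}\gamma_0 d_{k+1}\}.
\]
On the other hand, $0\in\Lambda_k\subset\barLambda_k$, so
\[
\sum_s c_s^2\geq|\varphi_r(0)|^2\geq \exp\{-\gamma_0 d_{j+1}\}\geq\exp\{-\gamma_0 d_{k+1}\}
\]
because $k\geq j$. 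The two bounds conflict unless some $\mcE_s$ satisfies $|\mcE_s-E_r|<\eta$, which is \eqref{resonant in center}.

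The main obstacle is the constant bookkeeping in this comparison. The exponent $\frac2{11}\gamma_0$ does not beat the $\gamma_0$ coming from $|\varphi_r(0)|^2$. I would therefore use the sharper decay rate available in Lemma \ref{non-resonant Green's function}. For $x\in\partial^{\pm}\barLambda_k$, every path to $\partial^-\mcB$ has length at least $\frac1{10}d_{k+1}$, and $\gamma_0$ is large when $h$ is large. If that still falls short of comparing against $\exp\{-\gamma_0 d_{k+1}\}$, the fallback is to change the comparison itself. Rather than comparing the full $\ell^2$ mass, I would compare the spectral projection of $R_{\barLambda_k}\varphi_r$ onto energies near $E_r$ with its value at $0$. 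Alternatively, I would exploit that $\barLambda_k$ is a $\frac1{10}d_{k+1}$-neighborhood whose boundary lies at distance of order $d_{k+1}\gg d_{j+1}$ when $k>j$. This leaves only $k=j$ needing care, which could be absorbed by reading the definition of $I_j$ with the factor $\frac12$ as intended. Note that the energy gap $\exp\{-d_{k+1}^{1-\varepsilon}\}$ is itself harmless, since it is only subexponential.
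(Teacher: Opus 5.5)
Your argument proves the claim only for $k\geq j+1$. In that case $|\varphi_r(0)|^2\geq e^{-\gamma_0 d_{j+1}}\geq e^{-\gamma_0 d_{k+1}^{1/\alpha}}$, and this beats $\eta^{-2}e^{-\frac{2}{11}\gamma_0 d_{k+1}}$. The statement, however, includes $k=j$, and there your comparison fails. None of your remedies repairs it:
\begin{itemize}
\item There is no sharper rate hidden in Lemma \ref{non-resonant Green's function}. Its decay factor is exactly $e^{-\gamma_0}$, since $1+\frac{h-4d-\beta-\iota}{2d}=\frac{h-2d-\beta-\iota}{2d}$.
\item The threshold defining $I_j$ is itself $e^{-\frac12\gamma_0 d_{j+1}}$. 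So enlarging $h$ rescales both sides equally. You are still comparing smallness earned over the width $\frac{1}{10}d_{k+1}$ of $\overline{\Lambda}_k\setminus\Lambda_k$ against a loss of $\frac12 d_{k+1}$, and $\frac1{10}<\frac12$ independently of $h$. Moreover, for $d\leq 3$ you are not free to take $h$ large at all.
\item ``Reading $I_j$ as intended'' is not an argument, and the spectral-projection alternative is not specified.
\end{itemize}

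The culprit is the uniform bound $|\varphi_r|\leq 1$ on all wells. It decouples the error $\|(H_{\overline{\Lambda}_k}-E_r)R_{\overline{\Lambda}_k}\varphi_r\|$ from the possibly tiny mass of $\varphi_r$ near the origin.

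The paper avoids this by contradiction. If \eqref{resonant in center} fails, then $\|G_{\overline{\Lambda}_k}(E_r)\|\leq 2\exp\{d_{k+1}^{1-\varepsilon}\}$. One then applies Lemma \ref{coupling lemma} on the $d_{k+1}$-neighborhood $\widetilde{\Lambda}_k$ of $\overline{\Lambda}_k$, with $\mathfrak{S}=\Lambda_k'$ and $\mathfrak{S}'=\overline{\Lambda}_k$; this uses that $\widetilde{\Lambda}_k\setminus\Lambda_k$ is non-resonant. The lemma gives decay $e^{-\frac12\gamma_0|x-y|}$ of $G_{\widetilde{\Lambda}_k}(E_r)$. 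Poisson's formula at $0$, with only $|\varphi_r|\leq 1$ on $\partial^+\widetilde{\Lambda}_k$, then yields $|\varphi_r(0)|<e^{-\frac12\gamma_0 d_{k+1}}\leq e^{-\frac12\gamma_0 d_{j+1}}$, because $\dist(0,\partial^-\widetilde{\Lambda}_k)>d_{k+1}$. The decay is harvested over a distance exceeding $d_{k+1}$, and that is precisely what handles $k=j$.

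If you prefer to keep your direct approximate-eigenvector route, you must measure the error relative to the local mass. Set $M:=\|\varphi_r\|_{\ell^\infty(\Lambda_k)}\leq\|R_{\overline{\Lambda}_k}\varphi_r\|$. In Poisson's formula at $x\in\partial^+\overline{\Lambda}_k$, bound the contribution of wells inside $\Lambda_k$ by $M$. Bound only the remaining wells by $1$; these lie at distance more than $d_{k+1}$ from $\overline{\Lambda}_k$. This gives $\|(H_{\overline{\Lambda}_k}-E_r)R_{\overline{\Lambda}_k}\varphi_r\|/\|R_{\overline{\Lambda}_k}\varphi_r\|\lesssim d_{k+1}^{C}\bigl(e^{-\frac{1}{10}\gamma_0 d_{k+1}}+M^{-1}e^{-\gamma_0 d_{k+1}}\bigr)$. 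Since $M\geq|\varphi_r(0)|\geq e^{-\frac12\gamma_0 d_{k+1}}$, this is $<\eta$ for $k$ large, including $k=j$.
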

\begin{proof}
  Assume \eqref{resonant in center} fails. Then we have 
  \begin{equation}\label{center good}
      \| G_{\barLambda_k}(E_r) \| \leq 2\exp\{d_{k+1}^{1-\varepsilon}\}.
  \end{equation}
  Let $\widetilde{\Lambda}_k$ be the $d_{k+1}$-neighborhood of $\barLambda_k$, and recall that $\Lambda'_k$ is the $2d_k$-neighborhood of $\Lambda_k$. We have 
  \[\widetilde{\Lambda}_k = (\widetilde{\Lambda}_k \setminus \Lambda'_k)\cup \barLambda_k.\]
  Now set $L=d_k,L'=\diam(\barLambda_k)\approx \frac{1}{5}d_{k+1}, L_1=\diam(\widetilde{\Lambda}_k)\approx \frac{11}{5}d_{k+1}\sim L^{\alpha}$. Then $1\ll L\ll L'\leq \frac{1}{2}L_1$, and 
  \[ \diam(\Lambda'_k)\sim L .\]
  Moreover, $\barLambda_k$ contains the $\frac{L'}{10}$-neighborhood of $\Lambda'_k$. Since $\widetilde{\Lambda}_k\setminus \Lambda_k$ is non-resonant, we can construct a class of $L$-size blocks (denoted by $\mathfrak{F}$) to cover $\widetilde{\Lambda}_k\setminus \barLambda_k$ such that,  every element of $\mathfrak{F}$ is contained in $\widetilde{\Lambda}_k\setminus \Lambda_k$. Moreover, we can ensure that for each $n\in \widetilde{\Lambda}_k \setminus\Lambda'_k $, there is a $\Lambda'\in \mathfrak{F}$ such that 
 \[Q_{\frac{L}{5}}(n)\subset \Lambda'.\]
  
  By above geometric structure and \eqref{center good}, we can apply Lemma \ref{coupling lemma} (with $\mathfrak{S}=\Lambda'_k,\mathfrak{S}'=\barLambda_k$) to conclude 
  \begin{equation}\label{whole block off diagonal decay}
        |G_{\widetilde{\Lambda}_k}(x,y;E_r) | \leq \exp\{-\frac{1}{2}\gamma_0 |x-y|\} \ {\rm if} \ |x-y|\geq \frac{1}{100} d_{k+1} \geq \frac{1}{200} L_1.
  \end{equation}
  Thus, using \eqref{whole block off diagonal decay} together with the Poisson's formula, we obtain 
  \begin{align*}
    |\varphi_r(0)| & \leq \sum_{w\in \partial^-\widetilde{\Lambda}_k \atop w'\sim w,w'\in \partial^+\widetilde{\Lambda}_k } |G_{\widetilde{\Lambda}_k}(0,w;E_r)|\cdot |\varphi_r (w')| \\
                    & \leq 2d (\# \partial^-\widetilde{\Lambda}_k) \exp\{  -\frac{1}{2}\gamma_0 \cdot \dist(0,\partial^-\widetilde{\Lambda}_k)\}\\
                    & < \exp\{  -\frac{1}{2}\gamma_0 d_{k+1}\} \leq   \exp\{  -\frac{1}{2}\gamma_0 d_{j+1}\}, 
  \end{align*}
  which contradicts the condition $r\in I_j$.

\end{proof}

Based on Claim \ref{resonant in center claim}, we can decompose \eqref{MSD} into  (we set $I_0=\emptyset$)
\begin{equation}\label{decompose MSD 1}
  \sum_{i=1}^{\infty} \sum_{j=1}^{\infty} \sum_{r\in I_i\setminus I_{i-1}} \sum_{t\in I_{j}\setminus I_{j-1}} \left( \sum_{n\in \Z^d}  |n|^2 |\varphi_r(n)|\cdot |\varphi_t(n)|\right) |\varphi_r(0)| \cdot | \varphi_t(0)|.
\end{equation}
Recall that $K_2=K_2(\omega)$ denotes the smallest scale  so that Lemma \ref{separation of spectrum lemma} holds. Therefore, if $r\in I_j$, by Claim \ref{resonant in center claim},  
\begin{align*}
  \dist(E_r,\spec(H_{\barLambda_k^s})) & \geq \dist(\spec(H_{\barLambda_k}),\spec(H_{\barLambda_k^s}))-\dist(E_r,\spec(H_{\barLambda_k})) \\
     &\geq \exp\{-d_{k+1}^{1-\varepsilon}\}-\frac{1}{2}\exp\{-d_{k+1}^{1-\varepsilon}\} \geq \frac{1}{2}\exp\{-d_{k+1}^{1-\varepsilon}\}.
\end{align*}
Thus, we have 
\begin{equation}\label{L2 norm bound, dynamical}
  \|G_{\barLambda_k^s}(E_r) \| \leq 2\exp\{d_{k+1}^{1-\varepsilon}\}\ {\rm for} \ \forall k\geq \max\{j, K_2\},\ \forall 2\leq s\leq N.
\end{equation}
Using  \eqref{L2 norm bound, dynamical} together with the previous argument (that leads to \eqref{exp decay 3}), we can prove that 
\begin{equation}\label{exp decay 4}
  |\varphi_r(x)|\lesssim_{d} \exp\{-\frac{1}{500}\gamma_0 |x|\}\ {\rm for}\ \forall |x| > 2 \max\{d_{j+1}, d_{K_2+1} \}.
\end{equation}
(Here we note that, because $\varphi_r$ has already been normalized, we may replace the bound \eqref{generalized eigenfunction} in Poisson’s formulas \eqref{exp decay 1} and \eqref{exp decay 2} by the uniform estimate $|\varphi_r(w')|\leq 1$. Consequently, the constant appearing in \eqref{exp decay 4} is independent of the index $r\in I_j$.) Thus, applying Hilbert-Schmidit argument yields
\begin{align*}
  \| \varphi_r \| ^2_{\ell^2(\Z^d\setminus Q_{2 \max\{d_{j+1}, d_{K_2+1} \}}(0))} & \leq \sum_{|x|\geq 2 \max\{d_{j+1}, d_{K_2+1} \}} \exp \left\{-\frac{1}{250}\gamma_0 |x|\right\} \leq \frac{1}{2},
\end{align*}
and 
\begin{align*}
  \# Q_{2 \max\{d_{j+1}, d_{K_2+1} \}}(0)&\geq \sum_{|x|\leq Q_{2 \max\{d_{j+1}, d_{K_2+1} \}}(0)}  \sum_{r} |\varphi_r(n)|^2 \\
     & \geq \sum_{r\in I_j} (1-\frac{1}{2})\geq \frac{1}{2} \cdot \# I_j.
\end{align*}
This means 
\begin{equation}\label{number of Ij}
  \# I_j \lesssim \left( d_{\max\{j,K_2\}+1} \right)^d\ {\rm for}\  \forall j\geq 1.
\end{equation}

\bigskip

Based on \eqref{number of Ij}, we further decompose \eqref{decompose MSD 1} into 
\begin{equation*}
  \left(\sum_{1\leq i \leq K_2} \sum_{1\leq j\leq K_2} + \sum_{i > K_2} \sum_{1\leq j\leq K_2} +\sum_{1\leq i \leq K_2} \sum_{ j> K_2} + \sum_{ i >K_2} \sum_{ j> K_2}\right)\sum_{r\in I_i\setminus I_{i-1}} \sum_{t\in I_{j}\setminus I_{j-1}}  \cdots .
\end{equation*}
We discuss:
\begin{itemize}
  \item For the summation over ${1\leq i \leq K_2}, {1\leq j\leq K_2}$, we can estimate 
          \begin{align*}
            &\sum_{1\leq i \leq K_2} \sum_{1\leq j\leq K_2}  \sum_{r\in I_i\setminus I_{i-1}} \sum_{t\in I_{j}\setminus I_{j-1}} \left( \sum_{n\in \Z^d}  |n|^2 |\varphi_r(n)|\cdot |\varphi_t(n)|\right) |\varphi_r(0)| \cdot | \varphi_t(0)| \\
                    =&  \sum_{r\in I_{K_2}} \sum_{t\in I_{K_2}} \left( \sum_{n\in \Z^d}  |n|^2 |\varphi_r(n)|\cdot |\varphi_t(n)|\right) |\varphi_r(0)| \cdot | \varphi_t(0)| \\
                    \leq & (\# I_{K_2})^2 \cdot \sup_{r,t\in I_{K_2}}\left(   \sum_{n\in \Z^d}  |n|^2 |\varphi_r(n)|\cdot |\varphi_t(n)| \right) \\
                    \overset{\text{\eqref{exp decay 4}}}{\lesssim_d } & (d_{K_2+1})^{2d} \left( \sum_{|n|\leq 2d_{K_2+1} }|n|^2 +\sum_{|n|>2d_{K_2+1}} |n|^2 \exp\{-\frac{1}{250}\gamma_0 |n|\}\right) <\infty. 
          \end{align*}

    \item For the summation over ${ i > K_2}, {1\leq j\leq K_2}$ and ${1\leq i \leq K_2}, {j> K_2}$, we can estimate, for example,
          \begin{align*}
            &\sum_{ i > K_2} \sum_{1\leq j\leq K_2}  \sum_{r\in I_i\setminus I_{i-1}} \sum_{t\in I_{j}\setminus I_{j-1}} \left( \sum_{n\in \Z^d}  |n|^2 |\varphi_r(n)|\cdot |\varphi_t(n)|\right) |\varphi_r(0)| \cdot | \varphi_t(0)| \\
                    \leq &   \sum_{t\in I_{K_2}} \sum_{ i > K_2}   \sum_{r\in I_i\setminus I_{i-1}} \left( \sum_{n\in \Z^d}  |n|^2 |\varphi_r(n)| \right) |\varphi_r(0)| \\
                    \leq & (\# I_{K_2})  \cdot\sum_{ i > K_2} \# I_i \cdot  \left( \sup_{r\in I_{i}}   \sum_{n\in \Z^d}  |n|^2 |\varphi_r(n)| \right)  \exp\{-\frac{1}{2}\gamma_0 d_i\} \\
                    \overset{\text{\eqref{exp decay 4}}}{\lesssim_d } & (d_{K_2+1})^{d} \sum_{ i > K_2} (d_{i+1})^d   \exp\{-\frac{1}{2}\gamma_0 d_i\} \cdot  \left( \sum_{|n|\leq 2d_{i+1} }|n|^2 +\sum_{|n|>2d_{i+1}} |n|^2 \exp\{-\frac{1}{500}\gamma_0 |n|\} \right)\\
                    \lesssim_d & (d_{K_2+1})^{d} \sum_{ i > K_2} (d_{i+1})^{2d+2}  \exp\{-\frac{1}{2}\gamma_0 d_i\} <\infty.
          \end{align*}
     \item For the summation over ${i > K_2}, { j> K_2}$, we can estimate 
          \begin{align}\label{third case term, dynamical}
          \notag  &\ \ \ \sum_{i > K_2} \sum_{ j> K_2}  \sum_{r\in I_i\setminus I_{i-1}} \sum_{t\in I_{j}\setminus I_{j-1}} \left( \sum_{n\in \Z^d}  |n|^2 |\varphi_r(n)|\cdot |\varphi_t(n)|\right) |\varphi_r(0)| \cdot | \varphi_t(0)| \\
                    =&  \sum_{i > K_2} \sum_{ j> K_2} \left( \sum_{r\in I_i\setminus I_{i-1}}  \sum_{n\in \Z^d}  |n|^2 |\varphi_r(n)|\cdot |\varphi_r(0)|\right)  \cdot  \left( \sum_{t\in I_j\setminus I_{j-1}}  \sum_{n\in \Z^d}  |n|^2 |\varphi_t(n)|\cdot |\varphi_t(0)|\right).
          \end{align}
          By the same argument used in the second case, we have 
          \begin{equation*}
             \left( \sum_{r\in I_i\setminus I_{i-1}}  \sum_{n\in \Z^d}  |n|^2 |\varphi_r(n)|\cdot |\varphi_r(0)|\right)\lesssim (d_{i+1})^{2d+2} \exp\{-\frac{1}{2}\gamma_0 d_i\}.
          \end{equation*}
          Hence, 
          \begin{equation*}
            \text{\eqref{third case term, dynamical}} \lesssim_d \sum_{i > K_2} \sum_{ j> K_2} (d_{i+1})^{2d+2} \exp\{-\frac{1}{2}\gamma_0 d_i\}\cdot (d_{j+1})^{2d+2} \exp\{-\frac{1}{2}\gamma_0 d_j\}<\infty.
          \end{equation*}

\end{itemize}
Summarizing all the estimates above, we obtain
\[r^2(t,\omega)<\infty\]
for almost every $\omega$, which completes the proof of Theorem \ref{dynamical localization}.

\end{proof}

\appendix
\section{Proof of Theorem \ref{UC for d arbitrary}}\label{Appendix UC}
In this section, we present the proof of Theorem \ref{UC for d arbitrary}. The core idea of the proof bears strong similarity to that of the higher-dimensional Wegner estimate in Section \ref{section dim bigger 4}: we leverage the cone property to construct a one-dimensional subset that exhibits transversality, and then use the arguments developed in Section \ref{martingale section} to build a martingale on this subset. We emphasize that the martingale structure employed here is strikingly analogous to the one introduced in Section \ref{martingale section}. In particular, during the construction of this martingale, the sites we select form a martingale themselves, one that depends on the preceding randomness—a feature we refer to as the {\bf "site-mixed" property}.

\begin{proof}[Proof of Theorem \ref{UC for d arbitrary}]
To get  better insight, we first consider the two-dimensional case.
  
\begin{Def}
For a fixed length $L\gg 1$, \textbf{the elementary propagating region} of length $L$ in $\Z^2$ is defined by 
\[\mcI_L:= \left\{ (n_1,n_2)\in \Z^2: \ n_2\geq 0 ,\ n_1+ n_2 \leq L+1, \  n_2-n_1 \leq L+1    \right\}. \]   

\end{Def}

\begin{rmk}
The region $\mcI_L$ can be interpreted as follows. Given initial data $u_0$ on $(\mcP_0\cup \mcP_1)\cap \mcI_L$, the solution $u$ of the Dirichlet problem
\begin{equation*}
\begin{cases}
\Delta u = W u, \\
u \equiv u_0 \ \text{on} \ (\mcP_0\cup\mcP_1)\cap \mcI_L,
\end{cases}
\end{equation*}
is uniquely determined precisely on $\mcI_L$. Moreover, $\mcI_L$ is the maximal region on which the values  of $u$ can be determined.
\end{rmk}

Now we consider the tilted line 
\[\mcT_k:=  \{(n_1,n_2)\in \Z^2 : \ n_2-n_1  = k \}.\]
Since the  equation $H(\omega) u-\lambda u=0 $ locally reads  as 
\begin{align}
\label{mcT 0}  u(n_1,n_2) &= -u(n_1-1,n_2-1)\\
 \label{mcT 1}   &\ \ \ + (2d+V(n_1,n_2-1)-\lambda)\cdot u(n_1,n_2-1) \\
  \label{mcT 2}  &\ \ \ -u(n_1+1,n_2-1)-u(n_1,n_2-2).
\end{align}
Clearly, \eqref{mcT 0} is about the information of $u$ on $\mcT_{n_2-n_1}$,  \eqref{mcT 1} on $\mcT_{n_2-n_1-1}$ and \eqref{mcT 2}  on $\mcT_{n_2-n_1-2}$. When $n_2\gg 1$, we can iterate the relationships along $\mcT_{n_2-n_1}$ to obtain 
{\Small
\begin{align*}
  &u(n_1,n_2) \\
  &= -\big(-u(n_1-2,n_2-2) + (2d+V(n_1-1,n_2-2)-\lambda)\cdot u(n_1-1,n_2-2) -u(n_1,n_2-2)-u(n_1-1,n_2-3) \big)\\
   &\quad + (2d+V(n_1,n_2-1)-\lambda)\cdot u(n_1,n_2-1) \\
   &\quad -u(n_1+1,n_2-1)-u(n_1,n_2-2)\\
   &=u(n_1-2,n_2-2 ) \\
   &\quad +(2d+V(n_1,n_2-1)-\lambda)\cdot u(n_1,n_2-1) - (2d+V(n_1-1,n_2-2)-\lambda)\cdot u(n_1-1,n_2-2) \\
  &\quad -u(n_1+1,n_2-1)+u(n_1-1,n_2-3) \\
  &=\cdots \\
  &=\sum_{x\in \mcT_{n_2-n_1-1}} a_x \cdot (2d+V(x)-\lambda) \cdot u(x)  +\sum_{y\in \mcT_{n_2-n_1-2}} b_y \cdot u(y) + {\rm Remains}(u_0).
\end{align*}
}
Here, we have $|a_x|=1$, $a_x,b_y$ are  coefficients independent of the randomness, and ${\rm Remains}(u_0)$ is a term only involving the initial data $u_0$  (see Figure  \ref{iteration figure}).

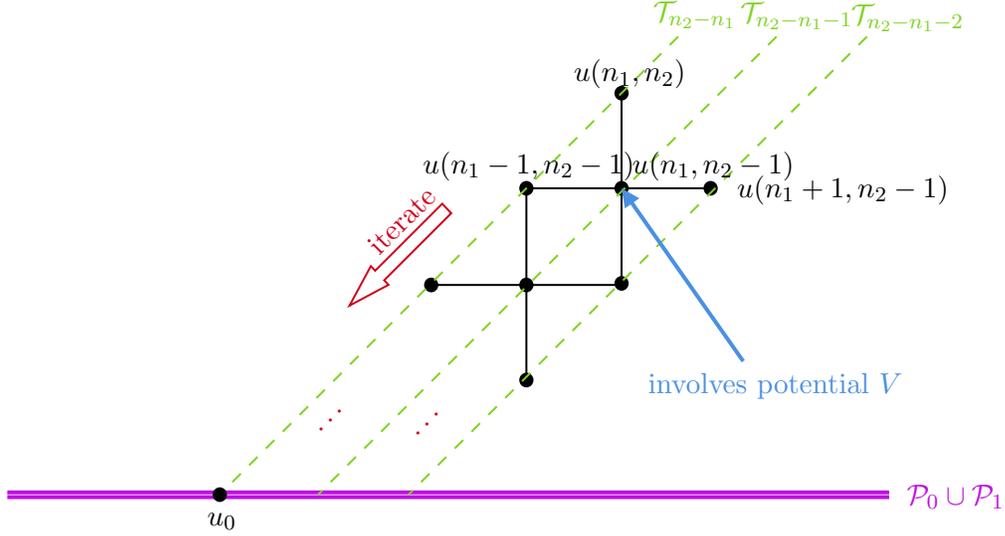
\begin{figure}[htbp]
  
\centering

\tikzset{every picture/.style={line width=0.75pt}} 

\begin{tikzpicture}[x=0.75pt,y=0.75pt,yscale=-0.8,xscale=0.8]

\draw  [fill={rgb, 255:red, 0; green, 0; blue, 0 }  ,fill opacity=1 ] (403.14,116) .. controls (403.14,113.79) and (404.87,112) .. (407,112) .. controls (409.13,112) and (410.86,113.79) .. (410.86,116) .. controls (410.86,118.21) and (409.13,120) .. (407,120) .. controls (404.87,120) and (403.14,118.21) .. (403.14,116) -- cycle ;
\draw  [fill={rgb, 255:red, 0; green, 0; blue, 0 }  ,fill opacity=1 ] (403.14,236) .. controls (403.14,233.79) and (404.87,232) .. (407,232) .. controls (409.13,232) and (410.86,233.79) .. (410.86,236) .. controls (410.86,238.21) and (409.13,240) .. (407,240) .. controls (404.87,240) and (403.14,238.21) .. (403.14,236) -- cycle ;
\draw  [fill={rgb, 255:red, 0; green, 0; blue, 0 }  ,fill opacity=1 ] (343.14,176) .. controls (343.14,173.79) and (344.87,172) .. (347,172) .. controls (349.13,172) and (350.86,173.79) .. (350.86,176) .. controls (350.86,178.21) and (349.13,180) .. (347,180) .. controls (344.87,180) and (343.14,178.21) .. (343.14,176) -- cycle ;
\draw [color={rgb, 255:red, 189; green, 16; blue, 224 }  ,draw opacity=1 ][line width=1.5]    (19.71,367.86) -- (575.71,367.86)(19.71,370.86) -- (575.71,370.86) ;
\draw    (347,176) -- (467,176) ;
\draw    (407,116) -- (407,236) ;
\draw  [fill={rgb, 255:red, 0; green, 0; blue, 0 }  ,fill opacity=1 ] (459.29,176) .. controls (459.29,173.79) and (461.01,172) .. (463.14,172) .. controls (465.27,172) and (467,173.79) .. (467,176) .. controls (467,178.21) and (465.27,180) .. (463.14,180) .. controls (461.01,180) and (459.29,178.21) .. (459.29,176) -- cycle ;
\draw  [fill={rgb, 255:red, 0; green, 0; blue, 0 }  ,fill opacity=1 ] (343.14,297) .. controls (343.14,294.79) and (344.87,293) .. (347,293) .. controls (349.13,293) and (350.86,294.79) .. (350.86,297) .. controls (350.86,299.21) and (349.13,301) .. (347,301) .. controls (344.87,301) and (343.14,299.21) .. (343.14,297) -- cycle ;
\draw  [fill={rgb, 255:red, 0; green, 0; blue, 0 }  ,fill opacity=1 ] (283.14,237) .. controls (283.14,234.79) and (284.87,233) .. (287,233) .. controls (289.13,233) and (290.86,234.79) .. (290.86,237) .. controls (290.86,239.21) and (289.13,241) .. (287,241) .. controls (284.87,241) and (283.14,239.21) .. (283.14,237) -- cycle ;
\draw    (287,237) -- (407,237) ;
\draw    (347,177) -- (347,297) ;
\draw [color={rgb, 255:red, 126; green, 211; blue, 33 }  ,draw opacity=1 ][fill={rgb, 255:red, 0; green, 0; blue, 0 }  ,fill opacity=1 ] [dash pattern={on 4.5pt off 4.5pt}]  (442.71,80.29) -- (153.71,369.29) ;
\draw  [fill={rgb, 255:red, 0; green, 0; blue, 0 }  ,fill opacity=1 ] (149.86,369.29) .. controls (149.86,367.08) and (151.58,365.29) .. (153.71,365.29) .. controls (155.84,365.29) and (157.57,367.08) .. (157.57,369.29) .. controls (157.57,371.49) and (155.84,373.29) .. (153.71,373.29) .. controls (151.58,373.29) and (149.86,371.49) .. (149.86,369.29) -- cycle ;
\draw  [fill={rgb, 255:red, 0; green, 0; blue, 0 }  ,fill opacity=1 ] (403.14,176) .. controls (403.14,173.79) and (404.87,172) .. (407,172) .. controls (409.13,172) and (410.86,173.79) .. (410.86,176) .. controls (410.86,178.21) and (409.13,180) .. (407,180) .. controls (404.87,180) and (403.14,178.21) .. (403.14,176) -- cycle ;
\draw  [fill={rgb, 255:red, 0; green, 0; blue, 0 }  ,fill opacity=1 ] (343.14,237) .. controls (343.14,234.79) and (344.87,233) .. (347,233) .. controls (349.13,233) and (350.86,234.79) .. (350.86,237) .. controls (350.86,239.21) and (349.13,241) .. (347,241) .. controls (344.87,241) and (343.14,239.21) .. (343.14,237) -- cycle ;
\draw [color={rgb, 255:red, 126; green, 211; blue, 33 }  ,draw opacity=1 ][fill={rgb, 255:red, 0; green, 0; blue, 0 }  ,fill opacity=1 ] [dash pattern={on 4.5pt off 4.5pt}]  (561.71,80.29) -- (272.71,369.29) ;
\draw [color={rgb, 255:red, 126; green, 211; blue, 33 }  ,draw opacity=1 ][fill={rgb, 255:red, 0; green, 0; blue, 0 }  ,fill opacity=1 ] [dash pattern={on 4.5pt off 4.5pt}]  (504.71,80.29) -- (215.71,369.29) ;
\draw  [color={rgb, 255:red, 208; green, 2; blue, 27 }  ,draw opacity=1 ] (235.5,250.03) -- (248.93,225.13) -- (251.66,228.03) -- (293.96,185.73) -- (299.63,191.74) -- (257.34,234.04) -- (260.07,236.93) -- cycle ;
\draw [color={rgb, 255:red, 74; green, 144; blue, 226 }  ,draw opacity=1 ][line width=1.5]    (409.3,179.27) -- (483.71,285.14) ;
\draw [shift={(407,176)}, rotate = 54.9] [fill={rgb, 255:red, 74; green, 144; blue, 226 }  ,fill opacity=1 ][line width=0.08]  [draw opacity=0] (11.61,-5.58) -- (0,0) -- (11.61,5.58) -- cycle    ;

\draw (210.76,327.42) node [anchor=north west][inner sep=0.75pt]  [color={rgb, 255:red, 208; green, 2; blue, 27 }  ,opacity=1 ,rotate=-315] [align=left] {$\cdots$};
\draw (585,361) node [anchor=north west][inner sep=0.75pt]   [align=left] {\textcolor[rgb]{0.74,0.06,0.88}{$\mcP_0\cup\mcP_1$}};
\draw (243,210.18) node [anchor=north west][inner sep=0.75pt]  [color={rgb, 255:red, 208; green, 2; blue, 27 }  ,opacity=1 ,rotate=-315] [align=left] {iterate};
\draw (144,379) node [anchor=north west][inner sep=0.75pt]   [align=left] {$u_0$};
\draw (425,56) node [anchor=north west][inner sep=0.75pt]  [color={rgb, 255:red, 126; green, 211; blue, 33 }  ,opacity=1 ] [align=left] {$\mcT_{n_2-n_1}$};
\draw (480,57) node [anchor=north west][inner sep=0.75pt]  [color={rgb, 255:red, 126; green, 211; blue, 33 }  ,opacity=1 ] [align=left] {$\mcT_{n_2-n_1-1}$};
\draw (550,58) node [anchor=north west][inner sep=0.75pt]  [color={rgb, 255:red, 126; green, 211; blue, 33 }  ,opacity=1 ] [align=left] {$\mcT_{n_2-n_1-2}$};
\draw (375,93) node [anchor=north west][inner sep=0.75pt]   [align=left] {$u(n_1,n_2)$};
\draw (280,151) node [anchor=north west][inner sep=0.75pt]   [align=left] {$u(n_1-1,n_2-1)$};
\draw (478,166) node [anchor=north west][inner sep=0.75pt]   [align=left] {$u(n_1+1,n_2-1)$};
\draw (412,151) node [anchor=north west][inner sep=0.75pt]   [align=left] {$u(n_1,n_2-1)$};
\draw (422,291) node [anchor=north west][inner sep=0.75pt]   [align=left] {\textcolor[rgb]{0.29,0.56,0.89}{involves potential $V$}};
\draw (271.76,329.42) node [anchor=north west][inner sep=0.75pt]  [color={rgb, 255:red, 208; green, 2; blue, 27 }  ,opacity=1 ,rotate=-315] [align=left] {$\cdots$};

\end{tikzpicture}

\caption{The iteration of the solution till the initial data.}
\label{iteration figure}

\end{figure}
This fact also has an analogue for the tilted line
\[\mcT'_k := \{(n_1,n_2)\in \Z^2: \ n_1+n_2=k\}. \]

Based on the above discussions, we can construct the martingale in $\mcI_L$ as follows. 

Choose the site $X_0=0\in \Z^2$  and take the $\sigma$-algebra with 
\[\mcB_0=\{\Omega,\emptyset\}\subset \mcF_0=\sigma (\{0\}) . \]
 Now, we already know the initial data 
\[u\equiv u_0 \ {\rm on} \ \mcP_0\cup \mcP_1.\]

\noindent \textbf{{(Step 1)}}\\

\noindent By the cone property (i.e., Proposition \ref{cone property})  and \eqref{cone property parameter}, there must be a site 
\[X_1\in  \mathfrak{C}(0,\bfe_2)=\{(0,1),(1,1),(-1,1),(0,2)\}\]
such that $|u(X_1)|\geq \gamma |u(0)|$. Since we already know the value of $u=u_0$ in $\mcP_1$, we can judge that, if $X_1$ belongs to   $\mcP_1$, we choose $X_1$ to be the site with the  smallest first coordinate; else if 
\[|u_0(x)|<\gamma |u(0)|\ {\rm for}\ \forall x\in \mcP_1,\]
then $X_1$ is  chosen to be $(0,2)$. Thus, by our construction,  {$X_1$ is $\mcF_0$-measurable} (deterministic).\\

\noindent \textbf{{(Step 2)}}\\

\noindent Define  the $\sigma$-algebra 
\begin{equation}\label{mcB 1}
  \mcB_1=\sigma\left( \mathfrak{B}_1 \right), \ \mathfrak{B}_1 =\mcI_L\cap \{0\leq n_2\leq (X_1)_2\} \setminus \{X_1\} .
\end{equation}
Now if the randomness in $\mathfrak{B}_1$ is fixed, then the value of $u$ in $(\mcI_L\cap \{0\leq n_2\leq (X_1)_2+1\}) \setminus \{X_1+\bfe_2\}$ will be determined.  See Figure \ref{construction of mcL1}.  In particular, the value $|u(X_1)|$ will be determined, which means  {$|u(X_1)|$ is $\mcB_1$-measurable}.

Using again the cone property, there must be a site 
\[X_2 \in  \mathfrak{C}(X_1,\bfe_2)=\{A_1,B_1,C_1,D_1\}\]
with 
\[A_1=X_1+\bfe_2-\bfe_1,B_1 =X_1+\bfe_2+\bfe_1,C_1=X_1+\bfe_2,D_1=X_1+2\bfe_2,\]
such that $|u(X_2)|\geq \gamma |u(X_1)|$. We can judge that,   if $X_2$ can be $A_1$ or $B_1$, we choose the one with smallest first coordinate; else if  
\[|u(A_1)|<\gamma |u(X_1)|,|u(B_1)|<\gamma |u(X_1)|,\]
then we must have $X_2\in \{C_1,D_1\}$. (This means, although conditioning on $\mcB_1$ cannot determine $X_2$, but can determine the relevant place where $X_2$ may occur.)

By the  preceding discussion, we can define the tilted line as follows.  If $X_2=A_1$ or $X_2\in \{C_1,D_1\}$, we take 
\[\mathfrak{L}_1= \{n_2-n_1=(X_1)_2-(X_1)_1\}=\mcT_{(X_1)_2-(X_1)_1},\]
which contains $X_1$, but does not pass through $X_2$; else if $X_2=B_1$, we take 
\[\mathfrak{L}_1= \{n_2+n_1=(X_1)_1+(X_1)_2\}=\mcT'_{(X_1)_1+(X_1)_2},\]
which also contains $X_1$ but does not pass through $X_2$. See Figure \ref{construction of mcL1}.

\begin{figure}[htbp]
  \centering

\tikzset{every picture/.style={line width=0.75pt}} 

\begin{tikzpicture}[x=0.75pt,y=0.75pt,yscale=-1,xscale=1]

\draw  [color={rgb, 255:red, 74; green, 144; blue, 226 }  ,draw opacity=1 ][fill={rgb, 255:red, 74; green, 144; blue, 226 }  ,fill opacity=1 ] (322,236) .. controls (322,233.79) and (323.73,232) .. (325.86,232) .. controls (327.99,232) and (329.71,233.79) .. (329.71,236) .. controls (329.71,238.21) and (327.99,240) .. (325.86,240) .. controls (323.73,240) and (322,238.21) .. (322,236) -- cycle ;
\draw  [fill={rgb, 255:red, 0; green, 0; blue, 0 }  ,fill opacity=1 ] (342,236) .. controls (342,233.79) and (343.73,232) .. (345.86,232) .. controls (347.99,232) and (349.71,233.79) .. (349.71,236) .. controls (349.71,238.21) and (347.99,240) .. (345.86,240) .. controls (343.73,240) and (342,238.21) .. (342,236) -- cycle ;
\draw  [fill={rgb, 255:red, 0; green, 0; blue, 0 }  ,fill opacity=1 ] (362,236) .. controls (362,233.79) and (363.73,232) .. (365.86,232) .. controls (367.99,232) and (369.71,233.79) .. (369.71,236) .. controls (369.71,238.21) and (367.99,240) .. (365.86,240) .. controls (363.73,240) and (362,238.21) .. (362,236) -- cycle ;
\draw  [fill={rgb, 255:red, 0; green, 0; blue, 0 }  ,fill opacity=1 ] (382,236) .. controls (382,233.79) and (383.73,232) .. (385.86,232) .. controls (387.99,232) and (389.71,233.79) .. (389.71,236) .. controls (389.71,238.21) and (387.99,240) .. (385.86,240) .. controls (383.73,240) and (382,238.21) .. (382,236) -- cycle ;
\draw  [fill={rgb, 255:red, 0; green, 0; blue, 0 }  ,fill opacity=1 ] (402,236) .. controls (402,233.79) and (403.73,232) .. (405.86,232) .. controls (407.99,232) and (409.71,233.79) .. (409.71,236) .. controls (409.71,238.21) and (407.99,240) .. (405.86,240) .. controls (403.73,240) and (402,238.21) .. (402,236) -- cycle ;
\draw  [fill={rgb, 255:red, 0; green, 0; blue, 0 }  ,fill opacity=1 ] (302,236) .. controls (302,233.79) and (303.73,232) .. (305.86,232) .. controls (307.99,232) and (309.71,233.79) .. (309.71,236) .. controls (309.71,238.21) and (307.99,240) .. (305.86,240) .. controls (303.73,240) and (302,238.21) .. (302,236) -- cycle ;
\draw  [fill={rgb, 255:red, 0; green, 0; blue, 0 }  ,fill opacity=1 ] (282,236) .. controls (282,233.79) and (283.73,232) .. (285.86,232) .. controls (287.99,232) and (289.71,233.79) .. (289.71,236) .. controls (289.71,238.21) and (287.99,240) .. (285.86,240) .. controls (283.73,240) and (282,238.21) .. (282,236) -- cycle ;
\draw  [fill={rgb, 255:red, 0; green, 0; blue, 0 }  ,fill opacity=1 ] (262,236) .. controls (262,233.79) and (263.73,232) .. (265.86,232) .. controls (267.99,232) and (269.71,233.79) .. (269.71,236) .. controls (269.71,238.21) and (267.99,240) .. (265.86,240) .. controls (263.73,240) and (262,238.21) .. (262,236) -- cycle ;
\draw  [fill={rgb, 255:red, 0; green, 0; blue, 0 }  ,fill opacity=1 ] (242,236) .. controls (242,233.79) and (243.73,232) .. (245.86,232) .. controls (247.99,232) and (249.71,233.79) .. (249.71,236) .. controls (249.71,238.21) and (247.99,240) .. (245.86,240) .. controls (243.73,240) and (242,238.21) .. (242,236) -- cycle ;
\draw  [fill={rgb, 255:red, 0; green, 0; blue, 0 }  ,fill opacity=1 ] (462,256) .. controls (462,253.79) and (463.73,252) .. (465.86,252) .. controls (467.99,252) and (469.71,253.79) .. (469.71,256) .. controls (469.71,258.21) and (467.99,260) .. (465.86,260) .. controls (463.73,260) and (462,258.21) .. (462,256) -- cycle ;
\draw  [fill={rgb, 255:red, 0; green, 0; blue, 0 }  ,fill opacity=1 ] (342,215) .. controls (342,212.79) and (343.73,211) .. (345.86,211) .. controls (347.99,211) and (349.71,212.79) .. (349.71,215) .. controls (349.71,217.21) and (347.99,219) .. (345.86,219) .. controls (343.73,219) and (342,217.21) .. (342,215) -- cycle ;
\draw  [fill={rgb, 255:red, 0; green, 0; blue, 0 }  ,fill opacity=1 ] (362,215) .. controls (362,212.79) and (363.73,211) .. (365.86,211) .. controls (367.99,211) and (369.71,212.79) .. (369.71,215) .. controls (369.71,217.21) and (367.99,219) .. (365.86,219) .. controls (363.73,219) and (362,217.21) .. (362,215) -- cycle ;
\draw  [fill={rgb, 255:red, 0; green, 0; blue, 0 }  ,fill opacity=1 ] (382,215) .. controls (382,212.79) and (383.73,211) .. (385.86,211) .. controls (387.99,211) and (389.71,212.79) .. (389.71,215) .. controls (389.71,217.21) and (387.99,219) .. (385.86,219) .. controls (383.73,219) and (382,217.21) .. (382,215) -- cycle ;
\draw  [fill={rgb, 255:red, 0; green, 0; blue, 0 }  ,fill opacity=1 ] (402,215) .. controls (402,212.79) and (403.73,211) .. (405.86,211) .. controls (407.99,211) and (409.71,212.79) .. (409.71,215) .. controls (409.71,217.21) and (407.99,219) .. (405.86,219) .. controls (403.73,219) and (402,217.21) .. (402,215) -- cycle ;
\draw  [fill={rgb, 255:red, 0; green, 0; blue, 0 }  ,fill opacity=1 ] (302,215) .. controls (302,212.79) and (303.73,211) .. (305.86,211) .. controls (307.99,211) and (309.71,212.79) .. (309.71,215) .. controls (309.71,217.21) and (307.99,219) .. (305.86,219) .. controls (303.73,219) and (302,217.21) .. (302,215) -- cycle ;
\draw  [fill={rgb, 255:red, 0; green, 0; blue, 0 }  ,fill opacity=1 ] (282,215) .. controls (282,212.79) and (283.73,211) .. (285.86,211) .. controls (287.99,211) and (289.71,212.79) .. (289.71,215) .. controls (289.71,217.21) and (287.99,219) .. (285.86,219) .. controls (283.73,219) and (282,217.21) .. (282,215) -- cycle ;
\draw  [fill={rgb, 255:red, 0; green, 0; blue, 0 }  ,fill opacity=1 ] (262,215) .. controls (262,212.79) and (263.73,211) .. (265.86,211) .. controls (267.99,211) and (269.71,212.79) .. (269.71,215) .. controls (269.71,217.21) and (267.99,219) .. (265.86,219) .. controls (263.73,219) and (262,217.21) .. (262,215) -- cycle ;
\draw  [fill={rgb, 255:red, 0; green, 0; blue, 0 }  ,fill opacity=1 ] (242,215) .. controls (242,212.79) and (243.73,211) .. (245.86,211) .. controls (247.99,211) and (249.71,212.79) .. (249.71,215) .. controls (249.71,217.21) and (247.99,219) .. (245.86,219) .. controls (243.73,219) and (242,217.21) .. (242,215) -- cycle ;
\draw  [fill={rgb, 255:red, 0; green, 0; blue, 0 }  ,fill opacity=1 ] (322,256) .. controls (322,253.79) and (323.73,252) .. (325.86,252) .. controls (327.99,252) and (329.71,253.79) .. (329.71,256) .. controls (329.71,258.21) and (327.99,260) .. (325.86,260) .. controls (323.73,260) and (322,258.21) .. (322,256) -- cycle ;
\draw  [fill={rgb, 255:red, 0; green, 0; blue, 0 }  ,fill opacity=1 ] (342,256) .. controls (342,253.79) and (343.73,252) .. (345.86,252) .. controls (347.99,252) and (349.71,253.79) .. (349.71,256) .. controls (349.71,258.21) and (347.99,260) .. (345.86,260) .. controls (343.73,260) and (342,258.21) .. (342,256) -- cycle ;
\draw  [fill={rgb, 255:red, 0; green, 0; blue, 0 }  ,fill opacity=1 ] (362,256) .. controls (362,253.79) and (363.73,252) .. (365.86,252) .. controls (367.99,252) and (369.71,253.79) .. (369.71,256) .. controls (369.71,258.21) and (367.99,260) .. (365.86,260) .. controls (363.73,260) and (362,258.21) .. (362,256) -- cycle ;
\draw  [fill={rgb, 255:red, 0; green, 0; blue, 0 }  ,fill opacity=1 ] (382,256) .. controls (382,253.79) and (383.73,252) .. (385.86,252) .. controls (387.99,252) and (389.71,253.79) .. (389.71,256) .. controls (389.71,258.21) and (387.99,260) .. (385.86,260) .. controls (383.73,260) and (382,258.21) .. (382,256) -- cycle ;
\draw  [fill={rgb, 255:red, 0; green, 0; blue, 0 }  ,fill opacity=1 ] (402,256) .. controls (402,253.79) and (403.73,252) .. (405.86,252) .. controls (407.99,252) and (409.71,253.79) .. (409.71,256) .. controls (409.71,258.21) and (407.99,260) .. (405.86,260) .. controls (403.73,260) and (402,258.21) .. (402,256) -- cycle ;
\draw  [fill={rgb, 255:red, 0; green, 0; blue, 0 }  ,fill opacity=1 ] (302,256) .. controls (302,253.79) and (303.73,252) .. (305.86,252) .. controls (307.99,252) and (309.71,253.79) .. (309.71,256) .. controls (309.71,258.21) and (307.99,260) .. (305.86,260) .. controls (303.73,260) and (302,258.21) .. (302,256) -- cycle ;
\draw  [fill={rgb, 255:red, 0; green, 0; blue, 0 }  ,fill opacity=1 ] (282,256) .. controls (282,253.79) and (283.73,252) .. (285.86,252) .. controls (287.99,252) and (289.71,253.79) .. (289.71,256) .. controls (289.71,258.21) and (287.99,260) .. (285.86,260) .. controls (283.73,260) and (282,258.21) .. (282,256) -- cycle ;
\draw  [fill={rgb, 255:red, 0; green, 0; blue, 0 }  ,fill opacity=1 ] (262,256) .. controls (262,253.79) and (263.73,252) .. (265.86,252) .. controls (267.99,252) and (269.71,253.79) .. (269.71,256) .. controls (269.71,258.21) and (267.99,260) .. (265.86,260) .. controls (263.73,260) and (262,258.21) .. (262,256) -- cycle ;
\draw  [fill={rgb, 255:red, 0; green, 0; blue, 0 }  ,fill opacity=1 ] (242,256) .. controls (242,253.79) and (243.73,252) .. (245.86,252) .. controls (247.99,252) and (249.71,253.79) .. (249.71,256) .. controls (249.71,258.21) and (247.99,260) .. (245.86,260) .. controls (243.73,260) and (242,258.21) .. (242,256) -- cycle ;
\draw  [fill={rgb, 255:red, 0; green, 0; blue, 0 }  ,fill opacity=1 ] (182,256) .. controls (182,253.79) and (183.73,252) .. (185.86,252) .. controls (187.99,252) and (189.71,253.79) .. (189.71,256) .. controls (189.71,258.21) and (187.99,260) .. (185.86,260) .. controls (183.73,260) and (182,258.21) .. (182,256) -- cycle ;
\draw  [fill={rgb, 255:red, 0; green, 0; blue, 0 }  ,fill opacity=1 ] (162,256) .. controls (162,253.79) and (163.73,252) .. (165.86,252) .. controls (167.99,252) and (169.71,253.79) .. (169.71,256) .. controls (169.71,258.21) and (167.99,260) .. (165.86,260) .. controls (163.73,260) and (162,258.21) .. (162,256) -- cycle ;
\draw  [fill={rgb, 255:red, 0; green, 0; blue, 0 }  ,fill opacity=1 ] (142,256) .. controls (142,253.79) and (143.73,252) .. (145.86,252) .. controls (147.99,252) and (149.71,253.79) .. (149.71,256) .. controls (149.71,258.21) and (147.99,260) .. (145.86,260) .. controls (143.73,260) and (142,258.21) .. (142,256) -- cycle ;
\draw  [fill={rgb, 255:red, 0; green, 0; blue, 0 }  ,fill opacity=1 ] (162,236) .. controls (162,233.79) and (163.73,232) .. (165.86,232) .. controls (167.99,232) and (169.71,233.79) .. (169.71,236) .. controls (169.71,238.21) and (167.99,240) .. (165.86,240) .. controls (163.73,240) and (162,238.21) .. (162,236) -- cycle ;
\draw  [fill={rgb, 255:red, 0; green, 0; blue, 0 }  ,fill opacity=1 ] (182,236) .. controls (182,233.79) and (183.73,232) .. (185.86,232) .. controls (187.99,232) and (189.71,233.79) .. (189.71,236) .. controls (189.71,238.21) and (187.99,240) .. (185.86,240) .. controls (183.73,240) and (182,238.21) .. (182,236) -- cycle ;
\draw  [fill={rgb, 255:red, 0; green, 0; blue, 0 }  ,fill opacity=1 ] (182,216) .. controls (182,213.79) and (183.73,212) .. (185.86,212) .. controls (187.99,212) and (189.71,213.79) .. (189.71,216) .. controls (189.71,218.21) and (187.99,220) .. (185.86,220) .. controls (183.73,220) and (182,218.21) .. (182,216) -- cycle ;
\draw  [fill={rgb, 255:red, 0; green, 0; blue, 0 }  ,fill opacity=1 ] (482,256) .. controls (482,253.79) and (483.73,252) .. (485.86,252) .. controls (487.99,252) and (489.71,253.79) .. (489.71,256) .. controls (489.71,258.21) and (487.99,260) .. (485.86,260) .. controls (483.73,260) and (482,258.21) .. (482,256) -- cycle ;
\draw  [fill={rgb, 255:red, 0; green, 0; blue, 0 }  ,fill opacity=1 ] (502,256) .. controls (502,253.79) and (503.73,252) .. (505.86,252) .. controls (507.99,252) and (509.71,253.79) .. (509.71,256) .. controls (509.71,258.21) and (507.99,260) .. (505.86,260) .. controls (503.73,260) and (502,258.21) .. (502,256) -- cycle ;
\draw  [fill={rgb, 255:red, 0; green, 0; blue, 0 }  ,fill opacity=1 ] (462,236) .. controls (462,233.79) and (463.73,232) .. (465.86,232) .. controls (467.99,232) and (469.71,233.79) .. (469.71,236) .. controls (469.71,238.21) and (467.99,240) .. (465.86,240) .. controls (463.73,240) and (462,238.21) .. (462,236) -- cycle ;
\draw  [fill={rgb, 255:red, 0; green, 0; blue, 0 }  ,fill opacity=1 ] (462,216) .. controls (462,213.79) and (463.73,212) .. (465.86,212) .. controls (467.99,212) and (469.71,213.79) .. (469.71,216) .. controls (469.71,218.21) and (467.99,220) .. (465.86,220) .. controls (463.73,220) and (462,218.21) .. (462,216) -- cycle ;
\draw  [fill={rgb, 255:red, 0; green, 0; blue, 0 }  ,fill opacity=1 ] (482,236) .. controls (482,233.79) and (483.73,232) .. (485.86,232) .. controls (487.99,232) and (489.71,233.79) .. (489.71,236) .. controls (489.71,238.21) and (487.99,240) .. (485.86,240) .. controls (483.73,240) and (482,238.21) .. (482,236) -- cycle ;
\draw  [color={rgb, 255:red, 208; green, 2; blue, 27 }  ,draw opacity=1 ][fill={rgb, 255:red, 208; green, 2; blue, 27 }  ,fill opacity=1 ] (322,196) .. controls (322,193.79) and (323.73,192) .. (325.86,192) .. controls (327.99,192) and (329.71,193.79) .. (329.71,196) .. controls (329.71,198.21) and (327.99,200) .. (325.86,200) .. controls (323.73,200) and (322,198.21) .. (322,196) -- cycle ;
\draw  [color={rgb, 255:red, 208; green, 2; blue, 27 }  ,draw opacity=1 ][fill={rgb, 255:red, 208; green, 2; blue, 27 }  ,fill opacity=1 ] (322,216) .. controls (322,213.79) and (323.73,212) .. (325.86,212) .. controls (327.99,212) and (329.71,213.79) .. (329.71,216) .. controls (329.71,218.21) and (327.99,220) .. (325.86,220) .. controls (323.73,220) and (322,218.21) .. (322,216) -- cycle ;
\draw  [dash pattern={on 0.84pt off 2.51pt}] (325.71,80.15) -- (506.81,255.72) -- (145.96,256.28) -- cycle ;
\draw [color={rgb, 255:red, 126; green, 211; blue, 33 }  ,draw opacity=1 ]   (131.71,227.14) -- (314.71,227.14) ;
\draw [color={rgb, 255:red, 184; green, 233; blue, 134 }  ,draw opacity=1 ]   (314.71,244.14) -- (314.71,227.14) ;
\draw [color={rgb, 255:red, 126; green, 211; blue, 33 }  ,draw opacity=1 ]   (314.71,244.14) -- (335.71,244.14) ;
\draw [color={rgb, 255:red, 126; green, 211; blue, 33 }  ,draw opacity=1 ]   (335.71,228.14) -- (335.71,244.14) ;
\draw [color={rgb, 255:red, 126; green, 211; blue, 33 }  ,draw opacity=1 ]   (526.71,226.14) -- (335.71,228.14) ;
\draw [color={rgb, 255:red, 126; green, 211; blue, 33 }  ,draw opacity=1 ]   (131.71,227.14) -- (131.71,278.14) ;
\draw [color={rgb, 255:red, 126; green, 211; blue, 33 }  ,draw opacity=1 ]   (131.71,278.14) -- (528.71,275.14) ;
\draw [color={rgb, 255:red, 126; green, 211; blue, 33 }  ,draw opacity=1 ]   (526.71,226.14) -- (528.71,275.14) ;
\draw [color={rgb, 255:red, 74; green, 144; blue, 226 }  ,draw opacity=1 ]   (334.71,301.14) .. controls (348.5,260.76) and (347.74,275.68) .. (326.83,241.59) ;
\draw [shift={(325.86,240)}, rotate = 58.84] [color={rgb, 255:red, 74; green, 144; blue, 226 }  ,draw opacity=1 ][line width=0.75]    (10.93,-3.29) .. controls (6.95,-1.4) and (3.31,-0.3) .. (0,0) .. controls (3.31,0.3) and (6.95,1.4) .. (10.93,3.29)   ;
\draw [color={rgb, 255:red, 245; green, 166; blue, 35 }  ,draw opacity=1 ][line width=1.5]    (429.71,130.14) -- (244.71,319.14) ;
\draw [color={rgb, 255:red, 144; green, 19; blue, 254 }  ,draw opacity=1 ][line width=1.5]    (227.71,132.14) -- (402.71,318.14) ;

\draw (313,50) node [anchor=north west][inner sep=0.75pt]   [align=left] {$\mcI_L$};
\draw (536,258) node [anchor=north west][inner sep=0.75pt]   [align=left] {\textcolor[rgb]{0.49,0.83,0.13}{$\mathfrak{B}_1$}};
\draw (315,300) node [anchor=north west][inner sep=0.75pt]   [align=left] {\textcolor[rgb]{0.29,0.56,0.89}{$X_1$}};
\draw (294,191) node [anchor=north west][inner sep=0.75pt]   [align=left] {\textcolor[rgb]{0.82,0.01,0.11}{$A_1$}};
\draw (316,168) node [anchor=north west][inner sep=0.75pt]   [align=left] {\textcolor[rgb]{0.82,0.01,0.11}{$D_1$}};
\draw (338,190) node [anchor=north west][inner sep=0.75pt]   [align=left] {\textcolor[rgb]{0.82,0.01,0.11}{$B_1$}};
\draw (434,110) node [anchor=north west][inner sep=0.75pt]  [color={rgb, 255:red, 245; green, 166; blue, 35 }  ,opacity=1 ] [align=left] {\textcolor[rgb]{0.96,0.65,0.14}{\textbf{$\mcT_{(X_1)_2-(X_1)_1}$}}};
\draw (200,112) node [anchor=north west][inner sep=0.75pt]   [align=left] {\textbf{\textcolor[rgb]{0.74,0.06,0.88}{$\mcT'_{(X_1)_1+(X_1)_2}$}}};
\draw (201,234) node [anchor=north west][inner sep=0.75pt]   [align=left] {$\cdots$};
\draw (422,234) node [anchor=north west][inner sep=0.75pt]   [align=left] {$\cdots$};

\end{tikzpicture}

\caption{Construction of $\mathfrak{L}_1$.}
\label{construction of mcL1}

\end{figure}
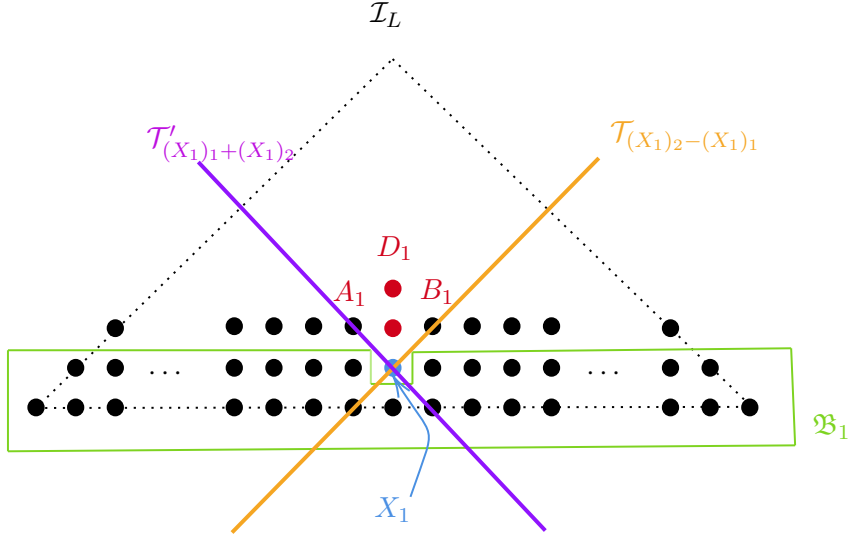

Therefore, the tilted line $\mathfrak{L}_1$ is determined, and it uniquely determines the site 
\[\mathfrak{e}_1\in (\mathfrak{L}_1+\bfe_2) \cap \partialside \mcI_L,\]
where 
 \[ \partialside \mcI_L := \mcI_L \cap (\mcT_{L+1}\cup \mcT_{L}\cup \mcT'_{L+1}\cup \mcT'_{L})\]
is the side boundary of $\mcI_L$. 
{Thus both $\mathfrak{L}_1$ and $\mathfrak{e}_1$ are $\mcB_1$-measurable.}\\

Next, we denote the region under the tilted line $\mcT_k$ (resp. $\mcT'_k$) by 
\[\mcT_{\leq k}= \{n_2-n_1\leq k \}, \ \mcT'_{\leq k}= \{n_1+n_2\leq k\}.\]
With this notation, we denote the region under $\mathfrak{L}_1$ by $\mathfrak{R}_1$, i.e.,  
\begin{equation*}
  \mathfrak{R}_1 = 
  \begin{cases}
    \mcT_{\leq (X_1)_2-(X_1)_1},\ {\rm if }\ \mathfrak{L}_1=\mcT_{(X_1)_2-(X_1)_1}, \\
    \mcT'_{\leq (X_1)_1+(X_1)_2},\ {\rm if }\ \mathfrak{L}_1=\mcT'_{(X_1)_1+(X_1)_2}.
  \end{cases}
\end{equation*}
We construct the next filtration element by 
\begin{equation}\label{mcF 1}
    \mcF_1=\sigma(\mathfrak{F}_1), \ \mathfrak{F}_1:= (\mathfrak{R}_1\cap \mcI_L )\cup \mathfrak{B}_1.
\end{equation}
Our construction ensures that 
\begin{equation}\label{1 step relation}
 \mathfrak{B}_1 \subset \mathfrak{F}_1, \  X_1 \notin \mathfrak{B}_1, \   X_1\in \mathfrak{F}_1, \ X_2 \notin \mathfrak{F}_1. 
\end{equation}
{Therefore, $\mcB_1\subset \mcF_1$.} 

If we further condition on $\mcF_1$, clearly the value of $u$ in the region 
\[\{ x\in \mcI_L: \ x\in \mathfrak{F}_1  \ {\rm or} \ x-\bfe_2\in \mathfrak{F}_1 \}\]
will be determined, which contains the sites $C_1$ and $\mathfrak{e}_1$. Hence, we can do further discussions under the case $X_2\in \{C_1,D_1\}$: If $|u(C_1)|\geq \gamma |u(X_1)|$, then we let $X_2=C_1$; else, we let $X_2=D_1$. Thus the site $X_2$ is determined. 

Moreover, the value $u(\mathfrak{e}_1)$ is now determined. By our previous discussion, we can express it in the following form 
\begin{equation}\label{u e1 value1}
  u(\mathfrak{e}_1) = \sum_{x\in \mathfrak{L}_1} a_x \cdot (2d+V(x)-\lambda) \cdot u(x)  +\sum_{y\in \mathfrak{L}_1-e_2} b_y \cdot u(y) + {\rm Remains}(u_0).
\end{equation}
Observing that the value of $u$ in the right hand side of \eqref{u e1 value1} will be determined if we condition on the randomness in $\mathfrak{F}_1\setminus \{X_1\}$, which means
\[u(\mathfrak{e}_1) = a_{X_1} \cdot V(X_1)u(X_1)+ {\rm a\ deterministic \ value},\ |a_{X_1}|=1. \]
Since $V(X_1)=V_{\text{hi}}(X_1)+ \beta \omega(X_1)$ and we have transversality  estimate at $X_1$ 
\[|u(X_1)|\geq \gamma |u(X_0)|=\gamma |u(0)|,\]
we can conclude that 
\begin{align*}
   \P\left( |u(\mathfrak{e}_1)| \geq \frac{1}{2}\beta |u(X_1)| \ \bigg|   \ \mcB_1 \right)& = \E \left(    \P_{\omega(X_1)}\left( |u(\mathfrak{e}_1)| \geq \frac{1}{2}\beta |u(X_1)| \ \bigg|   \ \sigma(\mathfrak{F}_1 \setminus \{X_1\})  \right)  \bigg| \mcB_1 \right) \\
               &\geq \frac{1}{2}.
\end{align*}
Therefore, we define 
\begin{equation}\label{martingale UC 1}
  \mcZ_1= \mathbf{1}_{\{|u(\mathfrak{e}_1)| \geq \frac{1}{2}\beta |u(X_1)| \}}(\omega(X_1)).
\end{equation}
Then  {$X_2,\mcZ_1$ are $\mcF_1$-measurable,} and 
\begin{equation*}
  \E(\mcZ_1 | \mcF_0)\geq \frac{1}{2}.
\end{equation*}

\noindent \textbf{{(Step 3)}}\\

\noindent Analogously, we take 
\begin{equation}\label{mcB 2}
  \mcB_2=\sigma\left( \mathfrak{B}_2 \right), \ \mathfrak{B}_2 =(\mcI_L\cap \{0\leq n_2\leq (X_2)_2\} \setminus \{X_2\})\cup \mathfrak{F}_1  .
\end{equation}

If the randomness in $\mathfrak{B}_2$ is fixed, then the value of $u$ in $(\mcI_L\cap \{0\leq n_2\leq (X_2)_2+1\}) \setminus \{X_2+e_2\}$ will be determined. In particular, the value $|u(X_2)|$ will be determined, which means {$|u(X_2)|$ is $\mcB_2$-measurable}.

By similar argument in {\bf Step 2}, we can determine the relevant place of $X_3$, and then construct the tilted line $\mathfrak{L}_2$ and $\mathfrak{e}_2$. Such construction satisfies 
\[X_2\in \mathfrak{L}_2,\ X_3\notin {\mathfrak{L}_2},\ \mathfrak{e}_k\in (\mathfrak{L}_2+\bfe_2)\cap \partialside \mcI_L. \]
We also have  {$\mathfrak{L}_2,\mathfrak{e}_2$ are $\mcB_2$-measurable.} 

Next, with $\mathfrak{L}_2$ determined, the $\sigma$-algebra 
\begin{equation}\label{mcF 2}
    \mcF_2=\sigma(\mathfrak{F}_2), \ \mathfrak{F}_2:= (\mathfrak{R}_2\cap \mcI_L )\cup \mathfrak{B}_2 
\end{equation}
will be obtained, and we have 
\begin{equation}\label{2 step relation}
 \mathfrak{B}_2 \subset \mathfrak{F}_2,\   X_2 \notin \mathfrak{B}_2, \   X_2\in \mathfrak{F}_2, \ X_3 \notin \mathfrak{F}_2. 
\end{equation}
{Therefore, $\mcB_2\subset \mcF_2$.}

If we further condition on $\mcF_1$, using similar discussions  as in {\bf Step 2}  concludes  that $X_3$ will be determined, and that 
\begin{align*}
   \P\left( |u(\mathfrak{e}_2)| \geq \frac{1}{2}\beta |u(X_2)| \ \bigg|   \ \mcB_2 \right)& = \E \left(    \P_{\omega(X_2)}\left( |u(\mathfrak{e}_2)| \geq \frac{1}{2}\beta |u(X_2)| \ \bigg|   \ \sigma(\mathfrak{F}_2 \setminus \{X_2\})  \right)  \bigg| \mcB_2 \right) \\
               &\geq \frac{1}{2}.
\end{align*}
Therefore, we define 
\begin{equation}\label{martingale UC 2}
  \mcZ_2= \mathbf{1}_{\{|u(\mathfrak{e}_2)| \geq \frac{1}{2}\beta |u(X_2)|\} }(\omega(X_2)).
\end{equation}
Hence {$X_3,\mcZ_2$ are $\mcF_2$-measurable,} and 
\begin{equation*}
  \E(\mcZ_2| \mcZ_1)= \E(\mcZ_2 | \mcF_1)\geq \frac{1}{2}.
\end{equation*}

\noindent \textbf{{(Step 4)}}$\cdots$\\

By iterating the above constructions, we obtain the following filtration
\[\mcB_0\subset \mcF_0\subset \mcB_1\subset\mcF_1\subset  \mcB_2\subset\mcF_2\cdots \subset  \mcB_T \subset \mcF_T,\]
together with a (site-mixed) martingale $(|u(X_j)|,\mathfrak{L}_j,\mathfrak{e}_j, X_{j+1},\mcZ_j),1 \leq j\leq T$ satisfying:
\begin{itemize}
  \item Denote the region under $\mathfrak{L}_j$ by $\mathfrak{R}_j$. Then 
   \begin{align*}
    \mcB_j=\sigma\left( \mathfrak{B}_j \right),& \ \mathfrak{B}_j =(\mcI_L\cap \{0\leq n_2\leq (X_j)_2\} \setminus \{X_j\})\cup \mathfrak{F}_{j-1}  ,\\
      \mcF_j=\sigma(\mathfrak{F}_j),& \ \mathfrak{F}_j = (\mathfrak{R}_j\cap \mcI_L )\cup \mathfrak{B}_j.
  \end{align*}
  \item We have $X_j\notin \mathfrak{B}_j,X_{j}\in \mathfrak{L}_j\subset \mathfrak{F}_j,X_{j+1}\notin \mathfrak{F}_j$. Moreover, $X_{j+1}\in \mathfrak{C}(X_j,\bfe_2)$ has  transversality:  
  \[|u(X_{j+1})|\geq \gamma|u(X_j)|\geq \gamma^{j+1}|u(0)|.\]
  We also have $\mathfrak{e}_j\in \partialside \mcI_L \cap (\mathfrak{L}_j+\bfe_2)$,  and $\mathfrak{e}_j,1\leq j\leq T$ are distinct.
  \item $(|u(X_j)|,\mathfrak{L}_j,\mathfrak{e}_j)$ are $\mcB_j$-measurable,  and $ (X_{j+1},\mcZ_j)$ are $\mcF_j$-measurable. Moreover, 
        \[ \mcZ_j= \mathbf{1}_{\{|u(\mathfrak{e}_j)| \geq \frac{1}{2}\beta |u(X_j)|\} }(\omega(X_j)),\ \E(\mcZ_{j+1}| \mcZ_j)\geq \frac{1}{2}. \]
\end{itemize}
For a visual illustration, see Figure \ref{martingale UC graph in dim 2}.

\begin{figure}[htbp]
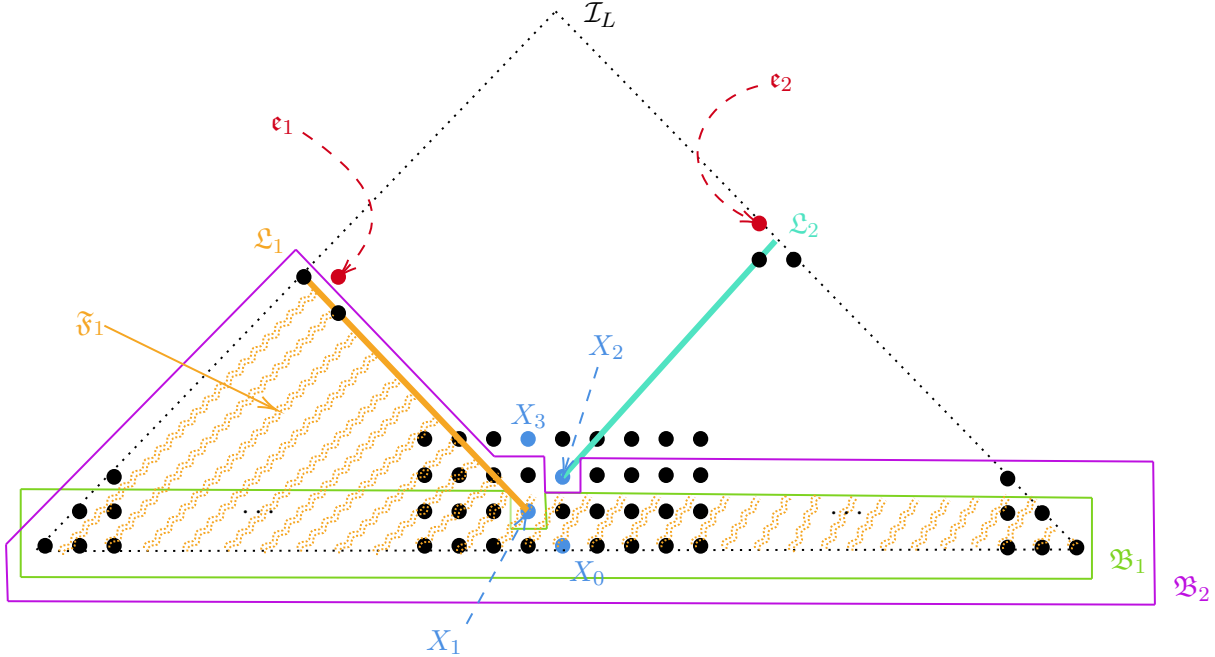


  \centering

\tikzset{every picture/.style={line width=0.75pt}} 


 
\caption{Construction of the martingale $(|u(X_j)|,\mathfrak{L}_j,\mathfrak{e}_j, X_{j+1},\mcZ_j)$.} 
\label{martingale UC graph in dim 2}
\end{figure}

We set the length of the martingale to be  $T=\frac{1}{10}L$. As in Section \ref{martingale section}, we define
\begin{equation}\label{mcY, UC}
  \mcY_j=\mcX_j-\frac{1}{2}j=\mcZ_1+\mcZ_2+\cdots + \mcZ_j-\frac{1}{2}j,\ 1\leq j\leq T.
\end{equation}
Then $(\mcY_j)_{1\leq j\leq T}$ becomes a submartingale. Applying Azuma's inequality yields
\begin{align}\label{Azuma UC}
  \P\left(\mcX_T \leq \frac{1}{10}T \right) & = \P\left(\mcY_T \leq -(\frac{1}{2}-\frac{1}{10})T \right) \leq \exp\{-T\}=\exp\{-\frac{1}{10}L\}.
\end{align}
Clearly, both the probabilistic estimate \eqref{Azuma UC} and the previous discussion   depend on the value of $\lambda$ and the initial data $u_0$. Furthermore, if the event $\mcX_T>\frac{1}{10}T$ happens, then there are more than $\frac{1}{10}T=\frac{1}{100}L$ many   $j$'s  such that 
\[|u(\mathfrak{e}_j)|\geq \frac{1}{2}\beta |u(X_j)|\geq \frac{1}{2}\beta\gamma^{\frac{1}{10}L} |u(0)|.\] 
That is to say, if we  define  
\[\Omega(\lambda,u_0,\mcI_L)= \ \#\left(  \partialside \mcI_L\cap \{ x:\  |u(x)| \geq  \frac{1}{2}\beta\gamma^{\frac{1}{10}L} |u(0)| \} \right) \ {\rm is \ larger \ than\ } \frac{L}{100},\]
then 
\begin{equation}\label{abundance in boundary of mcI L}
  \P\left(\Omega(\lambda,u_0,\mcI_L) \right) \geq \P\left(\mcX_T > \frac{1}{10}T \right)\geq 1-\exp\{-\frac{L}{10}\}.
\end{equation}
The  estimate \eqref{abundance in boundary of mcI L} is only about the randomness in $\mcI_L$.

Note that \eqref{abundance in boundary of mcI L} only provides a lower bound on the number of sites along the side boundary on which e $u$ is bounded from below. Define
 \[L_s=  \frac{L}{10}+10s, \ 0\leq s\leq \frac{L}{100}. \] 
Then the side boundaries $\partialside \mcI_{L_s},0\leq s\leq \frac{L}{100}$ are mutually disjoint, and are all contained in $Q_L(0)$. By \eqref{abundance in boundary of mcI L}, 
\begin{align*}
    \P\left(\bigcap_{1\leq s\leq \frac{L}{100}}\Omega(\lambda,u_0,\mcI_{L_s}) \right) \geq 1-\sum_{0\leq s\leq \frac{L}{100}}\exp\{-\frac{L_s}{10}\} \geq 1-\exp\{-\frac{L}{101}\},
\end{align*}
which means that for each $0\leq s\leq \frac{L}{100}$ and  in $\partialside \mcI_{L_s},$  there will be more than $\frac{L_s}{100}$  many sites such that 
\begin{equation}\label{lower dim 2 each boundary}
  |u(x)|\geq \frac{1}{2}\beta \gamma^{\frac{1}{10}L_s} |u(0)|\geq \frac{1}{2}\beta \gamma^{\frac{1}{50}L}|u(0)| \geq \exp \{{-\frac{1}{25}|\log \gamma|\cdot L}\}  |u(0)| 
\end{equation}
supposing $L\gg1$ l (depending only  on $h,\beta$). Therefore in $Q_L(0)$,  the total number of the sites satisfying \eqref{lower dim 2 each boundary} is bigger than 
\[  \sum_{ 0\leq s\leq \frac{L}{100}} \frac{L_s}{100}\geq  10^{-5}L^2 .\]
This implies  
\begin{equation}\label{increase dimension trick}
    \P \left(\mathcal{E}_{\mathrm{uc}}(L , u_0, \lambda)\right)\geq \P\left(\bigcap_{1\leq s\leq \frac{L}{100}}\Omega(\lambda,u_0,\mcI_{L_s}) \right) \geq 1-\exp\{-\frac{L}{1001}\}. 
\end{equation}
We have proven  Theorem \ref{UC for d arbitrary}  on $\Z^2$.

The argument presented above can be  extended to all dimensions $d\geq 3$.

Recall that we denote the standard basis of $\Z^d$ by $\{\bfe_1,\cdots,\bfe_d\}$, and a site $n\in\Z^d$ can be represented as 
\[n=(n_1,\cdots,n_d)=n_1\bfe_1+n_2\bfe_2+\cdots + n_d\bfe_d.\]
We give an order for element in $\Z^d$ via:  
\begin{equation}\label{order in Zd}
  n <n' \Leftrightarrow n_{i_0}<n_{i_0}'\ {\rm with} \ i_0:= \min \{i:\  n_i\neq n_i' \}. 
\end{equation}

\begin{Def}
For a fixed length $L\gg 1$, \textbf{the elementary propagating region} of length $L$ in $\Z^d$ is defined by 
\[\mcI_L := \left\{ (n_1,\cdots,n_d)\in \Z^d: \ n_d\geq 0 \ {\rm and }\ \sum_{1\leq i \leq d-1} |n_i|+ n_d \leq L+1    \right\}. \]   
\end{Def}
We define the following tilted hyperplanes in $\Z^d$:
 \[\mcT_k=\{n\in \Z^d:\  \langle n,\bfv_-\rangle = k\}, \ \bfv_-=\bfe_d-(\bfe_1+\bfe_2+\cdots+\bfe_{d-1}),\]
\[\mcT'_k=\{n\in \Z^d: \ \langle n,\bfv_+\rangle = k\}, \ \bfv_+=\bfe_d+(\bfe_1+\bfe_2+\cdots+\bfe_{d-1}).\]
Now for a site $n\in \mcT_k$ with $n_d\gg 1$, we may iterate the local form of  $H(\omega)u-\lambda u=0$ along the hyperplane $\mcT_k$ as follows. 
{\Small\begin{align}
\notag &u(n) \\
\notag  &= -\left(\sum_{1\leq i\leq d-1}u(n-\bfe_d -\bfe_i) \right) + (2d+V(n-\bfe_d)-\lambda)\cdot u(n-\bfe_d) -\left(\sum_{1\leq i\leq d-1} u(n-\bfe_d+\bfe_i)+u(n-2\bfe_d)\right)\\
\notag   &= (-1)^2 \cdot  \left(\sum_{1\leq i\leq d-1} \sum_{1\leq j\leq d-1} u(n-2\bfe_d-\bfe_i-\bfe_j)\right)\\
\notag   &\quad +(2d+V(n-\bfe_d)-\lambda)\cdot u(n-\bfe_d) +(-1)\cdot \left(\sum_{1\leq i\leq d-1}  (2d+V(n-2\bfe_d-\bfe_i)-\lambda)\cdot u(n-2\bfe_d -\bfe_i) \right) \\
 \notag  &\quad + \sum_{y\in \mcT_{k-2}} b_y^{(2)} \cdot u(y)\\
 \notag  & =\cdots \\
 \notag  &= (-1)^k \cdot  \left(\sum_{(i_1,i_2,\cdots,i_k) \atop 1\leq i_s \leq d-1 ,\forall 1\leq s\leq k}  u(n-k\bfe_d-\bfe_{i_1}-\bfe_{i_2}-\cdots-\bfe_{i_k})\right)\\
 \label{d dim expansion}  &\quad +\sum_{0\leq s\leq k-1}(-1)^{s}\cdot \left(\sum_{(i_1,i_2,\cdots,i_{s})  \atop 1\leq i_r \leq d-1 ,\forall 1\leq r\leq s }  (2d+V(n-(s+1)\bfe_d -\bfe_{i_1}-\cdots -\bfe_{i_s})-\lambda)\cdot u(n-(s+1)\bfe_d -\bfe_{i_1}-\cdots -\bfe_{i_s}) \right) \\
\notag   &\quad + \sum_{y\in \mcT_{k-2}} b_y^{(k)} \cdot u(y)\\
 \notag  &=\cdots\\
 \notag  &=\sum_{x\in \mcT_{k-1}} a_x \cdot (2d+V(x)-\lambda) \cdot u(x)  +\sum_{y\in \mcT_{k-2}} b_y \cdot u(y) + {\rm Remains}(u_0).
\end{align}}
From the above formula, it's easy to see that the coefficients  involving information on $\mcT_{k-1}$ will satisfy 
\[|a_x| {\rm \ is \ a \ binomial \ coefficient }\ \Rightarrow \ |a_x|\geq 1.\]
Similar discussion also works for $\mcT'_k$.

Now, choose the site $X_0=0\in \Z^d$, and take the $\sigma$-algebra
\[\mcB_0=\{\Omega,\emptyset\}\subset \mcF_0=\sigma (\{0\}), \mathfrak{F}_0=\{0\} . \]

\noindent \textbf{{(Step 1)}}\\

\noindent By the cone property Proposition \ref{cone property} and \eqref{cone property parameter}, there must be a site 
\[X_1\in  \mathfrak{C}(0,\bfe_2)=\{ \bfe_d\pm \bfe_i: \ 1\leq i\leq d-1 \}\cup\{ 2\bfe_d\}\]
such that $|u(X_1)|\geq \gamma |u(0)|$. Since we already know the value of $u=u_0$ in $\mcP_1$, we can judge that,  if $X_1$ can be in $\mcP_1$, we choose $X_1$ to be the smallest site under the order \eqref{order in Zd}; else, $X_1$ can only be chosen to be $2\bfe_2$. Thus, by our construction, {$X_1$ is $\mcF_0$-measurable} (deterministic).\\

\noindent \textbf{{(Step 2)}}\\

\noindent Define the $\sigma$-algebra 
\begin{equation}\label{mcB 1, d arbitrary}
  \mcB_1=\sigma\left( \mathfrak{B}_1 \right), \ \mathfrak{B}_1 =\mcI_L\cap \{0\leq n_2\leq (X_1)_d\} \setminus \{X_1\} .
\end{equation}
Now if the randomness in $\mathfrak{B}_1$ is fixed, then the value of $u$ in $(\mcI_L\cap \{0\leq n_2\leq (X_1)_d+1\}) \setminus \{X_1+\bfe_d\}$ will be determined. In particular, the value $|u(X_1)|$ will be determined, which means {$|u(X_1)|$ is $\mcB_1$-measurable}. Moreover, if we decompose 
\[\mathfrak{C}(X_1,\bfe_d)=A_1\cup B_1\cup\{C_1,D_1\}\]
with 
\[A_1=\{X_2+\bfe_d-\bfe_i :\ 1\leq i\leq d-1\}, \ B_1 = \{X_2+\bfe_d+\bfe_i :\ 1\leq i\leq d-1\},\ C_1=X_1+\bfe_d,D_1=X_1+2\bfe_d,\]
then the value of $u$ in $A_1$ and $B_1$ will also be determined.

By the cone property, there must be a site $X_2 \in  \mathfrak{C}(X_1,\bfe_2)$ such that $|u(X_2)|\geq \gamma |u(X_1)|$. We can judge that,  if $X_2$ can lie in $A_1\cup B_1$, we choose the smallest one under the order \eqref{order in Zd}; else,
we must have $X_2\in \{C_1,D_1\}$.

Next, we define the tilted hyperplane as follows. If $X_2\in A_1$ or $X_2\in \{C_1,D_1\}$, we take $\mathfrak{L}_1 =\mcT_{\langle X_1,\bfv_-\rangle}$. In this case, we have 
\[X_2-X_1\in \{\bfe_d-\bfe_i: \ 1\leq i\leq d-1 \} \cup \{\bfe_d,2\bfe_d\}\Rightarrow \langle X_2,\bfv_-\rangle  > \langle X_1,\bfv_-\rangle.\]
Moreover, if we consider the point $y\in \mathfrak{C}(X_2,\bfe_d)$, we have 
\[y-X_2\in \{\bfe_d \pm\bfe_i: \ 1\leq i\leq d-1 \} \cup \{\bfe_d,2\bfe_d\}\Rightarrow \langle y,\bfv_-\rangle  \geq  \langle X_2,\bfv_-\rangle.\]
Therefore, $X_1$ lies in $\mathfrak{L}_1$, but $X_2$ and the cone $\mathfrak{C}(X_2,\bfe_d)$ lie in the region strictly above $\mathfrak{L}_1$ (this ensures that the subsequent $X_3$ will also lie strictly above $\mathfrak{L}_1$). Else, if $X_2\in B_1$, we take $\mathfrak{L}_1 =\mcT'_{\langle X_1,\bfv_+\rangle}$.
Then, similarly, $X_1$ lies in $\mathfrak{L}_1$, but $X_2$ and the cone $\mathfrak{C}(X_2,\bfe_d)$ lie in the region strictly above $\mathfrak{L}_1$.

After determining $\mathfrak{L}_1$, we consider the set 
\[\mathfrak{e}'_1 = (\mathfrak{L}_1+\bfe_d) \cap \partialside \mcI_L,\]
where 
 \[ \partialside \mcI_L := \mcI_L \cap (\mcT_{L+1}\cup \mcT_{L}\cup \mcT'_{L+1}\cup \mcT'_{L} )\]
is a part of the side boundary of $\mcI_L$. We construct $\mathfrak{e}_1\subset \mathfrak{e}'_1$ as follows: For a $n\in \mathfrak{e}_1$, we can expand $u(n)$ (by the previous discussion) into   
\[u(n)= \sum_{x\in \mathfrak{L}_1} a_x \cdot (2d+V(x)-\lambda) \cdot u(x)  +\sum_{y\in \mathfrak{L}_1-\bfe_d} b_y \cdot u(y) + {\rm Remains}(u_0).\]
If the above formula involves the potential at $X_1$, i.e.,  $V(X_1)$, we let $n\in \mathfrak{e}_1$.
{Thus both $\mathfrak{L}_1$ and $\mathfrak{e}_1$ are $\mcB_1$-measurable.}


Since both $(\mathfrak{L}_1+\bfe_d)$ and $\partialside \mcI_L$ are (union of) $(d-1)$-dimensional hyperplanes, intuitively we can expect $\mathfrak{e}_1$ is a $(d-2)$-dimensional set. This can be strictly proven by the following combinatorial  geometry argument.
\begin{claim}\label{number of e1}
  We have 
  \[\# \mathfrak{e}_1 \gtrsim_d L^{d-2}. \]
\end{claim}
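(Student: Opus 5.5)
The plan is to read $\mathfrak{e}_1$ off the explicit expansion \eqref{d dim expansion} and reduce the claim to counting lattice points on a simplex. For $n\in\mathfrak{L}_1+\bfe_d$ with $\mathfrak{L}_1=\mcT_{\langle X_1,\bfv_-\rangle}$, iterate the equation downwards from $n$ along $\mcT_{\langle n,\bfv_-\rangle}$ as in \eqref{d dim expansion}. The sites on $\mcT_{\langle n,\bfv_-\rangle-1}=\mathfrak{L}_1$ whose potential enters are exactly
\[
x=n-(s+1)\bfe_d-\bfe_{i_1}-\cdots-\bfe_{i_s},\qquad 1\le i_r\le d-1 .
\]
Their coefficient is $(-1)^s$ times the number of ordered tuples $(i_1,\dots,i_s)$ giving the same $x$, which is a multinomial coefficient. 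All tuples with the same $x$ have the same $s$, hence the same sign, so there is no cancellation, consistent with the remark that $|a_x|\ge 1$.

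The step I expect to be the main obstacle is checking this no-cancellation claim rigorously. One has to verify that the $b_y$ terms on $\mcT_{k-2}$ never feed back a $V(x)u(x)$ contribution with $x\in\mcT_{k-1}$ once they are re-expanded. They do not: re-expanding them only produces sites strictly below $\mcT_{k-1}$, plus the initial data. The other point to verify is that the $s$-fold sum really runs over all compositions.

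Granting this, $V(X_1)$ enters the expansion of $u(n)$ if and only if $m:=n-X_1-\bfe_d$ satisfies
\[
m_i\ge 0\ \ (1\le i\le d-1),\qquad m_d=\sum_{i<d}m_i .
\]
The second condition is automatic from $\langle m,\bfv_-\rangle=0$, so the only constraint is that $m$ lies in the positive orthant in the first $d-1$ coordinates. For the case $\mathfrak{L}_1=\mcT'_{\langle X_1,\bfv_+\rangle}$ the same argument with $\bfe_i$ replaced by $-\bfe_i$ gives $m_i\le 0$ and $m_d=-\sum_{i<d}m_i$. I will write the first case and treat the second by reflection.

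It then remains to count such $n$ in $\partialside\mcI_L$. Since $|X_1|_1\le 2$, restrict to $m_i\ge 3$ for all $i<d$. Then every $n_i=(X_1)_i+m_i$ with $i<d$ is positive, and $n_d=(X_1)_d+1+\sum m_i\ge 0$. Hence
\[
\sum_{i<d}|n_i|+n_d \;=\;\sum_{i<d}(X_1)_i+(X_1)_d+1+2\sum_{i<d}m_i \;=\; c+2M,
\]
where $M=\sum_{i<d}m_i$ and $|c|\le 5$.

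Since $\partialside\mcI_L$ allows both values $L$ and $L+1$, exactly one $M\sim L/2$ of the right parity satisfies $c+2M\in\{L,L+1\}$. The resulting $n$ lies in $\mcI_L\cap(\mcT'_L\cup\mcT'_{L+1})$, so in $\partialside\mcI_L$ up to the definition, because on the positive orthant $\sum|n_i|+n_d=\langle n,\bfv_+\rangle$. It also lies in $\mathfrak{L}_1+\bfe_d$ by construction.

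The number of $(m_1,\dots,m_{d-1})$ with $m_i\ge 3$ and $\sum m_i=M$ is $\binom{M-3(d-1)+d-2}{d-2}\gtrsim_d L^{d-2}$ for $L\gg_d 1$. Distinct $m$ give distinct $n$, which yields $\#\mathfrak{e}_1\gtrsim_d L^{d-2}$.
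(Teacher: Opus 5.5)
Your argument is correct. It counts the same set as the paper, but by a more direct parametrization.

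\textbf{The paper's route.} The paper first solves for $\mathfrak{e}'_1$ as the intersection of $\mathfrak{L}_1+\bfe_d$ with the face $\mcT'_{L+1}$. For each $y\in\mathfrak{e}'_1$ it then looks at the set $\mathfrak{X}(y)\subset\mcP_{(X_1)_d}$ of sites whose potential enters the expansion of $u(y)$; this set is a rigid copy of the simplex $\mathfrak{W}_k$. It picks a ``central'' $y_0$ whose simplex is centered near $X_1$. Finally it translates $y_0$ by vectors $m$ with $m_1+\cdots+m_{d-1}=0$ ranging over a sub-simplex of size $\sim k/50$, so that $\mathfrak{X}(y_0+m)=\mathfrak{X}(y_0)+m$ still contains $X_1$.

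\textbf{Your route.} You parametrize candidates directly by $m=n-X_1-\bfe_d$. The condition $X_1\in\mathfrak{X}(n)$ is exactly $m_i\ge 0$ for $i<d$, with $m_d=\sum_{i<d}m_i$ automatic from $n\in\mathfrak{L}_1+\bfe_d$. You then count the points with $m_i\ge 3$ on the single slice $\sum_{i<d} m_i=M$ forced by $\langle n,\bfv_+\rangle\in\{L,L+1\}$.

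\textbf{What each buys.} Your version gives an injective parametrization and an explicit binomial count, and it replaces the paper's somewhat heuristic centering step. It also makes the nonvanishing of $a_{X_1}$ explicit: it is a multinomial coefficient times the uniform sign $(-1)^s$. The step you flagged as the main obstacle is not really one. Each step of the iteration produces terms only on $\mcT_k$, $\mcT_{k-1}$, $\mcT_{k-2}$, and only the $\mcT_k$ terms are re-expanded.

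The paper's formulation uses only $X_1\in Q_{L/(100d)}(0)$, so it transfers verbatim to the later sets $\mathfrak{e}_j$ (the remark after the claim). Your version as written uses $|X_1|_1\le 2$, but it adapts at once: replace the threshold $m_i\ge 3$ by $m_i\ge |(X_j)_i|+1$. This costs at most $|X_j|_1+d-1\ll L$ out of $M\sim L/2$, and keeps $n_i>0$ and $n\in\mcI_L$. A cosmetic point: $c=\langle X_1,\bfv_+\rangle+1\in\{1,3\}$, so your bound $|c|\le 5$ is loose but harmless.
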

\begin{proof}[Proof of Claim \ref{number of e1}]
  Without loss of generality, we assume $\mathfrak{L}_1+\bfe_d=\mcT_{\langle X_1,\bfv_-\rangle+1}$. Clearly, if $n\in \mathfrak{L}_1$, then the closest sites near $n$ in $\mathfrak{L}_1$ are
  \[n \pm (\bfe_d+\bfe_i),\ 1\leq i\leq d-1.\]
  Moreover, we have 
  \[ \langle n \pm (\bfe_d+\bfe_i) , \bfv_+\rangle = \langle n , \bfv_+\rangle \pm 2. \]
  This will ensure that $ (\mathfrak{L}_1+\bfe_d) \cap \mcT'_L \neq \emptyset$ or  $ (\mathfrak{L}_1+\bfe_d) \cap \mcT'_{L+1} \neq \emptyset$, and thus $\mathfrak{e}'_1\neq \emptyset$. Without loss of generality,  we assume $(\mathfrak{L}_1+\bfe_d) \cap \mcT'_{L+1} \neq \emptyset$. We solve the equation 
  \begin{equation*}
    \begin{cases}
       \langle n , \bfv_+\rangle =L +1,\\
       \langle n , \bfv_-\rangle =\langle X_1 , \bfv_-\rangle+1,
    \end{cases}
  \end{equation*}
  which gives 
  \begin{equation}\label{top set}
    \mathfrak{e}'_1= \left\{n\in \mcI_L:\  n_1+\cdots+n_{d-1}=\frac{1}{2}(L- \langle X_1 , \bfv_-\rangle) \ {\rm and} \ n_d=\frac{1}{2}(L+ \langle X_1 , \bfv_-\rangle+2) \right\}.
  \end{equation}
  Now for any $y\in \mathfrak{e}'_1$, the term \eqref{d dim expansion} in our previous discussion about the expansion of $u(y)$ tells us that, the sites, which lie in $\mcP_{(X_1)_d}$ and are involved in the expansion, are of form
  \[ x_{(i_1,\cdots,i_s)}(y)\triangleq y-(k+1)\bfe_k -\bfe_{i_1}-\bfe_{i_2}\cdots -\bfe_{i_k}  \in \mcP_{(X_1)_d},\  1\leq i_1,\cdots,i_k\leq d-1.\]
  This forces 
   \[y_d-(k+1)=\frac{1}{2}(L+ \langle X_1 , \bfv_-\rangle+2)-k-1=(X_1)_d \Rightarrow k= \frac{1}{2}(L+ \langle X_1 , \bfv_-\rangle)-(X_1 )_d. \]
  Therefore, 
  \begin{equation}\label{bottom set}
      \langle x_{(i_1,\cdots,i_s)} , \bfe_1+\cdots+\bfe_{d-1} \rangle = \frac{1}{2}(L- \langle X_1 , \bfv_-\rangle) -k = \langle X_1,\bfe_1+\cdots+\bfe_{d-1} \rangle.
  \end{equation}
  Now we define the set
  \[\mathfrak{X}(y)=\{ x_{(i_1,\cdots,i_s)}(y):  \ 1\leq i_1,\cdots,i_k\leq d-1\}, \]
  which is the same as  
  \[\mathfrak{W}_k:= \{n\in\Z^d : \ n_1,n_2,\cdots,n_{d-1}\geq 0,  n_d=0,n_1+\cdots n_{d-1}=k\}\]
   up to a rigid transformation (first do a reflection, and then shift the origin $0$ to $y-(k+1)\bfe_k= {\rm Proj}_{\mcP_{(X_1)_d}}(y)$). We choose a special $y_0\in \mathfrak{e}'_1$ such that 
  \[\left|(y_0)_i -\frac{1}{2(d-1)}(L- \langle X_1 , \bfv_-\rangle  ) \right|\leq 1\ {\rm for}\ \forall 1\leq i\leq d-1,\]
  which can be seen as the center of $\mathfrak{e}'_1$. By a geometric intuition,  it is natural  to believe the center of 
  \[\{  n\in \mcI_L: \ n_d=(X_1)_d, n_1 +\cdots n_{d-1}= \langle X_1,\bfe_1+\cdots + \bfe_{d-1}\rangle  \}\]
  will be the center of $\mathfrak{X}(y_0)$ (See Figure  \ref{choose the center} for an  illustration in $\Z^3$), which is denoted  by  $x'$ and
  \[ (x')_d=(X_1)_d, \ \left|(x')_i -\frac{1}{(d-1)}\langle X_1,\ \bfe_1+\cdots+\bfe_{d-1} \rangle  \right|\leq 1,\ 1\leq i \leq d-1.  \]

  \begin{figure}[htbp]
    \centering

\tikzset{every picture/.style={line width=0.75pt}} 

\begin{tikzpicture}[x=0.75pt,y=0.75pt,yscale=-0.8,xscale=0.8]

\draw    (388.21,90.29) -- (469.45,329.29) ;
\draw  [dash pattern={on 0.84pt off 2.51pt}]  (388.21,90.29) -- (297.76,202) ;
\draw    (388.21,90.29) -- (169,329.29) ;
\draw    (388.21,90.29) -- (598.21,202) ;
\draw  [dash pattern={on 0.84pt off 2.51pt}]  (297.76,202) -- (598.21,202) ;
\draw  [dash pattern={on 0.84pt off 2.51pt}]  (297.76,202) -- (169,329.29) ;
\draw    (169,329.29) -- (469.45,329.29) ;
\draw    (598.21,202) -- (469.45,329.29) ;
\draw [color={rgb, 255:red, 189; green, 16; blue, 224 }  ,draw opacity=1 ][line width=1.5]    (510.21,155.29) -- (434.21,225.29) ;
\draw [color={rgb, 255:red, 208; green, 2; blue, 27 }  ,draw opacity=1 ]   (472.21,190.29) -- (562.21,237.29) ;
\draw [color={rgb, 255:red, 208; green, 2; blue, 27 }  ,draw opacity=1 ]   (472.21,190.29) -- (505.21,293.29) ;
\draw [color={rgb, 255:red, 208; green, 2; blue, 27 }  ,draw opacity=1 ]   (472.21,190.29) -- (393.21,293.29) ;
\draw [color={rgb, 255:red, 208; green, 2; blue, 27 }  ,draw opacity=1 ]   (393.21,293.29) -- (505.21,293.29) ;
\draw [color={rgb, 255:red, 80; green, 227; blue, 194 }  ,draw opacity=1 ][line width=1.5]    (393.21,293.29) -- (451.21,235.29) ;
\draw [color={rgb, 255:red, 208; green, 2; blue, 27 }  ,draw opacity=1 ] [dash pattern={on 4.5pt off 4.5pt}]  (451.21,235.29) -- (562.21,237.29) ;
\draw [color={rgb, 255:red, 208; green, 2; blue, 27 }  ,draw opacity=1 ] [dash pattern={on 4.5pt off 4.5pt}]  (472.21,190.29) -- (451.21,235.29) ;
\draw    (389.11,116.64) -- (387.42,66.93) ;
\draw [shift={(387.32,63.93)}, rotate = 88.06] [fill={rgb, 255:red, 0; green, 0; blue, 0 }  ][line width=0.08]  [draw opacity=0] (8.93,-4.29) -- (0,0) -- (8.93,4.29) -- cycle    ;
\draw    (480,267) -- (567.21,265.34) ;
\draw [shift={(570.21,265.29)}, rotate = 178.91] [fill={rgb, 255:red, 0; green, 0; blue, 0 }  ][line width=0.08]  [draw opacity=0] (8.93,-4.29) -- (0,0) -- (8.93,4.29) -- cycle    ;
\draw  [color={rgb, 255:red, 74; green, 144; blue, 226 }  ,draw opacity=1 ][fill={rgb, 255:red, 74; green, 144; blue, 226 }  ,fill opacity=1 ] (469.11,190.29) .. controls (469.11,188.57) and (470.5,187.18) .. (472.21,187.18) .. controls (473.93,187.18) and (475.32,188.57) .. (475.32,190.29) .. controls (475.32,192) and (473.93,193.39) .. (472.21,193.39) .. controls (470.5,193.39) and (469.11,192) .. (469.11,190.29) -- cycle ;
\draw  [color={rgb, 255:red, 74; green, 144; blue, 226 }  ,draw opacity=1 ][fill={rgb, 255:red, 74; green, 144; blue, 226 }  ,fill opacity=1 ] (426.11,257.29) .. controls (426.11,255.57) and (427.5,254.18) .. (429.21,254.18) .. controls (430.93,254.18) and (432.32,255.57) .. (432.32,257.29) .. controls (432.32,259) and (430.93,260.39) .. (429.21,260.39) .. controls (427.5,260.39) and (426.11,259) .. (426.11,257.29) -- cycle ;
\draw  [color={rgb, 255:red, 65; green, 117; blue, 5 }  ,draw opacity=1 ][fill={rgb, 255:red, 65; green, 117; blue, 5 }  ,fill opacity=1 ] (412.11,272.29) .. controls (412.11,270.57) and (413.5,269.18) .. (415.21,269.18) .. controls (416.93,269.18) and (418.32,270.57) .. (418.32,272.29) .. controls (418.32,274) and (416.93,275.39) .. (415.21,275.39) .. controls (413.5,275.39) and (412.11,274) .. (412.11,272.29) -- cycle ;
\draw [color={rgb, 255:red, 80; green, 227; blue, 194 }  ,draw opacity=1 ] [dash pattern={on 4.5pt off 4.5pt}]  (208.21,171.29) .. controls (247.81,141.59) and (390.32,202.05) .. (435.86,248.87) ;
\draw [shift={(437.21,250.29)}, rotate = 226.89] [color={rgb, 255:red, 80; green, 227; blue, 194 }  ,draw opacity=1 ][line width=0.75]    (10.93,-3.29) .. controls (6.95,-1.4) and (3.31,-0.3) .. (0,0) .. controls (3.31,0.3) and (6.95,1.4) .. (10.93,3.29)   ;

\draw (518,131) node [anchor=north west][inner sep=0.75pt]   [align=left] {\textcolor[rgb]{0.74,0.06,0.88}{$\mathfrak{e}'_1$}};
\draw (358,54) node [anchor=north west][inner sep=0.75pt]   [align=left] {$\bfe_3$};
\draw (545,276) node [anchor=north west][inner sep=0.75pt]   [align=left] {$\bfe_1+\bfe_2$};
\draw (449,171) node [anchor=north west][inner sep=0.75pt]   [align=left] {\textcolor[rgb]{0.29,0.56,0.89}{$y_0$}};
\draw (414,235) node [anchor=north west][inner sep=0.75pt]   [align=left] {\textcolor[rgb]{0.29,0.56,0.89}{$x'$}};
\draw (385,262) node [anchor=north west][inner sep=0.75pt]   [align=left] {\textcolor[rgb]{0.25,0.46,0.02}{$X_1$}};
\draw (181,175) node [anchor=north west][inner sep=0.75pt]   [align=left] {\textcolor[rgb]{0.31,0.89,0.76}{$\mathfrak{X}(y_0)$}};

\end{tikzpicture}

\caption{Illustration of the proof of Claim \ref{number of e1} for $d=3$.}
\label{choose the center}

  \end{figure}
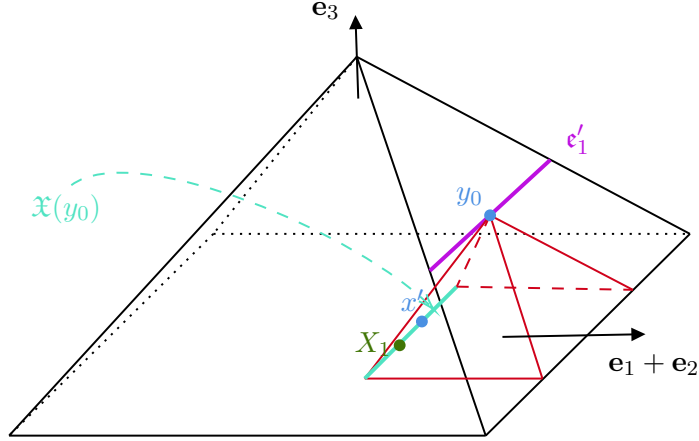

  So we compute  for $1\leq i\leq d-1$,
  \begin{align*}
    \left|(x'-y_0)_i +\frac{1}{(d-1)}k  \right|= \left|(x'-y_0)_i + \frac{1}{2(d-1)}(L-\langle X_1 , \bfv_-\rangle-2  \langle X_1,\bfe_1+\cdots+\bfe_{d-1} \rangle  ) \right| \leq 2 .
  \end{align*} 
  That proves $x'$ is the center of $\mathfrak{X}(y_0)$, since $\frac{k}{d-1}(\bfe_1+\cdots +\bfe_{d-1})$ is the center of $\mathfrak{W}_k$.

  Finally, assume $X_1\in Q_{\frac{L}{100d}}(0)$. Then we have 
  \begin{align}\label{X1 near center}
    |X_1-x'|_1\leq |X_1|_1+|x'|_1 \leq 2|X_1|_1\leq \frac{L}{50}\leq \frac{k}{10}. 
  \end{align} 
  That means $X_1$ is also near the center of $\mathfrak{X}(y_0)$. Moreover, using  \eqref{top set} and \eqref{bottom set}  ensures that if we shift $y_0$ by a vector $m\in \Z^d$ with 
  \[m_1+\cdots + m_{d-1}=0, \ y_0+m\in \mcI_L,\]
  then we still have $y_0+m\in \mathfrak{e}'_1$ and exactly 
  \[\mathfrak{X}(y_0+m)= \mathfrak{X}(y_0)+m.\]
  Since \eqref{X1 near center} ensures that for all $m\in \mathfrak{W}_\frac{k}{50}-{\rm center \ of \ } \mathfrak{W}_\frac{k}{50}$, $\mathfrak{X}(y_0)+m$ still contains $X_1$. Thus, 
  \[y_0 +m \in \mathfrak{e}_1\ {\rm for}\ \forall m\in \mathfrak{W}_{\frac{k}{50}}-{\rm center \ of \ } \mathfrak{W}_\frac{k}{50}. \]
  Therefore we conclude that (since $k\geq \frac{L}{5}$)
  \[\# \mathfrak{e}_1 \geq \# \mathfrak{W}_{L/250} = \binom{L/250+d-2}{d-2}\gtrsim_d L^{d-2}. \]

\end{proof}
\begin{rmk}\label{whole ej is d-2 dim}
 From the proof one readily sees that if the construction is iterated at most  for $\frac{L}{200d}$ many  times (i.e.,  $T\leq \frac{L}{200d}$), then $X_j \in Q_{\frac{L}{100d}}(0)\ {\rm for} \ \forall 1\leq j\leq T$. Thus the proof  also applies for obtaining  $\# \mathfrak{e}_j\gtrsim_d L^{d-2}\ {\rm for}\ \forall 1\leq j\leq T$.
\end{rmk}

Next, We construct the filtration element by 
\begin{equation}\label{mcF 1, d arbitrarily}
    \mcF_1=\sigma(\mathfrak{F}_1), \ \mathfrak{F}_1:= (\mathfrak{R}_1\cap \mcI_L )\cup \mathfrak{B}_1,
\end{equation}
where $\mathfrak{R}_1$ is the half space under $\mathfrak{L}_1$.
Our construction ensures that 
\begin{equation}\label{1 step relation}
 \mathfrak{B}_1 \subset \mathfrak{F}_1,  X_1 \notin \mathfrak{B}_1,   X_1\in \mathfrak{F}_1, X_2 \notin \mathfrak{F}_1. 
\end{equation}
{Therefore, $\mcB_1\subset \mcF_1$.} 

If we further condition on $\mcF_1$, clearly the value of $u$ in the region 
\[\{ x\in \mcI_L: \ x\in \mathfrak{F}_1  \ {\rm or} \ x-\bfe_d\in \mathfrak{F}_1 \}\]
will be determined, which contains the sites $C_1$ and the set $\mathfrak{e}_1$. Hence, we can do further discussion under the case $X_2\in \{C_1,D_1\}$: If $|u(C_1)|\geq \gamma |u(X_1)|$, then we let $X_2=C_1$; else, we let $X_2=D_1$. Thus the site $X_2$ is determined. 

Moreover, for any $n\in \mathfrak{e}_1$, the value $u(n)$ is now determined. By our definition of $\mathfrak{e}_1$, if we further condition on the randomness in $\mathfrak{F}_1\setminus \{X_1\}$, we can express $u(n),\forall n\in \mathfrak{e}_1$ in the following form:
\begin{equation}\label{u e1 value}
u(\mathfrak{e}_1) = a_{X_1} \cdot V(X_1)u(X_1)+ {\rm a\ deterministic \ value},\ |a_{X_1}|\geq 1. 
\end{equation}
Since $V(X_1)=V_{\text{hi}}(X_1)+ \beta \omega(X_1)$ and we have  the transversality at $X_1$:  
\[|u(X_1)|\geq \gamma |u(X_0)|=\gamma |u(0)|,\]
we can conclude that $|u(n)|<\frac{1}{2}\beta |u(X_1)|$ only holds for at most one value of $\omega(X_1)\in \{0,1\}$. 
Hence, by the pigeonhole principle,  
\begin{align*}
   &\P\left( \# \{ n\in \mathfrak{e}_1: \ |u(n)| \geq \frac{1}{2}\beta |u(X_1)| \}  \geq \frac{1}{2}\# \mathfrak{e}_1 \ \bigg|   \ \mcB_1 \right) \\
   &= \E \left(    \P_{\omega(X_1)}\left( \# \{ n\in \mathfrak{e}_1: \ |u(n)| \geq \frac{1}{2}\beta |u(X_1)| \}  \geq \frac{1}{2}\# \mathfrak{e}_1 \ \bigg|   \ \sigma(\mathfrak{F}_1 \setminus \{X_1\})  \right)  \bigg| \mcB_1 \right) \\
               &\geq \frac{1}{2}.
\end{align*}
Therefore, we define 
\begin{align*}
  \event_1:   & \ {\rm  the \ event \ }\# \{ n\in \mathfrak{e}_1:\  |u(n)| \geq \frac{1}{2}\beta |u(X_1)| \}  \geq \frac{1}{2}\# \mathfrak{e}_1, \\
  &\mcZ_1= \mathbf{1}_{\event_1}(\omega(X_1)).
\end{align*}
Then {$X_2,\mcZ_1$ are $\mcF_1$-measurable,} and 
\begin{equation*}
  \E(\mcZ_1 | \mcF_0)\geq \frac{1}{2}. 
\end{equation*}

\noindent \textbf{{(Step 3)}} $\cdots$\\

Iterate the above constructions  for  $T=\frac{L}{200d}$ amy  times. We obtain the following filtration
\[\mcB_0\subset \mcF_0\subset \mcB_1\subset\mcF_1\subset  \mcB_2\subset\mcF_2\cdots \subset  \mcB_T \subset \mcF_T,\]
together with a (site-mixed) martingale $(|u(X_j)|,\mathfrak{L}_j,\mathfrak{e}_j, X_{j+1},\mcZ_j),1 \leq j\leq T$ with the same properties of the martingale we constructed in the  $d=2$ case. The only tiny differences are 
\begin{itemize}
  \item  $\mathfrak{e}_j \subset \partialside \mcI_L \cap (\mathfrak{L}_j+\bfe_d)$ is now a set with cardinality $\# \mathfrak{e}_j\gtrsim_d L^{d-2}$ (by Remark \ref{whole ej is d-2 dim}). Moreover, $\mathfrak{e}_j \ (1\leq j\leq T)$ are mutually disjoint.
  \item The definition of $\mcZ_j$ is different. In higher dimensions, it is 
   \begin{align*}
  \mcZ_j= \mathbf{1}_{\event_j}(\omega(X_j)), 
\end{align*} 
where $\event_j$ is the event that $\# \{ n\in \mathfrak{e}_j:\  |u(n)| \geq \frac{1}{2}\beta |u(X_j)| \} \geq \frac{1}{2}\# \mathfrak{e}_j$. We still have 
\[E(\mcZ_{j+1}|\mcZ_j)\geq \frac{1}{2}.\]

\end{itemize}

Therefore, by  the  Azuma's inequality for  the submartingale, we can conclude that,  with probability larger than $1-\exp\{-T\}$, there are more than $\frac{1}{10}T=\frac{L}{2000d}$ many  $j$'s  such that 
\[\# \{ n\in \mathfrak{e}_j: |u(n)| \geq \frac{1}{2}\beta |u(X_j)| \geq \frac{1}{2}\beta\gamma^{\frac{1}{10}L} |u(0)| \} \geq \frac{1}{2}\# \mathfrak{e}_j \gtrsim_d L^{d-2} .\] 
That is to say, if we define  
\[\Omega(\lambda,u_0,\mcI_L):= \ \#\left(  \partialside \mcI_L\cap \{ x: \ |u(x)| \geq  \frac{1}{2}\beta\gamma^{\frac{1}{10}L} |u(0)| \} \right) \gtrsim_d \frac{L}{2000d}\cdot L^{d-2},\]
then 
\begin{equation}\label{abundance in boundary of mcI L, d arbitrary}
  \P\left(\Omega(\lambda,u_0,\mcI_L) \right) \geq \P\left(\mcX_T > \frac{1}{10}T \right)\geq 1-\exp\{-\frac{L}{200d}\}.
\end{equation}
The   estimate \eqref{abundance in boundary of mcI L, d arbitrary} is only about the randomness in $\mcI_L$.\\

Finally, by the  similar trick leading  to \eqref{increase dimension trick}, we take 
 \[L_s=  \frac{L}{10}+10s, \ 0\leq s\leq \frac{L}{100}. \] 
Then the side boundaries $\partialside \mcI_{L_s}\ (0\leq s\leq \frac{L}{100})$ are mutually disjoint, and are all contained in $Q_L(0)$. By \eqref{abundance in boundary of mcI L, d arbitrary}, the event 
$\cap_{1\leq s\leq \frac{L}{100}}\Omega(\lambda,u_0,\mcI_{L_s}) $ will imply that in $Q_L(0)$,  the total number of  sites with  
\begin{equation*}
  |u(x)|\geq \frac{1}{2}\beta \gamma^{\frac{1}{10}L_s} |u(0)| \geq \exp \{{-\frac{1}{25}|\log \gamma|\cdot L}\}  |u(0)|,
\end{equation*}
satisfies 
\[  \gtrsim_d \sum_{ 0\leq s\leq \frac{L}{100}} L_s^{d-1}\gtrsim_d L^d .\]
This yields  
\begin{equation*}
    \P \left(\mathcal{E}_{\mathrm{uc}}(L , u_0, \lambda)\right)\geq \P\left(\bigcap_{1\leq s\leq \frac{L}{100}}\Omega(\lambda,u_0,\mcI_{L_s}) \right) \geq 1-\exp\{-\frac{L}{2001d}\}.
\end{equation*}

We  have completed the proof  Theorem \ref{UC for d arbitrary} for arbitrary $d\geq 3$.

\end{proof}

\section{Some useful  lemmas}
In this section, we present several useful linear algebra lemmas  applied in our proofs. We begin with  the Weyl's inequality and its corollaries.
\begin{lem}[{\bf Weyl's inequality}]\label{Weyl inequality lemma}
  Let \( A, B \) be self-adjoint on some $n$-dimensional  inner product space \( V \)  
with all  eigenvalues ordered in the descending order  $\lambda_1(\cdot) \geq \cdots \geq \lambda_n(\cdot).$ Then for any   \(1 \leq i, j \leq n\), we have
\begin{equation}\label{Weyl inequality}
  \lambda_{i+j-1}(A+B) \leq \lambda_i(A) + \lambda_j(B) \leq \lambda_{i+j-n}(A+B),
\end{equation}
where we set  \(\lambda_k = -\infty\) if \(k > n\),  and \(\lambda_k = +\infty\) if \(k < 1\).
\end{lem}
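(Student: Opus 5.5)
The plan is to derive Weyl's inequality from the Courant--Fischer min-max characterization of eigenvalues, combined with a dimension-counting argument for subspaces. Recall that for a self-adjoint $C$ on the $n$-dimensional space $V$,
\[
\lambda_k(C)=\max_{\dim U=k}\ \min_{x\in U,\ \|x\|=1}\langle Cx,x\rangle=\min_{\dim W=n-k+1}\ \max_{x\in W,\ \|x\|=1}\langle Cx,x\rangle .
\]
We take this as standard; it follows from the spectral theorem by intersecting $U$ with the span of the last $n-k+1$ eigenvectors.

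\textbf{The left inequality.} Assume first $i+j-1\le n$; otherwise the left side is $-\infty$ by convention and there is nothing to prove. Let $\{u_k\}$ and $\{v_k\}$ be orthonormal eigenbases of $A$ and $B$. Set $W_A=\mathrm{span}\{u_i,\dots,u_n\}$, which has dimension $n-i+1$, and $W_B=\mathrm{span}\{v_j,\dots,v_n\}$, which has dimension $n-j+1$. Then $W=W_A\cap W_B$ has dimension at least $n-(i+j-1)+1$. Shrink $W$ to a subspace of exactly that dimension. For a unit vector $x\in W$ we have $\langle (A+B)x,x\rangle\le\lambda_i(A)+\lambda_j(B)$. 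The min-max form then gives $\lambda_{i+j-1}(A+B)\le\lambda_i(A)+\lambda_j(B)$.

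\textbf{The right inequality.} Apply the left inequality to the operators $-A$ and $-B$. Use $\lambda_k(-C)=-\lambda_{n+1-k}(C)$, and reindex with $i'=n+1-i$, $j'=n+1-j$. Then $\lambda_{i'+j'-1}(-A-B)\le\lambda_{i'}(-A)+\lambda_{j'}(-B)$ becomes
\[
-\lambda_{i+j-n}(A+B)\le-\lambda_i(A)-\lambda_j(B),
\]
which is the claim. When $i+j-n<1$, the right side is $+\infty$ by convention, so the inequality holds trivially.

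\textbf{Main obstacle.} The only delicate part is the bookkeeping: the dimension count $\dim(W_A\cap W_B)\ge (n-i+1)+(n-j+1)-n$, and the index conventions in the boundary cases. No analytic difficulty arises.
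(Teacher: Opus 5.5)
Your proof is correct and is the Courant--Fischer argument the paper has in mind when it says the inequality ``follows directly from the min-max principle'' and omits the proof. Both the dimension count $\dim(W_A\cap W_B)\ge n-(i+j-1)+1$ and the reduction of the right inequality to the left one via $\lambda_k(-C)=-\lambda_{n+1-k}(C)$, with $i'=n+1-i$ and $j'=n+1-j$, check out, including the boundary conventions.
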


The proof of  the Weyl's inequality follows directly from the min-max principle  and is omitted here. Two important corollaries of  the Weyl's inequality are:
\begin{cor}[{\bf Spectral stability}]\label{Spectral stability lemma}
For each \(k = 1, \cdots, n\), we have
\[
\lambda_k(A + B) - \lambda_k(A) \in [\lambda_n(B), \lambda_1(B)]
\]
and thus 
\[
|\lambda_k(A + B) - \lambda_k(A)| \leq \|B\|.
\]
\end{cor}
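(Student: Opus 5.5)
The plan is to derive the corollary directly from Lemma \ref{Weyl inequality lemma}, applied twice with one index fixed to an extreme value, and then read off the norm bound.

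For the upper bound, apply the left inequality of \eqref{Weyl inequality} to the pair $(A,B)$ with $i=k$ and $j=1$. Since $i+j-1=k$, this gives
\[\lambda_k(A+B)\leq \lambda_k(A)+\lambda_1(B).\]

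For the lower bound, apply the right inequality of \eqref{Weyl inequality} with $i=k$ and $j=n$. Then $i+j-n=k$, so
\[\lambda_k(A)+\lambda_n(B)\leq \lambda_k(A+B).\]
Both choices keep the indices inside $\{1,\dots,n\}$, so the $\pm\infty$ conventions never enter.

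Subtracting $\lambda_k(A)$ from both bounds yields $\lambda_k(A+B)-\lambda_k(A)\in[\lambda_n(B),\lambda_1(B)]$.

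Finally, $B$ is self-adjoint, so its eigenvalues lie in $[-\|B\|,\|B\|]$. Hence $\lambda_1(B)\leq \|B\|$ and $\lambda_n(B)\geq -\|B\|$, which gives $|\lambda_k(A+B)-\lambda_k(A)|\leq\|B\|$.

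There is no real obstacle. The only care needed is choosing $j=1$ for the upper bound and $j=n$ for the lower bound so that the index shifts cancel exactly.
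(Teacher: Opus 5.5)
Your proposal is correct and is the argument the paper intends. The paper states this corollary as an immediate consequence of Weyl's inequality without writing out the details, and your specializations $i=k$, $j=1$ (left inequality) and $i=k$, $j=n$ (right inequality), followed by the bound $-\|B\|\leq\lambda_n(B)\leq\lambda_1(B)\leq\|B\|$, supply exactly those details.
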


We define the spread of a self-adjoint matrix as the difffference between its largest and smallest eigenvalues. That is  
\begin{equation}\label{definition of spread}
  \spread(\cdot)=\lambda_1(\cdot)-\lambda_n(\cdot).
\end{equation}
\begin{cor}[{\bf Spread's lower bound}]\label{spread lemma}
 We have 
  \[\spread(A+B)\geq |\spread(A)-\spread(B)|.\]
\end{cor}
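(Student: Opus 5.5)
The statement to prove is Corollary \ref{spread lemma}: $\spread(A+B)\geq |\spread(A)-\spread(B)|$ for self-adjoint $A,B$ on an $n$-dimensional space.

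The plan is to derive this from the Weyl inequality (Lemma \ref{Weyl inequality lemma}) using only its two extreme cases. The key identity is that the spread is a seminorm-like quantity: $\spread(M)=\lambda_1(M)-\lambda_n(M)$, where $\lambda_1(M)=\max_{|v|=1}\langle Mv,v\rangle$ and $\lambda_n(M)=\min_{|v|=1}\langle Mv,v\rangle$.

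The first step is subadditivity, $\spread(A+B)\leq \spread(A)+\spread(B)$. I take $i=j=1$ in the left inequality of \eqref{Weyl inequality}, which gives $\lambda_1(A+B)\leq\lambda_1(A)+\lambda_1(B)$. I take $i=j=n$ in the right inequality, which gives $\lambda_n(A)+\lambda_n(B)\leq\lambda_n(A+B)$. Subtracting the second from the first yields the subadditivity.

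The second step applies this to the decomposition $A=(A+B)+(-B)$. Since $\spread(-B)=\lambda_1(-B)-\lambda_n(-B)=-\lambda_n(B)+\lambda_1(B)=\spread(B)$, subadditivity gives $\spread(A)\leq\spread(A+B)+\spread(B)$. Symmetrically, writing $B=(A+B)+(-A)$ gives $\spread(B)\leq\spread(A+B)+\spread(A)$. Combining the two inequalities gives $\spread(A+B)\geq|\spread(A)-\spread(B)|$.

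There is no real obstacle here. The only care needed is the boundary convention in Weyl's inequality: with $i=j=n$ the right inequality reads $\lambda_n(A)+\lambda_n(B)\leq\lambda_{n}(A+B)$, since $i+j-n=n$. Alternatively, both extreme inequalities follow directly from the variational characterization of $\lambda_1$ and $\lambda_n$, since the max of a sum is at most the sum of the maxes, and likewise for the min.
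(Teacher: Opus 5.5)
Your proof is correct, but it is organized differently from the paper's.

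The paper substitutes the mixed indices $(i,j)=(n,1)$ and $(1,n)$ into \eqref{Weyl inequality}. This gives the chains $\lambda_n(A+B)\le\lambda_n(A)+\lambda_1(B)\le\lambda_1(A+B)$ and $\lambda_n(A+B)\le\lambda_1(A)+\lambda_n(B)\le\lambda_1(A+B)$. Subtracting them crosswise yields $-\spread(A+B)\le\spread(A)-\spread(B)\le\spread(A+B)$ in one step. (In the paper the second chain is printed with a stray $\geq$ that should read $\leq$.)

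You instead use only the diagonal cases $(1,1)$ and $(n,n)$. These are just ``the max of a sum is at most the sum of the maxes'' and its analogue for the min. From them you show that $\spread$ is subadditive, i.e.\ a seminorm on self-adjoint matrices that is invariant under $M\mapsto -M$. The claim then follows as the reverse triangle inequality, via $A=(A+B)+(-B)$.

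The two arguments have the same content. The paper's mixed-index inequality $\lambda_n(A+B)\le\lambda_n(A)+\lambda_1(B)$ is exactly your $(n,n)$ inequality applied to the pair $(A+B,-B)$. Likewise, $\lambda_n(A)+\lambda_1(B)\le\lambda_1(A+B)$ is your $(1,1)$ inequality applied to $(A+B,-A)$.

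Your version makes the seminorm structure explicit and needs nothing beyond the Rayleigh-quotient characterization of $\lambda_1$ and $\lambda_n$. The paper's version is a single direct substitution into Weyl's inequality.
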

\begin{proof}[Proof of Corollary \ref{spread lemma}]
  Setting $i=n,j=1$ and then $i=1,j=n$ in \eqref{Weyl inequality} yields 
  \[\lambda_n(A+B)\leq \lambda_n(A)+\lambda_1(B)\leq \lambda_1(A+B),\ \lambda_n(A+B)\geq \lambda_1(A)+\lambda_n(B)\leq \lambda_1(A+B), \]
  which  gives
  \[-\spread(A+B)\leq \spread(A)-\spread(B)\leq \spread(A+B).\]
  This concludes the desired proof. 
\end{proof}

Finally,  we have 
\begin{lem}[{\bf Approximate orthogonality}]\label{orthogonality lemma}
Let $\bfv_1,\cdots ,\bfv_n$ be $n$ vectors satisfying 
\[\langle \bfv_i,\bfv_j \rangle =\delta_{i,j}+\mcO(\varepsilon).\]
If $0<n\varepsilon\ll 1$, then $\bfv_1,\cdots ,\bfv_n$ are linearly independent.
\end{lem}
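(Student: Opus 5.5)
The claim is that vectors $\bfv_1,\cdots,\bfv_n$ with approximately orthonormal inner products must be linearly independent. The plan is to study the Gram matrix $G=(\langle \bfv_i,\bfv_j\rangle)_{1\leq i,j\leq n}$ and show it is invertible. Linear independence is equivalent to $\det G\neq 0$. Equivalently, it suffices to check that $G$ has trivial kernel as a quadratic form on $\R^n$.

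Write $G=I+E$. The hypothesis says every entry of $E$ satisfies $|E_{ij}|\leq C\varepsilon$ for some absolute constant $C$. I would bound $\|E\|$ by the Schur test, as is done elsewhere in the paper:
\[\|E\|\leq \max_i \sum_j |E_{ij}|\leq Cn\varepsilon.\]
Since $n\varepsilon\ll 1$, we may assume $Cn\varepsilon<\frac12$, so $\|E\|<\frac12$. By the spectral stability Corollary \ref{Spectral stability lemma} applied with $A=I$ and $B=E$, every eigenvalue of $G$ then lies in $[\frac12,\frac32]$. In particular $G$ is positive definite.

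To conclude, suppose $\sum_i c_i\bfv_i=0$ with $c\in\R^n$. Then
\[0=\Big\|\sum_i c_i\bfv_i\Big\|^2=\langle c,Gc\rangle\geq \tfrac12|c|^2,\]
which forces $c=0$. This gives linear independence.

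There is no real obstacle here. The only point needing care is the matrix norm bound. Using the Hilbert–Schmidt norm instead would give $\|E\|\leq Cn\varepsilon$ as well, since $\|E\|_{HS}\leq\sqrt{n^2}\,C\varepsilon$. The smallness condition therefore has to be read as $n\varepsilon$ being smaller than an absolute constant determined by the implicit constant in $\mathcal O(\varepsilon)$. This is exactly the assumption used in the application, where $\#B_j\cdot\gamma^{2(a-1-0.5)L_{j-1}}\ll1$.
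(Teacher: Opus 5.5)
Your proposal is correct and follows essentially the same route as the paper: write the Gram matrix as $G=I+E$, bound $\|E\|\lesssim n\varepsilon\ll 1$ by Schur's test, and conclude that $G$ is invertible, hence the vectors are linearly independent. You also spell out the final step explicitly via positive definiteness and the identity $\|\sum_i c_i\bfv_i\|^2=\langle c,Gc\rangle$, which the paper leaves implicit.
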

\begin{proof}[Proof of Lemma \ref{orthogonality lemma}]
  Consider the Gram matrix $G$ of $\bfv_1,\cdots ,\bfv_n$. We have 
  \[G=(\langle \bfv_i,\bfv_j \rangle)_{n\times n} =I+\mcO(\varepsilon),\]
  where $I$ is the identity matrix of order $n$. 
  Then by Schur's  test, we have 
  \[ \| \mcO(\varepsilon) \| \lesssim \varepsilon\cdot n\ll 1.\]
  Thus, $G$ is invertible, which implies the  linear  independence of $\bfv_1,\cdots ,\bfv_n$.

\end{proof}

\section{A coupling lemma}

The following coupling lemma is standard in the multi-scale analysis, and one can find its proof in \cite[Lemma 3.1, Remark 3.6]{LSZ25}.

\begin{lem}\label{coupling lemma}
Fix $L_1\sim L^{\alpha },1\ll L\ll L' \leq \frac{1}{2} L_1$ ($\alpha>1$).  Let $\Lambda$ be a $L_1$-size block. Assume $\mathfrak{S} \subset \mathfrak{S}' \subset \Lambda\subset \Z^d$ satisfying 
\begin{itemize}
  \item $\mathfrak{S}$ is a union of no more than $N$ many $L$-size blocks $\Lambda_0'$,
  \item $\mathfrak{S}'$ is union of $L'$-size blocks $\Lambda_1'$, which is the $\frac{L'}{10}$-neighborhood of each $\Lambda_0'\in \mathfrak{S}$. Moreover, distinct elements in $\mathfrak{S}'$ are at distance $\gtrsim L'$.
\end{itemize}
 Assume there is  a class of $L $-size non-resonant blocks  
\[\mathfrak F=\{\Lambda':\ \Lambda'\subset \Lambda\}\]
covering $\Lambda\setminus \mathfrak{S}'$, and satisfying that for each $n\in \Lambda\setminus \mathfrak{S}$, there is a $\Lambda'\in \mathfrak F$ s.t.,  
      \begin{equation}\label{good block nhd}
        Q_{\frac{L}{5}}(n)\cap \Lambda \subset \Lambda'.
      \end{equation}
Assume further for $E\in\R$, 
  \begin{equation}\label{center block L2 estimate}
           \| G_{\Lambda_1'} (E)\| \leq  2\exp\{(L')^{1-\varepsilon}\} \ {\rm for} \ \forall \Lambda_{1}' \in \mathfrak{S}'. 
  \end{equation}
Then 
\begin{align}
\label{annuls L2 norm}  \| G_{\Lambda}(E)\| &<\exp \{  L_1^{1-\varepsilon}\}, \\
\label{annuls off-diagonal decay}  |G_{\Lambda}(x,y;E)| &<\exp \{-\frac12 \gamma_0 |x-y|\} \ {\rm for}\ \forall |x-y|\geq \frac{L_1}{200}. 
\end{align}
Moreover, If we denote $S=\cup_{\mathfrak{F}}\Lambda'$, then 
\begin{align}
\label{covering annuls L2 norm}  \| G_{S}(E)\| &<\exp \{  2 L^{1-\varepsilon}\}, \\
\label{covering annuls off-diagonal decay}  |G_{S}(x,y;E)| &<\exp \{-\frac12 \gamma_0 |x-y|\} \ {\rm for}\ \forall |x-y|\geq L. 
\end{align}
\end{lem}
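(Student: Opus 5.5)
The plan is a standard two-stage multiscale argument: first control $G_S$ on the union of non-resonant $L$-blocks, then couple $S$ with the resonant blocks $\Lambda_1'\in\mathfrak{S}'$ through the second resolvent identity.

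\emph{Step 1: estimates on $S$.} Each $\Lambda'\in\mathfrak F$ is non-resonant, so Lemma \ref{non-resonant Green's function} gives $\|G_{\Lambda'}(E)\|\le (h-4d-\beta-\iota)^{-1}$ and $|G_{\Lambda'}(x,y;E)|\le C e^{-\gamma_0|x-y|_1}$. For $x\in S$, pick $\Lambda'\ni x$ as in \eqref{good block nhd}, so that $\dist(x,\partial^-\Lambda'\setminus\partial^-\Lambda)\ge L/5$. The geometric resolvent expansion
\[
G_S(x,y)=G_{\Lambda'}(x,y)\mathbf 1_{y\in\Lambda'}-\sum_{w\sim w'} G_{\Lambda'}(x,w)\,G_S(w',y)
\]
then gives $|G_S(x,y)|\le C L^{d} e^{-\gamma_0 L/5}\max_{w'}|G_S(w',y)|$ whenever $|x-y|\ge L$. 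Iterating at most $|x-y|/(L/5)$ times, with each step costing a factor $e^{-(\gamma_0-\delta)L/5}$, yields \eqref{covering annuls off-diagonal decay} with rate $\frac12\gamma_0$. The a priori input needed to close the iteration is the $L^2$ bound; run the same expansion with Schur's test to get $\|G_S\|\le C\max\|G_{\Lambda'}\|+\frac12\|G_S\|$, which gives \eqref{covering annuls L2 norm}.

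\emph{Step 2: coupling on $\Lambda$.} Write $\Lambda=S\cup\mathfrak S'$. Each $\Lambda_1'$ satisfies \eqref{center block L2 estimate}, and the $\Lambda_1'$ are $\gtrsim L'$-separated. Points $x\in\mathfrak S'\setminus\mathfrak S$ at distance at least $L'/20$ from $\partial\Lambda_1'$ are handled by expanding inside $\Lambda_1'$. Since $x$ also lies in a good block $\Lambda'$ of Step 1, expanding in $\Lambda'$ gains $e^{-\gamma_0L/5}$ per step. A path can pass through a resonant block at most $N$ times. Crossing a block $\Lambda_1'$ costs the factor $e^{(L')^{1-\varepsilon}}$ from $\|G_{\Lambda_1'}\|$, but the walk needs about $L'/10$ steps of decay to exit, because $\Lambda_1'$ contains the $L'/10$-neighbourhood of $\Lambda_0'$ and the decay from the boundary inward comes from Step 1. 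That exit cost is at least $e^{-\gamma_0L'/20}$, which dominates $e^{(L')^{1-\varepsilon}}$.

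For $|x-y|\ge L_1/200$ the total decay is $e^{-\frac{2}{3}\gamma_0|x-y|}$ multiplied by $e^{N(L')^{1-\varepsilon}}$, and the loss $N(L')^{1-\varepsilon}=o(L_1)$ is absorbed, giving \eqref{annuls off-diagonal decay}. For the $L^2$ bound \eqref{annuls L2 norm}, I would use Schur's test on the matrix $G_\Lambda(x,y)$. Pairs with $|x-y|\ge L_1/200$ are summable by the decay. Nearby pairs are bounded by $\#\Lambda\cdot e^{N(L')^{1-\varepsilon}}\le e^{L_1^{1-\varepsilon}}$, using $L'\le L_1/2$.

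\emph{Main obstacle.} The hardest step is making the resolvent bootstrap on $\Lambda$ close. It needs an a priori finite bound on $\|G_\Lambda\|$ (or a self-improving inequality), and the large norm $e^{(L')^{1-\varepsilon}}$ of $G_{\Lambda_1'}$ must be beaten by the $\sim L'$ distance separating the interior of $\Lambda_1'$ from its boundary. First I would run the two-block Schur-complement bound $\|G_\Lambda\|\le \|G_{\mathfrak S'}\|\,(1+\|\Gamma G_S\Gamma G_{\mathfrak S'}\|)$ with $\|\Gamma G_S \Gamma G_{\mathfrak S'}\|<\frac12$ restricted near the boundaries. Since this follows \cite[Lemma 3.1]{LSZ25}, I would cite that argument for the bookkeeping.
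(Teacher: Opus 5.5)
The paper gives no argument for this lemma; it just cites \cite[Lemma 3.1, Remark 3.6]{LSZ25}, so your closing deferral matches it. The argument you sketch for the crux does not work as written, though.

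The crux is the existence of $G_\Lambda(E)$ with the bound \eqref{annuls L2 norm}. Both your far-pair decay and your near-pair bound presuppose it. The step meant to supply it requires $\|\Gamma G_S\Gamma G_{\mathfrak S'}\|<\frac12$ "near the boundaries", and that fails for two reasons:
\begin{itemize}
\item $(S,\mathfrak S')$ is not a splitting of $\Lambda$. By \eqref{good block nhd} every $n\in\Lambda\setminus\mathfrak S$ lies in some good block, so $S\supseteq\Lambda\setminus\mathfrak S$ overlaps $\mathfrak S'$.
\item More seriously, the smallness is false even for the genuine splitting $\mathfrak S'\sqcup(\Lambda\setminus\mathfrak S')$. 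Restricting to the boundary removes the factor $e^{(L')^{1-\varepsilon}}$. What remains is a product of boundary entries of two essentially non-resonant Green's functions (the collar of $\Lambda_1'$ and the outside region), and that product is of order one. For instance, in $d=1$ with $\omega\equiv0$ near $\partial\Lambda_1'$, both entries are close to $g=(\kappa-\sqrt{\kappa^2-4})/2$ with $\kappa=2+h-E$, and $g\to1$ as $h-E\downarrow0$. Theorem~\ref{low dimension localization} uses the lemma with only $h>4d+1$, where $h-E$ can be arbitrarily small.
\end{itemize}

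Your bookkeeping has two further slips. First, a path in the iterated expansion can re-enter the same $\Lambda_1'$ arbitrarily often, so the total loss $e^{N(L')^{1-\varepsilon}}$ is unjustified. Second, in Step 1 you charge only $e^{-\gamma_0L/5}$ per step while a step can move a distance of order $L$. That gives rate $\gamma_0/5$, not $\frac12\gamma_0$. Step 1 is in any case immediate from Lemma~\ref{non-resonant Green's function}, since $S$ is a union of non-resonant blocks.

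The missing idea is to apply the absorption mechanism you already describe pointwise, rather than through a two-block Schur complement. Note that the points of $\mathfrak S$, not those of $\mathfrak S'\setminus\mathfrak S$, are the ones that must be expanded inside $\Lambda_1'$.
\begin{itemize}
\item Choice of local block: for $x\in\Lambda\setminus\mathfrak S$, let $W(x)$ be the good block from \eqref{good block nhd}. For $x\in\Lambda_0'$, let $W(x)=\Lambda_1'$.
\item Exit estimate for resonant blocks: $\Lambda_1'\setminus\Lambda_0'\subset\Lambda\setminus\mathfrak S$ is non-resonant, so expand $G_{\Lambda_1'}(w,x)$ once through it. Lemma~\ref{non-resonant Green's function} and \eqref{center block L2 estimate} give $|G_{\Lambda_1'}(x,w)|\lesssim L^{d-1}e^{-\gamma_0(|x-w|-2L)}e^{(L')^{1-\varepsilon}}$. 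On the relative boundary $|x-w|\ge L'/10$.
\item Contraction: in both cases $\sum|G_{W(x)}(x,w)|\le\frac12$, summing over $w\in\partial^-W(x)$ adjacent to $\Lambda\setminus W(x)$.
\item Existence of $G_\Lambda$: the Poisson formula applied to a putative eigenvector of $H_\Lambda$ at $E$ gives $\|\psi\|_\infty\le\frac12\|\psi\|_\infty$, so $\psi=0$.
\item Norm bound: the resolvent identity gives the self-improving Schur bound $\sup_x\sum_y|G_\Lambda(x,y)|\le2\sup_x\sum_y|G_{W(x)}(x,y)|\lesssim(L')^{d/2}e^{(L')^{1-\varepsilon}}$. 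This is $<e^{L_1^{1-\varepsilon}}$ because $L'\le\frac12L_1$.
\item Off-diagonal decay: iterate the same expansion. Each step from $z$ to $w'$ costs $e^{-(\gamma_0-\delta)|z-w'|}$. Each resonant visit is absorbed locally because $|z-w|\ge L'/10$, so the number of visits never enters. This gives \eqref{annuls off-diagonal decay}.
\end{itemize}
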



\section*{Data Availability}
		The manuscript has no associated data.
		\section*{Declarations}
		{\bf Conflicts of interest} \ The authors  state  that there is no conflict of interest.

   \bibliographystyle{alpha}

\begin{thebibliography}{WSPZ89}

\bibitem[AM93]{AM93}
M.~Aizenman and S.~Molchanov.
\newblock Localization at large disorder and at extreme energies: an elementary
  derivation.
\newblock {\em Comm. Math. Phys.}, 157(2):245--278, 1993.

\bibitem[And58]{And58}
P.~W. Anderson.
\newblock Absence of diffusion in certain random lattices.
\newblock {\em Phys. Rev.}, 109(5):1492--1505, 1958.

\bibitem[AW15]{AW15}
M.~Aizenman and S.~Warzel.
\newblock {\em Random operators}, volume 168 of {\em Graduate Studies in
  Mathematics}.
\newblock American Mathematical Society, Providence, RI, 2015.
\newblock Disorder effects on quantum spectra and dynamics.

\bibitem[BGS02]{BGS02}
J.~Bourgain, M.~Goldstein, and W.~Schlag.
\newblock Anderson localization for {S}chr\"{o}dinger operators on {$\bold
  Z^2$} with quasi-periodic potential.
\newblock {\em Acta Math.}, 188(1):41--86, 2002.

\bibitem[BK05]{BK05}
J.~Bourgain and C.~E. Kenig.
\newblock On localization in the continuous {A}nderson-{B}ernoulli model in
  higher dimension.
\newblock {\em Invent. Math.}, 161(2):389--426, 2005.

\bibitem[BLMS22]{BLMS}
L.~Buhovsky, A.~Logunov, E.~Malinnikova, and M.~Sodin.
\newblock A discrete harmonic function bounded on a large portion of {$\Bbb
  Z^2$} is constant.
\newblock {\em Duke Math. J.}, 171(6):1349--1378, 2022.

\bibitem[Bou03]{Bou03}
J.~Bourgain.
\newblock Random lattice {S}chr\"{o}dinger operators with decaying potential:
  some higher dimensional phenomena.
\newblock In {\em Geometric aspects of functional analysis}, volume 1807 of
  {\em Lecture Notes in Math.}, pages 70--98. Springer, Berlin, 2003.

\bibitem[Bou04]{Bou04}
J.~Bourgain.
\newblock On localization for lattice {S}chr\"{o}dinger operators involving
  {B}ernoulli variables.
\newblock In {\em Geometric aspects of functional analysis}, volume 1850 of
  {\em Lecture Notes in Math.}, pages 77--99. Springer, Berlin, 2004.

\bibitem[Bou07]{Bou07}
J.~Bourgain.
\newblock Anderson localization for quasi-periodic lattice {S}chr\"{o}dinger
  operators on {$\Bbb Z^d$}, {$d$} arbitrary.
\newblock {\em Geom. Funct. Anal.}, 17(3):682--706, 2007.

\bibitem[CKM87]{CKM87}
R.~Carmona, A.~Klein, and F.~Martinelli.
\newblock Anderson localization for {B}ernoulli and other singular potentials.
\newblock {\em Comm. Math. Phys.}, 108(1):41--66, 1987.

\bibitem[CSZ23]{CSZ23}
H.~Cao, Y.~Shi, and Z.~Zhang.
\newblock Localization and regularity of the integrated density of states for
  {S}chr\"{o}dinger operators on {$\Bbb Z^d$} with {$C^2$}-cosine like
  quasi-periodic potential.
\newblock {\em Comm. Math. Phys.}, 404(1):495--561, 2023.

\bibitem[DLS85]{DLS85}
F.~Delyon, Y.~L\'{e}vy, and B.~Souillard.
\newblock Anderson localization for multidimensional systems at large disorder
  or large energy.
\newblock {\em Comm. Math. Phys.}, 100(4):463--470, 1985.

\bibitem[DS20]{DS20}
J.~Ding and C.~K. Smart.
\newblock Localization near the edge for the {A}nderson {B}ernoulli model on
  the two dimensional lattice.
\newblock {\em Invent. Math.}, 219(2):467--506, 2020.

\bibitem[DSS02]{DSS02}
D.~Damanik, R.~Sims, and G.~Stolz.
\newblock Localization for one-dimensional, continuum, {B}ernoulli-{A}nderson
  models.
\newblock {\em Duke Math. J.}, 114(1):59--100, 2002.

\bibitem[FMSS85]{FMSS85}
J.~Fr\"{o}hlich, F.~Martinelli, E.~Scoppola, and T.~Spencer.
\newblock Constructive proof of localization in the {A}nderson tight binding
  model.
\newblock {\em Comm. Math. Phys.}, 101(1):21--46, 1985.

\bibitem[FS83]{FS83}
J.~Fr\"{o}hlich and T.~Spencer.
\newblock Absence of diffusion in the {A}nderson tight binding model for large
  disorder or low energy.
\newblock {\em Comm. Math. Phys.}, 88(2):151--184, 1983.

\bibitem[FSW90]{FSW90}
J.~Fr\"{o}hlich, T.~Spencer, and P.~Wittwer.
\newblock Localization for a class of one-dimensional quasi-periodic
  {S}chr\"{o}dinger operators.
\newblock {\em Comm. Math. Phys.}, 132(1):5--25, 1990.

\bibitem[GK13]{GK13}
F.~Germinet and A.~Klein.
\newblock A comprehensive proof of localization for continuous {A}nderson
  models with singular random potentials.
\newblock {\em J. Eur. Math. Soc. (JEMS)}, 15(1):53--143, 2013.

\bibitem[IM16]{IM16}
J.~Z. Imbrie and R.~Mavi.
\newblock Level spacing for non-monotone {A}nderson models.
\newblock {\em J. Stat. Phys.}, 162(6):1451--1484, 2016.

\bibitem[Imb21]{Imb21}
J.~Z. Imbrie.
\newblock Localization and eigenvalue statistics for the lattice {A}nderson
  model with discrete disorder.
\newblock {\em Rev. Math. Phys.}, 33(8):Paper No. 2150024, 50, 2021.

\bibitem[Jit07]{Jit07}
S.~Jitomirskaya.
\newblock Ergodic {S}chr\"{o}dinger operators (on one foot).
\newblock In {\em Spectral theory and mathematical physics: a {F}estschrift in
  honor of {B}arry {S}imon's 60th birthday}, volume~76 of {\em Proc. Sympos.
  Pure Math.}, pages 613--647. Amer. Math. Soc., Providence, RI, 2007.

\bibitem[JLMS81]{JLMS81}
G.~Jona-Lasinio, F.~Martinelli, and E.~Scoppola.
\newblock New approach to the semiclassical limit of quantum mechanics. {I}.
  {M}ultiple tunnelings in one dimension.
\newblock {\em Comm. Math. Phys.}, 80(2):223--254, 1981.

\bibitem[JLMS84]{JLMS84}
G.~Jona-Lasinio, F.~Martinelli, and E.~Scoppola.
\newblock Quantum particle in a hierarchical potential with tunnelling over
  arbitrarily large scales.
\newblock {\em J. Phys. A}, 17(12):L635--L638, 1984.

\bibitem[JLMS85]{JLMS85}
G.~Jona-Lasinio, F.~Martinelli, and E.~Scoppola.
\newblock Multiple tunnelings in {$d$} dimensions: a quantum particle in a
  hierarchical potential.
\newblock {\em Ann. Inst. H. Poincar\'{e} Phys. Th\'{e}or.}, 42(1):73--108,
  1985.

\bibitem[Kir08]{Kir08}
W.~Kirsch.
\newblock An invitation to random {S}chr\"{o}dinger operators.
\newblock In {\em Random {S}chr\"{o}dinger operators}, volume~25 of {\em Panor.
  Synth\`eses}, pages 1--119. Soc. Math. France, Paris, 2008.
\newblock With an appendix by Fr\'{e}d\'{e}ric Klopp.

\bibitem[Kri08]{Kri08}
E.~Kritchevski.
\newblock Hierarchical {A}nderson model.
\newblock 2008.

\bibitem[Li22]{Li22}
L.~Li.
\newblock Anderson-{B}ernoulli localization at large disorder on the 2{D}
  lattice.
\newblock {\em Comm. Math. Phys.}, 393(1):151--214, 2022.

\bibitem[LSZ25a]{LSZ25b}
S.~Liu, Y.~Shi, and Z.~Zhang.
\newblock Extended states for the random {S}chr{\"o}dinger operator on
  $\mathbb{Z}^d\ (d\geq5)$ with decaying {B}ernoulli potential.
\newblock {\em arXiv:2505.04077}, 2025.

\bibitem[LSZ25b]{LSZ25}
S.~Liu, Y.~Shi, and Z.~Zhang.
\newblock On localization for the alloy-type {A}nderson-{B}ernoulli model with
  long-range hopping.
\newblock {\em arXiv:2508.12714}, 2025.

\bibitem[LZ22]{LZ22}
L.~Li and L.~Zhang.
\newblock Anderson-{B}ernoulli localization on the three-dimensional lattice
  and discrete unique continuation principle.
\newblock {\em Duke Math. J.}, 171(2):327--415, 2022.

\bibitem[MS85]{MS85}
F.~Martinelli and E.~Scoppola.
\newblock Remark on the absence of absolutely continuous spectrum for
  {$d$}-dimensional {S}chr\"{o}dinger operators with random potential for large
  disorder or low energy.
\newblock {\em Comm. Math. Phys.}, 97(3):465--471, 1985.

\bibitem[MS87]{MS87}
F.~Martinelli and E.~Scoppola.
\newblock Introduction to the mathematical theory of {A}nderson localization.
\newblock {\em Riv. Nuovo Cimento (3)}, 10(10):1--90, 1987.

\bibitem[SVW98]{SVW98}
C.~Shubin, R.~Vakilian, and T.~Wolff.
\newblock Some harmonic analysis questions suggested by {A}nderson-{B}ernoulli
  models.
\newblock {\em Geom. Funct. Anal.}, 8(5):932--964, 1998.

\bibitem[SW86]{SW86}
B.~Simon and T.~Wolff.
\newblock Singular continuous spectrum under rank one perturbations and
  localization for random {H}amiltonians.
\newblock {\em Comm. Pure Appl. Math.}, 39(1):75--90, 1986.

\bibitem[Weg81]{Weg81}
F.~Wegner.
\newblock Bounds on the density of states in disordered systems.
\newblock {\em Z. Phys. B}, 44(1-2):9--15, 1981.

\bibitem[WSPZ89]{WSPZ89}
D~W{\"u}rtz, T~Schneider, A~Politi, and M~Zannetti.
\newblock Renormalization group and dynamical maps for the hierarchical
  tight-binding problem.
\newblock {\em Physical Review B}, 39(11):7829, 1989.

\end{thebibliography}

\end{document}